\NewDocumentCommand{\LeftComment}{s m}{%
  \Statex \IfBooleanF{#1}{\hspace*{\ALG@thistlm}}\(\triangleright\) #2}
\newtheorem{theorem}{Theorem}
\newtheorem{prop}{Proposition}
\newtheorem{definition}{Definition}
\newtheorem{lem}{Lemma}[section]
\newtheorem{lemma}{Lemma}
\newtheorem{remark}{Remark}
\newcommand{\R}{\mathbb{R}}
\newcommand{\defeq}{\stackrel{\mathrm{def}}{=}}
\newcommand{\FP}{\ensuremath{\mathrm{FP}}}
\newcommand{\FN}{\ensuremath{\mathrm{FN}}}
\newcommand{\vt}{\vartheta}
\newcommand{\beq}{\begin{equation}}
\newcommand{\eeq}{\end{equation}}
\DeclareMathOperator*{\argmax}{arg\,max}
\DeclareMathOperator*{\argmin}{arg\,min}
\DeclareMathOperator\dif{d\!}
\DeclareMathOperator{\sgn}{sgn}
\DeclarePairedDelimiter\abs{\lvert}{\rvert}
\DeclarePairedDelimiter\norm{\lVert}{\rVert}
\newcommand{\FPtwo}{\vt+f_1(\sqrt{r},\lambda')}
\newcommand{\FNone}{\vt+f_2(\sqrt{r},\lambda')}
\newcommand{\FNtwo}{2\vt+f_3(\sqrt{r},\lambda')}
\newcommand{\tfpone}{\frac{1}{1 -\rho^2}\left[(1 +|\rho|) \lambda' + (1 -\rho^2)t'\right]^2}
\newcommand{\tfptwo}{(\lambda' + t')^2}
\newcommand{\tfpthree}{\vt+\frac{1}{1 -\rho^2}\left[\lambda'(1 -|\rho|) + t'(1 -\rho^2)\right]^2}
\newcommand{\tfnone}{\vt+(\sqrt{r} -\lambda' - t')_ +^2}
\newcommand{\tfntwo}{\vt+\frac{1}{1 -\rho^2}\left[(1 -\rho^2)\sqrt{r} - t'(1 -\rho^2) -\lambda'(1 -|\rho|)\right]^2}
\newcommand{\tfnthree}{2\vt+\frac{1}{1 -\rho^2}\left[(1 -\rho^2)\sqrt{r} - t'(1 -\rho^2) -\lambda'(1 +|\rho|)\right]^2}
\title{A Comparison of Hamming Errors of Representative Variable Selection Methods}
\author{Zheng Tracy Ke \\
Department of Statistics\\
Harvard University\\
Cambridge, MA 02138, USA \\
\texttt{zke@fas.harvard.edu} \\
\And
Longlin Wang \\
Department of Statistics \\
Harvard University \\
Cambridge, MA 02138, USA \\
\texttt{lwang2@fas.harvard.edu} 
}
\begin{document}

\maketitle

\begin{abstract}
Lasso is a celebrated method for variable selection in linear models, but it faces challenges when the variables are moderately or strongly correlated. This motivates alternative approaches such as using a non-convex penalty, adding a ridge regularization, or conducting a post-Lasso thresholding. In this paper, we compare Lasso with 5 other methods: Elastic net, SCAD, forward selection, thresholded Lasso, and forward backward selection. We measure their performances theoretically by the expected Hamming error, assuming that the regression coefficients are {\it iid} drawn from a two-point mixture and that the Gram matrix is block-wise diagonal. By deriving the rates of convergence of Hamming errors and the phase diagrams, we obtain useful conclusions about the pros and cons of different methods. 
\end{abstract}

\section{Introduction} \label{sec:Intro}

Variable selection is one of the core problems in high-dimensional data analysis. Consider a linear regression, where  the response $y\in\mathbb{R}^n$ and the design matrix $X=[x_1,\ldots,x_p]\in\mathbb{R}^{n\times p}$ satisfy that  
\beq \label{linearM}
y=X\beta+z, \qquad \|x_j\|=1,\qquad  z\sim {\cal N}(0, \sigma^2I_n). 
\eeq
The goal is  estimating the support of $\beta$ ($\mathrm{Supp}(\beta)$). Lasso \citep{tibshirani1996regression} is a popular method: 
\beq \label{Lasso}
\hat{\beta}^{\mathrm{lasso}} =\mathrm{argmin}_{\beta}\bigl\{ \norm*{y-X\beta}^2/2 + \lambda \norm*{\beta}_1 \bigr\}. 
\eeq
Lasso has good rates of convergence on the $L_q$-estimation error or prediction error \citep{bickel2009simultaneous}. However, it can be unsatisfactory for variable selection, especially when the columns in the design matrix are moderately or strongly correlated.  \cite{zhao2006model} showed that an {\it irrepresentable condition} on $X$ is necessary for Lasso to recover $\mathrm{Supp}(\beta)$ with high probability, and such a condition is restrictive when $p$ is large \citep{fan2010selective}. \cite{ji2012ups} studied the Hamming error of Lasso and revealed Lasso's non-optimality by lower-bounding its Hamming error rate.
Many alternative strategies were proposed for variable selection, such as using non-convex penalties \citep{fan2001variable, zhang2010nearly, shen2012likelihood}, 
adding a ridge regularization \citep{zou2005regularization}, post-processing on the Lasso estimator \citep{zou2006adaptive,zhou2009thresholding}, and iterative algorithms \citep{zhang2011adaptive, donoho2012sparse}. In this paper, our main interest is to theoretically compare these different strategies. 

Existing theoretical studies focused on `model selection consistency' (e.g., \cite{fan2001variable,zhao2006model, zou2006adaptive, meinshausen2010stability, loh2017support}), which uses $\mathbb{P}(\mathrm{Supp}(\hat{\beta})= \mathrm{Supp}(\beta))$ to measure the performance of variable selection. 
However, for many real applications, the study of the Hamming error (i.e., total number of false positives and false negatives) is in urgent need. 
For example, in genome-wide association studies (GWAS) or Genetic Regulatory Network,  the goal is to identify the genes or SNPs that are truly associated with a given phenotype, and we hope to find a multiple testing procedure that simultaneously 
controls the FDR and maximizes the power (for multiple testing). This problem can be re-cast as minimizing the Hamming error in a special regression setting \citep{efron2004large,jin2012comment,sun2007oracle}. This motivates us to study the {\it Hamming errors} of variable selection methods, which were rarely considered in the literature.


We adopt the {\it rare and weak signal model} \citep{donoho2004higher,arias2011global, jin2016rare}, which is often used in theoretical analysis of sparse linear models. Let $p$ be the asymptotic parameter. 
Given constants $\vartheta\in (0,1)$ and $r>0$,  we assume that $\beta_j$'s are iid generated such that 
\beq \label{model-beta}
\beta_j=\begin{cases} \tau_p, &\mbox{with probability }\epsilon_p,\\
0, & \mbox{with probability }1-\epsilon_p,
\end{cases} \qquad\mbox{where}\qquad \epsilon_p=p^{-\vartheta}, \quad \tau_p=\sqrt{2r\log(p)}.  
\eeq 
As $p\to\infty$, $\|\beta\|_0\approx p^{1-\vartheta}$, and a nonzero $\beta_j$ is at the critical order $\sqrt{\log(p)}$. \footnote{In \eqref{linearM}, we assume that each column of $X$ is standardized to have a unit $\ell^2$-norm and that the order of nonzero $\beta_j$ is $\sqrt{\log(n)}$. Alternatively, many works assume that each column of $X$ is standardized to have an $\ell^2$-norm of $\sqrt{n}$ and that the order for nonzero $\beta_j$ is $n^{-1/2}\sqrt{\log(p)}$. These are two equivalent parameterizations.} The two parameters $(\vartheta, r)$ capture the sparsity level and signal strength, respectively. We may generalize \eqref{model-beta} to let nonzero $\beta_j$'s take different values in $[\tau_p,\infty)$, but the current form is more convenient for presentation.

The {\it blockwise covariance structure} is frequently observed in real applications. In genetic data, there may exist strong correlations between nearby genetic markers, but the long-range dependence is usually negligible; as a result, the sample covariance matrix is approximately blockwise diagonal \citep{dehman2015performance}. In financial data, the sample covariance matrix of stock returns (after common factors are removed) is also approximately blockwise diagonal, where each block corresponds to an industry group \citep{fan2015risks}. Motivated by these examples, we consider an idealized setting, where the Gram matrix $G=X'X$ is block-wise diagonal consisting of $2\times 2$ blocks:
\beq \label{model-X}
G=\mathrm{diag}(B,B,\ldots,B,B_0), \qquad \mbox{where}\quad B=\begin{bmatrix}
1 & \rho\\
\rho & 1
\end{bmatrix}\mbox{ and  }B_0=\begin{cases}
B, & \mbox{if $p$ is even}, \\
1, & \mbox{if $p$ is odd}. 
\end{cases}
\eeq
This is an idealization of the blockwise covariance structures in real applications. We may generalize \eqref{model-X} to allow unequal block sizes and unequal off-diagonal entries, but we keep the current form for convenience of presentation. Model~\eqref{model-X} is also closely connected to the random designs in compressed sensing \citep{donoho2006compressed}. Write $X=[X_1,X_2,\ldots,X_n]'$. Suppose $X_1, X_2,\ldots,X_n$ are iid generated from ${\cal N}\bigl(0,\; n^{-1}\Sigma)$, where $\Sigma$ has the same form as $G$ in \eqref{model-X}. In a high-dimensional sparse setting, we have $\|\beta\|_0\ll n\ll p$. Then, $G=X'X\approx \Sigma$, and due to the blessing of sparsity of $\beta$, $G\beta\approx \Sigma\beta$. As a result, $X'y$ (sufficient statistic of $\beta$) satisfies that $X'y = G\beta + {\cal N}(0, G) \approx \Sigma\beta + {\cal N}(0, \Sigma)$, and the right hand side reduces to Model~\eqref{model-X} \citep{genovese2012comparison}. In Section~\ref{subsec:random}, we formally show that this random design setting is asymptotically equivalent to Model~\eqref{model-X}.  

Now, under model \eqref{model-beta} and model \eqref{model-X}, we have three parameters $(\vartheta,r,\rho)$. They capture the sparsity level, signal strength and design correlations, respectively. Our main results are the explicit convergence rates of Hamming error, as a function of $(\vartheta,r,\rho)$, for different methods. 
We will study six methods: (i) Lasso as in \eqref{Lasso}; (ii) Elastic net \citep{zou2005regularization}, which adds an additional $L^2$-penalty to \eqref{Lasso}, (iii) smoothly clipped absolute deviation (SCAD) \citep{fan2001variable}, which replaces the $L^1$-penalty by a non-convex penalty, (iv) thresholded Lasso \citep{zhou2009thresholding}, which further thresholds the Lasso solution, and two iterative algorithms, (v) forward selection and (vi) forward backward selection  \citep{huang2016partial}; see Section~\ref{sec:main} for a precise description of each method.
To our best knowledge, our results are the first that directly compare Hamming errors of these methods.


\section{A preview of main results and some discussions} \label{sec:Preview}

For any $\hat{\beta}$, its Hamming error is $H(\hat{\beta},\beta)=\sum_{j=1}^p 1\{\hat{\beta}_j\neq 0, \beta_j=0\}+\sum_{j=1}^p 1\{\hat{\beta}_j= 0, \beta_j\neq 0\}$. As we shall show, for any of the six methods studied here, there exists a function $h(\vartheta, r, \rho)\in [0,1]$ such that $\mathbb{E}[H(\hat{\beta},\beta)]=L_p p^{1-h(\vartheta,r,\rho)}$, where $L_p$ is a {\it multi-$\log(p)$} term. (A multi-$\log(p)$ term is such that \( L_p \cdot p^{\epsilon}\to\infty \) and \( L_p \cdot p^{ -\epsilon}\to 0 \) for any \( \epsilon > 0 \).) Since the expected number of true relevant variables is $p^{1-\vartheta}$, we are interested in three cases: 
\begin{itemize}
\item {\it Exact recovery: $h(\vartheta,r,\rho)>1$.} In this case, the expected Hamming error is $o(1)$ as $p\to\infty$. It follows that 
model selection consistency holds. 
\item {\it Almost full recovery: $\vartheta< h(\vartheta,r,\rho)<1$.} In this case, the expected Hamming error does not vanish as $p\to\infty$, but it is much smaller than 
the total number of true relevant variables. Variable selection is still satisfactory (although model selection consistency no longer holds). 
\item {\it No recovery: $h(\vartheta,r,\rho)\leq \vartheta$.} In this case, the expected Hamming error is comparable with or much larger than the total number of true relevant variables. Variable selection fails. 
\end{itemize}
We call the two-dimensional space $(\vartheta,r)$ the {\it phase space}. For each fixed $\rho$, the phase space is divided into three regions: {\it Region of Exact Recovery (ER)}, which is the subset $\{(\vartheta,r): h(\vartheta,r,\rho)>1\}$, and {\it Region of Almost Full Recovery (AFR)} and {\it Region of No Recovery (NR)} defined similarly. This gives rise to a {\it phase diagram} for each method. We denote the curve separating ER region and AFR region by $r= U(\vartheta)$ and the curve separating AFR region and NR region by $r=L(\vartheta)$; they are called the {\it upper} and {\it lower phase curves}, respectively. 
The phase diagram and phase curves are convenient ways to visualize the convergence rates of the Hamming error.

\begin{figure}[tb!]
  \centering
  \includegraphics[height=.38\textwidth]{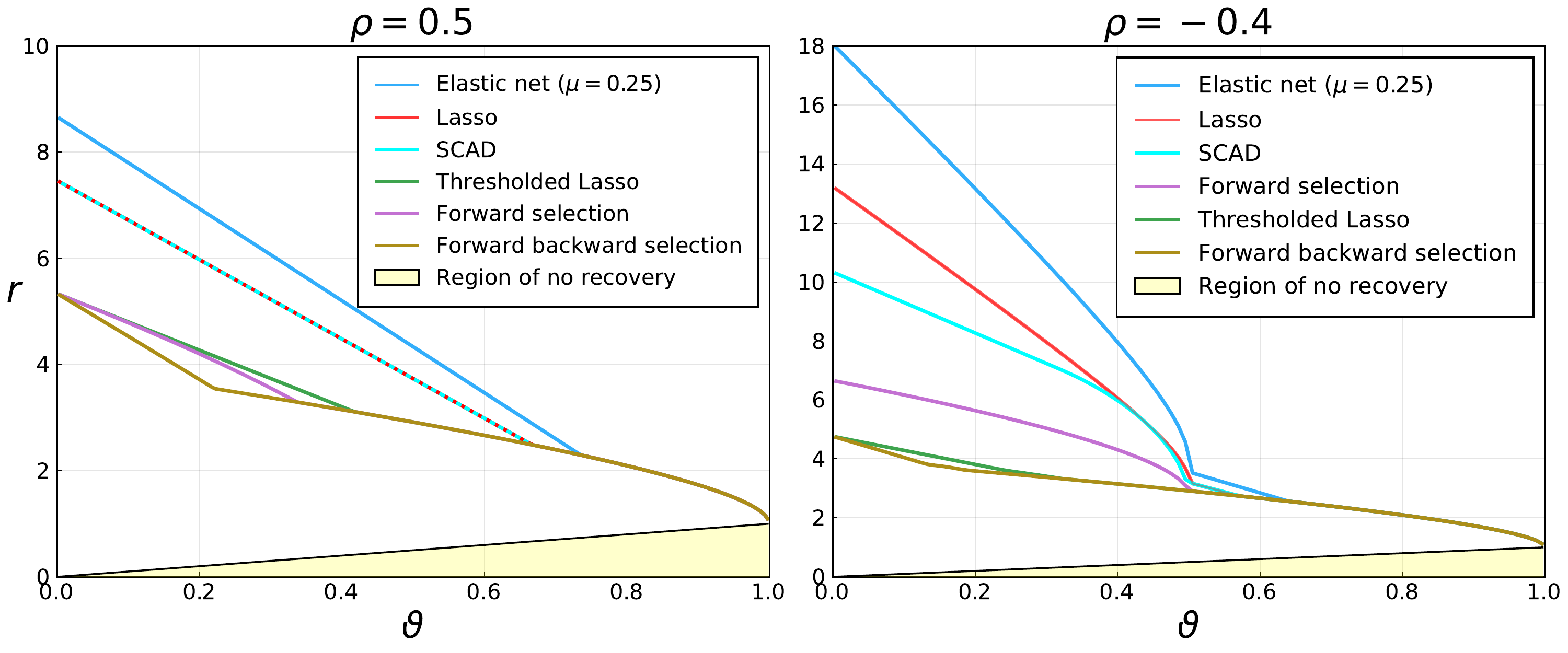} 
  \caption{Phase diagrams of six variable selection methods for a block-wise diagonal design. The parameters $(\vartheta,r,\rho)$ characterize the sparsity, signal strength, and design correlations, respectively. For each method, we plot the curve $r=U(\vartheta)$ which separates Region of Almost Full Recovery and Region of Exact Recovery (the lower this curve, the better). 
Explicit forms of $U(\vartheta)$ are in Section~\ref{sec:main}. 
On the left panel, the curves for Lasso and SCAD overlap and are displayed as a dashed line. 
How to interpret these phase curves are discussed in Section~\ref{sec:Preview}.
}\label{fig:overview}
  \end{figure}

Figure~\ref{fig:overview} shows the phase curves for the six methods (with explicit expressions in the theorems in Section~\ref{sec:main}). 
These phase curves depend on the correlation parameter $\rho$. 
Under our model, for each diagonal block $(j, j+1)$, it holds that $\mathbb{E}[x_j'y|\beta]=\beta_j+\rho \beta_{j+1}$, where $\beta_j, \beta_{j+1}\in \{0,\tau_p\}$. Therefore, a {\it positive} $\rho$ boosts the signal at each individual site (i.e., $\mathbb{E}[x_j'y|\beta]\geq \beta_j$), while a {\it negative} $\rho$ leads to potential `signal cancellation' (i.e., $\mathbb{E}[x_j'y|\beta]\leq \beta_j$). This is why the phase curves have different shapes for positive and negative $\rho$. In Figure~\ref{fig:overview}, we plot the phase curves for  $\rho=0.5$ and $\rho=-0.4$. For other positive/negative value of $\rho$, the patterns are similar.

{\bf Discussion of SCAD}. SCAD is a representative of non-convex penalization methods. There have been inspiring works that demonstrate the advantages of using a non-convex penalty (e.g., \cite{fan2004nonconcave,loh2017support}). Our results support their insights from a different angle: The phase curve of SCAD is strictly better than that of Lasso, when $\vartheta<0.5$ and $\rho<0$. 
Furthermore, our results illustrate where the advantage of SCAD comes from --- compared with Lasso, it handles `signal cancellation' better. 
To see this, we recall that `signal cancellation' only happens for $\rho<0$. Moreover, under our model \eqref{model-beta}, the expected number of signal pairs (a signal pair is  a diagonal block $\{j,j+1\}$ where both $\beta_j$ and $\beta_{j+1}$ are nonzero) is $\asymp p\epsilon_p^2=p^{1-2\vartheta}$. Therefore, `signal cancellation' becomes problematic only when $\vartheta<0.5$ and $\rho<0$ both hold. This explains why the phase curves of SCAD and Lasso are the same for the other values of $\vartheta$ and $\rho$.  
We note that in the previous studies (e.g., \cite{loh2017support}), the advantages of a non-convex penalty in handling `signal cancellation' are reflected in the weaker conditions of $(X,\beta)$ for achieving model selection consistency. 
Our results support the advantage of using a non-convex penalty by directly studying the Hamming errors and phase diagrams.  

The performance of SCAD can be further improved by adding an entry-wise thresholding on $\hat{\beta}$. 
We believe that the phase diagrams of \textit{thresholded SCAD} are better than those of SCAD itself, although the extremely tedious analysis impedes us from specific results for now.
Also, we are cautious about what to conclude from comparing SCAD and thresholded Lasso. In our settings, Lasso has no model selection consistency mainly because the signals are too weak (i.e., $r$ is not sufficiently large). In such settings, thresholded Lasso outperforms SCAD in terms of the Hamming error. However, there are cases where Lasso has no model selection consistency no matter how large the signal strength is \citep{zhao2006model}. For those cases, it is possible that SCAD is better than thresholded Lasso (see \cite{wainwright2009sharp} for a related study).

{\bf Discussion of Elastic net}.  The phase curve of Elastic net is worse than that of Lasso. As we will explain in Section~\ref{subsec:en}, Elastic net is a `bridge' between Lasso and marginal regression in our case. Since the phase curve of marginal regression is always worse than that of Lasso for the blockwise diagonal design,  we do not benefit from using Elastic net in the current setting. We must note that Elastic net is motivated by genetic applications where several correlated variables are competing as predictors, and where it is implicitly assumed that groups of correlated variables tend to be all relevant or all irrelevant \citep{zou2005regularization}. 
This is not captured by our model \eqref{model-beta}. Therefore, our results do not go against the benefits of Elastic net known in the literature, but rather our results support that the advantages of Elastic net come from `group' appearance of signal variables. 

{\bf Discussion of thresholded Lasso}.  Thresholded Lasso is a representative of improving Lasso by post-processing. There have been inspiring works that demonstrate the advantages of such a post-processing \citep{van2011adaptive, wang2017bridge,weinstein2020power}. Our results support these insights from a different angle. It is surprising (and very encouraging) that the improvement by post-Lasso thresholding is so significant. We note that Lasso is a 1-stage method, which solves a single optimization to obtain $\hat{\beta}$. By comparison, thresholded Lasso is a 2-stage method. Lasso has only one algorithm parameter $\lambda$, while thresholded Lasso has two algorithm parameters $\lambda$ and $t$ (the threshold). In Lasso, we control false positives and false negatives with the same algorithm parameter $\lambda$, and it is sometimes hard to find a value of $\lambda$ that simultaneously controls the two types of errors well. In thresholded Lasso, the two types of errors can be controlled separately by two algorithm parameters. This explains why thresholded Lasso enjoys such a big improvement upon Lasso. It inspires us to modify other 1-stage methods, such as SCAD, by adding a post-processing step of thresholding. For example, we conjecture that thresholded SCAD also has a strictly better phase diagram than that of Lasso, even for a positive $\rho$. On the other hand, thresholding is no free lunch. It leaves one more tuning parameter to be decided in practice. Our theoretical results are based on ideal tuning parameters. How to properly choose these tuning parameters in a data-driven way is an interesting question. \cite{weinstein2020power} proposes a promising approach, where they use cross-validation to select $\lambda$ and FDR control by knockoff to select $t$. We leave it for future work to study the phase diagrams with data-driven tuning parameters.

{\bf Discussion of the two iterative algorithms}. We consider two iterative algorithms, forward selection (`Forward') and forward backward selection (`FB'). The FB algorithm we analyze is a simplified version in \citet{huang2016partial}, which has only one  backward step (after all the forward steps have finished) by thresholding the refitted least-squares solution. 
Our results show that both methods outperform Lasso, and between these two methods, FB is strictly better than Forward. In the literature, there are very interesting theoretical works showing the advantages of iterative algorithms for variable selection \citep{donoho2012sparse,zhang2011adaptive}. Our results support their insights from a different angle. We discover that, for a wide range of $\rho$, FB has the best phase diagram among all the six methods. This is a very encouraging result. Of course, it is as important to note that the performance of an iterative algorithm tends to be more sensitive to the form of the design, due to its sequential nature.

\section{Main Results} \label{sec:main}
Consider model \eqref{linearM}, \eqref{model-beta}, and \eqref{model-X}, where we set $\sigma^2=1$ without loss of generality. 
Let $\mathbb{E}[H(\hat{\beta},\beta)]$ be the expected Hamming error, where 
the expectation is with respect to the randomness of  $\beta$ and $z$. 
Let $L_p$ denote a generic multi-$\log(p)$ term such that $L_pp^\epsilon\to \infty$ and $L_pp^{-\epsilon}\to 0$ for any $\epsilon>0$. 

\begin{theorem}
Under Models \eqref{linearM}, \eqref{model-beta}, and \eqref{model-X}, for each of the methods considered in this paper (Lasso, Elastic net, SCAD, thresholded Lasso, forward selection, forward backward selection, as well as marginal regression in Section~\ref{subsec:en}), there exists a function $h(\vartheta,r,\rho)$ such that $\mathbb{E}[H(\hat{\beta},\beta)]=L_pp^{1-h(\vartheta,r,\rho)}$. The explicit expressions of $h(\vartheta,r,\rho)$, which may depend on the tuning parameters of a method, are given in Theorems B.1, C.1, D.1-D.3, F.1, G.1, H.1-H.4 of the supplement. 
\end{theorem}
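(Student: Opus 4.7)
The overall plan is to exploit the blockwise-diagonal structure of $G$ to reduce the analysis to a per-block computation, and then to translate per-block error probabilities into polynomial-in-$p$ rates via Mills'-ratio bounds on Gaussian tails. Write the sufficient statistic as $\tilde{y} = X'y = G\beta + \xi$ with $\xi \sim \mathcal{N}(0,G)$. Because $G = \mathrm{diag}(B,\ldots,B,B_0)$, the vector $\tilde{y}$ splits into independent $2$-dimensional subvectors $\tilde{y}^{(k)} = B\beta^{(k)} + \xi^{(k)}$ with $\xi^{(k)} \sim \mathcal{N}(0,B)$ (and an analogous scalar at the end if $p$ is odd). For each of the six methods the objective is separable across blocks once expressed in the $\tilde{y}$-parameterization: Lasso, Elastic net, SCAD and thresholded Lasso have separable penalties, so the block-wise minimizers are independent; forward selection and the single-backward FB variant are greedy with respect to partial residuals, which also decouple across blocks because the support of $G$ is block-diagonal and the penalty/threshold is block-invariant. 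Thus $H(\hat{\beta},\beta)$ is a sum of $\lfloor p/2\rfloor$ i.i.d.\ random variables (plus a negligible boundary term), and $\mathbb{E}[H(\hat{\beta},\beta)] = \tfrac{p}{2}\cdot \mathbb{E}[H^{(1)}] + O(1)$, where $H^{(1)} \in \{0,1,2\}$ is the Hamming error on a single block.

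Next I would compute $\mathbb{E}[H^{(1)}]$ by conditioning on the four configurations of $\beta^{(1)}=(\beta_j,\beta_{j+1})\in\{0,\tau_p\}^2$, each with prior $\epsilon_p^{a}(1-\epsilon_p)^{2-a}$. For a given configuration I characterize the event $\{\hat\beta_j \neq 0\}$ (resp.\ $\{\hat\beta_j=0\}$) as a polytope or union of half-spaces in $(\xi^{(1)}_1,\xi^{(1)}_2)$-space determined by the method's KKT conditions or greedy rules, combined with the tuning parameters $\lambda,\lambda',t,\ldots$ scaled as $\lambda = \sqrt{2q\log p}$ for an exponent $q$. The probabilities of these events are then integrals of a bivariate normal density with covariance $B$ over these polytopes. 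Applying the exact Mills-ratio asymptotics $\mathbb{P}(\mathcal{N}(0,B) \in A) = L_p\, p^{-d(A)/2\log p}$, where $d(A)$ is the squared $B^{-1}$-distance from the origin to $A$, gives a clean polynomial rate. Multiplying each error probability by its prior $p^{-a\vartheta}$ and taking the maximum exponent across the four configurations and across FP/FN events yields the exponent $1-h(\vartheta,r,\rho)$ in the form claimed. The macros \verb|\tfpone|,\ldots,\verb|\tfnthree| in the preamble are precisely the squared distances for the three sub-events (one signal versus zero noise neighbor, versus one-signal neighbor, versus two-signal neighbor), foreshadowing how the per-configuration exponents will assemble.

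For upper-bounding $\mathbb{E}[H(\hat\beta,\beta)]$, the bivariate Gaussian tail bound gives each probability up to a multi-$\log(p)$ factor absorbed into $L_p$; for the matching lower bound I would invoke the standard Mills-ratio lower bound on a shifted half-space contained in the event. Summing the polynomial contributions, the exponent of the dominant term is $h(\vartheta,r,\rho) = \min$ over all (configuration, error-type) pairs of $[a\vartheta + d/(2\log p)]$ evaluated at the particular $\lambda,\lambda',t$ of the method, yielding the explicit formulas catalogued in the referenced supplement theorems.

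The main obstacle is not the Lasso/Elastic net case, whose per-block KKT geometry is routine, but the non-convex and greedy methods. For SCAD, the event $\{\hat\beta_j=0\}$ is a union of several regions coming from the piecewise-quadratic penalty and the possible presence of spurious local minima, and one must verify that the local minimum selected by the algorithm has the claimed geometry; for forward-backward I must argue that the sequential selection on each block really does factor, and track how a single global threshold in the backward step interacts with block-wise refits. Handling these cases amounts to enumerating the decision regions carefully and computing the squared $B^{-1}$-distances from the origin; once that enumeration is in place, the rate computation is mechanical and the $L_p$ absorption handles the polylog factors.
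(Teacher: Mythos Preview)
Your proposal is correct and follows essentially the same route as the paper: block decomposition via the sufficient statistic $X'y$, conditioning on the four configurations of $(\beta_j,\beta_{j+1})$, identification of the rejection region $\{\hat\beta_j\neq 0\}$ as a union of half-planes in the bivariate space, and reduction of each error probability to $L_p\,p^{-d}$ where $d$ is the squared $B^{-1}$-distance from the conditional mean to the region boundary. The paper phrases things in the scaled coordinates $h=(X'y)/\sqrt{2\log p}$ (so the region is fixed and the four means $\mu_{00},\mu_{01},\mu_{10},\mu_{11}$ vary) rather than in noise-space, and it cites a single lemma for the bivariate Gaussian tail rather than invoking Mills' ratio directly, but these are cosmetic differences; your identification of the SCAD local-minima issue and the need to argue blockwise factorization for the greedy methods exactly matches where the paper spends most of its effort.
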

In the main article, to save space, we only present the expressions of the upper phase curve $U(\vartheta)=U(\vartheta;\rho)$ and the lower phase curve $L(\vartheta)=L(\vartheta;\rho)$ for each method, which are defined as follows: 
\beq \label{phase-curves}
U(\vartheta;\rho)= \inf\{r>0:  h(\vartheta,r,\rho)>1\}, \qquad L(\vartheta;\rho)=\inf\{r>0: h(\vartheta,r,\rho)>\vartheta\}.   
\eeq
These two curves describe the phase diagram: The upper phase curve $U(\vartheta)$ separates the ER region and AFR region, and the lower phase curve $L(\vartheta)$ separates the AFR region and NR region.    

\subsection{Elastic net and Lasso} \label{subsec:en}
The Elastic net \citep{zou2005regularization} is a method that estimates $\beta$ by 
\beq \label{Elastic-net}
\hat\beta^{\mathrm{EN}} =\mathrm{argmin}_{\beta} \bigl\{ \norm*{y-X\beta}^2/2 + \lambda \norm*{\beta}_1 + (\mu/2) \norm*{\beta}^2 \bigr\}. 
\eeq
Compared with Lasso, it adds an additional $L^2$-penalty to the objective function. 
Below, we fix $\mu>0$ and re-parametrize $\lambda=\sqrt{2q\log(p)}$, for some constant $q>0$. The choice of $q$ affects the exponent, $1-h(\vartheta,r,\rho)$, in the expression of $\mathbb{E}[H(\hat{\beta},\beta)]$. We choose the ideal $q$ that minimizes $1-h(\vartheta,r,\rho)$. The next theorem is proved in the supplement.  

\begin{theorem}[Elastic Net] \label{thm:elastic-net}
Under Models \eqref{linearM}, \eqref{model-beta}, and \eqref{model-X}, let $\hat{\beta}^{\mathrm{EN}}$ be the Elastic net estimator in \eqref{Elastic-net}. Fix $\mu$ and write $\eta=\rho/(1+\mu)$. Let $\lambda=\sqrt{2q\log(p)}$ with an ideal choice of $q$ that minimizes the exponent of $\mathbb{E}[H(\hat{\beta},\beta)]$. 
The phase curves are given by $L(\vartheta)=\vartheta$, and 
  \begin{equation*}
    U(\vt)= \begin{cases} 
    \max \left\{ h_1(\vt),h_2(\vt) \right\}, & \text{ when }\rho \geq 0, \\
    \max \left\{ h_1(\vt),h_2(\vt),h_3(\vt),h_4(\vt) \right\}, &\text{ when }\rho < 0,
    \end{cases} 
  \end{equation*}
where $h_1(\vt) =(1 + \sqrt{1 -\vt})^2$, $h_2(\vt) = \bigl( \frac{1 -\abs*{\eta}}{1 -\abs*{\rho}} + \frac{\sqrt{1 +\eta^2 - 2\rho\eta}}{1 -\abs*{\rho}} \bigr)^2 (1 -\vt)$, $h_3(\vt) =\frac{1}{(1 -\abs*{\rho})^2} \bigl( 1 + \frac{\sqrt{1 +\eta^2 - 2\rho\eta}}{1 +\abs*{\eta}}\sqrt{1 - 2\vt}\bigr)^2$, and $h_4(\vt) = \frac{1 +\eta^2 - 2\rho\eta}{(1 - 2\abs{\rho} + \rho \eta)_ + ^2} \bigl( \sqrt{1 -\vt} + \frac{1 -\abs{\eta}}{1 +\abs{\eta}}\sqrt{1 - 2\vt} \bigr)^2$. 
\end{theorem}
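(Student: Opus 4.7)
\textbf{Proof plan for Theorem \ref{thm:elastic-net}.}

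The first move is a decoupling: since $G=X'X$ is block-diagonal with $2\times 2$ blocks, the Elastic net objective separates across blocks once we pass to the sufficient statistic $\tilde y := X'y$. For a generic block $\{j,j+1\}$, write $(a,b)=(\beta_j,\beta_{j+1})$ and $(u,v)=(\tilde y_j,\tilde y_{j+1})$. Completing the square in \eqref{Elastic-net} gives the block subproblem
\[
\min_{a,b}\Bigl\{\tfrac{1+\mu}{2}(a^2+b^2)+\rho ab-(au+bv)+\lambda(|a|+|b|)\Bigr\},
\]
which is exactly a 2-dimensional Lasso problem with Gram matrix $\bigl(\begin{smallmatrix}1&\eta\\\eta&1\end{smallmatrix}\bigr)$, penalty $\lambda'=\lambda/(1+\mu)$, and ``observation'' $(u,v)/(1+\mu)$, where $\eta=\rho/(1+\mu)$. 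Under \eqref{linearM} and \eqref{model-X}, $(u,v)\mid\beta\sim\mathcal N(B(\beta_j,\beta_{j+1})^\top,B)$. This reduction is the reason Elastic net behaves like Lasso with the reduced correlation $\eta$, and it also explains why the form of $U(\vartheta)$ echoes the Lasso phase curve with $\rho$ replaced by $\eta$ (and an additional distortion $(1\pm|\rho|)$ arising from the unchanged noise covariance $B$).

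Next I would enumerate the four true configurations $(\beta_j,\beta_{j+1})\in\{0,\tau_p\}^2$, each occurring with prior probability $(1-\epsilon_p)^{2-k}\epsilon_p^k$, and for each one list the KKT selection patterns of $(\hat a,\hat b)$ that produce false positives and false negatives. Because the block penalty is convex and piecewise quadratic, each selection pattern corresponds to a polyhedron $R\subset\R^2$ in $(u,v)$-space determined by linear KKT inequalities. The expected contribution of a block to $\mathbb E[H]$ decomposes as a finite sum $\sum_{k,R}\epsilon_p^k(1-\epsilon_p)^{2-k}\,\Pr\bigl((u,v)\in R\mid\beta\bigr)$. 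Multiplying by $p/2$ blocks gives the total Hamming error.

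The core computation is then a Laplace/large-deviation analysis: for Gaussian $(u,v)$ with covariance $B$, one has
\[
\Pr\bigl((u,v)\in R\bigr)=L_p\,\exp\Bigl\{-\inf_{(u,v)\in R}\tfrac12(u-\mathbb E u,\,v-\mathbb E v)B^{-1}(u-\mathbb E u,\,v-\mathbb E v)^\top\Bigr\}.
\]
Writing $\lambda=\sqrt{2q\log p}$ and $\tau_p=\sqrt{2r\log p}$, each such infimum is a quadratic program whose value is a quadratic in $\sqrt q,\sqrt r$ with coefficients in $\rho,\eta$. Combining with the $\epsilon_p^k=p^{-k\vartheta}$ prior factors, one obtains a per-pattern exponent of the form $k\vartheta+\text{(QP-value)}/\log p$, and the overall exponent is the minimum of these, giving $\mathbb E[H]=L_p\,p^{1-h(\vartheta,r,\rho)}$. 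Each of the four candidate error patterns contributes one of $h_1,\dots,h_4$ after we take the supremum over $k\in\{0,1,2\}$ matched to the correct cancellation geometry: $h_1$ is the ``standalone'' FP/FN in blocks with only one signal (no $\rho$-dependence), $h_2$ is the contaminated FP/FN in a one-signal block, while $h_3,h_4$ are the FN patterns in a two-signal block and thus carry the $(1-2\vartheta)$ terms coming from $\epsilon_p^2$. When $\rho\ge 0$, signal boosting kills $h_3,h_4$ (their QP values are large enough that $h_i\le\max(h_1,h_2)$); when $\rho<0$, signal cancellation makes $h_3,h_4$ active, accounting for the two-case form of $U(\vartheta)$.

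Finally, the ideal $q$ is obtained by maximizing the exponent jointly over all four patterns; this is a small min-max over the single variable $q$ whose optimizer can be read off by equating the two binding patterns. The lower phase curve $L(\vartheta)=\vartheta$ follows by observing that setting $q=\vartheta+o(1)$ already makes $h(\vartheta,r,\rho)>\vartheta$ for any $r>\vartheta$, since the FP exponent becomes $\ge\vartheta$ and the FN exponent is automatically $\ge\vartheta$ once $r>\vartheta$.

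\textbf{Main obstacle.} The hard part will be the polyhedral bookkeeping in step two–three: with the $B$-correlated noise and $\eta$-correlated effective Gram matrix, there are several KKT cases to enumerate (both coordinates active with matching/opposite signs, one active and one thresholded, etc.), and the four 2D quadratic programs must be solved carefully to produce the precise closed forms $h_1,\dots,h_4$. Ensuring that the stated $U(\vartheta)$ is the correct envelope, in particular verifying that $h_3,h_4$ are dominated when $\rho\ge 0$ and active when $\rho<0$, requires a small but delicate sign analysis of the quadratic coefficients in $\rho$ and $\eta$.
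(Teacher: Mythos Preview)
Your plan is correct and follows essentially the same route as the paper: block decoupling, derivation of the bivariate rejection region from the Elastic-net KKT/solution path, Gaussian large-deviation (Mahalanobis-distance) computation of each pattern's exponent, and optimization over $q$. One small sharpening: the $h_i$'s are not each tied to a single error pattern but arise at the phase boundary by equating one FP exponent with one FN exponent (e.g.\ $h_1$ from $\mathrm{FP}_1=\mathrm{FN}_1$, $h_2$ from $\mathrm{FP}_2=\mathrm{FN}_1$, $h_3,h_4$ from the two pairings involving $\mathrm{FN}_2$), which is why $h_3,h_4$ carry $\sqrt{1-2\vartheta}$ and are inactive for $\rho\ge 0$.
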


\begin{figure}[tb!]
  \centering
  \includegraphics[height=.31\textwidth]{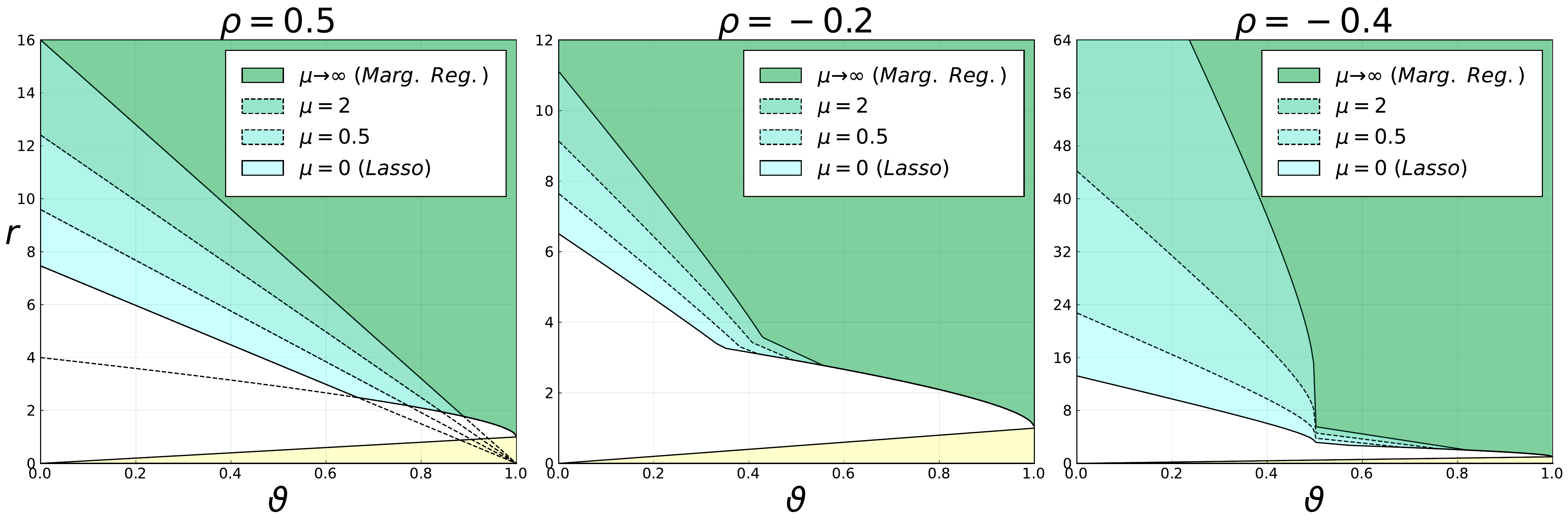} 
  \vspace{-2mm}
  \caption{The phase diagrams of Elastic net and its comparison with Lasso (notation: $\eta=\rho/(1+\mu)$).}\label{fig:elastic-net}
  \vspace{-2mm}
  \end{figure}

Lasso is a special case with $\mu=0$. By setting $\mu=0$ (equivalently, $\eta=\rho$) in   
Theorem~\ref{thm:elastic-net}, we obtain the phase curves for Lasso. They agree with the results in  \cite{ji2012ups} (but \cite{ji2012ups} does not cover Elastic net).  

To see the effect of the $L^2$-penalty, we consider an extreme case where $\mu\to\infty$. Some elementary algebra shows that $(1+\mu)\hat{\beta}^{\mathrm{EN}}$ converges to the soft-thresholding of $X'y$ at the threshold $\lambda$. In other words, as $\mu\to\infty$, Elastic net converges to marginal screening (i.e., select variables by thresholding the marginal regression coefficients). At the same time, when $\mu=0$, $(1+\mu)\hat{\beta}^{\mathrm{EN}}$ equals the Lasso estimate. Hence, Elastic net serves as a bridge between Lasso and marginal regression. 
In the setting here, the phase diagram of marginal regression is inferior to that of Lasso, and so the phase diagram of Elastic net is also inferior to that of Lasso. See the proposition below and Figure~\ref{fig:elastic-net}:

\begin{prop} \label{prop:bridge}
In Theorem~\ref{thm:elastic-net}, for each fixed $\vartheta\in (0,1)$, 
as \( \mu\to 0 \), $U(\vartheta)$ is monotone decreasing and converges to $U_{\mathrm{Lasso}}(\vt)$, which is the upper phase curve of Lasso; as $\mu\to\infty$, $U(\vartheta)$ is monotone increasing and converges to $U_{\mathrm{MR}}(\vartheta)$, which is the upper phase curve of marginal regression. Furthermore, when \( \rho \leq  - \frac{1}{2} \), $U_{\mathrm{MR}}(\theta)=\infty$ for all \( 0 < \vt \leq \frac{1}{2} \) (i.e. exact recovery is impossible to achieve no matter how large $r$ is).
\end{prop}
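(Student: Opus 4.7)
The plan is to verify the three assertions of the proposition directly from the closed-form expressions of $h_1,h_2,h_3,h_4$ in Theorem~\ref{thm:elastic-net}, using the parameterization $\eta=\rho/(1+\mu)$. Since $|\eta|=|\rho|/(1+\mu)$ is a strictly decreasing continuous bijection from $\mu\in[0,\infty)$ onto $(0,|\rho|]$, monotonicity in $\mu$ is equivalent to monotonicity in $s:=|\eta|$ (in the opposite direction).

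For the Lasso limit, I set $\mu=0$, so $\eta=\rho$. Then $1+\eta^2-2\rho\eta=1-\rho^2$ and $1-|\eta|=1-|\rho|$, so $h_2$ collapses to $\bigl(1+\sqrt{(1+|\rho|)/(1-|\rho|)}\bigr)^2(1-\vt)$ and $h_3,h_4$ reduce analogously, reproducing the Lasso formula of Theorem~B.1 in the supplement (cf.\ \cite{ji2012ups}). For the marginal-regression limit I let $\mu\to\infty$, so $\eta\to 0$; direct substitution gives $h_1=(1+\sqrt{1-\vt})^2$, $h_2=4(1-\vt)/(1-|\rho|)^2$, $h_3=(1+\sqrt{1-2\vt})^2/(1-|\rho|)^2$, and $h_4=(\sqrt{1-\vt}+\sqrt{1-2\vt})^2/(1-2|\rho|)_+^2$. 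I then match the maximum of these to the marginal-regression phase curve of the supplement, which is itself produced by an elementary Gaussian-tail computation for thresholding $X'y$ under~\eqref{model-X}.

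For monotonicity of $U(\vt)=\max\{h_i\}$ in $\mu$, it suffices to show each $h_i$ is monotone in $\mu$, since the max preserves monotonicity. The function $h_1$ is independent of $\eta$. For $h_2$, substitute $t=1/(1+\mu)$ to write $1+\eta^2-2\rho\eta = 1-\rho^2 t(2-t)$; since $t(2-t)$ is increasing on $[0,1]$, the radicand is increasing in $\mu$, and $1-|\eta|=1-|\rho|t$ is also increasing in $\mu$, so $h_2$ is monotone increasing in $\mu$. For $h_3$, a direct differentiation gives
\[
\frac{d}{ds}\left[\frac{1+s^2+2|\rho|s}{(1+s)^2}\right] \;=\; \frac{2(1-|\rho|)(s-1)}{(1+s)^3} \;\leq\; 0 \quad\text{on }[0,|\rho|]\subset[0,1),
\]
so the bracketed factor of $h_3$ is decreasing in $s$, i.e.\ increasing in $\mu$. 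For the third part of the proposition I simply read off $h_4$ in the $\mu\to\infty$ limit: when $|\rho|\geq 1/2$, the denominator $(1-2|\rho|)_+^2$ vanishes while the numerator $(\sqrt{1-\vt}+\sqrt{1-2\vt})^2$ is strictly positive on $(0,1/2]$, so $h_4=+\infty$ and hence $U_{\mathrm{MR}}(\vt)=+\infty$.

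The main obstacle will be the monotonicity of $h_4$ in $\mu$ on the interval where $h_4$ is finite. Unlike $h_2$ and $h_3$, the numerator $1+s^2+2|\rho|s$, the denominator $(1-2|\rho|+|\rho|s)_+^2$, and the outer bracket $(\sqrt{1-\vt}+\tfrac{1-s}{1+s}\sqrt{1-2\vt})^2$ do not all move in reinforcing directions in $s$. My plan here is to differentiate the full expression in $s$, clear the (positive) denominator, and reduce monotonicity to the sign of a polynomial in $s$ on the feasible set $[0,|\rho|]\cap\{s:1-2|\rho|+|\rho|s>0\}$, splitting into cases around the critical values $|\rho|=1/2$ (where the feasible interval shrinks to a single endpoint) and $|\rho|=\sqrt{2}-1$ (where the sign of one of the coefficients of the polynomial flips). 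Once this case analysis is carried out, the monotonicity of $U(\vt)$ follows, and combined with the boundary-value computations above the full proposition is proved.
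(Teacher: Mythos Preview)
Your overall strategy---show each $h_i$ is monotone in $\mu$ via the substitution $s=|\eta|=|\rho|/(1+\mu)$ and then read off the two endpoints---is sound and is in fact more direct than the paper's route. But you have a sign slip that is manufacturing your ``main obstacle.'' Since $\eta=\rho/(1+\mu)$ has the same sign as $\rho$, one has $\rho\eta=|\rho|\,|\eta|=|\rho|s>0$, hence
\[
1+\eta^2-2\rho\eta \;=\; 1+s^2-2|\rho|s,
\]
not $1+s^2+2|\rho|s$. (Your $h_2$ computation via $t=1/(1+\mu)$ gets this right; the error enters only when you switch to $s$ for $h_3$ and $h_4$.) With the correct sign, your $h_3$ derivative becomes $2(1+|\rho|)(s-1)/(1+s)^3\le 0$, same conclusion. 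More importantly, for $h_4$ all three ingredients now move together: on $[0,|\rho|]$ the numerator $1+s^2-2|\rho|s$ has derivative $2(s-|\rho|)\le 0$, the denominator $(1-2|\rho|+|\rho|s)_+^2$ is nondecreasing in $s$, and $\tfrac{1-s}{1+s}$ is decreasing in $s$. So $h_4$ is monotone increasing in $\mu$ with no case analysis required; when $|\rho|\ge 1/2$ the denominator eventually hits zero as $s\downarrow 0$, sending $h_4\to+\infty$, which is still monotone and immediately gives the third assertion.

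By contrast, the paper does not argue the marginal-regression endpoint by taking $\eta\to 0$ in Theorem~\ref{thm:elastic-net}. It instead derives $U_{\mathrm{MR}}$ from scratch: rejection region for hard thresholding of $X'y$, Hamming-error exponents, and then the phase curve, after which the $|\rho|\ge 1/2$ blow-up on $(0,1/2]$ falls out of the $(1-2|\rho|)$ denominator there. Your route is shorter because it piggybacks on Theorem~\ref{thm:elastic-net}; the paper's is longer but yields an independent check of the Elastic net formula at $\eta=0$. For the monotonicity claim itself the paper is essentially declarative, so your factor-by-factor verification (once the sign is corrected) actually supplies a detail the paper leaves implicit.
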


\subsection{Smoothly clipped absolute deviation penalty (SCAD)}   
SCAD \citep{fan2001variable} is a non-convex penalization method. For any $a>2$, it defines a penalty function $q_{\lambda}(\theta)$ on $(0,\infty)$ by $q_{\lambda}(\theta)=\int_0^\theta q_{\lambda}'(t)dt$, where $
q_{\lambda}'(\theta)=\lambda\bigl\{I(\theta \leq \lambda)+\frac{(a \lambda-\theta)_{+}}{(a-1) \lambda} I(\theta>\lambda)\bigr\}$. 
The resulting penalty function $q_{\lambda}(\cdot)$ coincides with the $L^1$-penalty in $(0,\lambda]$ and becomes a constant in $[a\lambda,\infty)$. 
Let $Q_{\lambda}(\beta)=\sum_{j=1}^pq_{\lambda}(|\beta_j|)$. Then, SCAD estimates $\beta$ by  
\beq \label{SCAD}
\hat\beta^{\mathrm{SCAD}} =\mathrm{argmin}_{\beta}\bigl\{\norm*{y-X\beta}^2/2 + Q_{\lambda}(\beta) \bigr\}.  
\eeq
The following theorem is proved in the supplemental material (see Figure~\ref{fig:scad}, left panel): 
\begin{theorem}[SCAD]   \label{thm:SCAD}
Under Models \eqref{linearM}, \eqref{model-beta}, and \eqref{model-X}, let $\hat{\beta}^{\mathrm{SCAD}}$ be the SCAD estimator in \eqref{SCAD}. Fix \( a\in (2, \frac{2}{1 -\abs*{\rho}} )\). Let $\lambda=\sqrt{2q\log(p)}$ with an ideal choice of $q$ that minimizes the rates of convergence of the expected Hamming error. The phase curves are given by $L(\vartheta)=\vartheta$, and 
  \begin{equation*}
    U(\vt)= \begin{cases} 
    \max \left\{ h_1(\vt),h_2(\vt), h_3(\vt) \right\}, & \text{ when }\rho \geq 0, \\
    \max \left\{ h_1(\vt),h_2(\vt),h_4(\vt),h_5(\vt) \right\}, &\text{ when }\rho < 0,
    \end{cases} 
  \end{equation*}
 where $h_1(\vt)=(1 + \sqrt{1 -\vt})^2$,  and $h_2(\vt)=\bigl( 1 + \sqrt{\frac{1 +\abs{\rho}}{1 -\abs{\rho}}} \bigr)^2 (1 -\vt)$,  $h_4(\vt) = \bigl( \sqrt{\frac{1-2 \vartheta}{1 -|\rho|^{2}}}+\frac{1}{1-|\rho|} \bigr)^2$, $h_3(\vt)=\bigl( \frac{3 +\rho}{2(1 -\rho^2)}\sqrt{\frac{1 +\rho}{1 -\rho}}\sqrt{1 -\vt} + \frac{1}{2}\sqrt{\frac{2(1 - 2\vt)}{1 +\rho} - \frac{(1 -\vt)}{(1 -\rho)^2}} \bigr)^2$, and 
   \begin{align*}
    h_5(\vt) = 
      \begin{cases} 
        \bigl( \frac{5 + 3|\rho|}{1 -|\rho|} \bigr)(1 -\vt), &  \text{ if }\sqrt{\frac{1-2 \vartheta}{1-\vartheta}} \geq \frac{3-4|\rho|-3\rho^2}{(1-|\rho|)} \sqrt{\frac{1+|\rho|}{5+3|\rho|}},\\
        \frac{1}{(1 -\abs*{\rho})^2} \bigl(\sqrt{\frac{1 +\abs{\rho}}{1 -\abs{\rho}}} \sqrt{1 -\vt} + \sqrt{\frac{1 -\abs{\rho}}{1 +\abs{\rho}}}\sqrt{1 - 2\vt} \bigr)^2, &  \text{ if } \sqrt{\frac{1-2 \vartheta}{1-\vartheta}} \leq \frac{(1+|\rho|)(1-2 |\rho|)}{1-|\rho|}, \\
        h_6(\vt) & \text{ other wise},
      \end{cases} 
  \end{align*}
\begin{equation*}
h_6(\vt) = \left\{ \sqrt{\frac{1-2 \vartheta}{1-\rho^{2}}}+\frac{\frac{1-2 |\rho|}{1-|\rho|} \sqrt{\frac{1-2 \vartheta}{1-\rho^{2}}}+\sqrt{\bigl[\bigl(\frac{1-2 |\rho|}{1-|\rho|}\bigr)^{2}+ \frac{1-|\rho|}{1+|\rho|}\bigr](1-\vartheta)-\frac{1-2 \vartheta}{(1+|\rho|)^{2}}}}{(1-|\rho|)\bigl[\bigl(\frac{1-2 |\rho|}{1-|\rho|}\bigr)^{2}+\frac{1-|\rho|}{1+|\rho|}\bigr]} \right\}^2.
  \end{equation*}
 \end{theorem}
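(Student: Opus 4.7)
The plan is to exploit the block-diagonal structure of $G$ to decouple the SCAD program into independent $2\times 2$ problems, carry out a case-by-case KKT analysis on each block, and reduce the stochastic ingredients to Gaussian large-deviation probabilities which can be optimized over the tuning parameter $\lambda$. Concretely, because $G=X'X$ is block-diagonal with blocks $B$, the SCAD objective \eqref{SCAD} separates into $p/2$ independent two-variable problems (plus, when $p$ is odd, one scalar problem with negligible contribution). Within the block $(j,j+1)$, the sufficient statistic is $\tilde{y}=(x_j,x_{j+1})'y$ and, conditional on $(\beta_j,\beta_{j+1})$, one has $\tilde{y}=B(\beta_j,\beta_{j+1})'+\zeta$ with $\zeta\sim\mathcal{N}(0,B)$. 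The four signal configurations $(0,0),(\tau_p,0),(0,\tau_p),(\tau_p,\tau_p)$ carry prior masses of order $1,p^{-\vt},p^{-\vt},p^{-2\vt}$, and by linearity of expectation
\[
\mathbb{E}[H(\hat\beta,\beta)] = \tfrac{p}{2}\sum_{(s,s')}\Pi(s,s')\bigl\{\mathbb{P}_{s,s'}(\hat\beta_j\neq s)+\mathbb{P}_{s,s'}(\hat\beta_{j+1}\neq s')\bigr\}+o(1),
\]
so it suffices to estimate each block-level error probability and retain only the term with the slowest polynomial decay.

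Next I would characterize the SCAD minimizer inside a block. The SCAD derivative equals $\lambda$ on $[0,\lambda]$, $(a\lambda-\theta)/(a-1)$ on $[\lambda,a\lambda]$, and $0$ on $[a\lambda,\infty)$, so together with the sign of each $\hat\beta$ the $(\hat\beta_j,\hat\beta_{j+1})$-plane partitions into a finite grid of cells on each of which the stationarity condition is a linear system solvable in closed form. The hypothesis $a<2/(1-|\rho|)$ is precisely what makes $B$ minus the Hessian contribution of the concave tapering piece still positive definite, so each cell-wise stationary point is the unique local minimizer on that cell and the global minimizer is selected by comparing the finitely many candidate objective values. Every error event --- false positive at $j$, false negative at $j+1$, both coordinates missed, and so on --- thereby becomes a polyhedron in the noise plane whose faces are affine in $\lambda$ and $\tau_p$.

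Writing $\lambda=\sqrt{2q\log p}$ and $\tau_p=\sqrt{2r\log p}$, each polyhedral Gaussian probability is of the form $L_p\,p^{-\Delta_{s,s',\mathrm{err}}(q,r,\rho)}$, with $\Delta$ the rescaled squared $B^{-1}$-distance from the origin to the polyhedron. The contribution of mechanism $(s,s',\mathrm{err})$ to $\mathbb{E}[H(\hat\beta,\beta)]$ is $L_p\,p^{1-k(s,s')\vt-\Delta_{s,s',\mathrm{err}}}$, where $k(s,s')\in\{0,1,2\}$ counts nonzero signal entries, so $h(\vt,r,\rho;q)=\min_{s,s',\mathrm{err}}[k(s,s')\vt+\Delta_{s,s',\mathrm{err}}]$, and one maximizes this over $q$. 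The upper phase curve $U(\vt)$ is the infimum of $r$ at which $\sup_q h$ exceeds $1$; the optimum is attained at a $q$ equating two of the $\Delta$'s, producing the explicit envelopes $h_1,\ldots,h_6$. For the lower phase curve, the $(\tau_p,0)$-configuration balances an FN exponent $\vt+(\sqrt{r}-\sqrt{q})_+^2$ against an FP exponent $q$, giving $L(\vt)=\vt$ at $q\to\vt^+$.

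The main obstacle is the case $\rho<0$, where the pair $(\tau_p,\tau_p)$ is maximally vulnerable to signal cancellation: the effective marginal signal is only $(1-|\rho|)\tau_p$, and several distinct error mechanisms --- one coordinate missed, both missed, a spurious candidate emerging inside the quadratic tapering region --- become simultaneously competitive. The three-branch formula for $h_5$, governed by the value of $\sqrt{(1-2\vt)/(1-\vt)}$ relative to two $\rho$-dependent thresholds, arises from identifying which pair of $\Delta$'s collides at the optimum in each sub-region. Verifying that the envelope of these competing exponents really equals $\max\{h_1,h_2,h_4,h_5\}$ and that no extra branches are generated by stationary points strictly inside the tapered region is the main technical burden of the proof.
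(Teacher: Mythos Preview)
Your plan is essentially the paper's: reduce to independent $2\times2$ blocks, determine the rejection region $\mathcal{R}=\{(h_1,h_2):\hat b_1\neq 0\}$ for the bivariate SCAD problem, convert each error probability into a squared Mahalanobis distance from the relevant mean $\mu_{ss'}$ to $\mathcal{R}$ or $\mathcal{R}^c$, and optimize over $q$. The paper organizes this via a \emph{solution-path} analysis (Lemma on the SCAD path): starting from $\lambda'=\infty$ and decreasing, it tracks which coordinate enters and in which regime of the penalty derivative, thereby describing $\partial\mathcal{R}$ as a finite union of line segments whose slopes depend on whether $a\gtrless 2/(1-|\rho|)$. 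The distances are then computed segment-by-segment exactly as you propose.

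One point to correct: your reading of the constraint $a<2/(1-|\rho|)$ is wrong. It is \emph{not} a convexity condition. The Hessian on the doubly-tapered cell is $B-\tfrac{1}{a-1}I$, which is positive definite iff $a>(2-|\rho|)/(1-|\rho|)$; this threshold is strictly below $2/(1-|\rho|)$, so for $a$ near $2$ the objective is genuinely nonconvex on that cell and your ``unique local minimizer per cell'' claim fails. In the paper, $a=2/(1-|\rho|)$ is instead the value at which $\rho+1/a=(1+\rho)/2$, which changes the order in which the path hits the boundaries $|b_1|=\lambda$ versus $b_2\neq 0$ and hence the shape of $\mathcal{R}$ (two different figures). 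The restriction $a<2/(1-|\rho|)$ is adopted because in this regime the rejection region becomes essentially $a$-independent, making the distance computations tractable; it has nothing to do with guaranteeing convexity. If you carry out your program, use the solution-path argument to identify the global minimizer rather than relying on cell-wise convexity.
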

 
\begin{figure}[tb!]
 \centering
  \includegraphics[width=.92\textwidth]{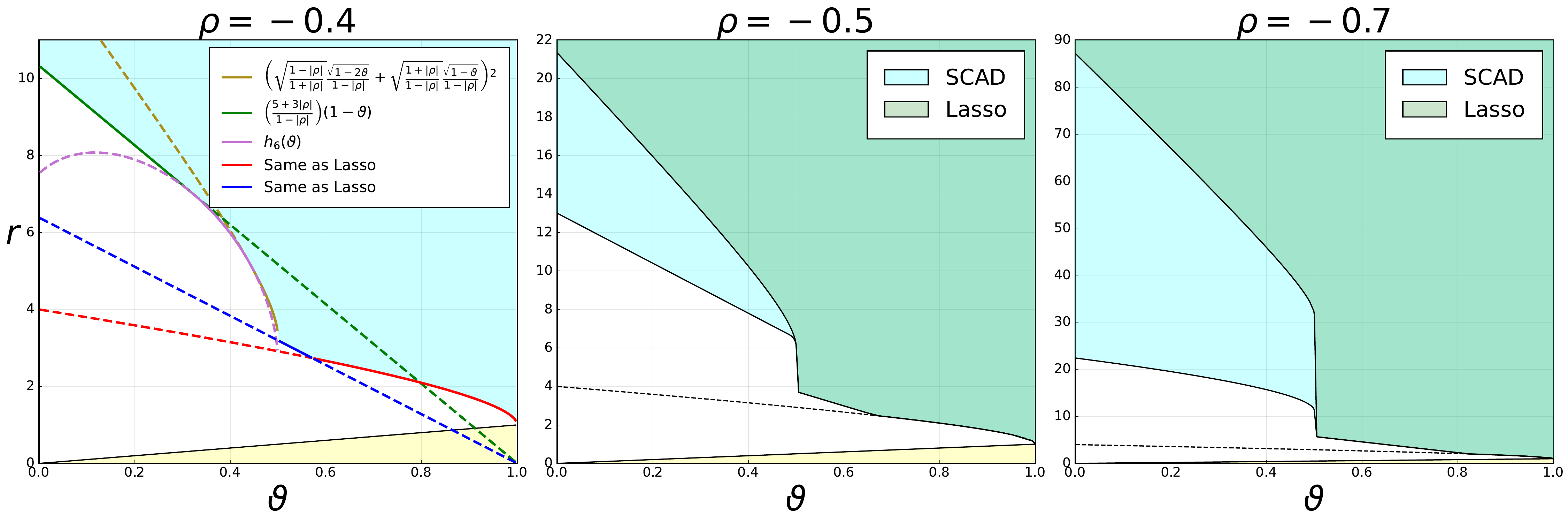} 
  \vspace{-2mm}
  \caption{Left: Phase curves of SCAD. Middle and Right: Comparison of SCAD and Lasso. }\label{fig:scad}
  \vspace{-2mm}
  \end{figure}


Note that the phase curves of Lasso are given in Theorem~\ref{thm:elastic-net} by setting $\eta=\rho$. We compare SCAD with Lasso.  When $\rho< 0$, the upper phase curve in Theorem~\ref{thm:SCAD} is strictly lower than that of Lasso (see Figure~\ref{fig:scad}, middle and right panels). When $\rho\geq 0$, the upper phase curve in  Theorem~\ref{thm:SCAD} is sometimes higher than that of Lasso. Note that we restrict $a<\frac{2}{1-|\rho|}$ in Theorem~\ref{thm:SCAD}. In fact, a larger $a$ may be preferred for $\rho\geq 0$. The next proposition is about using an optimal $a$. 
\begin{prop} \label{prop:SCAD-larger-a}
In the SCAD estimator, we choose $a=a^*$ and $\lambda=\sqrt{2q^* \log(p)}$ such that $(a^*, q^*)=(a^*(\vartheta,r,\rho), q^*(\vartheta,r,\rho))$ minimize the rates of convergence of the expected Hamming error among all choices of $(a,q)$.  Let $U^*(\vt)$ be the resulting upper phase curve for SCAD. 
Then, $U(\vt)=U_{\mathrm{Lasso}}(\vt)$ when $\rho\geq 0$, and $U(\vt)< U_{\mathrm{Lasso}}(\vt)$ when $\rho<0$. 
\end{prop}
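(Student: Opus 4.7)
The plan is to compare the optimized SCAD phase curve $U^*(\vt) = \inf_{a > 2,\, q > 0} U(\vt;\rho,a,q)$ to the Lasso phase curve $U_{\mathrm{Lasso}}(\vt)$ obtained from Theorem~\ref{thm:elastic-net} with $\eta=\rho$. Both are built from a rate function $h(\vt,r,\rho;\cdot)$ that is a maximum over four exponents, each attached to one of the joint configurations $(\mathbf{1}\{\beta_j\ne 0\},\mathbf{1}\{\beta_{j+1}\ne 0\})$ inside a $2\times 2$ diagonal block of $G$. I would start by writing down the full, unrestricted version of this rate function from the supplement proof of Theorem~\ref{thm:SCAD} (i.e.\ without the constraint $a<2/(1-|\rho|)$), then separate the exponents into those coming from the single-signal blocks (1,0) and (0,1) and those from the signal-pair block (1,1). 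A direct inspection shows that the single-signal exponents of SCAD match the corresponding Lasso exponents term by term, since on the relevant interval $[0,\lambda]$ the SCAD penalty coincides with $\lambda|\cdot|$; this gives the universal lower bound $U^*(\vt)\geq\max\{h_1(\vt),h_2(\vt)\}$, which already equals $U_{\mathrm{Lasso}}(\vt)$ whenever $\rho\geq 0$.

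For the case $\rho\geq 0$, I would then establish the matching upper bound by sending $a\to\infty$ while fixing $q$ to Lasso's minimizer. With $\lambda=\sqrt{2q\log p}$ and signals of critical magnitude $\tau_p=\sqrt{2r\log p}$, a large $a$ pushes the boundary $a\lambda$ of the SCAD flat region beyond any fixed multiple of $\tau_p$, so the block minimizers live in the region where SCAD's penalty is asymptotically indistinguishable from $\lambda|\cdot|$ at the scale of interest. Tracking the block KKT system, the signal-pair exponent $h^{\mathrm{SCAD}}_{(1,1)}(\vt,r,\rho;a,q)$ then converges to Lasso's signal-pair exponent. Combined with the already established agreement on single-signal exponents, $h^{\mathrm{SCAD}}\to h^{\mathrm{Lasso}}$ pointwise in $(\vt,r)$; taking $\inf$ over $q$ gives $U^*(\vt)\leq U_{\mathrm{Lasso}}(\vt)$, proving equality.

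For the case $\rho<0$, I would exhibit a specific finite $(a,q)$ that achieves a strict improvement, namely the maximizer of the rate inside the admissible range $a\in(2,2/(1-|\rho|))$ of Theorem~\ref{thm:SCAD}. At such an $a$, the signal-pair minimizer of the SCAD objective lies in the flat region of the penalty, so the fitted coefficients on a signal pair are unshrunk; this eliminates the bias term that hurts Lasso precisely when $\rho<0$ amplifies signal cancellation. Since $h_1^{\mathrm{SCAD}}=h_1^{\mathrm{Lasso}}$ and $h_2^{\mathrm{SCAD}}=h_2^{\mathrm{Lasso}}$ by matching Theorem~\ref{thm:SCAD} to Theorem~\ref{thm:elastic-net} at $\eta=\rho$, it remains to verify the pointwise inequality $\max\{h_4^{\mathrm{SCAD}},h_5^{\mathrm{SCAD}}\} < \max\{h_3^{\mathrm{Lasso}},h_4^{\mathrm{Lasso}}\}$ for every $\vt\in(0,1)$. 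This is a piecewise algebraic comparison, broken into the three sub-cases of $h_5^{\mathrm{SCAD}}$; in each sub-case the dominating term on the SCAD side is obtained from $h_3^{\mathrm{Lasso}}$ (respectively $h_4^{\mathrm{Lasso}}$) by replacing a bias factor of the form $\lambda'(1\pm|\rho|)$ with zero, which manifestly decreases the exponent.

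The hardest step is the lower bound $U^*(\vt)\geq U_{\mathrm{Lasso}}(\vt)$ when $\rho\geq 0$, because a priori a non-convex SCAD penalty with some intermediate $a$ could exploit the pair configuration and beat Lasso even without signal cancellation. Ruling this out requires showing that for $\rho\geq 0$, the signal-pair exponent never becomes the binding term on the phase boundary: the single-signal exponents $h_1,h_2$ (which coincide with Lasso's) already dominate $h_3^{\mathrm{SCAD}}$ for every admissible $a$. A secondary technical nuisance is SCAD's non-convexity: because multiple stationary points may exist per block, I would need to verify that the candidate block minimizer used in the rate analysis is the global minimizer, using the restrictability condition $a<2/(1-|\rho|)$ where applicable and a separate argument (monotonicity of the fitted residual in $a$) when $a$ is taken large.
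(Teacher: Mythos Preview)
Your treatment of $\rho<0$ is essentially the paper's: both compare the explicit phase-curve formulas of Theorem~\ref{thm:SCAD} to those of Theorem~\ref{thm:elastic-net} at $\eta=\rho$. One simplification you are missing is that $h_4^{\mathrm{SCAD}}$ and $h_3^{\mathrm{Lasso}}$ are \emph{identical} expressions (both equal $(\tfrac{1}{1-|\rho|}+\sqrt{\tfrac{1-2\vt}{1-\rho^2}})^2$), so the whole comparison collapses to $h_5^{\mathrm{SCAD}}$ versus $h_4^{\mathrm{Lasso}}$; the paper then just observes that the second sub-case in the definition of $h_5$ is literally $h_4^{\mathrm{Lasso}}$, and the conditional expression of $h_5$ is bounded above by the minimum of its first two sub-cases.

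For $\rho\ge 0$, your lower-bound step contains a genuine gap. The assertion that ``the single-signal exponents of SCAD match the corresponding Lasso exponents term by term, since on the interval $[0,\lambda]$ the SCAD penalty coincides with $\lambda|\cdot|$'' is not valid: the rejection region for $\hat b_1\neq 0$ is determined by the bivariate KKT system and depends on the penalty globally, not just on $[0,\lambda]$. Concretely, the supplement's $f_1$ and $f_2$ for SCAD (the $(0,1)$ and $(1,0)$ exponents) carry extra cases absent from Lasso---e.g.\ the term $\tfrac{(1-|\rho|)(2+|\rho|)^2}{5+3|\rho|}\,r$ in $f_2$, tied to the SCAD-specific boundary segment through the point $B=((1+|\rho|)\lambda',2\lambda')$. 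So the single-signal exponents do \emph{not} match term by term; the reason $h_1,h_2$ nonetheless coincide in the two phase curves is that, \emph{at the optimizing $q$}, the binding constraints happen to land on the portion of the SCAD rejection-region boundary that agrees with Lasso's. That is what the paper's Part~3 case analysis establishes, and your penalty-coincidence heuristic does not substitute for it.

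The paper's own route for $\rho\ge 0$ is different from yours: it invokes the full phase-curve computation for $a\in(2,\tfrac{2}{1-\rho})$ already done in proving Theorem~\ref{thm:SCAD}, which shows SCAD is never below Lasso in that range (indeed it is slightly worse in a tiny corner near $\vt=0$ when $\rho<0.179$); then it argues that as $a\uparrow\infty$ the SCAD rejection region converges to Lasso's, so the corner disappears and the optimum over $a$ recovers Lasso exactly. Your $a\to\infty$ upper bound is fine and matches this half; what you need to replace is the lower bound---either by the paper's route, or by proving directly that for every $a>2$ the optimal-$q$ phase boundary of SCAD is pinned by constraints forcing $r\ge\max\{h_1^{\mathrm{Lasso}},h_2^{\mathrm{Lasso}}\}$, which requires tracking where the minimizing ellipses touch the boundary rather than appealing to the local form of the penalty.
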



The phase curves of SCAD are insensitive to the choice of $a$. When $a<0$, the optimal $a^*$ can be any value in $(2, \frac{2}{1-|\rho|})$. When $\rho\geq 0$, there exists a constant $c=c(\vartheta,\rho)$ such that the optimal $a^*$ is any value in $(c,\infty)$. As $a\to\infty$, the SCAD penalty reduces to the $L^1$-penalty. This explains why the phase curve of SCAD is the same as that of Lasso when $\rho\geq 0$.

\subsection{Thresholded Lasso}
Let $\hat{\beta}^{\mathrm{Lasso}}$ be the Lasso estimator in \eqref{Lasso}. The thresholded Lasso estimator $\hat{\beta}^{\mathrm{TL}}$ is obtained by applying coordinate-wise hard-thresholding to the Lasso estimator:  
\beq \label{thresh-Lasso}
\hat{\beta}^{\mathrm{TL}}_j = \hat{\beta}^{\mathrm{Lasso}}\cdot 1\{|\hat{\beta}^{\mathrm{Lasso}}|>t\}, \qquad 1\leq j\leq p. 
\eeq
%
%

\begin{theorem}[Thresholded Lasso]   \label{thm:thresh-lasso}
Under Models \eqref{linearM}, \eqref{model-beta}, and \eqref{model-X}, let $\hat{\beta}^{\mathrm{TL}}$ be the thresholded Lasso estimator in \eqref{thresh-Lasso}. Let $\lambda=\sqrt{2q\log(p)}$ and $t=\sqrt{2w\log(p)}$ with the ideal $(q, w)$ that minimize the exponent of the expected Hamming error. The phase curves are given by $L(\vartheta)=\vartheta$, and 
\begin{equation*}
    U(\vt)= \begin{cases} 
    \max \left\{ h_1(\vt),h_2(\vt) \right\}, & \text{ when }\rho \geq 0, \\
    \max \left\{ h_1(\vt),h_2(\vt),h_3(\vt) \right\}, &\text{ when }\rho < 0,
    \end{cases} 
\end{equation*}
where $h_1(\vartheta)= (1 + \sqrt{1 -\vt})^2$, $h_2(\vartheta)=\frac{4(1-\vartheta)}{1-\rho^{2}}$, and $h_3(\vartheta)= \bigl(1+\frac{1+|\rho|}{2} \sqrt{\frac{1-\vartheta}{1-\rho^{2}}}+\frac{1-|\rho|}{2} \sqrt{\frac{1-2 \vartheta}{1-\rho^{2}}}\bigr)^2$. 
%
\end{theorem}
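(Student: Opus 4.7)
The plan is to exploit the block-diagonal Gram $G = \mathrm{diag}(B,\ldots,B)$ to reduce the Lasso to $p/2$ independent two-dimensional problems, one per block. Writing $u = X'y$ and restricting to a block $\{j,j+1\}$, we have $u_{(j,j+1)} \sim \mathcal{N}(B\beta_{(j,j+1)}, B)$, and the Lasso objective decouples into $\hat{\beta}_{(j,j+1)}^{\mathrm{Lasso}} = \argmin_{\gamma \in \mathbb{R}^2}\{\tfrac{1}{2}\gamma' B\gamma - \gamma' u_{(j,j+1)} + \lambda\|\gamma\|_1\}$. Since $(\beta_j,\beta_{j+1})$ takes one of four values in $\{0,\tau_p\}^2$ with prior masses $\{(1-\epsilon_p)^2,\; \epsilon_p(1-\epsilon_p),\; \epsilon_p(1-\epsilon_p),\; \epsilon_p^2\}$, the expected Hamming error equals $(p/2)$ times the per-block sum of false-positive and false-negative probabilities weighted by these priors.

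Next, I would solve the 2D Lasso via KKT conditions: for each of the nine sign patterns in $\{-1,0,+1\}^2$ the stationarity relation $B\hat{\gamma} - u + \lambda s = 0$ with $s \in \partial\|\hat{\gamma}\|_1$ gives an explicit affine formula for $\hat{\gamma}$ on a polygonal region of the $(u_1,u_2)$-plane. Composing with the hard-threshold at $t$ then yields an explicit selection region in $\mathbb{R}^2$ for each coordinate. For each of the four configurations of $(\beta_j,\beta_{j+1})$ I compute per-coordinate FP and FN probabilities by integrating the Gaussian density $\mathcal{N}(B\beta_{(j,j+1)}, B)$ over the appropriate selection region. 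Because $\lambda$, $t$, and $\tau_p$ all scale as $\sqrt{\log p}$, each probability has the large-deviation form $L_p\, p^{-g}$ where $g$ is $\tfrac{1}{2\log p}$ times the squared $B^{-1}$-Mahalanobis distance from the mean $B\beta_{(j,j+1)}$ to the region. Summing weighted contributions yields $\mathbb{E}[H] = L_p\, p^{1-h(\vartheta,r,\rho)}$ with $h = \min_i (c_i + g_i(q,w,r,\rho))$, where $c_i \in \{0,\vartheta,2\vartheta\}$ is the configuration-prior exponent.

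The three terms $h_1, h_2, h_3$ then arise as the dominant error channels at the ideal tuning. The term $h_1(\vartheta) = (1+\sqrt{1-\vartheta})^2$ comes from the false-negative on an isolated signal block (prior $\epsilon_p$), where after the 2D Lasso the estimated coordinate is effectively $\mathcal{N}(\tau_p - \lambda, 1)$ and missing the threshold yields the classical Donoho--Jin exponent. The term $h_2(\vartheta) = 4(1-\vartheta)/(1-\rho^2)$ is the dominant exponent on double-signal blocks (prior $\epsilon_p^2$), in which the effective noise variance inflates by $(1-\rho^2)^{-1}$ through the Schur complement of $B$. The term $h_3(\vartheta)$, active only for $\rho<0$, captures the signal-cancellation event on double-signal blocks: the mean $B\beta = (1+\rho)\tau_p\mathbf{1}$ is reduced by cancellation, so the Lasso can zero out a true signal, and the exponent reflects a careful trade-off between $q$ and $w$. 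Optimizing $(q,w)$ to balance these three rates gives the claimed $U(\vartheta)$, and $L(\vartheta)=\vartheta$ follows because below that curve the trivial false-negative on all signals dominates.

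The main obstacle is the combined KKT case-enumeration and large-deviation minimization: for each of the four configurations and each sign pattern of $\hat{\gamma}$, one must identify the face of the post-threshold selection region closest to the signal mean in $B^{-1}$-norm. The key insight that makes the answer clean is that the two tuning parameters $q$ (for $\lambda$) and $w$ (for $t$) act quasi-independently on the FP and FN channels: $\lambda$ controls shrinkage while $t$ controls which shrunken coordinates survive, so the two-dimensional tuning can simultaneously saturate the FP-type and FN-type exponents that would otherwise conflict in pure Lasso. Verifying that the ideal $(q,w)$ yields exactly the closed forms of $h_1,h_2,h_3$ --- and in particular that the non-trivial pure-null false-positive exponent is always dominated by these three --- is the final bookkeeping step, and is where the analysis becomes most delicate for the $\rho<0$ regime.
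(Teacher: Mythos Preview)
Your overall strategy---decouple into $2\times 2$ blocks, solve the bivariate Lasso via KKT, compose with the hard threshold to obtain an explicit polygonal selection region in the plane of $(h_1,h_2)=(x_j'y,x_{j+1}'y)/\sqrt{2\log p}$, then evaluate each error probability as $L_p\,p^{-g}$ with $g$ the squared $B^{-1}$-Mahalanobis distance from the configuration mean to the (complement of the) region, and finally optimize over $(q,w)$---is exactly the route the paper takes, and it works.

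However, your attribution of the curve $h_2(\vartheta)=4(1-\vartheta)/(1-\rho^2)$ to the \emph{double-signal} configuration (prior $\epsilon_p^2$) is wrong, and if you carried this belief into the constraint equations you would not recover the stated answer. A double-signal channel carries prior exponent $2\vartheta$, so any curve it generates must involve $\sqrt{1-2\vartheta}$, not $\sqrt{1-\vartheta}$. In the paper's analysis $h_2$ is produced by balancing two \emph{single}-signal channels, each with prior exponent $\vartheta$: the false positive on coordinate $j$ when $(\beta_j,\beta_{j+1})=(0,\tau_p)$ and the false negative on coordinate $j$ when $(\beta_j,\beta_{j+1})=(\tau_p,0)$. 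The factor $(1-\rho^2)^{-1}$ is not a Schur-complement variance from a double-signal block; it is the elliptical-distance denominator that appears when the nearest boundary of the rejection region is one of the oblique lines $h_1-\rho h_2=\mathrm{const}$. That the double-signal false-negative channel is never binding for $\rho\ge 0$ is precisely why the $\rho\ge 0$ phase curve has only two pieces; for $\rho<0$ signal cancellation activates that channel and produces $h_3$, which indeed contains $\sqrt{1-2\vartheta}$.

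Also, do not underestimate what you call the ``final bookkeeping step.'' With two tuning parameters the boundary is located by making \emph{three} of the four exponents (null FP, one-signal FP, one-signal FN, two-signal FN) simultaneously equal to $1$; this generates five or six candidate curves for each sign of $\rho$, and the bulk of the argument is the elimination showing that all but the stated two (or three) are everywhere dominated on their existence intervals. This case-by-case elimination, not the large-deviations computation itself, is where most of the work in the paper's proof sits.
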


\begin{figure}[tb!]
 \centering
  \includegraphics[width=.91\textwidth]{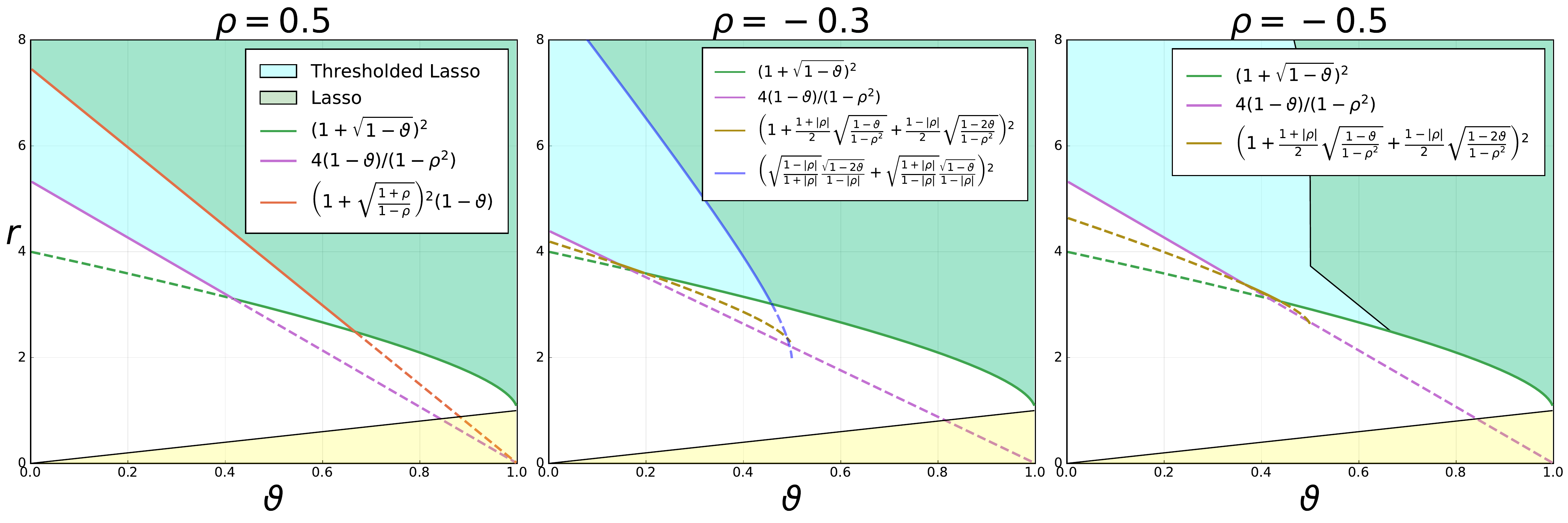} 
  \vspace{-2mm}
  \caption{Comparison of the phase diagrams of thresholded Lasso and Lasso.}\label{fig:thres.lasso}
  \vspace{-3mm}
  \end{figure}

See Figure~\ref{fig:thres.lasso} for a comparison with Lasso (a special case of $t=0$). With the flexibility of using an optimal $t$, the phase diagram of thresholded Lasso is always better than that of Lasso. 

Theorem~\ref{thm:thresh-lasso} also gives other interesting facts about thresholded Lasso. First, the shape of phase curves is much less affected by the sign of $\rho$. This differs from Lasso, Elastic net, and SCAD, for which the shape of phase curves is significantly different for positive and negative $\rho$. 
Second, the optimal $\lambda$ in thresholded Lasso is considerably smaller than the optimal $\lambda$ in Lasso (it can be seen from the proofs of Theorem~\ref{thm:thresh-lasso} and Theorem~\ref{thm:elastic-net}). This is because the $\lambda$ in thresholded Lasso only serves to control false negatives, but the $\lambda$ in Lasso is used to simultaneously control false positives and false negatives, hence, cannot be too small. 
We observe the same phenomenon in simulations; see Section~\ref{sec:simu}.

\subsection{Forward selection and forward backward selection}
Forward selection is a classical textbook method for variable selection. Write $X=[x_1,x_2,\ldots,x_p]$, where $x_i\in \mathbb{R}^n$ for $1\leq i\leq p$. For any subset $A\subset\{1,2,\ldots,p\}$, let $P^{\bot}_{A}$ be the projection  onto the orthogonal complement of the linear space spanned by \( \{ x_i:i\in  A \}  \). 
Given a threshold $t>0$, the forward selection algorithm initializes with $S_0=\emptyset$ and $\hat{r}_0=y$.  At the $k$th iteration, compute
\[
i^* = \mathrm{argmax}_{i\notin S_{k - 1}}\abs{x_i'\hat r_{k - 1}}, \qquad \delta = 
         \abs{x_{i^* }'\hat r_{k - 1} }/\norm{ P^{\bot}_{S_{k - 1}} x_{i^*}}. 
\]
If $\delta>t$, compute $S_k=S_{k-1}\cup \{i^*\}$ and $\hat{r}_{k}=P^{\bot}_{S_k}y$; otherwise, output $\hat{\beta}^{\mathrm{forward}}$ as the least-squares estimator restricted to $S_{k-1}$. 
The stopping rule of $\delta\leq t$ is equivalent to measuring the decrease of the residual sum of squares. 
The following theorem is proved in the supplemental material: 
\begin{theorem}[Forward Selection]  \label{thm:forward}
Under Models \eqref{linearM}, \eqref{model-beta}, and \eqref{model-X}, let $\hat{\beta}^{\mathrm{forward}}$ be the estimator from forward selection. Let $t=\sqrt{2q\log(p)}$ with the ideal $q$ that minimizes the exponent of the expected Hamming error. The phase curves are given by $L(\vartheta)=\vartheta$, and 
\begin{equation*}
    U(\vt)= \begin{cases} 
    \max \left\{ h_1(\vt),h_2(\vt), h_3(\vt) \right\}, & \text{ when }\rho \geq 0, \\
    \max \left\{ h_1(\vt),h_2(\vt),h_3(\vt), h_4(\vt) \right\}, &\text{ when }\rho < 0,
    \end{cases} 
\end{equation*}
with $h_1(\vt)$=$(1+\sqrt{1-\vartheta})^2$, $h_2(\vt)=\frac{2(1-\vartheta)}{1-|\rho|}$, $h_3(\vt)=\frac{(1 +\sqrt{1 - 2\vt})^2}{1 -\rho^2}$, $h_4(\vt)=\bigl( \sqrt{\frac{1-2 \vartheta}{2(1-|\rho|)}}+\frac{1}{1-|\rho|} \bigr)^2$. 
\end{theorem}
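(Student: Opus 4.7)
The plan is to exploit the block-diagonal structure of $G$ to reduce the analysis to a block-by-block computation. Since $G=\mathrm{diag}(B,\ldots,B)$ and $X'z\sim\mathcal{N}(0,G)$, the sufficient statistics $(x_j'y,x_{j+1}'y)$ are independent across different $2\times2$ blocks. A key observation is that the projection $P^{\perp}_A$ respects the block structure: if neither column of the $(j,j+1)$-block has been selected, the gain $|x_i'\hat r|/\|P^{\perp}_A x_i\|$ depends only on $(U_j,U_{j+1}) := (x_j'y,x_{j+1}'y)$; if $j$ has been selected already, the scaled gain for its partner equals $|U_{j+1}-\rho U_j|/\sqrt{1-\rho^2}$, using $\|P^{\perp}_{\{j\}}x_{j+1}\|^2 = 1-\rho^2$. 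Consequently, whether index $j$ is ultimately selected depends only on $(U_j,U_{j+1})$ together with the global threshold $t=\sqrt{2q\log p}$, and the expected Hamming error decomposes as
\[
\mathbb{E}[H(\hat\beta,\beta)] = (p/2)\sum_{(a,b)}\mathbb{P}\bigl((\beta_j,\beta_{j+1})=(a,b)\bigr)\,\mathbb{E}\bigl[H_{\mathrm{block}}\mid(a,b)\bigr],
\]
with the four patterns $(0,0),(\tau,0),(0,\tau),(\tau,\tau)$ weighted by $1,\epsilon_p,\epsilon_p,\epsilon_p^2$.

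\textbf{Event enumeration.} For each pattern, I enumerate the short list of events that yield false positives (FP) or false negatives (FN) inside the block. For $(0,0)$, FPs arise when $\max(|U_1|,|U_2|)>t$ or when the second-round gain $|U_2-\rho U_1|/\sqrt{1-\rho^2}$ clears $t$ after a first wrong pick. For $(\tau,0)$ (and symmetrically $(0,\tau)$), an FN happens when neither $|U_1|$ nor the post-projection gain at $j$ exceeds $t$, and an FP is produced when the noise column is also picked in the second round. For $(\tau,\tau)$, the events of interest are whether any signal is detected at all --- a nontrivial issue when $\rho<0$, where $\mathbb{E}[U_i]=(1-|\rho|)\tau$ can fall below $t$, producing simultaneous signal cancellation --- and, conditional on the first signal being detected, whether the residual statistic $|U_2-\rho U_1|/\sqrt{1-\rho^2}$, whose mean equals $\tau\sqrt{1-\rho^2}$, crosses $t$.

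\textbf{Rates and optimization.} For each event I compute the Gaussian large-deviation rate by minimizing $\tfrac12 u'B^{-1}u$ over the appropriate shifted half-space; combined with the pattern probability $p^{0}$, $p^{-\vartheta}$, or $p^{-2\vartheta}$, each mode contributes a term $p^{1-E(\vartheta,r,q,\rho)}$ to $\mathbb{E}[H]$. Then $h(\vartheta,r,\rho)$ is the minimum of these $E$'s, and I optimize over $q$ to maximize $h$. Setting $h=1$ and $h=\vartheta$ recovers $U(\vartheta)$ and $L(\vartheta)$. I expect each $h_i$ to come from a distinct mode: $h_1=(1+\sqrt{1-\vartheta})^2$ from the classical balance of FPs in $(0,0)$ blocks against FNs in $(\tau,0)$/$(0,\tau)$ blocks (solved by $q=1$, $\sqrt r=1+\sqrt{1-\vartheta}$); $h_3=(1+\sqrt{1-2\vartheta})^2/(1-\rho^2)$ from missing the second signal in $(\tau,\tau)$ after the first is correctly detected with effective signal $\tau\sqrt{1-\rho^2}$; and $h_2,h_4$ from the two-signal failure modes (``only one selected'' and ``both missed'') that dominate when $\rho<0$ and cancellation bites, with the factor $1/(1-|\rho|)$ tracing back to the eigenvalue of $B^{-1}$ along the $u_1=u_2$ direction.

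\textbf{Main obstacle.} The sequential, max-driven nature of forward selection forces the analysis of $(\tau,\tau)$ blocks to be a genuine two-dimensional large-deviation problem. One must simultaneously track which of $|U_1|,|U_2|$ wins the global competition against the $p/2$ empty blocks, how the residual statistic in the second step interacts with the partner column, and the answer is piecewise according to how $(1-|\rho|)\sqrt r$ compares with $\sqrt q$. Matching the resulting optimizers in the cancellation regime to the $h_4$ expression --- in particular getting the $\sqrt{1/(2(1-|\rho|))}$ coefficient right --- is where the bookkeeping is most delicate; all other cases reduce to one-dimensional Mills-ratio estimates once the blockwise event structure is pinned down.
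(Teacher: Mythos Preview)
Your block-decoupling and large-deviation scheme is exactly the paper's strategy. The paper, however, organizes the enumeration differently: it first derives the \emph{rejection region} $\mathcal{R}=\{(h_1,h_2):\hat\beta_j\neq 0\}$ geometrically from the forward-selection solution path, and then computes the four Mahalanobis distances $d_\Sigma^2(\mu_{ab},\mathcal{R})$ or $d_\Sigma^2(\mu_{ab},\mathcal{R}^c)$ for $\mu_{00},\mu_{01},\mu_{10},\mu_{11}$. This is more than cosmetic---it exposes a boundary segment your event list misses.

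Concretely, your attribution of $h_2=2(1-\vartheta)/(1-|\rho|)$ to ``two-signal failure modes'' is incorrect; note that $h_2$ is already present when $\rho>0$, where there is no cancellation. It in fact comes from the \emph{single-signal} $(\tau,0)$ block. Your FN description, ``neither $|U_1|$ nor the post-projection gain at $j$ exceeds $t$'', omits the case $|U_1|>t$ but $|U_2|>|U_1|$: the null column $j{+}1$ then wins the first round, and the residual gain for the true signal $j$, namely $|U_1-\rho U_2|/\sqrt{1-\rho^2}$, can fall below $t$ even though $|U_1|$ did not. The large-deviation rate of this ``wrong first pick'' event is the Mahalanobis distance from $\mu_{10}=(\sqrt r,\,|\rho|\sqrt r)$ to the diagonal $\{h_1=h_2\}$, which works out to $\tfrac12(1-|\rho|)r$; balancing it against the $(0,0)$ FP rate at $t'=1$ is exactly what produces $h_2$. (So the factor $1/(1-|\rho|)$ is not the eigenvalue of $B^{-1}$ along $u_1=u_2$---for $\rho>0$ that eigenvalue is $1/(1+\rho)$---but arises from this diagonal distance.) Without this event your argument would miss $h_2$ entirely and give the wrong upper curve whenever $h_2$ is the active piece, e.g.\ for $\rho>1/2$ and small $\vartheta$. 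In the paper's rejection-region picture, the diagonal $\{h_1=h_2\}$ appears as an explicit segment of $\partial\mathcal{R}$, so this mode is caught automatically. Your readings of $h_1$, $h_3$, and $h_4$ are correct.
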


Forward backward selection (FB) modifies forward selection by allowing to drop variables. 
We use the FB algorithm in \cite{huang2016partial}, where the backward step is conducted after all the forward steps are finished. For a threshold $v>0$, it applies entry-wise thresholding on $\hat{\beta}^{\mathrm{forward}}$: 
\beq \label{FoBa}
\hat{\beta}^{\mathrm{FB}}_j = \hat{\beta}^{\mathrm{forward}}_j\cdot 1\{ |\hat{\beta}^{\mathrm{forward}}_j|>v \}, \qquad 1\leq j\leq p. 
\eeq
\begin{theorem}[Forward Backward Selection]  \label{thm:forward-backward}
Under Models \eqref{linearM}, \eqref{model-beta}, and \eqref{model-X}, let $\hat{\beta}^{\mathrm{FB}}$ be the estimator from forward selection. Let $t=\sqrt{2q\log(p)}$ and $v=\sqrt{2u\log(p)}$ with the ideal $(q, u)$ that minimize the exponent of the expected Hamming error. When $\rho\geq 0$, the phase curves are given by $L(\vartheta)=\vartheta$, and 
\begin{equation*}
    U(\vt)= 
    \max \left\{ h_1(\vt),h_2(\vt), h^*_3(\vt) \right\}, 
\end{equation*} 
where $h_1(\vt)$ and $h_2(\vt)$ are the same as in Theorem~\ref{thm:forward} and $h_3^*(\vt)=\frac{(\sqrt{1 -\vt} +\sqrt{1 - 2\vt})^2}{1 -\rho^2}$. 
When $\rho<0$, 
\begin{equation*}
  U(\vt) \leq \max \left\{ g_1(\vt),g_2(\vt),g_3(\vt),g_4(\vt) \right\},
\end{equation*}
where \( g_1(\vt) = (v_{\min}(\vt) + \sqrt{1 -\vt})^2 \), $g_2(\vt)=\frac{2(1-\vartheta)}{1-|\rho|}$, $g_3(\vt)=\bigl( \sqrt{\frac{1-2\vt}{1-\rho^2}} + v_{\min}(\vt)\bigr)^2$, $g_4(\vt)=\bigl( \sqrt{\frac{1-2 \vartheta}{2(1-|\rho|)}}+\frac{t_{\min}(\vt)}{1-|\rho|} \bigr)^2$, \( v_{\min}(\vt) =\max \bigl\{ 1,\sqrt{\frac{1 -\vt}{1 -\rho^2}} \bigr\} \), and \( t_{\min}(\vt) =\max \bigl\{ \frac{\sqrt{2}}{2},\frac{v_{\min}(\vt)}{1 +|\rho|/\sqrt{1 -\rho^2}} \bigr\} \). 
\end{theorem}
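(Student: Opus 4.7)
The plan is to exploit the block-diagonal structure of $G$: because the $\beta_j$'s are iid and $G$ decomposes into independent $2\times 2$ blocks, both the forward sweep and the backward thresholding step decouple across blocks (the stopping rule of forward selection is blockwise-max, so the per-block selection is unaffected by other blocks). For a block $B=\{j,j+1\}$ the local sufficient statistic is $X_B^{\prime}y = B\beta_B + w$ with $w\sim\mathcal{N}(0,B)$, and the per-block Hamming contribution $H_B$ depends only on $(\beta_B,w,t,v)$. Summing over the $\lfloor p/2\rfloor$ blocks gives $\mathbb{E}[H(\hat\beta^{\mathrm{FB}},\beta)] = (p/2)\,\mathbb{E}[H_B] + O(1)$, so it suffices to express $\mathbb{E}[H_B]$ in the form $L_p p^{-h(\vt,r,\rho)}$ by weighting the per-configuration FP/FN probabilities by the four prior weights $(1-\epsilon_p)^{2-k}\epsilon_p^k$ for $\beta_B\in\{0,\tau_p\}^2$.

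For each configuration I would enumerate the four possible forward outputs $\hat S_B^{\mathrm{fwd}}\in\{\emptyset,\{1\},\{2\},\{1,2\}\}$. The forward step is governed by two Gaussian events: whether $\max_i|x_i^{\prime}y|>t$ at the first step, and whether the adjusted marginal $|x_i^{\prime}y-\rho x_{i^*}^{\prime}y|/\sqrt{1-\rho^2}$ of the remaining variable exceeds $t$ at the second step. Their joint law, per configuration, was already obtained in the proof of Theorem~\ref{thm:forward}, and can be reused verbatim. Conditional on $\hat S_B^{\mathrm{fwd}}$, the refit $\hat\beta^{\mathrm{LS}}_{\hat S_B^{\mathrm{fwd}}}$ is Gaussian with mean equal to the oracle restriction of $\beta_B$ to $\hat S_B^{\mathrm{fwd}}$ (which carries an omitted-variable bias exactly when the partner index is nonzero) and with covariance equal to the inverse of the corresponding submatrix of $B$. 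The backward thresholding at $v$ then reduces the remaining FP and FN probabilities to two-sided Gaussian tail probabilities at levels determined by $t$, $v$, $\tau_p$, and $\rho$, each of the form $L_p p^{-K}$ for an explicit exponent $K=K(\vt,r,\rho,q,u)$ with $t=\sqrt{2q\log p}$ and $v=\sqrt{2u\log p}$.

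Aggregating the configuration contributions yields $\mathbb{E}[H_B] = L_p p^{-h(\vt,r,\rho;q,u)}$ with $h$ the pointwise minimum of the configuration exponents; the theorem follows by maximizing $h$ over admissible $(q,u)$ and solving $h=1$ for the upper phase curve. For $\rho\ge 0$ the three binding events producing $h_1$, $h_2$, $h_3^*$ are a missed lone signal, a pure-noise block that survives both stages, and a missed double-signal block after the backward refit; the freedom to choose $v>0$ allows $t$ to be taken smaller than in plain forward selection, which sharpens the $h_3$ of Theorem~\ref{thm:forward} to the smaller $h_3^*$. For $\rho<0$ I would not attempt a sharp result: instead I would restrict the optimization to $t\ge t_{\min}(\vt)\sqrt{2\log p}$ and $v\ge v_{\min}(\vt)\sqrt{2\log p}$, which suppresses awkward subcases, and upper-bound each configuration exponent by one of $g_1,\dots,g_4$, corresponding respectively to a missed lone signal, a pure-noise FP, a signal-cancellation FN in a double-signal block, and a cross-contamination event where only one member of a double-signal block is retained.

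The main obstacle will be the $\rho<0$ double-signal case. Because $\mathbb{E}[x_j^{\prime}y]=(1+\rho)\tau_p$ is small relative to $\tau_p$, the forward step may miss both signals, pick one and be deflected by noise at the second step, or even enter via noise, and the subsequent refit-and-threshold can correct some of these outcomes but aggravate others. Tracking the joint law of the forward output and the subsequent refit across this four-way split, with correlated Gaussian events at the two stages, is the main bookkeeping burden; the role of $t_{\min}$ and $v_{\min}$ is precisely to kill the ugly subcases so that the remaining ones distil into the clean exponents $g_1,\dots,g_4$. The other ingredients — block decoupling, Gaussian tail asymptotics, and the $(q,u)$-plane tuning optimization — are routine and parallel the arguments already used for Theorems~\ref{thm:elastic-net} and~\ref{thm:forward}.
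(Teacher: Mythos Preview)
Your block-decoupling reduction and the plan to optimize the exponent over $(q,u)$ are correct and mirror the paper. But one step in your outline is a genuine trap: you write that ``conditional on $\hat S_B^{\mathrm{fwd}}$, the refit $\hat\beta^{\mathrm{LS}}_{\hat S_B^{\mathrm{fwd}}}$ is Gaussian.'' It is not. The forward-selection event $\{\hat S_B^{\mathrm{fwd}}=S\}$ is itself an event in $(h_1,h_2)=(x_j'y,x_{j+1}'y)/\sqrt{2\log p}$, and the refit on $S$ is a linear function of the \emph{same} $(h_1,h_2)$. Conditioning on the selection event truncates the bivariate Gaussian, so the refit is not conditionally Gaussian, and you cannot factor $\mathbb{P}(\hat S_B^{\mathrm{fwd}}=S,\ |\hat\beta^{\mathrm{LS}}_i|>v)$ into a product of marginals. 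At the large-deviations scale the joint exponent is the elliptical distance to the \emph{intersection} region, which generally differs from the sum of the two marginal exponents; if you factor, you will get the wrong $K$'s.

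The paper sidesteps this by never conditioning: it defines the deterministic rejection region $\mathcal R=\{(h_1,h_2):\hat\beta_j\neq 0\ \text{after both steps}\}$ and computes $\mathbb{P}_{\mu_{k\ell}}(\tilde y\in\mathcal R)$ or $\mathbb{P}_{\mu_{k\ell}}(\tilde y\in\mathcal R^c)$ via the distance formula \eqref{d_Sigma}. Because the refit formula switches between $h_1$ and $(h_1-\rho h_2)/(1-\rho^2)$ depending on whether forward selected one or both variables, the shape of $\mathcal R$ changes with the ratio $v'/t'$; the paper distinguishes \emph{six} regimes for $v'/t'$ (each with its own polygonal $\mathcal R$), computes the phase curve in each, and takes the minimum. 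Your enumeration-by-forward-outcome would eventually reconstruct the same $\mathcal R$, but you should organize it this way from the start and compute each probability as a single bivariate Gaussian integral over a polygonal region, not as a two-stage product.

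Two smaller points. First, your attribution of $h_2(\vt)=2(1-\vt)/(1-|\rho|)$ to ``a pure-noise block that survives both stages'' is off: that term comes from the $\mu_{01}$ (or symmetrically $\mu_{10}$) configuration hitting the diagonal boundary $h_1=h_2$ of $\mathcal R$, not from $\mu_{00}$. Second, for $\rho<0$ your strategy is exactly what the paper does (it is their ``Case~3''): fix $v'=v_{\min}(\vt)$, $t'=t_{\min}(\vt)$ and show that with this specific choice every exponent in the Hamming decomposition exceeds $1$ once $r>\max\{g_1,\dots,g_4\}$. Just be careful with the direction of the inequalities---you need \emph{lower} bounds on the exponents $K_i$, not upper bounds, to conclude $U(\vt)\le\max_i g_i(\vt)$.
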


Theorem~\ref{thm:forward-backward} gives $U(\vartheta)$ for $\rho\geq 0$ and an upper bound of it for $\rho<0$. Combining it with Theorems~\ref{thm:elastic-net} and \ref{thm:forward}, we conclude that the upper phase curve of FB is always better than those of Lasso and forward selection (for $\rho<0$, the upper bound here is already better than $U(\vartheta)$ for the other two methods).   

   


\begin{figure}[tb!]
\centering
\includegraphics[height=.3\textwidth]{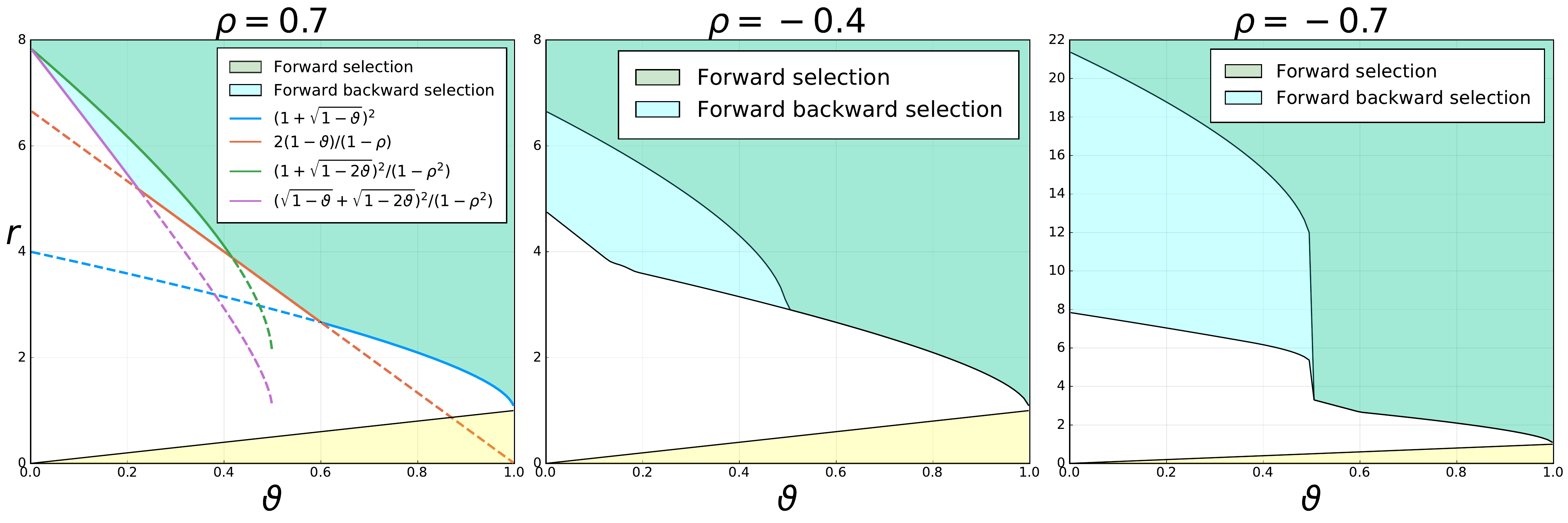}
\vspace{-2mm}
\caption{The phase diagrams of forward selection and forward backward selection.}\label{fig:foba}
\vspace{-3mm}
\end{figure}


We remark that we did obtain the exact phase curve for $\rho<0$ in the proof of Theorem~\ref{thm:forward-backward}. It is just too complicated and space-consuming to present it in the main text.  
However, given specific values of $(\vartheta, \rho)$, we can always plot the exact phase curve using the (complicated) formulas in the supplement. 
In Figures~\ref{fig:overview} and \ref{fig:foba}, 
the phase curves of FB are indeed the exact ones.



\subsection{Connection to the random design model}\label{subsec:random}
Consider the random design as mentioned in Section~\ref{sec:Intro}. The {\it minimax Hamming error} is   $H^*(\vartheta,r,\rho)=\inf_{\hat{\beta}}\mathbb{E}[H(\hat{\beta},\beta)]$, where the infimum is taken over all methods $\hat{\beta}$ and the expectation is with respect to the randomness of $(X,\beta,z)$. We can define $H^*(\vartheta,r,\rho)$ in the same way for our current model \eqref{model-X}. The minimax Hamming error is related to the statistical limit of the model setting, but not any specific method. The next theorem shows that, when $n\gg s_p=p^{1-\vartheta}$ (we allow both $p\leq n$ and $p>n$), the convergence rate of the minimax Hamming error is the same under two models.

\begin{theorem} \label{thm:equivalence}
Under Models \eqref{linearM} and \eqref{model-beta}, suppose $X$ is independent of $(\beta,z)$ and its rows are iid generated from  ${\cal N}(0, n^{-1}\Sigma)$, with $\Sigma$ having the same form as $G$ in \eqref{model-X}. Suppose $n=p^\omega$, with $\omega>1-\vartheta$ (note: this allows $\omega<1$, which corresponds to $n\ll p$). There exists a number $h^{**}(\vartheta,r,\rho)$ such that the minimax Hamming error satisfies that $H^*(\vartheta,r,\rho)=L_pp^{1-h^{**}(\vartheta,r,\rho)}$. Furthermore, if we instead have $X'X=\Sigma$ (i.e., model \eqref{model-X}), then it also holds that $H^*(\vartheta,r,\rho)=L_pp^{1-h^{**}(\vartheta,r,\rho)}$. 
\end{theorem}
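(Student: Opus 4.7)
My plan is a sufficiency-plus-concentration reduction. Given $X$, the vector $\tilde{y} := X'y$ is a sufficient statistic for $\beta$, and conditionally on $X$ it is Gaussian with mean $G\beta$ and covariance $\sigma^2 G$, where $G = X'X$. The deterministic model \eqref{model-X} is the special case $G = \Sigma$, while under the random design $G = \Sigma + \Delta$ with $\Delta$ small. The heart of the proof is to show that $\Delta$ costs at most a multi-$\log(p)$ factor, so that a single exponent $h^{**}$ governs both models.

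First I would define $h^{**}(\vartheta, r, \rho)$ through the deterministic case. Since $\Sigma$ is block-diagonal with $2 \times 2$ blocks and the coordinates of $\beta$ are iid, the deterministic problem decouples over blocks into independent four-point Gaussian-mean testing problems over $(\beta_{2k-1}, \beta_{2k}) \in \{0, \tau_p\}^2$. A Bayes-optimal likelihood-ratio test, with a matching two-point Le Cam lower bound in each block, yields $h^{**}$ in closed form up to an $L_p$ factor.

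For the upper bound in the random design, I apply the deterministic Bayes-optimal procedure to $\tilde{y}$ and work on the event
\[
\mathcal{E} = \bigl\{ \|\Delta\beta\|_\infty \leq p^{-c_1},\ \|G - \Sigma\|_{\max} \leq p^{-c_2} \bigr\}
\]
for small $c_1, c_2 > 0$. On $\mathcal{E}$ the conditional law of $\tilde{y}$ differs from its deterministic analog only by an $o(\tau_p)$ mean shift and an $o(1)$ block-covariance perturbation, so the deterministic likelihood-ratio analysis carries over with unchanged exponent $h^{**}$. Off $\mathcal{E}$ the Hamming error is trivially at most $p$, but Hanson--Wright together with a union bound over the random support of $\beta$ gives $\mathbb{P}(\mathcal{E}^c) \leq p^{-M}$ for any preassigned $M$; taking $M > h^{**}$ makes this contribution negligible. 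The crucial sparse-aggregation estimate is $|(\Delta\beta)_j| = O(\tau_p \sqrt{s_p \log(p)/n}) = O(p^{(1-\vartheta-\omega)/2} \log p)$, which vanishes precisely when $\omega > 1-\vartheta$. The matching lower bound comes from running the same four-point Le Cam argument conditionally on $X \in \mathcal{E}$: the KL divergence between the two hypotheses is within a $1+o(1)$ factor of its deterministic value, so the per-block error probability picks up only a $1+o(1)$ in the exponent, absorbed into $L_p$. Summing over blocks yields $H^* \geq L_p p^{1-h^{**}}$. The same arguments apply verbatim when $G = \Sigma$ is deterministic, so the two models share the exponent.

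The main obstacle is controlling $\|\Delta\beta\|_\infty$ uniformly over the random support of $\beta$. The entries of $\Delta \beta$ are dependent Gaussian quadratic forms, so a clean sub-Gaussian tail is not immediate; I would first condition on $\mathrm{Supp}(\beta)$, apply Hanson--Wright for each fixed support of size $O(s_p)$, and then union bound over the $\binom{p}{s_p}$ possible supports. The polynomial slack $\omega - (1-\vartheta) > 0$ buys just enough room to absorb this union bound via Stirling, leaving the mean perturbation $(\Delta\beta)_j$ strictly of smaller order than both the signal scale $\tau_p$ and the noise scale $\sigma \sqrt{G_{jj}}$, which is what lets the deterministic likelihood-ratio analysis transfer to the random design without loss of exponent.
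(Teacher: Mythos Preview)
Your architecture---sufficiency of $X'y$, blockwise decoupling, a four-point Le Cam lower bound per block, and an explicit block-local estimator for the upper bound---matches the paper's proof. The place where your plan breaks is the control of the cross-block contamination $(\Delta\beta)_j$.

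You propose to apply Hanson--Wright for each fixed support $S$ and then \emph{union bound over the $\binom{p}{s_p}$ possible supports}, asserting that the slack $\omega-(1-\vartheta)>0$ absorbs this via Stirling. It does not. Stirling gives $\log\binom{p}{s_p}\asymp \vartheta\, p^{1-\vartheta}\log p$, which is super-polynomial in $p$. For a fixed $S$ with $|S|\le 2s_p$ and fixed $j$, Bernstein/Hanson--Wright gives
\[
\mathbb{P}\bigl(|e_j'(G-\Sigma){\bf 1}_S|>t\bigr)\le \exp\bigl(-c\,n t^2/s_p\bigr),
\]
so to beat $\binom{p}{s_p}$ you would need $n t^2/s_p\gtrsim s_p\log p$, i.e.\ $t^2\gtrsim p^{2(1-\vartheta)-\omega}\log p$. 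This forces $t\to 0$ only when $\omega>2(1-\vartheta)$, strictly stronger than the theorem's assumption $\omega>1-\vartheta$. In the regime $1-\vartheta<\omega\le 2(1-\vartheta)$ your union bound fails.

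The fix---and this is exactly what the paper does---is that no union bound over supports is needed. Because $\beta$ and $X$ are independent, you condition on $\beta$ (hence on the actual random $S$), restrict to $\{|S|\le 2p^{1-\vartheta}\}$ (which holds with probability $1-O(p^{-3})$ by a Bernoulli-sum Bernstein bound), and apply Bernstein to $e_j'(G-\Sigma){\bf 1}_{S\cap V^c}$ for that single $S$. The only union bound required is over $j\in V$ and the finitely many configurations $(U_0,U_1)$ inside a block, which is polynomial; the resulting deviation $C\sqrt{|S|\log(p)/n}=L_p\,p^{-(\omega-(1-\vartheta))/2}$ is $o(1)$ exactly when $\omega>1-\vartheta$. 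Concretely, the paper bounds the joint event $\{\mathrm{Supp}(\beta_V)=U_0,\ \mathrm{Supp}(\hat\beta_V)=U_1,\ |S|\le 2p^{1-\vartheta}\}$ directly and integrates over $\beta$ at the end, rather than constructing a single high-probability event $\mathcal{E}$ uniform in all supports. Replacing your union-bound step with this conditional argument repairs the proof; the rest of your outline goes through.
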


\section{Simulations} \label{sec:simu}

In Experiments 1-3, $(n,p)=(1000,300)$. In Experiment 4, $(n,p) = (500,1000)$.

\textbf{Experiment 1} (block-wise diagonal designs).  We generate $(X,\beta)$ as in \eqref{model-beta}-\eqref{model-X}. For each method, we select the ideal tuning parameters that minimize the average Hamming error over 50 repetitions. The averaged Hamming errors and its standard deviations under the ideal tuning parameters over 500 repetitions are reported below. The results are consistent with the theoretical phase diagrams (see Figure~\ref{fig:overview}). E.g., thresholded Lasso and forward backward selection are the two methods that perform the best; Lasso is more unsatisfactory when $\rho<0$; SCAD improves Lasso when $\rho<0$.  

\begin{table}[hbt]
\centering
\begin{center}
\scalebox{0.9}{
\begin{tabular}{ccc|cccccc}
    $\rho$  & $\vartheta$ & $r$   & \multicolumn{1}{l}{Lasso}         & \multicolumn{1}{l}{ThresLasso}    & \multicolumn{1}{l}{ElasticNet}    & \multicolumn{1}{l}{SCAD}          & \multicolumn{1}{l}{Forward}       & \multicolumn{1}{l}{FoBackward}  \\ \hline
    0.5  & 0.1   & 1.5 & 11.57 (3.59) & {\bf 10.48} (3.34) & 11.57 (3.31) & 11.72 (3.33) & 14.88 (4.12) & 13.35 (3.90) \\
    0.5  & 0.1   & 4   & 1.00 (1.00)  & {\bf 0.42} (0.65)  & 1.03 (1.00)  & 1.00 (0.96)  & 0.66 (0.84)  & 0.51 (0.73)  \\
    -0.5 & 0.1   & 1.5 & 35.62 (5.09) & 15.62 (4.06) & 35.48 (5.64) & 25.87 (5.04) & 19.48 (4.61) & {\bf 14.82} (3.82)\\
    \hline
    \end{tabular}}
\end{center}
\caption{Experiment 1 (block-diagonal designs). $(n,p)=(1000,300)$.} 
\end{table}

\noindent \textbf{Experiment 2} (general designs). In the Toeplitz design, we let $(X' X)_{i,j}=0.7^{|i-j|}$ and set $(\vartheta,r)=(0.1,2.5)$.  In the factor model design, we let $X'X=BB'-\text{diag}(BB')+ I_p$, where entries of $B\in \mathbb{R}^{p\times 2}$ are {\it iid} from $\mathrm{Unif}(0,0.6)$, and set $(\vartheta,r)=(0.1,1.5)$. Same as in Experiment 1, we use the ideal tuning parameters. The averaged Hamming errors and its standard deviations are reported below. 
The Toeplitz design is a setting where each variable is only highly correlated with a few other variables. The factor model design is a setting where a variable is (weakly) correlated with all the other variables. 
The results are quite similar to those in Experiment 1. This confirms that the insight gained in the study of the block-wise diagonal design continues to apply to more general designs. 

\begin{table}[hbt]
       \centering
    \scalebox{0.9}{
\begin{tabular}{c|cccccc}
design       & Lasso        & ThresLasso   & ElasticNet   & SCAD         & Forward      & FoBackward \\ \hline
Toeplitz     & 47.15 (6.32) & {\bf 22.02} (5.31) & 47.40 (6.41) & 24.61 (5.70) & 30.77 (6.18) & 22.93 (5.44) \\
Factor model & 21.14 (4.52) & {\bf 15.90} (3.87) & 21.20 (4.45) & 19.68 (4.23) & 20.04 (4.34) & 16.13 (3.76)\\
\hline
\end{tabular}}
\caption{Experiment 2 (general designs). $(n,p)=(1000,300)$.} 
\end{table}

\textbf{Experiment 3} (tuning parameters). Fix $(\vt,r)=(0.1, 1.5)$ and $\rho\in\{\pm 0.5\}$ in the block-wise diagonal design. We study the effect of tuning parameters in Lasso,  thresholded Lasso (ThreshLasso), forward selection (ForwardSelect), and forward backward selection (FB).
In (a)-(b), we show the heatmap of averaged Hamming error (over 50 repetitions) of ThreshLasso for a grid of $(t, \lambda)$; when $t=0$, it reduces to Lasso. In (c)-(d), we show the Hamming error of FB for a grid of $(v,t)$; when $v=0$, it reduces to ForwardSelect.  Cyan points are theoretically optimal tuning parameters (formulas are in proofs of theorems). Red points are empirically optimal tuning parameters that minimize the averaged Hamming error. 
The theoretical tuning parameter values are quite close to the empirically optimal ones. Moreover, the optimal $\lambda$ in ThreshLasso is smaller than the optimal $\lambda$ in Lasso.  

\begin{figure}[htb!]
\hspace*{-10pt}
\subfloat[Lasso and ThreshLasso ($\rho=0.5$)]{\includegraphics[height=3cm, width=.24\textwidth]{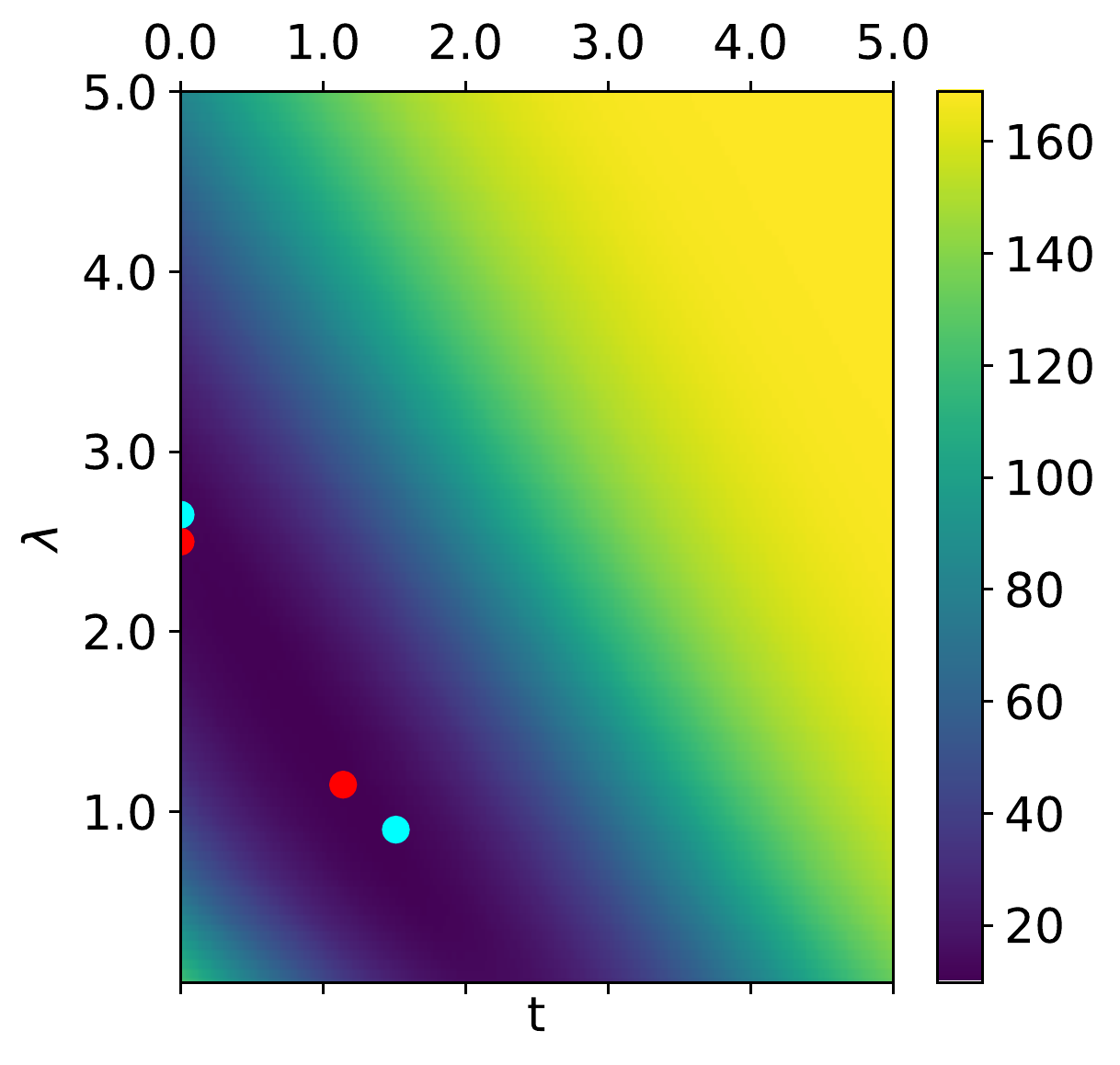}}
\hspace*{5pt}
\subfloat[Lasso and ThreshLasso ($\rho=-0.5$)]{\includegraphics[height=3cm, width=.24\textwidth]{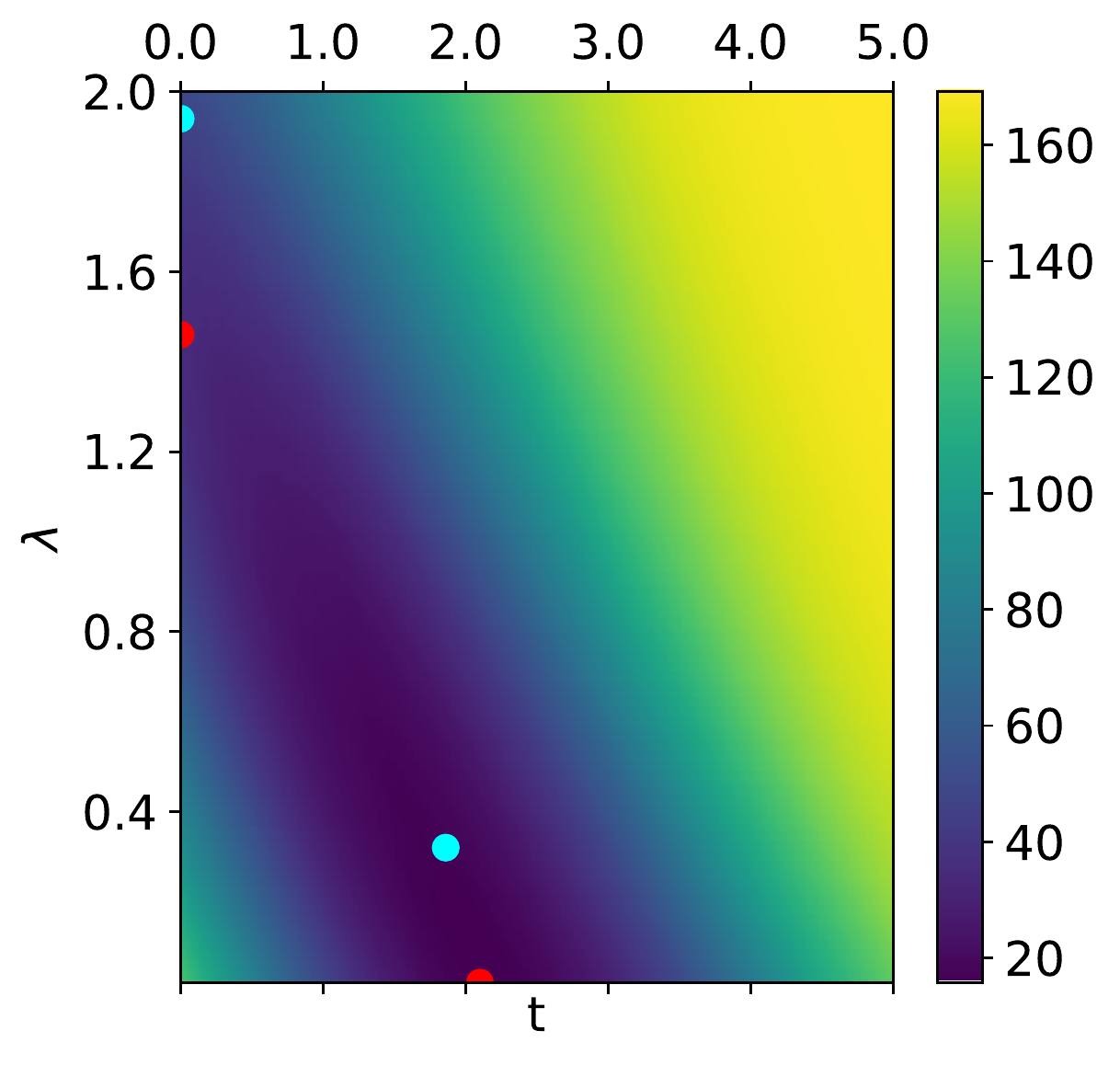}}
\hspace*{5pt}
\subfloat[ForwardSelect and FB ($\rho=0.5$)]{\includegraphics[height=3cm, width=.24\textwidth]{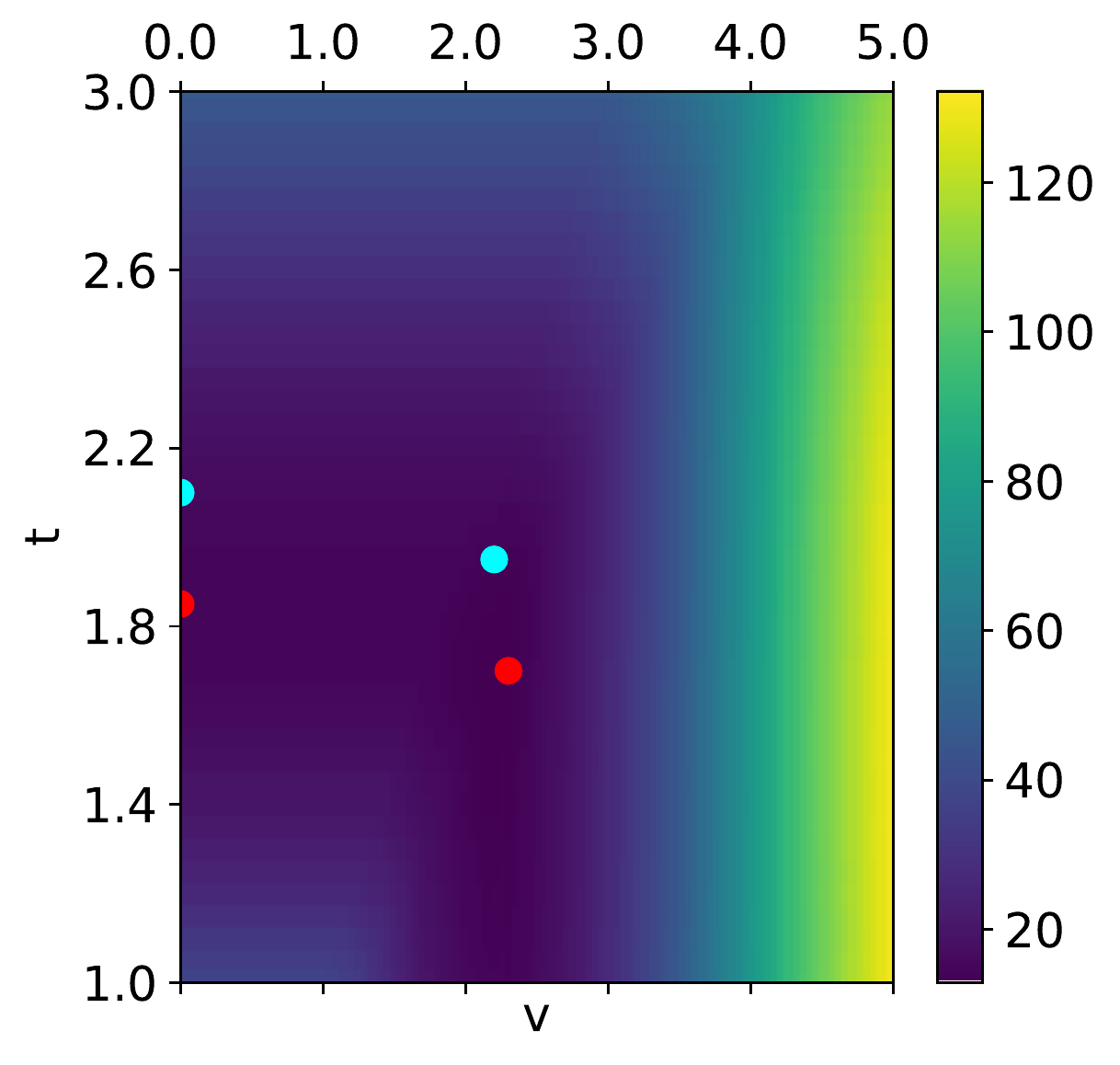}}
\hspace*{5pt}
\subfloat[ForwardSelect and FB ($\rho=-0.5$)]{\includegraphics[height=3cm, width=.24\textwidth]{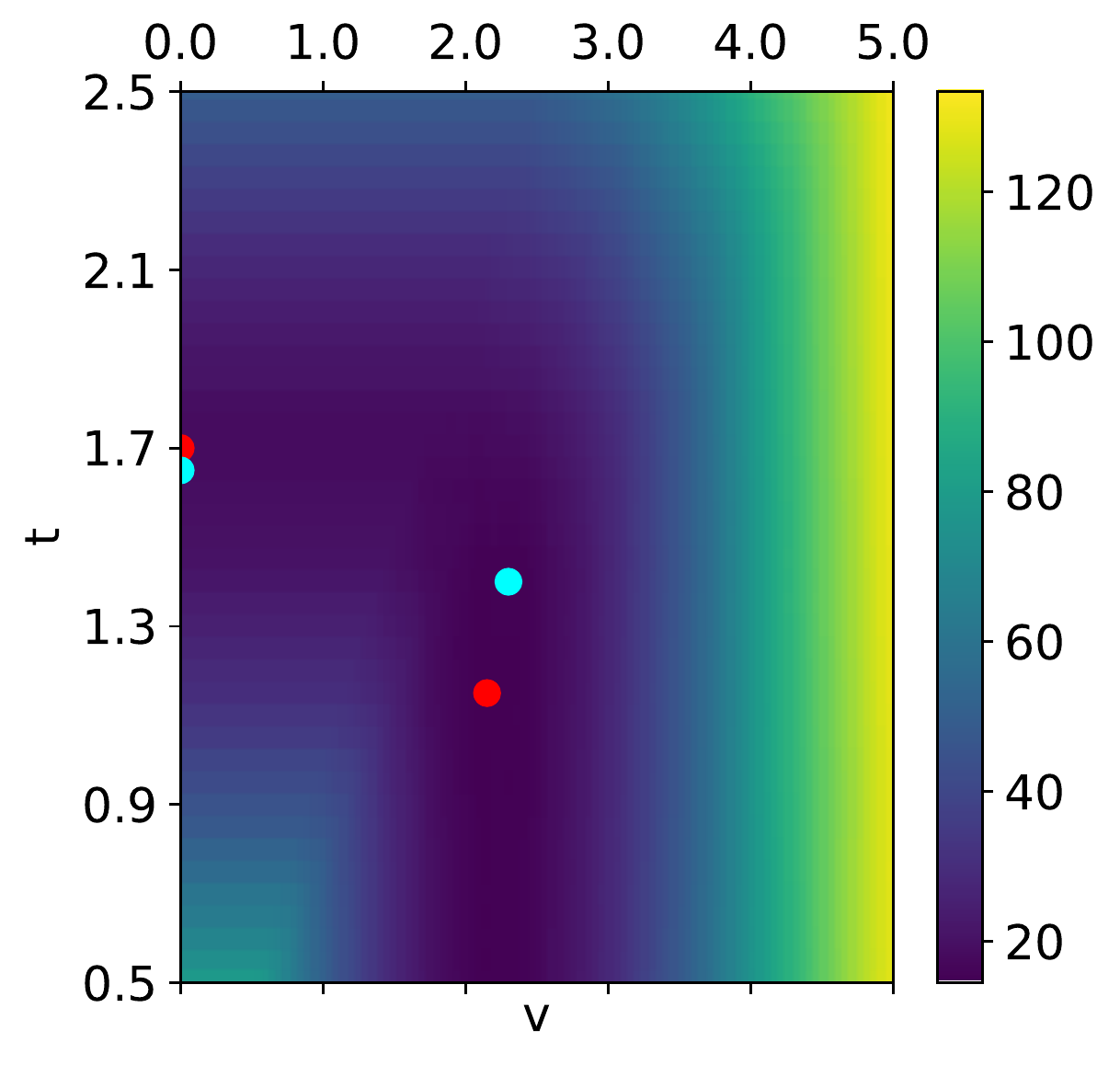}}
\caption{Experiment 3 (effects of tuning parameters). In all plots, cyan points are computed from the formulas in our theory, and red points are the empirically best tuning parameters (they minimize the average Hamming error over 500 repetitions). In (a)-(b), the cyan/red points with $t=0$ correspond to Lasso, and the other two are for thresholded Lasso. In (c)-(d), the cyan/red points with $t=0$ correspond to forward selection, and the other two are for forward backward selection.}
\end{figure}

\textbf{Experiment 4} ($p>n$ and random designs).  Fix \( (n,p,\vartheta,r) = (500,1000,0.5,1.5)\). We simulate data from the random design setting in Theorem~\ref{thm:equivalence}. We study the average Hamming error over 500 repetitions (tuning parameters are set in the same way as in Experiment 1). See Table~\ref{table:experiment.4}. We have some similar observations as before: e.g., ThreshLasso and FoBackward are still the best two, 
\begin{table}[hb!]
  \centering
  \begin{center}
  \scalebox{0.9}{
  \begin{tabular}{ccc|cccccc}
      $\rho$  &$\vartheta$& $r$ &  \multicolumn{1}{l}{Lasso}         & \multicolumn{1}{l}{ThresLasso}    & \multicolumn{1}{l}{ElasticNet}    & \multicolumn{1}{l}{SCAD}          & \multicolumn{1}{l}{Forward}       & \multicolumn{1}{l}{FoBackward}  \\ \hline
      0.5  & 0.5 & 1.5  & 16.02 (5.52) & \textbf{9.83} (4.08) & 13.92 (5.12) & 15.98 (6.28) & 11.74 (5.55) & 9.84 (4.93) \\
      -0.5 & 0.5 & 1.5  & 18.49 (6.03) & 10.50 (4.23) & 15.18 (5.64) & 18.12 (6.00) & 12.00 (5.67) & \textbf{10.41} (5.03) \\
      \hline
      \end{tabular}}
  \end{center}
  \caption{Experiment 4 ($p>n$ and random designs).} 
\label{table:experiment.4}
  \end{table}

\section{Conclusion} \label{sec:discuss}
Most papers on variable selection focus on one method and study its properties in a relatively broad setting.
In contrast, we focus on a relatively narrow setting but study a variety of different methods. 
Our motivation is to facilitate a direct comparison of main-stream approaches for variable selection. 
Although the model we use seems idealized, by varying the parameters, it already accommodates many different combinations of sparsity level, signal strength, and design correlation level. 
Under this model, we derive tractable forms of the Hamming error and phase diagram for each method, and we make notable discoveries out of these theoretical results. 
%

\bibliography{VS}
\bibliographystyle{iclr2022_conference}

\newpage

\appendix


\section{Sketch of the proof ideas} \label{suppsec:sketch}
We use a similar proof idea for every main theorem, which we explain as follows. To obtain the phase diagram, the key is deriving the rate of convergence of the expected Hamming error $\mathbb{E}[H(\hat{\beta},\beta)]$. 
Let 
\beq \label{def:FPFN}
\FP_p=\sum_{j=1}^p\mathbb{P}(\beta_j=0,\hat{\beta}_j\neq 0), \qquad\mbox{and}\qquad \FN_p=\sum_{j=1}^p \mathbb{P}(\beta_j= \tau_p, \hat{\beta}_j=0).
\eeq
By definition, 
\[
\mathbb{E}[H(\hat{\beta},\beta)]  = \FP_p+\FN_p. 
\]
Suppose $j$ is in the diagonal block $\{j,j+1\}$ of the Gram matrix $G$. For most methods (except for forward selection and forward backward selection, which we discuss separately), it is easy to see that $\hat{\beta}_j$ does not depend on any other $\beta_i$ with $i\notin\{j,j+1\}$. It follows that
\begin{align*}
\mathbb{P}(\beta_j=0, \hat{\beta}_j\neq 0) &= \mathbb{P}(\beta_j=0, \beta_{j+1}=0, \hat{\beta}_j\neq 0)+
\mathbb{P}(\beta_j=0, \beta_{j+1}=\tau_p, \hat{\beta}_j\neq 0)\cr
&= (1-\epsilon_p)^2\, \mathbb{P}\bigl( \hat{\beta}_j\neq 0 \big| \beta_j=0, \beta_{j+1}=0\bigr)\cr
&\qquad  +(1-\epsilon_p)\epsilon_p \cdot\mathbb{P}\bigl(\hat{\beta}_j\neq 0 \big| \beta_j=0, \beta_{j+1}=\tau_p \bigr)\cr
&=  L_p\, \mathbb{P}_{00}(\hat{\beta}_j\neq 0 ) + L_pp^{-\vartheta}\,\mathbb{P}_{01}(\hat{\beta}_j\neq 0), 
\end{align*} 
where $\mathbb{P}_{00}$ is the conditional probability conditioning on $(\beta_j, \beta_{j+1})=(0,0)$ and $\mathbb{P}_{01}$ is the conditional probability conditioning on $(\beta_j, \beta_{j+1})=(0, \tau_p)$. Similarly, we can derive 
\[
\mathbb{P}(\beta_j=\tau_p, \hat{\beta}_j=0)  = L_pp^{-\vartheta}\, \mathbb{P}_{10}(\hat{\beta}_j= 0 ) + L_pp^{-2\vartheta}\,\mathbb{P}_{11}(\hat{\beta}_j= 0), 
\]
where $\mathbb{P}_{10}$ is the conditional probability conditioning on $(\beta_j, \beta_{j+1})=(\tau_p,0)$ and $\mathbb{P}_{11}$ is the conditional probability conditioning on $(\beta_j, \beta_{j+1})=(\tau_p, \tau_p)$. 
When $p$ is even, by symmetry in this design, the above expressions do not change with $j$. When $p$ is odd, this is true except for $j=p$; however, this single $j$ has a negligible effect on the expected Hamming error. We thus have
\begin{eqnarray} \label{proof-sketch}
\mathbb{E}[H(\hat{\beta},\beta)] &=& L_pp\cdot \mathbb{P}_{00}(\hat{\beta}_j\neq 0) + L_pp^{1-\vartheta}\cdot \mathbb{P}_{01}(\hat{\beta}_j\neq 0) \cr
&& + L_pp^{1-\vartheta}\cdot \mathbb{P}_{10}(\hat{\beta}_j= 0) + L_pp^{1-2\vartheta}\cdot \mathbb{P}_{11}(\hat{\beta}_j= 0). 
\end{eqnarray}

It remains to study the probabilities in \eqref{proof-sketch}. Let $\tilde{y}_j=x_j'y/\sqrt{2\log(p)}$ and $\tilde{y}_{j+1}=x_{j+1}'y/\sqrt{2\log(p)}$. For most methods considered in this paper, $\hat{\beta}_j$ is determined by $(\tilde{y}_j, \tilde{y}_{j+1})$ only. Define
\beq \label{def:RejRegion}
{\cal R} = \{ (h_1, h_2)\in\mathbb{R}^2:  \; (\tilde{y}_j, \tilde{y}_{j+1})=(h_1, h_2) \mbox{ implies that }\hat{\beta}_j\neq 0  \}. 
\eeq
Write $\tilde{y}=(\tilde{y}_1, \tilde{y}_2)'$. Then, we can re-write \eqref{proof-sketch} as
\begin{eqnarray} \label{proof-sketch2}
\mathbb{E}[H(\hat{\beta},\beta)] &=& L_pp\cdot \mathbb{P}_{00}(\tilde{y}\in {\cal R}) + L_pp^{1-\vartheta}\cdot \mathbb{P}_{01}(\tilde{y}\in {\cal R}) \cr
&& + L_pp^{1-\vartheta}\cdot \mathbb{P}_{10}(\tilde{y}\notin {\cal R} ) + L_pp^{1-2\vartheta}\cdot \mathbb{P}_{11}(\tilde{y}\notin {\cal R}). 
\end{eqnarray} 
In the settings of interest in this paper, conditioning on each realization of $(\beta_j, \beta_{j+1})$, it can be shown that 
\[
\tilde{y}\;\; \sim\;\; {\cal N}_2\Bigl(\mu,\; \frac{1}{2\log(p)} \Sigma\Bigr), \qquad\mbox{for some fixed }\mu\in\mathbb{R}^2\mbox{ and }\Sigma\in\mathbb{R}^{2\times 2}. 
\]
For any $x\in\mathbb{R}^2$ and $S\subset\mathbb{R}^2$, define
\beq \label{d_Sigma}
d^2_{\Sigma}(x, S)=\inf_{v\in S}\bigl\{ (x-v)'\Sigma^{-1}(x-v)\bigr\}.
\eeq
We apply Lemma 6.1 in \cite{ke2020power} to get that, as $p\to\infty$,  ($L_p$ denotes a multi-$\log(p)$ term; see Section~\ref{sec:main} or the notations below) 
\beq \label{proof-sketch3}
\mathbb{P}(\tilde{y}\in {\cal R})=L_pp^{-d^2_{\Sigma}(\mu, \, {\cal R})}, \qquad \mathbb{P}(\tilde{y}\notin {\cal R})=L_pp^{-d^2_{\Sigma}(\mu, \, {\cal R}^c)}.  
\eeq  
Combining \eqref{proof-sketch3} with \eqref{proof-sketch2}, we can get the rate of convergence of the expected Hamming error, if we calculate the following quantities:
\begin{itemize}
\item The set ${\cal R}$ (we call it ``rejection region''). The rejection region depends on the definition of the method and the choice of tuning parameters. 
\item The distances $d_{\Sigma}(\mu, {\cal R})$ and $d_{\Sigma}(\mu, {\cal R}^c)$. Note that $(\mu, \Sigma)$ depend on the realization of $(\beta_j, \beta_{j+1})$. Therefore, we need to calculate $(\mu,\Sigma)$ for each of the four possible realizations. 
\end{itemize}

In the remaining of this supplemental material, we prove Theorems~\ref{thm:elastic-net}-\ref{thm:forward-backward} and Proposition~\ref{prop:bridge}-\ref{prop:SCAD-larger-a}. For each theorem, the proof can be divided into three parts:
\begin{itemize}
\item[(a)] Derive the rejection region ${\cal R}$. 
\item[(b)] Apply \eqref{proof-sketch2}-\eqref{proof-sketch3} to calculate the rate of convergence of $\mathbb{E}[H(\hat{\beta},\beta)]$. 
\item[(c)] Calculate the phase diagram based on the result from (b). 
\end{itemize}  

\bigskip

Throughout the proof, we use $L_p$ to denote a generic multi-$\log(p)$ term, which satisfies that $L_pp^\epsilon\to \infty$ and $L_pp^{-\epsilon}\to 0$ for any $\epsilon>0$.  We also frequently use the notation: 
\begin{definition} \label{def:EllipsDistance}
For $\rho\in (-1,1)$ and $u,v\in\mathbb{R}^2$, define $d_\rho(u,v)>0$ by $d_\rho^2(u,v) = (u_1 -v_1)^2 + (u_2 - v_2)^2 - 2\rho(u_1 - v_1)(u_2 - v_2)$. 
\end{definition}

In our proofs, we also frequently calculate the infimum of $d_\rho^2(u, v)$, for $v$ a line in $\mathbb{R}^2$. 
The following lemma is very useful. Its proof is elementary and thus omitted. 
\begin{lem}\label{supplem:distance}
Fix $\rho\in (-1,1)$. 
Given real numbers $A,B,C$ such that $AB\neq 0$, consider a constrained optimization over $x=(x_1,x_2)$ that minimizes $d_{\rho}^2(x, (0,0))=x_1^2+x_2^2-2\rho x_1x_2$ subject to the constraint $Ax_1+Bx_2+C=0$. The solution is $x_1^* =  \frac{ - C(A +\rho B)}{A^2 + B^2 + 2\rho AB}$ and $x_2^* =\frac{ - C(B +\rho A)}{A^2 + B^2 + 2\rho AB}$,
and the objective function evaluated at $x^*=(x_1^*, x_2^*)$ is 
\[
d_\rho^2(x^*, (0,0)') = \frac{C^2(1 -\rho^2)}{A^2 + B^2 + 2\rho AB}. 
\]
\end{lem}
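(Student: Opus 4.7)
The plan is to treat this as a standard constrained quadratic minimization and solve it with Lagrange multipliers. Writing $d_\rho^2(x,(0,0)) = x^\top M x$ with $M = \bigl(\begin{smallmatrix}1 & -\rho \\ -\rho & 1\end{smallmatrix}\bigr)$, the matrix $M$ is positive definite because $|\rho|<1$, so the objective is strictly convex and a unique minimizer over the affine constraint set exists.

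First I would form the Lagrangian $\mathcal{L}(x,\mu) = x^\top M x + \mu(A x_1 + B x_2 + C)$ and write the stationarity conditions as the linear system $2 M x = -\mu (A, B)^\top$. Using the closed-form inverse $M^{-1} = \frac{1}{1-\rho^2}\bigl(\begin{smallmatrix}1 & \rho \\ \rho & 1\end{smallmatrix}\bigr)$, this gives $x_1 = -\mu(A+\rho B)/[2(1-\rho^2)]$ and $x_2 = -\mu(B+\rho A)/[2(1-\rho^2)]$. Substituting into the active constraint $A x_1 + B x_2 = -C$ yields a single scalar equation whose solution is $\mu = 2C(1-\rho^2)/(A^2+B^2+2\rho AB)$. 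Plugging this back into the expressions for $x_1$ and $x_2$ recovers exactly the claimed formulas for $x_1^*$ and $x_2^*$.

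For the optimal value, the cleanest route is the general identity that the minimum of $x^\top M x$ subject to $a^\top x = -C$ equals $C^2/(a^\top M^{-1} a)$ with $a = (A,B)^\top$. A direct computation gives $a^\top M^{-1} a = (A^2+B^2+2\rho AB)/(1-\rho^2)$, from which $d_\rho^2(x^*,(0,0)) = C^2(1-\rho^2)/(A^2+B^2+2\rho AB)$. Alternatively, one can substitute the explicit $(x_1^*, x_2^*)$ into the quadratic form and simplify: expanding $(A+\rho B)^2 + (B+\rho A)^2 - 2\rho(A+\rho B)(B+\rho A)$ collapses to $(1-\rho^2)(A^2+B^2+2\rho AB)$, giving the same answer.

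There is no genuine obstacle here. The one technical point I would verify upfront is nondegeneracy of the denominator $A^2+B^2+2\rho AB$, which is needed for both the Lagrangian system and the final formula to make sense. Since this quantity equals $(1-\rho^2)(a^\top M^{-1} a)$ with $M^{-1}$ positive definite, it is strictly positive whenever $a\neq (0,0)$, and the hypothesis $AB\neq 0$ is more than enough to ensure this; everything else is routine algebra.
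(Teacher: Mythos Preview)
Your proposal is correct. The paper itself omits the proof entirely, saying only ``Its proof is elementary and thus omitted,'' so there is nothing to compare against; your Lagrange-multiplier derivation is a clean and complete way to supply the missing argument, and your check that $A^2+B^2+2\rho AB=(1-\rho^2)\,a^\top M^{-1}a>0$ for $a\neq 0$ properly handles the one nondegeneracy issue.
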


\section{Proof of Theorem~\ref{thm:elastic-net} (Elastic net)}\label{suppsec:en}
As described in Section~\ref{suppsec:sketch}, our proof has three parts: (a) deriving the rejection region, (b) obtaining the rate of convergence of $\mathbb{E}[H(\hat{\beta},\beta)]$, and (c) calculating the phase diagram.

\paragraph{Part 1: Deriving the rejection region.} 
Recall that the rejection region ${\cal R}$ is as defined in \eqref{def:RejRegion}.
Write $h_1=x_j'y/\sqrt{2\log(p)}$, $h_2=x_{j+1}'y/\sqrt{2\log(p)}$, and $\lambda=\sqrt{2q\log(p)}$. Consider a bivariate Elastic net problem, where $(\hat{b}_1, \hat{b}_2)$ minimizes
\beq \label{enproof-optimization}
L(b)\equiv \frac{1}{2}b'\begin{bmatrix}1&\rho\\\rho & 1\end{bmatrix}b + b'h+\sqrt{q}\|b\|_1 + \frac{1}{2} \mu\|b\|^2.  
\eeq
It is seen that $(\hat{\beta}_j, \hat{\beta}_{j+1})=\sqrt{2\log(p)}(\hat{b}_1, \hat{b}_2)$. Hence, ${\cal R}$ consists of all values of $h$ such that $\hat{b}_1\neq 0$.

Fix $\rho\geq 0$. The next lemma gives the explicit solution to \eqref{enproof-optimization} in the case of $h_1>|h_2|$. It is proved in Section~\ref{subsec:proof-EN-solution}. 
\begin{lem}[Solution path of Elastic net]\label{suppthm:sol.path.en}
Consider the optimization in \eqref{enproof-optimization}. Suppose \( h_1 > \abs{h_2} \geq 0 \). Write $\eta=\rho/(1+\mu)$. 
\begin{itemize}
  \item When \( \sqrt{q} \geq h_1 \), we have \( \hat{b}_1 = \hat{b}_2 = 0 \).
  \item If $h_2\geq \eta h_1$, when \(  \frac{{h_2 -\eta h_1}}{1 -\eta}\leq \sqrt{q} <h_1 \), we have $\hat{b}_1 =\frac{h_1 -\sqrt{q}}{1 +\mu}$, and $\hat{b}_2 = 0$; 
  
  When $\sqrt{q}<\frac{{h_2 -\eta h_1}}{1 -\eta}$, we have   \[
  \hat{b}_1 =\frac{\frac{h_1 -\sqrt{q} }{1 +\mu} -\eta\frac{h_2 -\sqrt{q} }{1 +\mu}}{1 -\eta^2},\qquad 
    \hat{b}_2 =\frac{\frac{h_2 -\sqrt{q} }{1 +\mu} -\eta\frac{h_1 -\sqrt{q} }{1 +\mu}}{1 - \eta^2}; 
   \]
   \item if $h_2<\eta h_1$, when \(  \frac{{-h_2 +\eta h_1}}{1 +\eta}\leq \sqrt{q} <h_1 \), we have $\hat{b}_1 =\frac{h_1 -\sqrt{q}}{1 +\mu}$, and $\hat{b}_2 = 0$; 
   
   When $\sqrt{q}< \frac{{-h_2 +\eta h_1}}{1 +\eta}$, we have 
\[
    \hat{b}_1 =\frac{\frac{h_1 -\sqrt{q} }{1 +\mu} -\eta \frac{h_2 +\sqrt{q} }{1 +\mu}}{1 - \eta^2},\quad 
  \hat{b}_2 =\frac{\frac{h_2 +\sqrt{q} }{1 +\mu} -\eta\frac{h_1 -\sqrt{q} }{1 +\mu}}{1 - \eta^2}.
\]
\end{itemize}
\end{lem}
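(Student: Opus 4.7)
The plan is to apply KKT conditions to the strictly convex bivariate problem and perform a careful case analysis on the support pattern of the minimizer.

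I would first note that since $G+\mu I$ has eigenvalues $(1+\mu)\pm\rho$, strictly positive under $|\rho|<1$ and $\mu\ge 0$, the objective $L(\cdot)$ is strictly convex, so the minimizer $(\hat b_1,\hat b_2)$ exists, is unique, and is fully characterized by the subdifferential optimality conditions. Writing $s_i\in\partial|\hat b_i|$ (equal to $\sgn(\hat b_i)$ when $\hat b_i\neq 0$ and lying in $[-1,1]$ otherwise), stationarity produces a $2\times 2$ linear system
\begin{equation*}
(1+\mu)\hat b_1+\rho\hat b_2 \;=\; h_1-\sqrt{q}\,s_1, \qquad (1+\mu)\hat b_2+\rho\hat b_1 \;=\; h_2-\sqrt{q}\,s_2
\end{equation*}
(up to the sign convention in the $b'h$ term). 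Dividing through by $1+\mu$ and setting $\eta=\rho/(1+\mu)$ naturally isolates the lemma-level quantities.

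Next, I would enumerate the four a priori support patterns of $(\hat b_1,\hat b_2)$. Under the hypothesis $h_1>|h_2|\ge 0$ with $\rho\ge 0$, the patterns with $\hat b_1\le 0$ can be ruled out at the outset: heuristically, the larger marginal signal $h_1$ forces $\hat b_1>0$ to enter first as $\sqrt{q}$ decreases. This leaves three regimes: both coordinates zero; only $\hat b_1>0$; both nonzero. In the first regime, the subgradient bound $|h_i|\le\sqrt{q}$ for $i=1,2$ collapses, under $h_1>|h_2|$, to the single threshold $\sqrt{q}\ge h_1$. In the second, $s_1=1$ fixes $\hat b_1=(h_1-\sqrt q)/(1+\mu)$, and the remaining inclusion $|\rho\hat b_1-h_2|\le\sqrt{q}$ identifies the interval of $\sqrt{q}$ over which this regime is active. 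In the third, once $s_2$ is chosen, direct inversion of the $2\times 2$ system yields the closed-form expressions stated in the lemma.

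The delicate step---and the reason the lemma splits the third regime into two subcases depending on the sign of $h_2-\eta h_1$---is fixing the sign of $\hat b_2$ at the moment it becomes active. When $h_2\ge\eta h_1$, the active boundary inequality is $h_2-\rho\hat b_1=\sqrt{q}$, giving the threshold $(h_2-\eta h_1)/(1-\eta)$ and $\hat b_2$ entering with $s_2=+1$; when $h_2<\eta h_1$, the active inequality is $\rho\hat b_1-h_2=\sqrt{q}$, yielding the threshold $(\eta h_1-h_2)/(1+\eta)$ and $\hat b_2$ entering with $s_2=-1$. The main obstacle is the bookkeeping: in every regime one must verify that the computed $\hat b_i$'s respect the sign conventions assumed for $s_i$ and that the $\sqrt{q}$-thresholds between adjacent regimes agree. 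Continuity of the solution path at each transition provides a useful consistency check that the pieces fit together, after which the closed-form formulas follow from elementary $2\times 2$ linear algebra.
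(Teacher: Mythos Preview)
Your proposal is correct and follows essentially the same route as the paper: both arguments write down the subgradient/KKT system, track the solution as $\sqrt{q}$ decreases through the stages (both zero, only $\hat b_1$ active, both active), and split the final stage on the sign of $h_2-\eta h_1$ to determine whether $\hat b_2$ enters positively or negatively. Your remark about strict convexity and the sign convention in the $b'h$ term is a welcome clarification, but otherwise the argument matches the paper's proof step for step.
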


We now use Lemma~\ref{suppthm:sol.path.en} to derive ${\cal R}$. 
Partition $\mathbb{R}^2$ into 4 non-overlapping regions: 
\begin{align*}
& M_1=\{(h_1, h_2):\, h_1 > |h_2|\}, \qquad M_2=\{(h_1, h_2):\, h_1< - |h_2|\}, \cr
& M_3=\{(h_1, h_2):\, h_2 >  |h_1|\}, \qquad M_4=\{(h_1, h_2):\, h_2<-|h_1|\}. 
\end{align*}
First, we derive ${\cal R}\cap M_1$. By Lemma~\ref{suppthm:sol.path.en}, as $\sqrt{q}$ decreases from $\infty$ to $0$, $\hat{b}_1$ is initially zero and then becomes positive when $\sqrt{q}$ hits $h_1$ (second bullet point of this lemma). Then, if we further decrease $\sqrt{q}$, the value of $\hat{b}_1$ is always increasing (third bullet point of this lemma) and remains positive. Therefore, $\sqrt{q}<h_1$ is the sufficient and necessary condition for $\hat{b}_1$ to be nonzero. It follows that 
\[
{\cal R}\cap M_1 = M_1\cap \{ (h_1, h_2): h_1>\sqrt{q}\}.
\]
Second, we consider ${\cal R}\cap M_2$. Note that $(h_1,h_2)\in {\cal R}\cap M_2$ if and only if $(-h_1, -h_2)\in {\cal R}\cap M_1$. Additionally,  if we simultaneously flip the sign of $(h_1, h_2, b_1, b_2)$, 
the objective in \eqref{enproof-optimization} is unchanged. It follows that 
\[
{\cal R}\cap M_2 = \{(h_1, h_2): (-h_1, -h_2)\in {\cal R}\cap M_1\}. 
\]
Next, we derive ${\cal R}\cap M_3$. Note that $(h_1, h_2)\in M_3$ if and only if $(h_2, h_1)\in M_1$. Moreover, if we swap $(h_1, b_1)$ with $(h_2, b_2)$, the objective in \eqref{enproof-optimization} is unchanged. Hence, we can obtain ${\cal R}\cap M_3$ as follows: We first find the collection of $(h_1, h_2)\in\mathcal{R}\cap M_1$ such that $\hat{b}_2\neq 0$, and then switch the two coordinates $h_1$ and $h_2$ to get ${\cal R}\cap M_3$. To this end, by Lemma~\ref{suppthm:sol.path.en}, for $(h_1,h_2)\in {\cal R}\cap M_1$, $\hat{b}_2\neq 0$ if either $h_2-\eta h_1>\sqrt{q}(1-\eta)$ or $h_2-\eta h_1<-\sqrt{q}(1+\eta)$. It follows that, for $(h_1,h_2)\in {\cal R}\cap M_3$, $\hat{b}_1\neq 0$ if either $h_1-\eta h_2>\sqrt{q}(1-\eta)$ or $h_1-\eta h_2<-\sqrt{q}(1+\eta)$. It implies that 
\[
{\cal R}\cap M_3 = M_3\cap \bigl( \{ (h_1, h_2): h_1-\eta h_2>\sqrt{q}(1-\eta)\}\cup \{(h_1,h_2): h_1-\eta h_2<-\sqrt{q}(1+\eta)\}\bigr). 
\]
Last, we obtain ${\cal R}\cap M_4$ by 
\[
{\cal R}\cap M_4 = \{(h_1, h_2): (-h_1, -h_2)\in {\cal R}\cap M_3\}. 
\]
Combining the above results gives
\begin{align} \label{proof-en-rjRegion}
{\cal R} &= \{(h_1,h_2): h_1-\eta h_2>\sqrt{q}(1-\eta),\, h_1>\sqrt{q}\}\cr
&\;\; \cup \{(h_1, h_2): h_1-\eta h_2>\sqrt{q}(1+\eta)\} \cup \{(h_1, h_2): h_1-\eta h_2<-\sqrt{q}(1+\eta)\}\cr
&\;\; \cup \{(h_1,h_2): h_1-\eta h_2<-\sqrt{q}(1-\eta),\, h_1<-\sqrt{q}\}. 
\end{align}  
See Figure~\ref{suppfig:rejection.region.en} for a visualization of the rejection region (recall that $\eta=\rho/(1+\mu)$). 

\begin{figure}[tb!]
  \centering
  \includegraphics[width=0.7\textwidth]{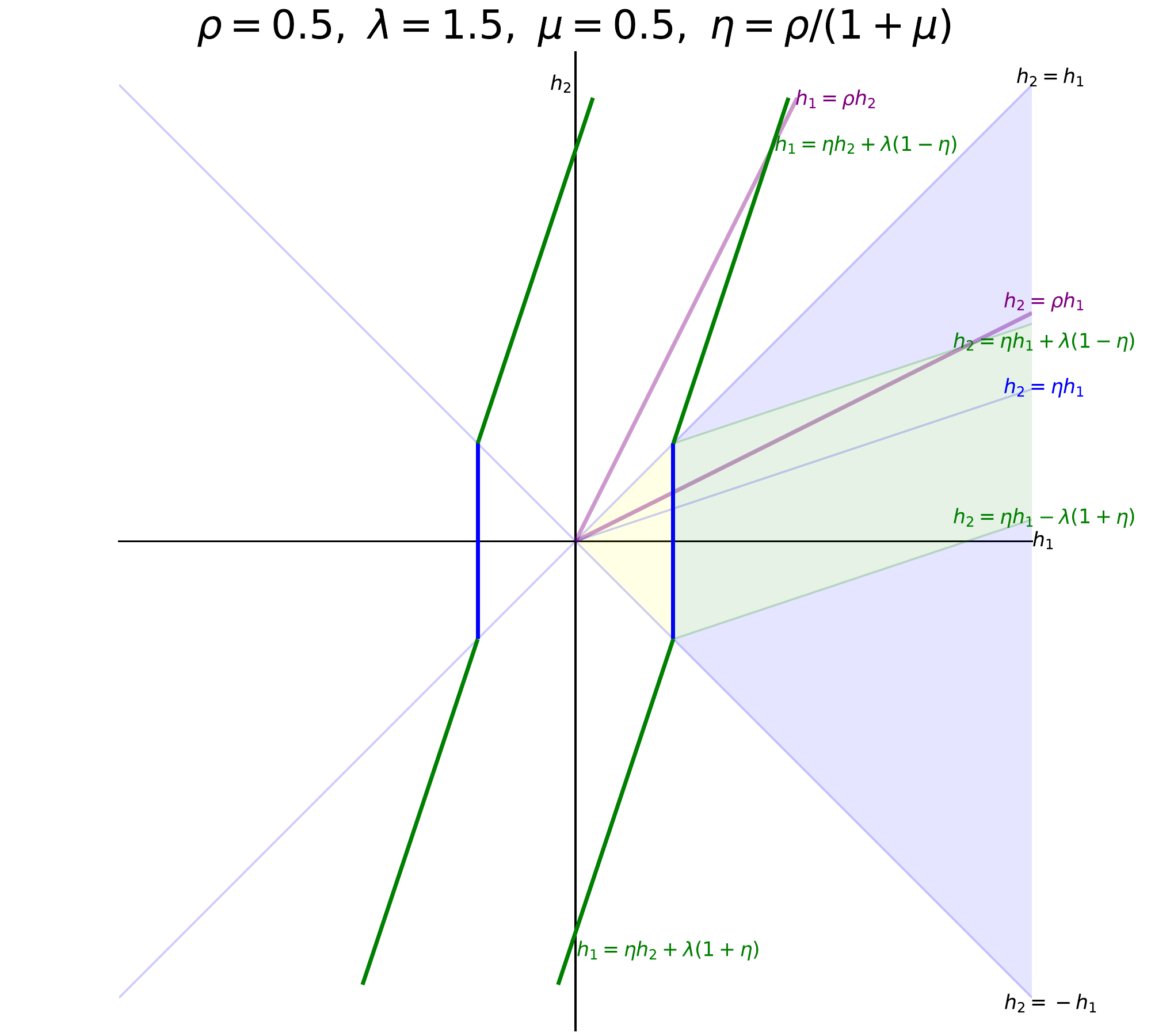}
  \caption{The rejection region of Elastic net for $\rho\geq 0$.}
  \label{suppfig:rejection.region.en}
\end{figure}

Figure~\ref{suppfig:rejection.region.en} only depicts the rejection region for $\rho\geq 0$. For $\rho<0$, we can similarly draw the rejection region, but it is not necessary for the proof of this theorem. In Part 2, we will see that, by carefully utilizing the  symmetry in our problem, we can derive the rate of convergence of the Hamming error for $\rho<0$ without deriving the rejection region directly.   

\paragraph{Part 2. Analyzing the Hamming error.} 
We aim to use \eqref{proof-sketch2}-\eqref{proof-sketch3} to derive the rate of convergence of $\mathbb{E}[H(\hat{\beta},\beta)]$.  
Recall that $\tilde{y}_1=x_j'y/\sqrt{2\log(p)}$ and $\tilde{y}_2=x_{j+1}'y/\sqrt{2\log(p)}$. It is easy to see that  $\tilde{y}\sim {\cal N}_2(\mu,\; \frac{1}{2\log(p)} \Sigma)$, where the covariance matrix $\Sigma$ is the $2\times 2$ matrix with 1 in the diagonal and $\rho$ in the off-diagonal, and the vector $\mu$ is equal to 
\begin{equation*}
\mu_{00}= \begin{bmatrix} 0 \\ 0 \end{bmatrix},\quad 
\mu_{01}=\begin{bmatrix} \rho \sqrt{r} \\ \sqrt{r} \end{bmatrix},\quad 
\mu_{10}=\begin{bmatrix} \sqrt{r} \\ \rho \sqrt{r} \end{bmatrix},\quad 
\mu_{11}=\begin{bmatrix} (1 +\rho) \sqrt{r} \\ (1 +\rho) \sqrt{r} \end{bmatrix}, 
  \end{equation*}
when $(\beta_j, \beta_{j+1})$ takes the value of $(0, 0)$, $(0, \tau_p)$, $(\tau_p, 0)$, and $(\tau_p, \tau_p)$, respectively. By \eqref{proof-sketch2}-\eqref{proof-sketch3}, $\mathbb{E}[H(\hat{\beta},\beta)]=\FP_p+\FN_p$, where
\begin{eqnarray} \label{proof-en-Hamming}
\FP_p &=& L_pp^{1-d^2_{\Sigma}(\mu_{00}, {\cal R})} + L_pp^{1-\vartheta -d^2_{\Sigma}(\mu_{01}, {\cal R}) }, \cr
\FN_p &=& L_pp^{1-\vartheta - d^2_{\Sigma}(\mu_{10}, {\cal R}^c)}+ L_pp^{1-2\vartheta-d^2_{\Sigma}(\mu_{11}, {\cal R}^c)}. 
\end{eqnarray} 
It suffices to calculate $d_{\Sigma}(\mu_{00}, {\cal R})$, $d_{\Sigma}(\mu_{01}, {\cal R})$, $d_{\Sigma}(\mu_{10}, {\cal R}^c)$, and $d_{\Sigma}(\mu_{11}, {\cal R}^c)$.

First, consider the case of $\rho\geq 0$. The expression of ${\cal R}$ is given explicitly in \eqref{proof-en-rjRegion}. 
By the definition in \eqref{d_Sigma} and Definition~\ref{def:EllipsDistance}, for any $S\subset\mathbb{R}^2$ and $\mu\notin S$, 
\beq \label{proof-en-distance}
d^2_{\Sigma}(\mu, S) = \frac{1}{1-\rho^2}\, \inf_{\xi\in S} d^2_\rho(\mu, \xi). 
\eeq
If $S$ can be expressed as the interaction and union of finitely many half-planes, then the point $\xi^*$ that attains the infimum must be on the boundary line of one of these half-planes. We thus only need to calculate: 
\begin{itemize}
\item[(i)] $\inf_{\xi\in {\cal L}}d^2_{\rho}(\mu, \xi)$ for the boundary line ${\cal L}$ of each half-plane in the definition of $S$ (with verification that the tangent point on \( {\cal L} \) is achievable on the boundary); 
\item[(ii)] $d_\rho^2(\mu,\zeta)$ for each point $\zeta$ that is a vertex of $S$ (i.e., the intersection of two boundary lines). 
\end{itemize}
For (i), we apply the formula given in Lemma~\ref{supplem:distance}. For (ii), we apply Definition~\ref{def:EllipsDistance} directly. These calculations give a finite collection of values. In (i), the $\xi^*$ that attains the infimum may not belong to $S$; if that happens, we delete it from the collection. Finally, $\inf_{\xi\in S} d^2_\rho(\mu, \xi)$ is the minimum of the values in this collection.

  

Since ${\cal R}$ and ${\cal R}^c$ can be expressed via the interaction and union of finitely many half-planes, we follow the above routine to calculate the desired quantities. Take $d_{\Sigma}(\mu_{01}, {\cal R})$ for example. 
Recall that \(\mu_{01}= (\rho \sqrt{r},\sqrt{r}) \). By \eqref{proof-en-distance}, it suffices to calculate $\inf_{\xi\in{\cal R}}d^2_\rho(\mu_{01},\xi)$. The region ${\cal R}$ has 6 boundary lines, but since $\rho\geq 0$, the infimum can only be attained in either of three cases:
\begin{itemize}
\item on the line ${\cal L}_1: h_1-\eta h_2=\sqrt{q}(1-\eta)$;
\item  on the line ${\cal L}_2: h_1=\sqrt{q}$;
\item on the the vertex $v^*=(\sqrt{q}, \sqrt{q})'$, which is an intersection of ${\cal L}_1\cap{\cal L}_2 $.  
\end{itemize}
Let $x=(x_1, x_2)'=(h_1-\rho\sqrt{r}, h_2-\sqrt{r})'$. We can re-write ${\cal L}_1$ as a line ${\cal L}_1'$ for $x$, which expression is $x_1-\eta x_2=(1-\eta)\sqrt{q}-(\rho-\eta)\sqrt{r}$. Similarly, we can re-write ${\cal L}_2$ as a line ${\cal L}_2'$: $x_1=\sqrt{q}-\rho\sqrt{r}$.  
We apply Lemma~\ref{supplem:distance} to get
\begin{align*}
\inf_{\xi\in {\cal L}_1} d^2_\rho(\mu_{01}, \xi ) &= \inf_{x\in {\cal L}_1'}d^2_\rho(x, (0,0)') = \frac{\bigl[(1-\eta)\sqrt{q}-(\rho-\eta)\sqrt{r}\bigr]^2(1-\rho^2)}{1+\eta^2-2\rho \eta},\cr
\inf_{\xi\in {\cal L}_2} d^2_\rho(\mu_{01}, \xi ) &= \inf_{x\in {\cal L}_2'}d^2_\rho(x, (0,0)') = (\sqrt{q}-\rho\sqrt{r})^2(1-\rho^2), \cr
d_{\rho}^2(\mu_{01}, v^*) &= d_\rho^2\bigl((\sqrt{q},\sqrt{q})', (\rho\sqrt{r}, \sqrt{r})'\bigr)\cr
& \equiv (\sqrt{q}-\rho\sqrt{r})^2+(\sqrt{q}-\sqrt{r})^2-2\rho (\sqrt{q}-\rho\sqrt{r})(\sqrt{q}-\sqrt{r})^2. 
\end{align*}
The value of $\inf_{\xi\in{\cal R}}d^2_\rho(\mu_{01},\xi)$ is the minimum of the above three values. In fact, the distance $d_\rho$ is related to the size of an ellipsoid that centers at $(\rho\sqrt{r},\sqrt{r})'$ and hits the boundary of ${\cal R}$. As $\sqrt{r}$ increases from zero, the center of this ellipsoid moves upwards on the line of $h_1=\rho h_2$. Consequently, the minimum of the above three values is initially (i) $\inf_{\xi\in {\cal L}_2} d^2_\rho(\mu_{01}, \xi )$ when $\sqrt{r}$ is appropriately small, then (ii) $d_{\rho}^2(\mu_{01}, v^*)$ when $\sqrt{r}$ is moderately large, and finally (iii) $\inf_{\xi\in {\cal L}_1} d^2_\rho(\mu_{01}, \xi )$ when $\sqrt{r}$ is sufficiently large; see Figure~\ref{suppfig:rejection.region.en}. We now figure out the range of $\sqrt{r}$ for each of the three cases. Recall that 
$v^*=(\sqrt{q}, \sqrt{q})'$. 
Let $\xi^*=(\xi_1^*, \xi_2^*)'$ be the vector that 
attains $\inf_{\xi\in {\cal L}_2} d^2_\rho(\mu_{01}, \xi )$. 
We have an explicit expression of $\xi^*$ from Lemma~\ref{supplem:distance}. By equating it with $v^*$, we can solve the critical value of $\sqrt{r}$ at which case (i) transits to case (ii):
\[
\sqrt{q}=v_2^*=\xi_2^*=  \sqrt{r}  + \rho (\sqrt{q}-\rho\sqrt{r}) \qquad\Longrightarrow\qquad \sqrt{r}=\frac{\sqrt{q}}{1 +\rho}. 
\]
Similarly, let $\tilde{\xi}^*=(\tilde{\xi}^*_1, \tilde{\xi}_2^*)'$ be the vector that attains $\inf_{\xi\in {\cal L}_1} d^2_\rho(\mu_{01}, \xi )$. By equating $\tilde{\xi}^*$ with $v^*$, we can solve the critical value of $\sqrt{r}$ at which case (ii) transits to case (iii): 
\[
\sqrt{q}=v_2^*=\tilde{\xi}_2^*=  \sqrt{r}  +  \frac{\bigl[(1-\eta)\sqrt{q}-(\rho-\eta)\sqrt{r}\bigr](\rho-\eta)}{1+\eta^2-2\rho \eta}\qquad\Longrightarrow\qquad \sqrt{r}=\frac{1 +\eta}{1 +\rho}\sqrt{q}. 
\]
We combine the above results to get
\beq \label{proof-en-Hamming-exponent}
\inf_{\xi\in {\cal R}}d^2_\rho(\mu_{01}, \xi) =  
\begin{cases} 
      (1 -\rho^2)(\sqrt{q}- \rho\sqrt{r})^2_ +, & \text{  if  } \sqrt{r} \leq \frac{1}{1+\rho}\sqrt{q}, \\
        d^2_{\rho}\bigl((\sqrt{q},\sqrt{q})',(\rho \sqrt{r},\sqrt{r})'\bigr), & \text{  if  } \frac{1}{1+\rho}\sqrt{q} < \sqrt{r} \leq \frac{1+\eta}{1+\rho}\sqrt{q},\\
       \frac{(1-\rho^2)}{1+\eta^2-2\rho \eta}[(1-\eta)\sqrt{q}-(\rho-\eta)\sqrt{r}]_+^2, & \text{if  } \sqrt{r} > \frac{1+\eta}{1+\rho}q. 
\end{cases} 
\eeq
Recall that $\eta=\rho/(1+\mu)$ is a shorthand notation. 
We plug \eqref{proof-en-Hamming-exponent} into \eqref{proof-en-distance}, and then we insert it into \eqref{proof-en-Hamming}. This gives the second term in $\FP_p$. We can follow the same routine to derive every term in $\FP_p$ and $\FN_p$. We omit the details but summarize the results in Theorem~\ref{suppthm:hamm.en} below.

Next, consider the case of $\rho<0$. We re-parametrize the linear model by replacing $(x_{j+1}, \beta_{j+1})$ with $(-x_{j+1}, -\beta_{j+1})$. After this re-parametrization, the $(j, j+1)$ block of the Gram matrix is a $2\times 2$ matrix $\Sigma$ whose off-diagonal entries are $-\rho=|\rho|$. The rejection region is defined by the solution path of \eqref{enproof-optimization} associated with $|\rho|>0$. This allows us to use the expression in \eqref{proof-en-rjRegion} directly with a simple replacement of $\rho$ by $|\rho|$.  There is no need to re-calculate the rejection region for a negative $\rho$. 

Let $\tilde{y}_1=x_j'y/\sqrt{2\log(p)}$ and $\tilde{y}_2=x_{j+1}'y/\sqrt{2\log(p)}$.  We still have $\tilde{y}\sim {\cal N}_2(\mu,\; \frac{1}{2\log(p)} \Sigma)$. However, the four realizations of $(\beta_j, \beta_{j+1})$ become $(0, 0)$, $(0, -\tau_p)$, $(\tau_p, 0)$, and $(\tau_p, -\tau_p)$. Therefore, the mean vectors $\mu$ have changed to 
\begin{equation*}
\mu_{00}= \begin{bmatrix} 0 \\ 0 \end{bmatrix},\quad 
\mu_{01}=\begin{bmatrix}- |\rho| \sqrt{r} \\ -\sqrt{r} \end{bmatrix},\quad 
\mu_{10}=\begin{bmatrix} -\sqrt{r} \\ -|\rho| \sqrt{r} \end{bmatrix},\quad 
\mu_{11}=\begin{bmatrix} (1 -|\rho|) \sqrt{r} \\ -(1 -|\rho|) \sqrt{r} \end{bmatrix}. 
\end{equation*}
Similar to \eqref{proof-en-Hamming}, it suffices to calculate 
$d_{\Sigma}(\mu_{00}, {\cal R})$, $d_{\Sigma}(\mu_{01}, {\cal R})$, $d_{\Sigma}(\mu_{10}, {\cal R}^c)$, and $d_{\Sigma}(\mu_{11}, {\cal R}^c)$.
Here ${\cal R}$ is the same as in Figure~\ref{suppfig:rejection.region.en}, but the locations of the $\mu$ vectors have changed. Since ${\cal R}$ is centrosymmetric,  $d_{\Sigma}(\mu_{00}, {\cal R})$, $d_{\Sigma}(\mu_{01}, {\cal R})$, and $d_{\Sigma}(\mu_{10}, {\cal R}^c)$ are actually the same as before. We only need to re-calculate $d^2_\Sigma(\mu_{11}, {\cal R}^c)$. The calculation routine is the same as that for \eqref{proof-en-Hamming-exponent}. We omit the details but present the results directly in the theorem below.

To summarize, in this part, we have proved the following theorem: 
\begin{theorem}\label{suppthm:hamm.en}
Suppose the conditions of Theorem~\ref{thm:elastic-net} hold. Let $\lambda=\sqrt{2q\log(p)}$ in Elastic net. Write $\eta=\rho/(1+\mu)$. As $p\to\infty$, 
\[
\FP_p=L_p p^{1- \min\bigl\{ q, \;\; \vt + f_1(\sqrt{r}, \sqrt{q})\bigr\}}, \qquad \FN_p = L_p p^{1-\min\bigl\{\vt + f_2(\sqrt{r}, \sqrt{q}),\;\; 2\vt + f_3(\sqrt{r}, \sqrt{q})\bigr\}}, 
\]
where (below, $d^2_{|\rho|}(u,v)$ is as in Definition~\ref{def:EllipsDistance}) 
\begin{align*}
 f_1(\sqrt{r},\sqrt{q}) & = \begin{cases} 
      (\sqrt{q}- |\rho|\sqrt{r})^2_ +, & \text{  if  } \sqrt{r} \leq \frac{1}{1+|\rho|}\sqrt{q}, \\
      \frac{1}{1-\rho^2}\cdot  d^2_{|\rho|}\bigl((\sqrt{q},\sqrt{q})',(|\rho| \sqrt{r},\sqrt{r})'\bigr), & \text{  if  } \frac{1}{1+|\rho|} \sqrt{q }< \sqrt{r} \leq \frac{1+|\eta|}{1+|\rho|} \sqrt{q},\\
        \frac{[ (1 -|\eta|)\sqrt{q} -(|\rho| -|\eta|)\sqrt{r}]_ +^2}{1 +\eta^2 -2|\rho||\eta|}, & \text{if } \sqrt{r} \geq \frac{1+|\eta|}{1+|\rho|}\sqrt{q},
\end{cases} \cr
f_2(\sqrt{r}, \sqrt{q}) &= \min\Bigl\{ (\sqrt{r} -\sqrt{q})_ +^2, \;\;
       \frac{[ (1 - \rho\eta)\sqrt{r} -(1 -|\eta|)\sqrt{q}]_ +^2}{1 +\eta^2 - 2\rho\eta} \Bigr\},\cr
       f_3(\sqrt{r}, \sqrt{q}) &= \frac{(1 -\eta)^2 [ (1 +\rho)\sqrt{r} -\sqrt{q}]_ +^2}{1 +\eta^2 - 2\rho\eta}. 
\end{align*}
\end{theorem}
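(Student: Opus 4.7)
My plan is to follow Part~2 of the sketch: reduce $\mathbb{E}[H(\hat\beta,\beta)]$ to the four large-deviation distances $d_\Sigma(\mu_{ab},\mathcal{R})$ and $d_\Sigma(\mu_{ab},\mathcal{R}^c)$ appearing in \eqref{proof-en-Hamming}, and evaluate each one by a case analysis on the boundary of the rejection region $\mathcal{R}$ given in \eqref{proof-en-rjRegion}. Because the result must be stated in a form symmetric in the sign of $\rho$, I would first treat $\rho\geq 0$ in detail and then bootstrap the case $\rho<0$ via the sign flip in $(x_{j+1},\beta_{j+1})$ described at the end of Part~1, which turns the Gram block into one with off-diagonal $|\rho|$ and only changes $\mu_{11}$. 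This reduces the whole theorem to the single case $\rho\geq 0$, with the understanding that every $\rho$ in the final formula must be read as $|\rho|$.

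For the four distances, my main tool is the identity $d_\Sigma^2(\mu,S)=(1-\rho^2)^{-1}\inf_{\xi\in S}d_\rho^2(\mu,\xi)$ together with Lemma~\ref{supplem:distance}. The set $\mathcal{R}$ in \eqref{proof-en-rjRegion} is a union of four half-plane intersections bounded by the six lines $h_1=\pm\sqrt{q}$ and $h_1-\eta h_2=\pm(1\pm\eta)\sqrt{q}$. For each $\mu_{ab}$ the minimizer lies either on one of these lines or on a vertex (intersection of two of them), so I only need to (i) compute, via Lemma~\ref{supplem:distance}, the $d_\rho$-foot on every relevant line, (ii) compute $d_\rho^2(\mu_{ab},v)$ directly at every relevant vertex, and (iii) check feasibility of each foot. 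The piecewise thresholds in $\sqrt{r}$ arise by equating a foot of a line with the adjacent vertex and solving for $\sqrt{r}$; this is exactly the calculation that produced \eqref{proof-en-Hamming-exponent} for $d_\Sigma(\mu_{01},\mathcal{R})$, which becomes $f_1$. I would carry out the same recipe for the other three distances:

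\begin{itemize}
\item For $d_\Sigma(\mu_{00},\mathcal{R})$: the ellipsoid centered at the origin first touches $\mathcal{R}$ on the line $h_1-\eta h_2=(1+\eta)\sqrt{q}$ (or its mirror), and Lemma~\ref{supplem:distance} yields $(1-\rho^2)(1+\eta)^2q/(1+\eta^2-2\rho\eta)$. After dividing by $1-\rho^2$ this simplifies to $q$, accounting for the $q$ inside the minimum in $\FP_p$.
\item For $d_\Sigma(\mu_{10},\mathcal{R}^c)$: the center is $(\sqrt r,\rho\sqrt r)$ and $\mathcal{R}^c$ is the slab-like region between two parallel strips. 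The minimizer is either on the line $h_1=\sqrt q$ (giving $(\sqrt r-\sqrt q)_+^2$ after simplification) or on the line $h_1-\eta h_2=(1-\eta)\sqrt q$ (giving the second term of $f_2$); taking the minimum yields $f_2$.
\item For $d_\Sigma(\mu_{11},\mathcal{R}^c)$ with $\rho\geq 0$: the center $((1+\rho)\sqrt r,(1+\rho)\sqrt r)$ sits on the diagonal, and the minimizer falls on the line $h_1-\eta h_2=(1-\eta)\sqrt q$, which gives exactly $f_3$. For $\rho<0$ the center moves to $((1-|\rho|)\sqrt r,-(1-|\rho|)\sqrt r)$; the same line is the relevant boundary, and Lemma~\ref{supplem:distance} delivers the same formula with $\rho\mapsto|\rho|$, which is why $f_3$ can be stated uniformly in terms of $|\rho|$ and $|\eta|$.
\end{itemize}

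With each distance in hand I plug back into \eqref{proof-en-Hamming}, collect the four exponents into $\min\{q,\vartheta+f_1\}$ for $\FP_p$ and $\min\{\vartheta+f_2,2\vartheta+f_3\}$ for $\FN_p$, and absorb the $(1-\rho^2)^{-1}$ factors into the stated forms of $f_1,f_2,f_3$. The main obstacle is purely bookkeeping: correctly identifying which line or vertex of $\mathcal{R}$ realizes each infimum as $\sqrt r$ varies, and verifying that the foot produced by Lemma~\ref{supplem:distance} actually lies on the relevant segment rather than on its extension — this is the source of the three-way split in $f_1$ and of the $(\cdot)_+$ truncations in $f_2$ and $f_3$. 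Once the transition values of $\sqrt r$ (e.g. $\sqrt q/(1+|\rho|)$ and $(1+|\eta|)\sqrt q/(1+|\rho|)$ for $f_1$) are pinned down by equating foot with vertex, the rest is elementary algebra.
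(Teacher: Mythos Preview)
Your strategy is exactly the paper's: reduce to the four distances in \eqref{proof-en-Hamming}, compute each one via Lemma~\ref{supplem:distance} together with a case analysis on the boundary lines and vertices of $\mathcal{R}$ from \eqref{proof-en-rjRegion}, and handle $\rho<0$ by the sign-flip on $(x_{j+1},\beta_{j+1})$ so that only $\mu_{11}$ changes. The paper carries out the $f_1$ computation in full (this is \eqref{proof-en-Hamming-exponent}) and declares the other three to be analogous.

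That said, two of your line identifications are off and would produce wrong intermediate expressions if carried through. For $d_\Sigma(\mu_{00},\mathcal{R})$, the first contact is on the vertical segment $h_1=\sqrt q$ at the point $(\sqrt q,\rho\sqrt q)$, which gives $d_\Sigma^2=q$ directly; the slanted line $h_1-\eta h_2=(1+\eta)\sqrt q$ is farther, and the quantity $(1+\eta)^2q/(1+\eta^2-2\rho\eta)$ you quote does not simplify to $q$ (set $\eta=\rho$ to get $(1+\rho)/(1-\rho)\neq 1$). For $d_\Sigma(\mu_{11},\mathcal{R}^c)$ with $\rho<0$, after the flip the center $((1-|\rho|)\sqrt r,-(1-|\rho|)\sqrt r)$ sits on the \emph{anti}-diagonal, so the relevant boundary is the lower slanted line $h_1-|\eta|h_2=(1+|\eta|)\sqrt q$, not the upper one you name; Lemma~\ref{supplem:distance} then yields $(1+|\eta|)^2[(1-|\rho|)\sqrt r-\sqrt q]_+^2/(1+|\eta|^2-2|\rho||\eta|)$, which matches the stated $f_3$ only after the substitutions $1+|\eta|=1-\eta$, $1-|\rho|=1+\rho$, $|\rho||\eta|=\rho\eta$. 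Thus $f_3$ is uniform in the \emph{signed} parameters $\rho,\eta$, not in $|\rho|,|\eta|$ as you write. Neither slip affects the architecture of the argument, but both are exactly the kind of bookkeeping you flag as the main obstacle, so they are worth getting right.
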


\paragraph{Part 3. Calculating the phase diagram.} 
By Theorem~\ref{suppthm:hamm.en}, the Hamming error is $\FP_p+\FN_p=L_p p^{1-h(q;\vartheta,r)}$, where  
\begin{equation}\label{suppeq:h(qvtr)}
  h(q;\vartheta,r) = \min\Bigl\{\min\bigl\{ q, \  \vt + f_1(\sqrt{r}, \sqrt{q})\bigr\},\;\;  \min\bigl\{\vt + f_2(\sqrt{r}, \sqrt{q}),\  2\vt + f_3(\sqrt{r}, \sqrt{q})\bigr\} \Bigr\}.
\end{equation}

To calculate the phase diagram, we need to find $q^*$ that maximizes $h(q;\vartheta,r)$ and then investigate the conditions on $(r,\vartheta)$ such that $h(q^*; \vartheta,r)>1$ or $\vartheta<h(q^*; \vartheta,r)<1$ or $h(q^*; \vartheta,r)\leq \vartheta$. 

We first prove that \( r= \vt \)  is the boundary between the Regions of Almost Full Recovery and No Recovery, i.e., the boundary separating $\vartheta<h(q^*; \vartheta,r)<1$ and  $h(q^*; \vartheta,r)\leq \vartheta$. 

When \( r <\vt \), we need to show \( h(q;\vt,r) \leq \vt,\,\forall\, q \).  If \( q \leq  \vt\), then \( h(q;\vt,r) \leq q \leq \vt \). If \( q > \vt \), then we look at \( f_2(\sqrt{r},\sqrt{q}) \): Now we have \( 0 \leq f_2(\sqrt{r},\sqrt{q}) \leq (\sqrt{r} - \sqrt{q})_ + ^2 = 0\). Thus  \( h(q;\vt,r) \leq \vartheta + f_2(\sqrt{r},\sqrt{q}) =\vt \).

When \( r >\vt \), we can always find suitable \( q \) to make $h(q^*; \vartheta,r) \geq h(q; \vartheta,r) > \vartheta$. It is left for later discussion whether \( h(q^*; \vartheta,r) \) is greater than 1. In fact, such \( q \) can be any value satisfying \( \max \{\vt, (\frac{|\rho|-|\eta|}{1 -|\eta|})^2 r\} < q < r  \), which always exists because \( \frac{|\rho|-|\eta|}{1 -|\eta|} < 1 \). Since \( r> q\), we know \( f_2(\sqrt{r},\sqrt{q}) \) and \( f_3(\sqrt{r},\sqrt{q})  \) are strictly positive from their definition; since \( (1 -|\eta|) \sqrt{q} > (|\rho| -|\eta|) \sqrt{r} \), we also know \( f_1(\sqrt{r},\sqrt{q}) > 0 \). Since all four components of \( h(q;\vt,r) \) in \eqref{suppeq:h(qvtr)} is greater than \( \vt \), we have the desired result. 

To sum up the discussion so far, we have shown that \( r =\vt\) is the curve separating the regions of $\vartheta<h(q^*; \vartheta,r)<1$ and  $h(q^*; \vartheta,r)\leq \vartheta$.

For the rest of Part 3, we try to find the boundary between $h(q^*; \vartheta,r)>1$ and  $\vartheta<h(q^*; \vartheta,r)<1$. 

We need an important fact about such boundary, not only for the proof of Elastic net but also for all other methods. Recall the definition of \( \FP_p \)  and \( \FN_p \) in \eqref{proof-en-Hamming}, and we actually have the general form 
\[
\FP_p=L_p p^{1- \min\bigl\{ d^2_{\Sigma}(\mu_{00},{\cal R}), \;\; \vt + d^2_{\Sigma}(\mu_{01},{\cal R})\bigr\}}, \qquad \FN_p = L_p p^{1-\min\bigl\{\vt + d^2_{\Sigma}(\mu_{10},{\cal R}^c),\;\; 2\vt + d^2_{\Sigma}(\mu_{11},{\cal R}^c)\bigr\}}, 
\]
and 
\begin{equation*}
  h(q;\vartheta,r) = \min\Bigl\{\min\bigl\{ d^2_{\Sigma}(\mu_{00},{\cal R}), \  \vt + d^2_{\Sigma}(\mu_{01},{\cal R})\bigr\},\;\;  \min\bigl\{\vt + d^2_{\Sigma}(\mu_{10},{\cal R}),\  2\vt + d^2_{\Sigma}(\mu_{11},{\cal R})\bigr\} \Bigr\}.
\end{equation*}
As an important fact, we always have the following relationship at the boundary:
\begin{equation}\label{suppeq:important.relationship}
  \min\bigl\{ d^2_{\Sigma}(\mu_{00},{\cal R}), \  \vt + d^2_{\Sigma}(\mu_{01},{\cal R})\bigr\} =  \min\bigl\{\vt + d^2_{\Sigma}(\mu_{10},{\cal R}),\  2\vt + d^2_{\Sigma}(\mu_{11},{\cal R})\bigr\} = 1
\end{equation}
This is because: First, at the boundary \( r =r(\vt) \) , we must always have $h(q^*; \vartheta,r(\vt))=1$; otherwise, since \( h(q; \vartheta,r) \) is continuous in \( (q,r) \) for fixed \( \vt \), it would contradict the definition of the boundary itself. Second, the exponents of \( \FP_p \) and \( FN_p \) has to be equal. This is because if we change the tuning parameters for fixed \( (\vt,r) \), it can only enlarge or shrink the rejection region \( \cal R \), and thus the effects on \( \FP_p \) and \( \FN_p \) would always be in the opposite directions. As a result, if the exponents of \( \FP_p \) and \( \FN_p \) are not equal at the boundary, we can change the tuning parameters to make \( h(q;\vt,r) >1 \).

With the important fact, we can proceed our discussion. By the definition of \( h(q;\vt,r) \) in \eqref{suppeq:h(qvtr)}, it gives us \( 2\times 2 = 4 \) cases respectively for \( \rho > 0 \) and \( \rho <0 \). We discuss them one by one and summarise the results when the full phase curves are complete. For brevity, we also denote \( \lambda' = \sqrt{q} \) for the rest of Part 3.

When \( \rho > 0 \), we have four cases. 

\textit{First}, if \( \lambda'^2 = \vt + f_2(\sqrt{r},\lambda') = 1\) and \( \vt+ f_1(\sqrt{r},\lambda') \geq 1 \), \( 2\vt+ f_4(\sqrt{r},\lambda') \geq 1 \): 
we have 
   \( \lambda' = 1 \). From \(\vt + f_2(\sqrt{r},\lambda') = 1\), we know \[ \sqrt{r} =\max \left\{ 1 + \sqrt{1 -\vt}, \frac{\sqrt{1 +\eta^2 - 2\rho\eta}}{1 -\rho\eta}\sqrt{1 -\vt} +\frac{1 -\eta}{1 -\rho\eta} \right\}. \] We also know from \( \vt+ f_1(\sqrt{r},\lambda') \geq 1 \), that since \( \sqrt{r} > \lambda' = 1 \), \( \sqrt{r} \leq \frac{1 - \eta}{\rho -\eta} - \frac{\sqrt{1 +\eta^2 - 2\rho\eta}}{\rho -\eta} \sqrt{1 -\vt} \); from  \( 2\vt+ f_4(\sqrt{r},\lambda') \geq 1 \), that \(  \sqrt{r} \geq \frac{\sqrt{1 +\eta^2 - 2\rho\eta}}{(1 -\eta)(1 +\rho)} \sqrt{1 - 2\vt} + \frac{1}{1 +\rho} \). 

After roughly interpreting the requirements, we make two points: (i) we always have 
\begin{equation}\label{suppeq:firstcase.intermediate}
  \frac{\sqrt{1 +\eta^2 - 2\rho\eta}}{1 -\rho\eta}\sqrt{1 -\vt} +\frac{1 -\eta}{1 -\rho\eta} \geq \frac{\sqrt{1 +\eta^2 - 2\rho\eta}}{(1 -\eta)(1 +\rho)} \sqrt{1 - 2\vt} + \frac{1}{1 +\rho}
\end{equation}
 and thus the requirement from  \( 2\vt+ f_4(\sqrt{r},\lambda') \geq 1 \) is loose. 
This can be proven by showing \( \frac{1 -\eta}{1 -\rho\eta} \geq  \frac{1}{1 +\rho}\)  and \( \frac{\sqrt{1 +\eta^2 - 2\rho\eta}}{1 -\rho\eta} \geq \frac{\sqrt{1 +\eta^2 - 2\rho\eta}}{(1 -\eta)(1 +\rho)}  \) respectively.    
(ii) Actually, we can eliminate the curve \( \sqrt{r} = \frac{\sqrt{1 +\eta^2 - 2\rho\eta}}{1 -\rho\eta}\sqrt{1 -\vt} +\frac{1 -\eta}{1 -\rho\eta}\) from this step, without using the requirement from \( 2\vt+ f_4(\sqrt{r},\lambda') \geq 1 \). (As a result, the same proof holds for the corresponding case of \( \rho < 0 \).) This is because if we put together
\begin{equation*}
  \begin{cases} 
    \frac{\sqrt{1 +\eta^2 - 2\rho\eta}}{1 -\rho\eta}\sqrt{1 -\vt} +\frac{1 -\eta}{1 -\rho\eta} >&~  1 + \sqrt{1 -\vt} \\
    \frac{\sqrt{1 +\eta^2 - 2\rho\eta}}{1 -\rho\eta}\sqrt{1 -\vt} +\frac{1 -\eta}{1 -\rho\eta} \leq&~ \frac{1 - \eta}{\rho -\eta} - \frac{\sqrt{1 +\eta^2 - 2\rho\eta}}{\rho -\eta} \sqrt{1 -\vt}
  \end{cases} 
\end{equation*}
we will have no solution. To be more specific, the first inequality gives us \( \sqrt{1 -\vt} >\frac{\eta(1 -\rho)}{\sqrt{1 +\eta^2 - 2\rho\eta} - 1 +\rho\eta} \), and the second equation will eventually give us \( \sqrt{1 -\vt} \leq \frac{(1 + \eta)(1 -\rho)}{(1 +\rho)\sqrt{1 +\eta^2 - 2\rho\eta}} \). However, the upper and lower bounds on \( \sqrt{1 -\vt} \) admits no solution, because we can prove \( \frac{\eta(1 -\rho)}{\sqrt{1 +\eta^2 - 2\rho\eta} - 1 +\rho\eta} \geq \frac{(1 + \eta)(1 -\rho)}{(1 +\rho)\sqrt{1 +\eta^2 - 2\rho\eta}}\) jusy by simplifying it for a few steps.

To sum up, the \textit{first} case gives us \( \sqrt{r} = 1 + \sqrt{1 -\vt} \) with the requirement \( \sqrt{r} \leq \frac{1 - \eta}{\rho -\eta} - \frac{\sqrt{1 +\eta^2 - 2\rho\eta}}{\rho -\eta} \sqrt{1 -\vt} \). 

\textit{Second}, if \( q =2\vt+ f_4(\sqrt{r},\lambda') = 1\) and \( \vt + f_1(\sqrt{r},\lambda') \geq  1\), \( \vt+ f_2(\sqrt{r},\lambda') \geq 1 \), we will need 
\begin{equation*}
  \sqrt{r} = \frac{\sqrt{1 +\eta^2 - 2\rho\eta}}{(1 -\eta)(1 +\rho)} \sqrt{1 - 2\vt} + \frac{1}{1 +\rho}
\end{equation*}
while requiring \( \sqrt{r} \geq \max \left\{ 1 + \sqrt{1 -\vt}, \frac{\sqrt{1 +\eta^2 - 2\rho\eta}}{1 -\rho\eta}\sqrt{1 -\vt} +\frac{1 -\eta}{1 -\rho\eta} \right\} \). We know this is impossible from Equation~\eqref{suppeq:firstcase.intermediate}. No curve is produced in this case. 

\textit{Third}, if \( \vt + f_1(\sqrt{r},\lambda') = \vt + f_2(\sqrt{r},\lambda') = 1\), and \( \lambda' \geq  1\), \( 2\vt+ f_3(\sqrt{r},\lambda') \geq 1 \), we will know from  \( \vt + f_2(\sqrt{r},\lambda') = 1 \) that 
\begin{equation*}
  \sqrt{r} =\max \left\{ \lambda' + \sqrt{1 -\vt}, \frac{\sqrt{1 +\eta^2 - 2\rho\eta}}{1 -\rho\eta}\sqrt{1 -\vt} +\lambda'\frac{1 -\eta}{1 -\rho\eta} \right\}.
\end{equation*}
We can use the same method as in the first point of the \textit{first} case to show that \( 2\vt+ f_3(\sqrt{r},\lambda') \geq 1 \) is loose with \( \lambda' \geq 1 \) . 

Now the curve seems to have two choices, but the latter one is actually impossible. When \( \sqrt{r} = \frac{\sqrt{1 +\eta^2 - 2\rho\eta}}{1 -\rho\eta}\sqrt{1 -\vt} +\lambda'\frac{1 -\eta}{1 -\rho\eta} \), we have 
\begin{equation*}
  \begin{cases} 
    \sqrt{r} =&~ \lambda' \frac{1 - \eta}{\rho -\eta} - \frac{\sqrt{1 +\eta^2 - 2\rho\eta}}{\rho -\eta} \sqrt{1 -\vt} = \frac{\sqrt{1 +\eta^2 - 2\rho\eta}}{1 -\rho\eta}\sqrt{1 -\vt} +\lambda'\frac{1 -\eta}{1 -\rho\eta}  \\
    \lambda' \geq&~ 1 
   \end{cases} 
\end{equation*}
which implies \( \lambda' =\frac{(1 +\rho)\sqrt{1 +\eta^2 - 2\rho\eta}\sqrt{1 -\vt}}{(1 -\rho)(1 +\eta)} \geq 1 \). We can eliminate this case now, without considering the requirement of \( 2\vt+ f_3(\sqrt{r},\lambda') \geq 1 \), because with such \(  \lambda'  \geq 1 \), \( \frac{\sqrt{1 +\eta^2 - 2\rho\eta}}{1 -\rho\eta}\sqrt{1 -\vt} +\lambda'\frac{1 -\eta}{1 -\rho\eta} \geq \lambda' + \sqrt{1 -\vt} \) cannot hold. To see this, we can compare \( \frac{\sqrt{1 +\eta^2 - 2\rho\eta}}{1 -\rho\eta}\sqrt{1 -\vt} +\lambda'\frac{1 -\eta}{1 -\rho\eta} \); cancelling out ``\( \sqrt{1 -\vt} \)'', we have 
\begin{equation*}
  \frac{\sqrt{1 +\eta^2 - 2\rho\eta}}{1 -\rho\eta} - 1 \geq \frac{\eta(1 -\rho)}{1 -\rho\eta} \cdot \frac{(1 +\rho)\sqrt{1 +\eta^2 - 2\rho\eta}}{(1 -\rho)(1 +\eta)}
\end{equation*}
Simplifying this for a few steps, and we will arrive at ``\( \sqrt{1 +\eta^2 - 2\rho\eta} \geq 1 +\eta \)'' which gives a contradiction. 

We can only have one case, where   \( \lambda' + \sqrt{1 -\vt} \) is greater:
\begin{equation*}
  \begin{cases} 
    \sqrt{r} =& \lambda' \frac{1 - \eta}{\rho -\eta} - \frac{\sqrt{1 +\eta^2 - 2\rho\eta}}{\rho -\eta} \sqrt{1 -\vt} = \lambda' + \sqrt{1 -\vt} \\
    \lambda' \geq& 1 
  \end{cases} 
\end{equation*}

To sum up, the \textit{third} case gives us the curve \( \sqrt{r} = \left[ \frac{1 -\eta}{1 -\rho} + \frac{\sqrt{1 +\eta^2 - 2\rho\eta}}{1 - \rho} \right]\sqrt{1 -\vt} \) with the requirement \( \lambda' = \left[ \frac{\rho -\eta}{1 -\rho} + \frac{\sqrt{1 +\eta^2 - 2\rho\eta}}{1 - \rho} \right]\sqrt{1 -\vt} \geq 1 \). 

\textit{Fourth}, if \( \vt+ f_2(\sqrt{r},\lambda') = 2\vt+ f_3(\sqrt{r},\lambda') = 1  \) and \( \lambda' \geq 1 \), \( \vt+ f_1(\sqrt{r},\lambda') \geq 1 \), we have the same contradiction as the \textit{second} case, that 
\begin{equation*}
  \frac{\sqrt{1 +\eta^2 - 2\rho\eta}}{(1 -\eta)(1 +\rho)} \sqrt{1 - 2\vt} + \frac{\lambda'}{1 +\rho} \geq \frac{\sqrt{1 +\eta^2 - 2\rho\eta}}{1 -\rho\eta}\sqrt{1 -\vt} +\lambda'\frac{1 -\eta}{1 -\rho\eta}
\end{equation*}
cannot hold. 

\textit{Summarising the cases of positive correlation,}, we have two curves:
\begin{align*}
  \sqrt{r} = &~ 1 + \sqrt{1 -\vt} \\
  \sqrt{r} =&~ \left[ \frac{1 -\eta}{1 -\rho} + \frac{\sqrt{1 +\eta^2 - 2\rho\eta}}{1 - \rho} \right]\sqrt{1 -\vt} 
\end{align*}
and the intersection point of the two curves is exactly at \( \left[ \frac{\rho -\eta}{1 -\rho} + \frac{\sqrt{1 +\eta^2 - 2\rho\eta}}{1 - \rho} \right]\sqrt{1 -\vt} = 1 \) so the two curves can be summarised as taking the maximum.

When \( \rho < 0 \), we also have four cases.

\textit{First}, if \( \lambda'^2 = \vt + f_2(\sqrt{r},\lambda') = 1\) and \( \vt+ f_1(\sqrt{r},\lambda') \geq 1 \), \( 2\vt+ f_4(\sqrt{r},\lambda') \geq 1 \): We already know from the same proof when the correlation is positive, that \( \sqrt{r} = 1 + \sqrt{1 -\vt} \) is the only admissible curve. Now we additionally need it to satisfy \( \sqrt{r} \geq \frac{\sqrt{1 +\eta^2 - 2\rho\eta}}{(1 +|\eta|)(1 -|\rho|)} \sqrt{1 - 2\vt} + \frac{1}{1 -|\rho|} \).

\textit{Second}, if \( \lambda'^2 =2\vt+ f_4(\sqrt{r},\lambda') = 1\) and \( \vt + f_1(\sqrt{r},\lambda') \geq  1\), \( \vt+ f_2(\sqrt{r},\lambda') \geq 1 \), we will need \( \sqrt{r} = \frac{\sqrt{1 +\eta^2 - 2\rho\eta}}{(1 +|\eta|)(1 -|\rho|)} \sqrt{1 - 2\vt} + \frac{1}{1 -|\rho|} \) and we need \( \sqrt{r} \geq 1 + \sqrt{1 -\vt} \) from \( \vt+ f_2(\sqrt{r},\lambda') \geq 1 \).

\textit{To sum up the first two cases,} they give us \[ \sqrt{r} =\max \left\{ 1 + \sqrt{1 -\vt}, \frac{\sqrt{1 +\eta^2 - 2\rho\eta}}{(1 +|\eta|)(1 -|\rho|)} \sqrt{1 - 2\vt} + \frac{1}{1 -|\rho|}\right\}. \]
and we need \( \sqrt{r} \leq \frac{1 - |\eta|}{|\rho| -|\eta|} - \frac{\sqrt{1 +\eta^2 - 2\rho\eta}}{|\rho| -|\eta|} \sqrt{1 -\vt} \) from \( \vt+ f_1(\sqrt{r},\lambda') \geq 1 \).

\textit{Third}, if \( \vt + f_1(\sqrt{r},\lambda') = \vt + f_2(\sqrt{r},\lambda') = 1\), and \( \lambda' \geq  1\), \( 2\vt+ f_3(\sqrt{r},\lambda') \geq 1 \), we already know from the same proof when the correlation is positive,
that we have only one curve \( \sqrt{r} = \left[ \frac{1 -|\eta|}{1 -|\rho|} + \frac{\sqrt{1 +\eta^2 - 2\rho\eta}}{1 - |\rho|} \right]\sqrt{1 -\vt}. \) Now we only need to update the requirement from \( 2\vt+ f_3(\sqrt{r},\lambda') \geq 1 \), which is 
\begin{align*}
  \sqrt{r} \geq \frac{\sqrt{1 +\eta^2 - 2\rho\eta}}{(1 +|\eta|)(1 -|\rho|)} \sqrt{1 - 2\vt} + \frac{\lambda'}{1 -|\rho|}
\end{align*}
Plug in \( \lambda' = \left[ \frac{|\rho| -|\eta|}{1 -|\rho|} + \frac{\sqrt{1 +\eta^2 - 2\rho\eta}}{1 - |\rho|} \right]\sqrt{1 -\vt}\), it is equivalent to 
\begin{equation*}
  \left[ \frac{1 -|\eta|}{1 -|\rho|} + \frac{\sqrt{1 +\eta^2 - 2\rho\eta}}{1 - |\rho|} \right]\sqrt{1 -\vt} \geq \frac{\sqrt{1 +\eta^2 - 2\rho\eta}}{1 - 2|\rho| +\rho\eta} \left[ \sqrt{1 -\vt} +\frac{1 -|\eta|}{1 +|\eta|}\sqrt{1 - 2\vt} \right]
\end{equation*} 
where the RHS is another curve which will show up in the next case.

\textit{Fourth}, if \( \vt+ f_2(\sqrt{r},\lambda') = 2\vt+ f_3(\sqrt{r},\lambda') = 1  \) and \( \lambda' \geq 1 \), \( \vt+ f_1(\sqrt{r},\lambda') \geq 1 \), we will have the most tedious case. 

It can be implied by \( \vt+ f_2(\sqrt{r},\lambda') = 2\vt+ f_3(\sqrt{r},\lambda') = 1  \) that 
\begin{equation*}
  \lambda' \left( \frac{1 -|\eta|}{|\rho| -|\eta|} - \frac{1}{1 -|\rho|} \right) = \sqrt{1 -\vt} \frac{\sqrt{1 +\eta^2 - 2\eta\rho}}{|\rho| -|\eta|} + \sqrt{1 - 2\vt} \frac{\sqrt{1 +\eta^2 - 2\eta\rho}}{(1 +|\eta|)(1 -|\rho|)}
\end{equation*}
When \( 1 +\rho\eta - 2|\rho| \leq 0 \), the equation admits no solution, because the coefficient of \( \lambda' \) is not positive.
In this case, if we look back at the curve in the \textit{second} case \( FP_1 = FN_2 \), we will notice that 
\begin{equation*}
  \frac{\sqrt{1 +\eta^2 - 2\rho\eta}}{(1 +|\eta|)(1 -|\rho|)} \sqrt{1 - 2\vt} + \frac{1}{1 -|\rho|} \leq \frac{1 - |\eta|}{|\rho| -|\eta|} - \frac{\sqrt{1 +\eta^2 - 2\rho\eta}}{|\rho| -|\eta|} \sqrt{1 -\vt}
\end{equation*} 
has no solution either. As a result, when \( 1 +\rho\eta - 2|\rho| \leq 0 \), there is simply no Exact Recovery region in \( \vt \in(0,\frac{1}{2}) \).

When \( 1 +\rho\eta - 2|\rho| > 0  \), we can proceed to solve for \( \lambda' \) and then \( \sqrt{r} \):
\begin{align*}
  \lambda' = &~ \frac{\sqrt{1 +\eta^2 - 2\rho\eta}}{1 - 2|\rho| +\rho\eta} \left[ (1 -|\rho|)\sqrt{1 -\vt} + \frac{|\rho| -|\eta|}{1 +|\eta|}\sqrt{1 - 2\vt} \right] \\
  \sqrt{r} =&~ \frac{\sqrt{1 +\eta^2 - 2\rho\eta}}{1 - 2|\rho| +\rho\eta} \left[ \sqrt{1 -\vt} +\frac{1 -|\eta|}{1 +|\eta|}\sqrt{1 - 2\vt} \right] 
\end{align*}
For all the requirements from \( \lambda' \geq 1 \) and \( \vt+ f_1(\sqrt{r},\lambda') \geq 1 \), we actually need \( \lambda' \geq 1 \) and \( \sqrt{r} \geq \max \left\{ \lambda' + \sqrt{1 -\vt}, \frac{\sqrt{1 +\eta^2 - 2\rho\eta}}{1 -\rho\eta}\sqrt{1 -\vt} +\lambda'\frac{1 -|\eta|}{1 -\rho\eta} \right\} \). 

The requirement \( \lambda' \geq 1 \) is actually \[ \frac{\sqrt{1 +\eta^2 - 2\rho\eta}}{1 - 2|\rho| +\rho\eta} \left[ \sqrt{1 -\vt} +\frac{1 -|\eta|}{1 +|\eta|}\sqrt{1 - 2\vt} \right] \geq  \frac{\sqrt{1 +\eta^2 - 2\rho\eta}}{(1 +|\eta|)(1 -|\rho|)} \sqrt{1 - 2\vt} + \frac{1}{1 -|\rho|} \] 
and
\( \sqrt{r} \geq \lambda' + \sqrt{1 -\vt} \) is actually \[ \frac{\sqrt{1 +\eta^2 - 2\rho\eta}}{1 - 2|\rho| +\rho\eta} \left[ \sqrt{1 -\vt} +\frac{1 -|\eta|}{1 +|\eta|}\sqrt{1 - 2\vt} \right] \geq \left[ \frac{1 -|\eta|}{1 -|\rho|} + \frac{\sqrt{1 +\eta^2 - 2\rho\eta}}{1 - |\rho|} \right]\sqrt{1 -\vt}. \]
So we can already conclude, that the diagram is 
\begin{align*}
  \sqrt{r} =\max \big\{&~ 1 + \sqrt{1 -\vt},\left[ \frac{1 -|\eta|}{1 -|\rho|} + \frac{\sqrt{1 +\eta^2 - 2\rho\eta}}{1 - |\rho|} \right]\sqrt{1 -\vt},\\
   &~\frac{\sqrt{1 +\eta^2 - 2\rho\eta}}{(1 +|\eta|)(1 -|\rho|)} \sqrt{1 - 2\vt} + \frac{1}{1 -|\rho|},\frac{\sqrt{1 +\eta^2 - 2\rho\eta}}{(1 - 2|\rho| +\rho\eta)_ +} \left[ \sqrt{1 -\vt} +\frac{1 -|\eta|}{1 +|\eta|}\sqrt{1 - 2\vt} \right]\big\}
\end{align*}

However, we are left with one last constraint  \( \frac{\sqrt{1 +\eta^2 - 2\rho\eta}}{1 - 2|\rho| +\rho\eta} \left[ \sqrt{1 -\vt} +\frac{1 -|\eta|}{1 +|\eta|}\sqrt{1 - 2\vt} \right] \geq \frac{\sqrt{1 +\eta^2 - 2\rho\eta}}{1 -\rho\eta}\sqrt{1 -\vt} +\lambda'\frac{1 -|\eta|}{1 -\rho\eta} \). It is actually loose, but the proof may be tedious (and unimportant). We just need to prove that when \( 1 - 2|\rho| +\rho\eta > 0 \) and 
\begin{equation*}
  \lambda' =  \frac{\sqrt{1 +\eta^2 - 2\rho\eta}}{1 - 2|\rho| +\rho\eta} \left[ (1 -|\rho|)\sqrt{1 -\vt} + \frac{|\rho| -|\eta|}{1 +|\eta|}\sqrt{1 - 2\vt} \right] \geq 1,
\end{equation*} 
we always have 
\( \lambda' + \sqrt{1 -\vt} \geq  \frac{\sqrt{1 +\eta^2 - 2\rho\eta}}{1 -\rho\eta}\sqrt{1 -\vt} +\lambda'\frac{1 -|\eta|}{1 -\rho\eta} \), which is equivalent to 
\begin{equation*}
  \frac{(1 -|\rho|)\sqrt{1 +\eta^2 - 2\rho\eta}}{1 - 2|\rho| +\rho\eta} \sqrt{1 -\vt} + \frac{|\rho| -|\eta|}{1 +|\eta|}\frac{\sqrt{1 +\eta^2 - 2\rho\eta}}{1 - 2|\rho| +\rho\eta} \sqrt{1 - 2\vt} \geq \frac{\sqrt{1 +\eta^2 - 2\rho\eta} -(1 -\rho\eta)}{|\eta|(1 -|\rho|)}\sqrt{1 - \vt}
\end{equation*}

We first look at one sufficient condition, \( \frac{\sqrt{1 +\eta^2 - 2\rho\eta} -(1 -\rho\eta)}{|\eta|(1 -|\rho|)} \leq 1 \) 
By simplifying this inequality, we  get \( \sqrt{1 +\eta^2 - 2\rho\eta} \leq 1 +|\eta| - 2\rho\eta \). The RHS is positive, because: (i) If \( |\rho| \leq 0.5 \), \( 1 +|\eta|(1 - 2|\rho|) > 0 \); (ii) If \( |\rho| >0. 5\), recall \( 1 - 2|\rho| +\rho\eta > 0\implies |\eta| \geq \frac{2|\rho| - 1}{|\rho|}\implies  1 +|\eta|(1 - 2|\rho|) \geq 1 -\frac{(2|\rho| -1)^2}{|\rho|} \geq 0 \) for \( |\rho| \geq 0.5 \). Then we can equare both sides and proceed, and finally getting \( \eta\rho \leq \frac{1}{2} \). 

As a result, when \( \rho\eta \leq \frac{1}{2} \), we already have a sufficient condition for what we want to prove. When \( \rho\eta > \frac{1}{2} \), we look at another sufficient condition:

We only need to prove another sufficient condition, by looking at the coefficients of \( \sqrt{1 -\vt} \), 
\begin{equation*}
  \frac{(1 -|\rho|)\sqrt{1 +\eta^2 - 2\rho\eta}}{1 - 2|\rho| +\rho\eta} \geq \frac{\sqrt{1 +\eta^2 - 2\rho\eta} -(1 -\rho\eta)}{\eta(1 -|\rho|)}
\end{equation*}
which is equivalent to verifying
\begin{equation*}
  (1 -|\eta| - 2|\rho| + 3\eta\rho -|\eta|\rho^2) \sqrt{1 +\eta^2 - 2\rho \eta} \leq (1 -\rho\eta)(1 - 2|\rho| +\rho\eta)
\end{equation*}
It is elementary mathematics that \( (RHS - LHS) \) is always positive as a function of \( (|\rho|,|\eta|) \) under \( \rho\eta \geq \frac{1}{2} \), \( 1 - 2|\rho| +\rho\eta > 0 \) and \( 0 <|\eta| <|\rho|  \). 

\textit{Summarising the cases of negative correlation:}
The diagram is 
\begin{align*}
  \sqrt{r} =\max \big\{&~ 1 + \sqrt{1 -\vt},\left[ \frac{1 -|\eta|}{1 -|\rho|} + \frac{\sqrt{1 +\eta^2 - 2\rho\eta}}{1 - |\rho|} \right]\sqrt{1 -\vt},\\
   &~\frac{\sqrt{1 +\eta^2 - 2\rho\eta}}{(1 +|\eta|)(1 -|\rho|)} \sqrt{1 - 2\vt} + \frac{1}{1 -|\rho|},\frac{\sqrt{1 +\eta^2 - 2\rho\eta}}{(1 - 2|\rho| +\rho\eta)_ +} \left[ \sqrt{1 -\vt} +\frac{1 -|\eta|}{1 +|\eta|}\sqrt{1 - 2\vt} \right]\big\}
\end{align*}

\subsection{Proof of Lemma~\ref{suppthm:sol.path.en}}  \label{subsec:proof-EN-solution}

Recall the optimization in \eqref{enproof-optimization}; the solution \( b =(b_1,b_2) \) has to set the sub-gradient of the objective function to zero. As a result, the equation of the sub-gradient for \( b =(b_1,b_2) \)  is:
\begin{equation*}
  \begin{bmatrix} 
  1 & \rho \\
  \rho & 1
  \end{bmatrix} \begin{bmatrix} b_1 \\ b_2 \end{bmatrix} 
  + \sqrt{q} \begin{bmatrix} 
  \sgn(b_1) \\
  \sgn(b_2)
  \end{bmatrix} 
  + \mu \begin{bmatrix} 
  b_1 \\
  b_2
  \end{bmatrix} = \begin{bmatrix} h_1 \\ h_2 \end{bmatrix} 
\end{equation*}

Now we begin to find out the solution path. Fixing \( \mu \), we decrease \( \sqrt{q} \) from a sufficiently large value to see when the variables enter the model. We assume \( h_1 > 0 \) and \( 0 <\abs{h_2} < h_1 \). Other cases can be computed in a similar way. 

\textit{Stage 1}: When \( \sqrt{q} \) is so large that neither of \( (x_j,x_{j + 1}) \) is in the model
\begin{equation*}
  \begin{bmatrix} 
  1 & \rho \\
  \rho & 1
  \end{bmatrix} \begin{bmatrix} 0 \\ 0 \end{bmatrix} 
  + \sqrt{q} \begin{bmatrix} 
  \sgn(0) \\
  \sgn(0)
  \end{bmatrix} 
  + \mu \begin{bmatrix} 
  0 \\
  0
  \end{bmatrix} = \begin{bmatrix} h_1 \\ h_2 \end{bmatrix},
\end{equation*}
it requires \( \sqrt{q} \geq \sqrt{q_1} =\max\{\abs{h_1},\abs{h_2}\} = h_1 \).

\textit{Stage 2}: When \( \sqrt{q} \) crosses \( \sqrt{q_1} = h_1 \), we assert that variable \( x_j \) has to enter the model, while \( x_{j + 1} \) does not. This is because:
\begin{itemize}
  \item \( b_1 \) has to be positive. If it is negative, we have 
  \( b_1 -\sqrt{q} +\mu \cdot 2b_1 = h_1 \), which implies \( b_1 \) has the same sign as \( h_1 +\sqrt{q} > 0 \), which is a contradition.
  \item \( b_2 \) cannot enter the model at this point. Otherwise, we have \( (1 +\mu)b_2 +\sqrt{q} \sgn(b_2) = h_2 \), for \( \abs{h_2} <\sqrt{q} < h_1 \). Considering the sign of \( b_2 \),  we have a contradition.
\end{itemize}
So \( b_1 \) has to enter the model as a positive number. Now the equation is 
\begin{equation*}
  \begin{bmatrix} 
  1 & \rho \\
  \rho & 1
  \end{bmatrix} \begin{bmatrix} b_1 \\ 0 \end{bmatrix} 
  + \sqrt{q} \begin{bmatrix} 
  1 \\
  \sgn(0)
  \end{bmatrix} 
  + \mu \begin{bmatrix} 
  b_1 \\
  0
  \end{bmatrix} = \begin{bmatrix} h_1 \\ h_2 \end{bmatrix} 
\end{equation*}
Thus \( b_1 =\frac{h_1 -\sqrt{q}}{1 +\mu} \) and \( \rho b_1 +\sqrt{q} \sgn(0) = h_2 \). Since \( \sgn(0)\in[ - 1,1] \), we need 
\begin{equation}\label{suppeq:sol.path.en.kkt}
  \abs{h_2 -\frac{\rho}{1 +\mu}h_1 +\frac{\rho}{1 +\mu}\sqrt{q}} \leq \sqrt{q}
\end{equation}
By discussing the sign of the content of the absolute value as \( \sqrt{q} \) decreases, we have the following two cases:

\textit{Stage 3, Case 1}: When \( h_2 >\eta h_1 \) (recall we define \( \eta =\frac{\rho}{1 +\mu}\) as a shorthand), and \( \sqrt{q} \) crosses \( \sqrt{q_2} =\frac{h_2 -\eta h_1}{1 -\eta} \), then \( x_{j + 1} \) enters the model and \( b_2 \) is positive. This is because Equation~\eqref{suppeq:sol.path.en.kkt} is now \( h_2 -\eta h_1 +\eta\sqrt{q} \leq \sqrt{q} \); as \( \sqrt{q} =\sqrt{q_2} = \frac{h_2 -\eta h_1}{1 - \eta}\), the sub-gradient of \( \abs{b_2} \) is taking the value of \( 1\in\sgn(0) \), so \( b_2 \) has to enter the model as a positive number. 

In this case, we solve 
\begin{equation*}
  \begin{bmatrix} 
  1 & \rho \\
  \rho & 1
  \end{bmatrix} \begin{bmatrix} b_1 \\ b_2 \end{bmatrix} 
  + \sqrt{q} \begin{bmatrix} 
  1 \\
  1
  \end{bmatrix} 
  + \mu \begin{bmatrix} 
  b_1 \\
  b_2
  \end{bmatrix} = \begin{bmatrix} h_1 \\ h_2 \end{bmatrix} 
\end{equation*}
and get \begin{align*}
  b_1 =\frac{\frac{h_1 -\sqrt{q}}{1 +\mu} -\eta\frac{h_2 -\sqrt{q}}{1 +\mu}}{1 - \eta^2},\quad 
  b_2 =\frac{\frac{h_2 -\sqrt{q}}{1 +\mu} -\eta\frac{h_1 -\sqrt{q}}{1 +\mu}}{1 - \eta^2}.
\end{align*}

\textit{Stage 3, Case 2}: When \( h_2 <\eta h_1 \), and \( \sqrt{q} \) crosses \( \sqrt{q_2} = \frac{\eta h_1 - h_2}{1 +\eta }\), \( x_{j + 1} \) enters the model and \( b_2 \) is negative. This is because reviewing Equation~\ref{suppeq:sol.path.en.kkt}, we always have \( h_2 -\eta h_1 +\eta \sqrt{q} < \sqrt{q} \), and thus when \( \sqrt{q} \) is small enough to make \( \abs{h_2 -\eta h_1 + \eta \sqrt{q}} = \sqrt{q} \), it has to be \( -h_2 +\eta h_1 -\eta \sqrt{q} = \sqrt{q} \). As a result, when \( \sqrt{q} \) crosses \( \sqrt{q_2} =\frac{\eta h_1 - h_2}{1 +\eta } \), \( b_2 \) enters the model as a negative number. Solving
\begin{equation*}
  \begin{bmatrix} 
  1 & \rho \\
  \rho & 1
  \end{bmatrix} \begin{bmatrix} b_1 \\ b_2 \end{bmatrix} 
  + \sqrt{q} \begin{bmatrix} 
  1 \\
  -1
  \end{bmatrix} 
  + \mu \begin{bmatrix} 
  b_1 \\
  b_2
  \end{bmatrix} = \begin{bmatrix} h_1 \\ h_2 \end{bmatrix} 
\end{equation*}
we have \begin{align*}
  b_1 =\frac{\frac{h_1 -\sqrt{q}}{1 +\mu} -\eta \frac{h_2 +\sqrt{q}}{1 +\mu}}{1 - \eta ^2},\quad 
  b_2 =\frac{\frac{h_2 +\sqrt{q}}{1 +\mu} -\eta \frac{h_1 -\sqrt{q}}{1 +\mu}}{1 - \eta ^2}.
\end{align*}



\section{Proof of Proposition~\ref{prop:bridge} (Marginal Regression)}
\label{sec:marginal.regression}

Proposition~\ref{prop:bridge} is about the connection of Elastic net to Lasso and marginal regression. 

To prove the assertion about Lasso, we only need to quote the results from the Corollary 4.2 of \citet{ke2020power} on the phase curves of Lasso. In fact, the phase curves of Lasso can be exactly obtained by setting \( \mu = 0 \) in Theorem~\ref{thm:elastic-net}. As \( \mu \) decreases from some positive value to zero, the curves in Theorem~\ref{thm:elastic-net} just converges downwards to the phase curves of Lasso.


To prove the assertion about marginal regression, we need to fully study the variable selection method based on marginal regression, which we will be devoted to for the rest of this section.

\begin{definition}\label{suppdef:marginal.regression}
  Marginal regression refers to the variable selection method which ranks all the variabels according to \( \{ \abs{X'y}_j:j\in[p] \}  \) and sets some cutoff point \( t \) for the ranking. Then the variables \( \{ j\in[p]:\abs{X'y}_j > t \}  \) is selected. 
\end{definition}
Soft-thresholded marginal regression behaves the same as Definition~\ref{suppdef:marginal.regression} in terms of variable selection.

\begin{remark}
If we focus on \( (1 +\mu)\hat\beta^{\mathrm{EN}} \), then Lasso and soft-thresholded marginal regression are just two limits of \( \mu = 0 \) and \( \mu =\infty \), in terms of the solution and its path, the shape of the rejection region, and phase curves. 
\end{remark}







As described in Section~\ref{suppsec:sketch}, our proof still consists of three parts: (a) deriving the rejection region, (b) obtaining the rate of convergence of $\mathbb{E}[H(\hat{\beta},\beta)]$, and (c) calculating the phase diagram. 

\paragraph{Part 1: Deriving the rejection region.} 
According to Definition~\ref{suppdef:marginal.regression}, the variable selection based on marginal regression can be decomposed to every correlated pair of variables, \( (x_j,x_{j + 1}) \). It directly thresholds \( (x_j'y,x_{j + 1}'y) \) with \( t \), and if we divide \( (x_j'y,x_{j + 1}'y, t) \) with \( \sqrt{2\log(p)} \), it is equivalent to thresholding $h_1=x_j'y/\sqrt{2\log(p)}$, $h_2=x_{j+1}'y/\sqrt{2\log(p)}$ with \( t' = t/\sqrt{2\log(p)} \).

The solution path of marginal regression is very straight forward, so we present it in Lemma~\ref{supplem:sol.path.marreg} without proof.

\begin{lemma}[the solution path of marginal regression]\label{supplem:sol.path.marreg}
  The definition of \( (h_1,h_2,\hat b_1,\hat b_2) \) follows from that of Lemma~\ref{suppthm:sol.path.en}, and we still assume \( h_1 > \abs{h_2} \geq 0 \). The solution path of marginal regression defined in \ref{suppdef:marginal.regression} can be describes as:
  \begin{enumerate}
  \item When \( t' \geq  h_1 \), we have \( \hat b_1 =\hat b_2 = 0 \).
  \item When \( \abs{h_2} \leq t' < h_1 \), we have \( \hat b_1 \neq 0 \) and \( \hat b_2 = 0 \).
  \item When \( t < \abs{h_2} \), we have \( \hat b_1 \neq 0 \) and \( \hat b_2 \neq 0 \).
  \end{enumerate}
\end{lemma}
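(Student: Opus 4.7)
The plan is to reduce the marginal-regression variable selection rule to coordinatewise thresholding on $(h_1, h_2)$ and then read off the three cases directly from the assumed ordering $h_1 > |h_2| \geq 0$. Concretely, Definition~\ref{suppdef:marginal.regression} selects index $j$ iff $|x_j'y| > t$. Dividing through by $\sqrt{2\log(p)}$ and using $h_j = x_j'y/\sqrt{2\log(p)}$ and $t' = t/\sqrt{2\log(p)}$, this is equivalent to the rule ``$j$ is selected iff $|h_j| > t'$.'' The remark preceding the lemma asserts that soft-thresholded marginal regression agrees with Definition~\ref{suppdef:marginal.regression} on selection, so the event $\{\hat b_j \neq 0\}$ coincides with $\{|h_j| > t'\}$.

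With this reduction in hand, the proof is pure bookkeeping on the bivariate pair $(|h_1|, |h_2|) = (h_1, |h_2|)$, where the equality in the first coordinate uses $h_1 > 0$ and the ordering $h_1 > |h_2|$. In the regime $t' \geq h_1$, one has $t' \geq h_1 > |h_2|$, so both coordinates fail the threshold and $\hat b_1 = \hat b_2 = 0$. In the regime $|h_2| \leq t' < h_1$, the first coordinate passes while the second does not, giving $\hat b_1 \neq 0$ and $\hat b_2 = 0$. In the regime $t' < |h_2|$, the ordering $h_1 > |h_2| > t'$ forces both coordinates to pass, yielding $\hat b_1 \neq 0$ and $\hat b_2 \neq 0$. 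These three cases exhaust the range of $t' \geq 0$.

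There is no substantive obstacle here, which is exactly why the author omits the proof. The only small notational point worth verifying is consistency at the boundaries $t' = h_1$ and $t' = |h_2|$: the lemma uses non-strict inequalities on the upper endpoints of intervals, which matches the strict ``$>$'' in Definition~\ref{suppdef:marginal.regression} (so that $|h_j| = t'$ produces $\hat b_j = 0$). Since each of the three cases is proved by a single inequality comparison, no further machinery (no KKT analysis, no ellipsoidal-distance computation in the sense of Lemma~\ref{supplem:distance}) is required.
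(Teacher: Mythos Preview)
Your proposal is correct and matches the paper's treatment: the paper explicitly states that the solution path of marginal regression is ``very straight forward'' and presents the lemma without proof. Your coordinatewise-thresholding argument is exactly the trivial verification the authors had in mind.
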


We now use Lemma~\ref{supplem:sol.path.marreg} to derive the rejection region \( \cal R \) of marginal regression. Recall that \( \cal R \) is the set of \( h =(h_1,h_2)' \) such that \( \hat b_1\neq 0 \).  In fact, the same procedure of Elastic net can be copied here, except that the specific behavior of the variable selection method is different. Lemma~\ref{supplem:sol.path.marreg}  tells us it is just 
\begin{align} \label{suppeq:RejRegion.marreg}
  {\cal R} &= \{(h_1,h_2): h_1> t'\}\cup \{(h_1, h_2): h_1 <- t'\}
\end{align}  

\paragraph{Part 2. Analyzing the Hamming error.} 
The first steps of analysing Elastic net applies here as well, and we just need to compute  $d_{\Sigma}(\mu_{00}, {\cal R})$, $d_{\Sigma}(\mu_{01}, {\cal R})$, $d_{\Sigma}(\mu_{10}, {\cal R}^c)$, and $d_{\Sigma}(\mu_{11}, {\cal R}^c)$ given the different shape of \( \cal R \). Then we can compute \[
  \FP_p=L_p p^{1- \min\bigl\{ d^2_{\Sigma}(\mu_{00},{\cal R}), \;\; \vt + d^2_{\Sigma}(\mu_{01},{\cal R})\bigr\}}, \qquad \FN_p = L_p p^{1-\min\bigl\{\vt + d^2_{\Sigma}(\mu_{10},{\cal R}^c),\;\; 2\vt + d^2_{\Sigma}(\mu_{11},{\cal R}^c)\bigr\}}.
  \]
   Finally $\mathbb{E}[H(\hat{\beta},\beta)]=\FP_p+\FN_p=L_p p^{1-h(t';\vt,r)}$. 

\begin{theorem}\label{suppthm:hamm.marreg}
  Under the conditions of Theorem~\ref{thm:elastic-net}, let \( t = t' \sqrt{2\log(p)} \) in marginal regression defined in \ref{suppdef:marginal.regression}. As \( p\to\infty \), 
  \begin{align*}
    \FP_p =&~ L_p \cdot p^{1 -\min \left\{ t'^2,\;\;\vt + (t' -|\rho| \sqrt{r})_ +^2 \right\}} \\
    \FN_p =&~ L_p \cdot p^{1 -\min \left\{ \vt +(\sqrt{r} - t')_ +^2,\;\;2\vt + [(1 +\rho)\sqrt{r} - t']_ +^2 \right\}}
  \end{align*}
\end{theorem}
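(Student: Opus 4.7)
}

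The plan is to plug the rejection region \eqref{suppeq:RejRegion.marreg} into the general framework \eqref{proof-sketch2}--\eqref{proof-sketch3}. Since ${\cal R} = \{h_1 > t'\}\cup\{h_1 < -t'\}$ is bounded by only two \emph{vertical} lines $h_1 = \pm t'$, the four Mahalanobis-type distances $d^2_\Sigma(\mu_{ij},{\cal R})$ and $d^2_\Sigma(\mu_{ij},{\cal R}^c)$ collapse to one-dimensional calculations depending only on the first coordinate of $\mu_{ij}$. Concretely, for the line ${\cal L}: h_1 = c$, Lemma~\ref{supplem:distance} applied with $A=1$, $B=0$, $C=\mu_1-c$ gives $\inf_{\xi\in{\cal L}}d^2_\rho(\mu,\xi) = (\mu_1-c)^2(1-\rho^2)$, hence $d^2_\Sigma(\mu,{\cal L}) = (\mu_1-c)^2$. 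The factor $(1-\rho^2)$ cancels, which is the structural reason for the clean form of the theorem.

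Next I would carry out the four cases. For $\rho\geq 0$ use the $\mu_{ij}$ of Theorem~\ref{thm:elastic-net}; for $\rho<0$ use the re-parametrization trick from the Elastic net proof (replace $(x_{j+1},\beta_{j+1})$ with $(-x_{j+1},-\beta_{j+1})$) so that ${\cal R}$ has the same form and only the $\mu_{ij}$ change sign in their second coordinate. In either case, the first coordinates are $\mu_{00,1}=0$, $\mu_{01,1}=\pm|\rho|\sqrt{r}$, $\mu_{10,1}=\sqrt{r}$, $\mu_{11,1}=(1+\rho)\sqrt{r}$ (the last one keeps its actual sign because $\mu_{11}$ has components $(1+\rho)\sqrt{r}$ in both parametrizations once the sign flip is applied). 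Since ${\cal R}$ is symmetric about $h_1=0$, the distance from $\mu$ to ${\cal R}$ reduces to $(t'-|\mu_1|)_+^2$ and the distance from $\mu$ to ${\cal R}^c$ reduces to $(|\mu_1|-t')_+^2$. This yields
\begin{align*}
d^2_\Sigma(\mu_{00},{\cal R}) &= t'^2, & d^2_\Sigma(\mu_{01},{\cal R}) &= (t'-|\rho|\sqrt{r})_+^2,\\
d^2_\Sigma(\mu_{10},{\cal R}^c) &= (\sqrt{r}-t')_+^2, & d^2_\Sigma(\mu_{11},{\cal R}^c) &= ((1+\rho)\sqrt{r}-t')_+^2.
\end{align*}

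Finally, substituting these four exponents into the general formula from Section~\ref{suppsec:sketch},
\[
\FP_p = L_p p^{1-\min\{d^2_\Sigma(\mu_{00},{\cal R}),\ \vt+d^2_\Sigma(\mu_{01},{\cal R})\}}, \qquad \FN_p = L_p p^{1-\min\{\vt+d^2_\Sigma(\mu_{10},{\cal R}^c),\ 2\vt+d^2_\Sigma(\mu_{11},{\cal R}^c)\}},
\]
immediately gives the two displayed expressions. I expect no genuine obstacle here: unlike Elastic net, whose rejection region has six boundary lines, a vertex, and $\rho$-sign-dependent geometry, the marginal regression rejection region consists of two parallel vertical half-planes, so there are no vertex-versus-edge case distinctions and no critical ranges of $\sqrt{r}$ to identify. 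The only small care needed is verifying that the tangent point given by Lemma~\ref{supplem:distance} actually lies on the relevant half-line (trivial here because the boundary is the full line $h_1=\pm t'$) and that the symmetry argument correctly merges the $\rho\geq 0$ and $\rho<0$ cases into the stated formula with $|\rho|$ in the $\FP_p$ term and $(1+\rho)$ in the $\FN_p$ term.
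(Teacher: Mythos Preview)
Your proposal is correct and follows exactly the approach the paper intends: plug the rejection region \eqref{suppeq:RejRegion.marreg} into the framework \eqref{proof-sketch2}--\eqref{proof-sketch3} and compute the four distances $d^2_\Sigma(\mu_{ij},\cdot)$, which collapse to $(t'-|\mu_1|)_+^2$ or $(|\mu_1|-t')_+^2$ because the boundary consists of two vertical lines. The paper's own Part~2 for this theorem is in fact less detailed than your write-up (it simply says ``the first steps of analysing Elastic net apply here as well'' and states the result), so you have filled in precisely the computation the paper omits; one cosmetic remark is that Lemma~\ref{supplem:distance} is stated with the hypothesis $AB\neq 0$, so strictly speaking you should do the one-line direct minimization over $h_2$ for the vertical line rather than cite the lemma with $B=0$, and the sign-flip reparametrization is unnecessary here since ${\cal R}$ is already independent of the sign of $\rho$.
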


\paragraph{Part 3. Calculating the phase diagram.} 
By Theorem~\ref{suppthm:hamm.marreg}, the Hamming error is $\FP_p+\FN_p=L_p p^{1-h(t';\vartheta,r)}$, where  
\begin{equation}\label{suppeq:h(qvtr).marreg}
  h(t';\vartheta,r) = \min\Bigl\{\min\bigl\{ t'^2, \  \vt + (t' -|\rho| \sqrt{r})_ +^2 \bigr\},\;\;  \min\bigl\{\vt + (\sqrt{r} - t')_ +^2,\  2\vt + [(1 +\rho)\sqrt{r} - t']_ +^2 \bigr\} \Bigr\}.
\end{equation}

The first steps of the proof of Elastic net can also be applied directly. We still  try to find $t'^*$ that maximizes $h(t';\vartheta,r)$ and then investigate the conditions on $(r,\vartheta)$ such that $h(t'^*; \vartheta,r)>1$ or $\vartheta<h(t'^*; \vartheta,r)<1$ or $h(t'^*; \vartheta,r)\leq \vartheta$. 

We can still prove that \( r= \vt \)  is the boundary between the Regions of Almost Full Recovery and No Recovery, i.e., the boundary separating $\vartheta<h(q^*; \vartheta,r)<1$ and  $h(q^*; \vartheta,r)\leq \vartheta$. The proof just needs slight modification to the proof of Elastic net. 

For the rest of Part 3, we try to find the boundary between the Regions of Exact Recovery and Almost Full Recovery, i.e., the boundary separating $h(t'^*; \vartheta,r)>1$ and  $\vartheta<h(t'^*; \vartheta,r)<1$. We can still leverage the important fact that Equation~\ref{suppeq:important.relationship} holds at the boundary: 
\begin{equation*}
  \min\bigl\{ t'^2, \  \vt + (t' -|\rho| \sqrt{r})_ +^2 \bigr\} =  \min\bigl\{\vt + (\sqrt{r} - t')_ +^2,\  2\vt + [(1 +\rho)\sqrt{r} - t']_ +^2 \bigr\} = 1
\end{equation*}
The above equation still gives us \( 2\times 2 = 4 \) cases respectively for \( \rho > 0 \) and \( \rho <0 \). We discuss them one by one and summarise the results when the full phase curves are complete. 

When \( \rho> 0 \), we set out to prove the final phase diagram is 
\begin{equation*}
  \sqrt{r} =\max \left\{  1 + \sqrt{1 -\vt},\,\frac{2}{1 -\rho}\sqrt{1 -\vt}  \right\}
\end{equation*}

\textit{First}, if \( t'^2 = \vt + (\sqrt{r} - t')_ +^2 = 1\) and \( \vt + (t' -|\rho| \sqrt{r})_ +^2 \geq 1 \), \( 2\vt + [(1 +\rho)\sqrt{r} - t']_ +^2 \geq 1 \), then \( t' = 1 \) and \( \sqrt{r} = 1 + \sqrt{1 -\vt} \). 

We also need to meet two requirements: \( \sqrt{r} \leq  \frac{1 - \sqrt{1 -\vt}}{\rho} \) from \( \vt + (t' -|\rho| \sqrt{r})_ +^2 \geq 1 \) and \( \sqrt{r} \geq \frac{1 + \sqrt{1 - 2\vt}}{1 +\rho} \) from \( 2\vt + [(1 +\rho)\sqrt{r} - t']_ +^2 \geq 1 \). The second requirement is not restrictive, and the first one is equivalent to \( \sqrt{1 -\vt} \leq \frac{1 -\rho}{1 +\rho} \).

\textit{Second}, if \( t'^2 = 2\vt + [(1 +\rho)\sqrt{r} - t']_ +^2 = 1\) and \( \vt + (\sqrt{r} - t')_ +^2 \geq 1 \), \( \vt + (t' -|\rho| \sqrt{r})_ +^2 \geq 1 \), then we have no admissible curve. This is because \( \sqrt{r} = \frac{1 + \sqrt{1 - 2\vt}}{1 +\rho} \) and it is required that \( \sqrt{r} \geq 1 + \sqrt{1 -\vt} \) by \( \vt + (\sqrt{r} - t')_ +^2 \geq 1 \). It gives us no admissible \( \sqrt{r} \). 

\textit{Summarising the first two cases}, we have only one curve \( \sqrt{r} = 1 + \sqrt{1 -\vt} \) under the constraint \( \sqrt{1 -\vt} \leq \frac{1 -\rho}{1 +\rho} \). 

\textit{Third}, if \( \vt + (t' -|\rho| \sqrt{r})_ +^2 = \vt + (\sqrt{r} - t')_ +^2 = 1 \), and \( t' \geq 1 \), \( 2\vt + [(1 +\rho)\sqrt{r} - t']_ +^2  \geq 1 \), the equality gives us \( \sqrt{r} = \frac{2}{1 -\rho}\sqrt{1 -\vt} \) and \( t' = \frac{1 +\rho}{1 -\rho}\sqrt{1 -\vt}\). We are also required to have \( t' \geq 1 \) and \( \sqrt{r} \geq \frac{\sqrt{1 - 2\vt}}{1 +\rho} + \frac{\sqrt{1 -\vt}}{1 -\rho} \) (not restrictive). 

\textit{Fourth}, if \( \vt + (t' -|\rho| \sqrt{r})_ +^2 =2\vt + [(1 +\rho)\sqrt{r} - t']_ +^2 = 1\), and \( t' \geq 1 \), \( \vt + (\sqrt{r} - t')_ +^2 \geq 1 \), then we have no admissible curve. In fact, the equality gives us \( \sqrt{r} = \frac{t' - \sqrt{1 -\vt}}{\rho} =\frac{t' + \sqrt{1 - 2\vt}}{1 +\rho}\), \( t' = \rho \sqrt{1 - 2\vt} + (1 +\rho) \sqrt{1 -\vt} \). We are also required to have \( \sqrt{r} \geq  t' + \sqrt{1 -\vt} \Leftrightarrow \sqrt{1 - 2\vt} \geq \frac{1 +\rho}{1 - \rho}\sqrt{1 -\vt} \), which gives a contradition.

\textit{Summarising the last two cases}, we have only one curve \( \sqrt{r} = \frac{2}{1 -\rho}\sqrt{1 -\vt} \) under the constraint \( \sqrt{1 -\vt} \geq \frac{1 -\rho}{1 +\rho} \). 

\textit{Summarising the cases of positive correlation}, we have proven the phase diagram is \( \sqrt{r} =\max \left\{  1 + \sqrt{1 -\vt},\,\frac{2}{1 -\rho}\sqrt{1 -\vt}  \right\} \). 

When \( \rho < 0 \), we then prove that the phase diagram is  
\begin{equation*}
  \sqrt{r} =\max \left\{  1 + \sqrt{1 -\vt},\,\frac{2}{1 -|\rho|}\sqrt{1 -\vt},\, \frac{\sqrt{1 -\vt} + \sqrt{1 - 2\vt}}{1 - 2|\rho|},\,\frac{1 + \sqrt{1 - 2\vt}}{1 -|\rho|}  \right\}
\end{equation*}

\textit{First}, if \( t'^2 = \vt + (\sqrt{r} - t')_ +^2 = 1\) and \( \vt + (t' -|\rho| \sqrt{r})_ +^2 \geq 1 \), \( 2\vt + [(1 -|\rho|)\sqrt{r} - t']_ +^2 \geq 1 \), then \( t' = 1 \) and \( \sqrt{r} = 1 + \sqrt{1 -\vt} \). 

We also have two requirements, \( \sqrt{r} \leq \frac{1 - \sqrt{1 -\vt}}{|\rho|} \) and \( \sqrt{r} \geq \frac{1 + \sqrt{1 - 2\vt}}{1 -|\rho|} \).  We have studied the first one when \( \rho > 0 \), and know it is \( 1+ \sqrt{1 -\vt} \geq \frac{2}{1 -|\rho|}\sqrt{1 -\vt} \). The RHS of the second requirement is actually a new curve we will see later.

\textit{Second}, if \( t'^2 =2\vt + [(1 -|\rho|)\sqrt{r} - t']_ +^2 = 1\) and \( \vt + (t' -|\rho| \sqrt{r})_ +^2 \geq 1 \), \(  \vt + (\sqrt{r} - t')_ +^2  \geq 1 \), we have \( \sqrt{r} = \frac{1 + \sqrt{1 - 2\vt}}{1 -|\rho|}\) and \( \sqrt{r} \leq  \frac{1 - \sqrt{1 -\vt}}{|\rho|} \), \( \sqrt{r} \geq 1 + \sqrt{1 -\vt}. \). 

\textit{Summarising the first two cases}, we have \( \sqrt{r} =\max \left\{ 1 + \sqrt{1 -\vt},\frac{1 + \sqrt{1 - 2\vt}}{1 -|\rho|} \right\} \)  and we need \( \sqrt{r} \leq \frac{1 - \sqrt{1 -\vt}}{|\rho|} \).

\textit{Third}, if \( \vt + (t' -|\rho| \sqrt{r})_ +^2 = \vt + (\sqrt{r} - t')_ +^2 = 1\) and \( t' \geq 1 \), \( 2\vt + [(1 -|\rho|)\sqrt{r} - t']_ +^2 \geq 1 \), then \( \sqrt{r} = \frac{2}{1 - |\rho|}\sqrt{1 -\vt} \), and the two other requirements are \( t' = \frac{1 +|\rho|}{1 -|\rho|}\sqrt{1 -\vt} \geq 1 \) and \( \sqrt{r} \geq  \frac{\sqrt{1 - 2\vt}}{1 -|\rho|} + \frac{1 +|\rho|}{(1 -|\rho|)^2} \sqrt{1 -\vt} \). 

In the next case, we will get another curve \( \sqrt{r} = \frac{\sqrt{1 -\vt} + \sqrt{1 - 2\vt}}{1 - 2|\rho|}\). In the above inequalities, the last one corresponds to \( \frac{2}{1 -|\rho|} \sqrt{1 -\vt} \geq  \frac{\sqrt{1 -\vt} + \sqrt{1 - 2\vt}}{1 - 2|\rho|} \). \( t' = \frac{1 +|\rho|}{1 -|\rho|}\sqrt{1 -\vt} \geq 1 \) in the above inequalities is just \( 1 + \sqrt{1 -\vt} \leq  \frac{2}{1 -|\rho|} \sqrt{1 -\vt} \).

\textit{Fourth}, if \( \vt + (t' -|\rho| \sqrt{r})_ +^2 = 2\vt + [(1 -|\rho|)\sqrt{r} - t']_ +^2  = 1\) and \( t' \geq 1 \), \(  \vt + (\sqrt{r} - t')_ +^2\geq 1 \), then we know from the equality that
\begin{equation*}
  \sqrt{r} = \frac{t' - \sqrt{1 -\vt}}{|\rho|} =\frac{t' + \sqrt{1 - 2\vt}}{1 -|\rho|}
\end{equation*}
and we will get this when we solve for \( t' \):
\begin{equation*}
  \frac{t'}{|\rho|} - \frac{t'}{1 -|\rho|} =\frac{\sqrt{1 - 2\vt}}{1 -|\rho|} + \frac{\sqrt{1 -\vt}}{|\rho|}.
\end{equation*}
If \( |\rho| \geq \frac{1}{2} \), this equation admits no positive solution for \( t' \). Recall that in the \textit{first and second} cases, we also needed \( \sqrt{r} \leq \frac{1 -\sqrt{1 -\vt}}{|\rho|} \) and \( \sqrt{r} \geq \frac{1 + \sqrt{1 - 2\vt}}{1 -|\rho|} \) in the cases of \( FP_1 \) being tight. When \( |\rho| \geq \frac{1}{2} \), \( \frac{1 + \sqrt{1 - 2\vt}}{1 -|\rho|} \leq \frac{1 -\sqrt{1 -\vt}}{|\rho|} \)  has no solution either, so there cannot be any curve in the interval \( 0 <\vt <\frac{1}{2} \). 

if \( |\rho| < \frac{1}{2} \), we can proceed to have the two requirements:
\begin{equation*}
  \begin{cases} 
    t' =&~ \frac{1}{1 - 2|\rho|} \left[ |\rho| \sqrt{1 - 2\vt} +(1 -|\rho|) \sqrt{1 -\vt} \right] \geq 1\\
    \sqrt{r} =&~ \frac{\sqrt{1 -\vt} + \sqrt{1 - 2\vt}}{1 - 2|\rho|} \geq \frac{|\rho|}{1 - 2|\rho|} \sqrt{1 - 2\vt} + \frac{2 - 3|\rho|}{1 - 2|\rho|} \sqrt{1 -\vt}
  \end{cases} 
\end{equation*}
In the above inequalities, \( \sqrt{r} = \frac{\sqrt{1 -\vt} + \sqrt{1 - 2\vt}}{1 - 2|\rho|} \geq \frac{|\rho|}{1 - 2|\rho|} \sqrt{1 - 2\vt} + \frac{2 - 3|\rho|}{1 - 2|\rho|} \sqrt{1 -\vt} \) is equivalent to \( \frac{\sqrt{1 -\vt} + \sqrt{1 - 2\vt}}{1 - 2|\rho|} \geq \frac{2}{1 -|\rho|}\sqrt{1 -\vt} \). The requirement on \( t' \) is equivalent to  \( \sqrt{r} = \frac{\sqrt{1 -\vt} + \sqrt{1 - 2\vt}}{1 - 2|\rho|} \geq \frac{1 + \sqrt{1 - 2\vt}}{1 -|\rho|} \).

\textit{Summarising the cases of negative correlation}: 
The final phase diagram is 
\begin{equation*}
  \sqrt{r} =\max \left\{  1 + \sqrt{1 -\vt},\,\frac{2}{1 -|\rho|}\sqrt{1 -\vt},\, \frac{\sqrt{1 -\vt} + \sqrt{1 - 2\vt}}{1 - 2|\rho|},\,\frac{1 + \sqrt{1 - 2\vt}}{1 -|\rho|}  \right\}
\end{equation*}
When \( \rho \leq - \frac{1}{2} \), \( \sqrt{r} \) has no finite value for \( \vt\in(0,1/2) \), and we do not have Region of Exact Recovery or \( h(t';\vt,r) > 1 \) at all.

\section{Proof of Theorem~\ref{thm:SCAD} (SCAD)}\label{suppsec:scad}

As described in Section~\ref{suppsec:sketch}, our proof for SCAD still consists of three parts: (a) deriving the rejection region, (b) obtaining the rate of convergence of $\mathbb{E}[H(\hat{\beta},\beta)]$, and (c) calculating the phase diagram. 

Before starting, we first recall the definition of SCAD. An alternative way to write the derivative of the penalty function is:
\begin{equation}\label{suppeq:definition.SCAD}
  q'(\theta)=\begin{cases} 
     \lambda \cdot \sgn(\theta) & \text{if } \abs\theta < \lambda\\
     \frac{a\lambda -\theta}{a - 1}\cdot \sgn(\theta) & \text{if } \lambda < \abs\theta < a\lambda \\
     0 & \text{if } \abs\theta > a\lambda
   \end{cases} 
 \end{equation}
for \( \theta\in\R,\, a > 2\, \lambda > 0 \). 

\paragraph{Part 1: Deriving the rejection region.} 
Just like the first steps of Elastic net, we define \( h =(h_1,h_2)'\in\R^2 \) where $h_1=x_j'y/\sqrt{2\log(p)}$, $h_2=x_{j+1}'y/\sqrt{2\log(p)}$; $\lambda=\lambda' \sqrt{2\log(p)}$;  $(\hat{\beta}_j, \hat{\beta}_{j+1})=\sqrt{2\log(p)}(\hat{b}_1, \hat{b}_2)$ are the  entries of the estimator \( \hat\beta^{\mathrm{{S}CAD}} \) corresponding to \( (x_j,x_{j + 1}) \). 

The estimator of SCAD is \( \hat{\beta}^{\text {SCAD }}=\operatorname{argmin}_{\beta}\left\{\|y-X \beta\|^{2} / 2+Q_{\lambda}(\beta)\right\} \). Like Elastic net, it can be decomposed into bivariate sub-problems of each pair of correlated variables. By dividing the bivariate sub-problem of \( (x_j,x_{j + 1}) \) by \( \sqrt{2\log(p)} \), we have
\beq \label{suppeq:optimization.scad}
L(b)\equiv \frac{1}{2}b'\begin{bmatrix}1&\rho\\\rho' & 1\end{bmatrix}b + b'h+\lambda'(q'(b_1)  + q'(b_2))
\eeq
and the minimizer of the optimization~\eqref{suppeq:optimization.scad} is \( (\hat b_1,\hat b_2) \).  The next lemma proves the solution to \eqref{suppeq:optimization.scad} when \( h_1 >\abs{h_2}\), and it is proven in Section~\ref{suppsec:sol.path.scad}.


\begin{lemma}[the solution path of SCAD] 
   \label{suppthm:sol.path.scad}
  Consider the optimization in \eqref{suppeq:optimization.scad}. Suppose \( h_1 >\abs{h_2} \), and suppose \( \rho \geq 0 \). 
  \begin{itemize}
    \item When \( \lambda' \geq \lambda'_1 =\max \left\{ \abs{h_1} ,\abs{h_2}\right\} \), \( \hat b_1=\ \hat b2 = 0 \).
    \item If \( (\rho - \frac{1}{a})h_1 < h_2 < \begin{cases} (\rho + \frac{1}{a})h_1 & \text{if }a > \frac{2}{1-\rho} \\ \frac{1+\rho}{2}h_1 & \text{if } a \leq  \frac{2}{1-\rho} \end{cases}  \),
    \begin{enumerate}
      \item When \( \lambda' < \lambda'_1\) and \( \lambda' \geq \abs{h_2 -\rho h_1} \),  \( \hat b_1 \neq 0 \) and \( \hat b_2 = 0 \). 
      \item When \( \lambda' < \abs{h_2 -\rho h_1} \), \( \hat b_1 \neq 0 \) and \( \hat b_2 \neq 0 \). 
    \end{enumerate}
    \item If \( a > \frac{2}{1 -\rho} \) and \( (\rho + \frac{1}{a})h_1 < h_2 < \frac{1 +\rho}{2}h_1 \): 
    \begin{enumerate}
      \item When \( \lambda' <\lambda'_1 \) and \( \lambda' \geq \frac{(a - 2)h_2 -\rho(a - 1)h_1}{a - 2 - a\rho} \), \( \hat b_1 \neq 0 \) and \( \hat b_2 = 0 \).
      \item When \( \lambda' < \frac{(a - 2)h_2 -\rho(a - 1)h_1}{a - 2 - a\rho}\), \( \hat b_1 \neq 0 \) and \( \hat b_2 \neq 0 \).
    \end{enumerate}
    \item If \( h_2 <(\rho - \frac{1}{a})h_1 \), \(\forall\, a \):
    \begin{enumerate}
      \item When \( \lambda' <\lambda'_1 \) and \( \lambda' \geq \frac{\rho(a - 1)h_1 -(a - 2)h_2}{a + a\rho - 2} \), \( \hat b_1 \neq 0 \) and \( \hat b_1 = 0 \).
      \item When \( \lambda' < \frac{\rho(a - 1)h_1 -(a - 2)h_2}{a + a\rho - 2}\), \( \hat b_1 \neq 0 \) and \( \hat b_2 \neq 0 \).
    \end{enumerate}
    \item If  \( h_2 \leq \frac{ - 1 +\rho}{2}h_1 \) or \( h_2 \geq  \frac{1 +\rho}{2}h_1 \): 
    \begin{enumerate}
      \item When \( \lambda' < \lambda'_1 \) and \( \lambda' \geq \lambda_2'^{(2)} = \frac{\abs{h_2 -\rho h_1}}{1 -\rho} \), \( \hat b_1 \neq 0 \) and \( \hat b_1 = 0 \).
      \item When \( \lambda' <\lambda_2'^{(2)} \), \( \hat b_1 \neq 0 \) and \( \hat b_2 \neq 0 \).
    \end{enumerate}
  \end{itemize}
\end{lemma}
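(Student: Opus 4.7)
The plan is to follow the solution-path approach of Lemma \ref{suppthm:sol.path.en}: write the first-order optimality conditions for \eqref{suppeq:optimization.scad},
\[
b_1 + \rho b_2 - h_1 + s_1 = 0,\qquad \rho b_1 + b_2 - h_2 + s_2 = 0,\qquad s_i\in\partial q_{\lambda'}(b_i),
\]
start with $\lambda'$ large enough that $(\hat b_1,\hat b_2)=(0,0)$, and decrease $\lambda'$ while tracking when each coordinate first becomes active. The new ingredient relative to Elastic net is that the SCAD (sub-)derivative is piecewise linear, with breakpoints at $|\theta|=\lambda'$ and $|\theta|=a\lambda'$. Consequently, once $\hat b_1$ is active (with $\hat b_2$ still zero), its value as $\lambda'$ decreases traverses three regimes: a Lasso regime on $\lambda'\in[h_1/2, h_1]$ with $\hat b_1=h_1-\lambda'$; a linear-decay regime on $\lambda'\in[h_1/a,h_1/2]$ with $\hat b_1=[(a-1)h_1-a\lambda']/(a-2)$; and a flat regime on $\lambda'\in(0,h_1/a]$ with $\hat b_1=h_1$. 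In each regime the inequality $|h_2-\rho\hat b_1|\leq\lambda'$, which encodes $\hat b_2=0$ via $s_2\in[-\lambda',\lambda']$, yields a different critical threshold at which $\hat b_2$ first enters the active set, and the four bulleted cases of the lemma correspond to which regime $\hat b_1$ occupies at that moment.

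\textbf{Key steps.} First, the sign-matching arguments from the Elastic net proof immediately rule out $\hat b_1<0$ and $\hat b_2$ entering before $\hat b_1$ (since $h_1>|h_2|$), giving the first bullet. Second, substituting the three regime expressions for $\hat b_1$ into the equality $|h_2-\rho\hat b_1|=\lambda'$ yields, for each sign of $h_2-\rho h_1$, a candidate critical $\lambda'$: these are precisely the thresholds $|h_2-\rho h_1|$, $[(a-2)h_2-\rho(a-1)h_1]/(a-2-a\rho)$, and $|h_2-\rho h_1|/(1-\rho)$ (with sign-flipped analogues) appearing in the remaining bullets. Third, each candidate is attained only within its own regime interval, so one must compare it against the boundary values $\lambda'=h_1/2$ and $\lambda'=h_1/a$ to decide which is the operative threshold. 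The four geometric regions of $(h_1,h_2,a)$ then emerge from this comparison: $|h_2-\rho h_1|\leq h_1/a$ yields the ``flat-entry'' bullet (with the dichotomy on $a\gtrless 2/(1-\rho)$ arising from whether the linear-decay threshold formula has a positive or negative denominator); the intermediate band gives the ``linear-decay-entry'' bullet; and $|h_2-\rho h_1|/(1-\rho)\geq h_1/2$ with appropriate sign gives the ``Lasso-entry'' bullet. The $h_2<0$ sub-cases follow from the symmetry $(h_2,b_2)\mapsto(-h_2,-b_2)$, which leaves the objective invariant.

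\textbf{Main obstacle.} The principal difficulty is verifying that the piecewise stationary path we construct is the \emph{global} minimizer of \eqref{suppeq:optimization.scad}, because $q_{\lambda'}$ is non-convex. I plan a continuation argument: at $\lambda'=+\infty$ the unique minimizer is $(0,0)$, and the stationary path we trace is continuous at every regime transition (by direct substitution at the breakpoints). Within each regime, the Hessian equals $\bigl(\begin{smallmatrix}1 & \rho\\ \rho & 1\end{smallmatrix}\bigr)$ plus a diagonal matrix whose entries lie in $\{0,-1/(a-1)\}$; the restriction $a\in(2,2/(1-|\rho|))$ from Theorem \ref{thm:SCAD} keeps this Hessian positive semidefinite on the portions of the path relevant to the case analysis, so the continuation cannot jump across basins to a competing stationary point. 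The remaining bookkeeping---checking regime transitions, ordering the candidate thresholds against the boundary values $h_1/2$ and $h_1/a$, and identifying the precise inequalities that separate the four bullets---is tedious but purely mechanical.
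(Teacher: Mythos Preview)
Your approach matches the paper's exactly: both write the subgradient system, decrease $\lambda'$ from infinity, track which of the three SCAD regimes $\hat b_1$ occupies (Lasso on $[h_1/2,h_1]$, linear-decay on $[h_1/a,h_1/2]$, flat on $(0,h_1/a]$), and determine in each regime when the condition $|h_2-\rho\hat b_1|\le\lambda'$ first fails so that $\hat b_2$ enters---yielding the same thresholds and the same case split on $(h_1,h_2,a)$. The paper, however, never addresses the non-convexity issue you flag as the main obstacle; it simply works with the stationary equations throughout, so your continuation/Hessian sketch is extra rigor beyond what the paper supplies (note though that the lemma is stated for all $a>2$, not only the range in Theorem~\ref{thm:SCAD}, so your positive-semidefiniteness claim will need adjustment).
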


We did not require \( \rho \geq  0 \) in the solution path of Elastic net, but here \( \rho \geq  0 \) is needed to cut down unnecessary discussion. The proof of Elastic net has shown that  the solution path of \( h_1>\abs{h_2}\) and \( \rho \geq  0\) is enough to draw the whole rejection region. 

Still requiring \( \rho > 0 \), the rejection region looks different for \( a > \frac{2}{1 -\rho} \) and \( a \leq \frac{2}{1 -\rho} \). The first steps are the same as those of Elastic net, and we only present the rejection region here:

When \( a \geq \frac{2}{1 -\rho} \): 
\begin{align} \label{proof-scad-rjRegion_1}
  {\cal R} &= \{(h_1,h_2): h_1-\rho h_2 >\lambda'(1-\rho),\, h_1 >\lambda',\,h_1 -\frac{|\rho|(a -1)}{a - 2}h_2 >\lambda'(1 -\frac{a\rho}{a -2})\}\cr 
  &\;\; \cup \{(h_1,h_2):h_1 -\rho h_2 >\lambda',h_2 > a\lambda'\}\cup \{(h_1, h_2): h_1-\rho h_2 >\lambda'(1+\rho)\}\cr
  &\;\; \cup \{(h_1, h_2): h_1 - \frac{\rho(a- 1)}{a - 2} h_2 > \lambda' (1 + \frac{a\rho}{a - 2}),h_1 -\rho h_2 >\lambda'\}\cr 
  &\;\; \cup \{(h_1,h_2): -h_1 +\rho h_2 >\lambda'(1-\rho),\, h_1 <-\lambda',\, -h_1 +\frac{|\rho|(a -1)}{a - 2}h_2 >\lambda'(1 -\frac{a\rho}{a -2})\}\cr 
  &\;\; \cup \{(h_1,h_2): -h_1 +\rho h_2 >\lambda',h_2 <- a\lambda'\}\cup \{(h_1, h_2): -h_1 +\rho h_2 >\lambda'(1+\rho)\}\cr
  &\;\; \cup \{(h_1, h_2): -h_1 + \frac{\rho(a- 1)}{a - 2} h_2 > \lambda' (1 + \frac{a\rho}{a - 2}), -h_1 +\rho h_2 >\lambda'\}
\end{align} 

When \( a \leq \frac{2}{1 -{\rho}} \):
\begin{align} \label{proof-scad-rjRegion_2}
  {\cal R} &= \{(h_1,h_2): h_1-\rho h_2 >\lambda'(1-\rho),\, h_1 >\lambda',\,h_1 >\frac{1 +\rho}{2}h_2 \}\cr 
  &\;\; \cup \{(h_1,h_2):h_1 -\rho h_2 >\lambda',h_2 > \frac{2\lambda'}{1 -\rho} \}\cup \{(h_1, h_2): h_1-\rho h_2 >\lambda'(1+\rho)\}\cr
  &\;\; \cup \{(h_1, h_2): h_1 - \frac{\rho(a- 1)}{a - 2} h_2 > \lambda' (1 + \frac{a\rho}{a - 2}),h_1 -\rho h_2 >\lambda'\}\cr 
  &\;\; \cup \{(h_1,h_2): -h_1 +\rho h_2 >\lambda'(1-\rho),\, h_1 <-\lambda',\,h_1 <\frac{1 +\rho}{2}h_2 \}\cr 
  &\;\; \cup \{(h_1,h_2): -h_1 +\rho h_2 >\lambda',h_2 <- \frac{2\lambda'}{1 -\rho} \}\cup \{(h_1, h_2): -h_1 +\rho h_2 >\lambda'(1+\rho)\}\cr
  &\;\; \cup \{(h_1, h_2): -h_1 + \frac{\rho(a- 1)}{a - 2} h_2 > \lambda' (1 + \frac{a\rho}{a - 2}), -h_1 +\rho h_2 >\lambda'\}\cr 
\end{align} 

When \( \rho <0 \), we apply the same sign-flipping technique in the proof of Elastic net and still use the rejection region of positive correlation. Such technique requires considering one more case for \( \rho < 0 \), which is the elliptical distance from the center \( \mu_{11} =((1 -\abs{\rho})\sqrt{r}, -(1 -\abs{\rho})\sqrt{r}) \) to \( {\cal R}^c \) (plotted with positive correlation \( \abs{\rho} > 0 \)). 

\paragraph{Part 2. Analyzing the Hamming error.} we allow \( \rho\in( -1,1) \) from now on. The analysis of Hamming error follows the same procedure as that of Elastic net. It is only the shape of \( \cal R \) which is different. For \( a \geq \frac{2}{1 -|\rho|} \) and \( a \leq \frac{2}{1 -|\rho|} \), we respectively present a theorem for the Hamming error rate. 
\begin{theorem}\label{suppthm:hamm.scad.1}
  Suppose the conditions of Theorem~\ref{thm:SCAD} hold. Let $\lambda=\lambda'\sqrt{2\log(p)}$ in SCAD. Define a few important points in the rejection region (as noted in Figure~\ref{suppfig:hamm.scad.large.a}): \( A(\lambda',\lambda'),\,B((1 +|\rho|)\lambda',2\lambda'),\,C((1 + a|\rho|)\lambda',a\lambda'),\, D((1 -|\rho|)\lambda', - 2\lambda') \).  As $p\to\infty$, 
  \[
  \FP_p=L_p p^{1- \min\bigl\{ \lambda'^2, \;\; \vt + f_1(\sqrt{r}, \lambda')\bigr\}}, \qquad \FN_p = L_p p^{1-\min\bigl\{\vt + f_2(\sqrt{r}, \lambda'),\;\; 2\vt + f_3(\sqrt{r}, \lambda')\bigr\}}, 
  \]
  where (below, $d^2_{|\rho|}(u,v)$ is as in Definition~\ref{def:EllipsDistance}) 
  \begin{align*}
   f_1(\sqrt{r},\lambda') & = \begin{cases} 
    (\lambda'-|\rho| \sqrt{r})^2 & \text{if }\sqrt{r} \leq \frac{\lambda'}{1 +|\rho|} \\
    \frac{1}{1 -|\rho|^2}d_{|\rho|}^2(A, (|\rho| \sqrt{r},\sqrt{r})) & \text{if } \frac{\lambda'}{1+|\rho|} \leq \sqrt{r} \leq \lambda' \\
    \frac{1 -|\rho|}{1 +|\rho|}\lambda'^2 & \text{if }\lambda' \leq \sqrt{r} \leq 2\lambda' \\
    \min \{ \frac{\lambda'^2}{1 -\rho^2},\frac{1}{1 -\rho^2} d^2(B, (|\rho| \sqrt{r},\sqrt{r}))\} & \text{if }2\lambda' \leq \sqrt{r} \leq \lambda'\left[2 + \frac{|\rho| -\rho^2}{(a - 2)(1 +|\rho|)}\right] \\
    \min\left\{\frac{\lambda'^2}{1 -\rho^2} ,\ \frac{\left[ \lambda'(1 -\frac{a|\rho|}{a - 2}) +\frac{|\rho| \sqrt{r}}{a - 2} \right]^2}{1 +\frac{\rho^2(a - 1)^2}{(a - 2)^2} -\frac{2\rho^2(a - 1)}{a - 2}}\right\} & \text{if } \sqrt{r} \geq \lambda'\left[2 + \frac{|\rho| -\rho^2}{(a - 2)(1 +|\rho|)}\right]
  \end{cases} \cr
  f_2(\sqrt{r}, \lambda') &= 
  \begin{cases} 
    \min \begin{cases} 
      (\sqrt{r} -\lambda')_ +^2 \\
      \frac{1}{1 -\rho^2}\left[(1 -\rho^2)\sqrt{r} -(1 -|\rho|)\lambda'\right] \\
      \frac{1}{1 +\frac{\rho^2(a - 1)^2}{(a - 2)^2} -\frac{2\rho^2(a - 1)}{a - 2}}\left[\lambda'\left(1 - \frac{a|\rho|}{a - 2}\right) - \sqrt{r} \cdot \left(1 -\frac{\rho^2(a - 1)}{a - 2}\right)\right]^2
      \end{cases} \text{if }\sqrt{r} \leq \frac{a(a - 2)(1 -\rho^2) +|\rho|}{|\rho|(a - 1)(1 - \rho^2)}\lambda'\\
      \frac{1}{1 -\rho^2}d_{|\rho|}^2(C, (\sqrt{r},|\rho| \sqrt{r})) \qquad\qquad\qquad \text{if } \frac{a(a - 2)(1 -\rho^2) +|\rho|}{|\rho|(a - 1)(1 - \rho^2)} \lambda'\leq \sqrt{r} \leq \frac{a\lambda'}{|\rho|} \\
      \frac{1}{1 -\rho^2}\left[(1 -\rho^2)\sqrt{r}-\lambda'\right]^2 \qquad\qquad\quad\ \text{if }\sqrt{r} \geq \frac{a\lambda'}{|\rho|}
  \end{cases} 
\end{align*}
The definition of \( f_3(\sqrt{r},\lambda') \) is different for different signs of \( \rho \). When \( \rho > 0\):
\begin{align*}
  f_3(\sqrt{r}, \lambda') &= \frac{1}{1 -\rho^2} \cdot \begin{cases} 
    \min \begin{cases} 
      \left[(1 -\rho^2)\sqrt{r} -(1 -\rho)\lambda'\right]_ +^2 \\
      h(\sqrt{r},\lambda')
      \end{cases} & \text{  if } \sqrt{r} \leq \frac{a\lambda'}{1+\rho} \\
      \left[(1 -\rho^2)\sqrt{r}-\lambda'\right]^2 & \text{  if } \sqrt{r} \geq \frac{a\lambda'}{1 +\rho}
  \end{cases} 
\end{align*}
where \begin{equation*}
  h(\sqrt{r},\lambda') = \begin{cases} 
    \frac{ (1 -\rho^2)}{1 +\frac{\rho^2(a - 1)^2}{(a - 2)^2} -\frac{2\rho^2(a - 1)}{a - 2}}\left[\lambda'\left(1 - \frac{a\rho}{a - 2}\right) - (1 +\rho)\sqrt{r} \cdot \left(1 -\frac{\rho(a - 1)}{a - 2}\right)\right]^2 & \text{ if } \sqrt{r} \leq \frac{\lambda'}{1 +\rho} \cdot \frac{a(a - 2)(1 -\rho^2) +\rho}{(a - 2)(1 -\rho^2) +\rho -\rho^2} \\
    d^2(C, ((1 +\rho)\sqrt{r},(1 +\rho) \sqrt{r})) & \text{ if } \sqrt{r} \geq \frac{\lambda'}{1 +\rho} \cdot \frac{a(a - 2)(1 -\rho^2) +\rho}{(a - 2)(1 -\rho^2) +\rho -\rho^2}
  \end{cases} 
\end{equation*}
When \( \rho < 0 \), 
\begin{align*}
  f_3(\sqrt{r},\lambda') =\frac{1}{1 -\rho^2} \cdot  \begin{cases} 
      \left[(1 -\rho^2)\sqrt{r}-(1+|\rho|)\lambda'\right]^2 & 
      \text{if } \sqrt{r} \leq \frac{2\lambda'}{1 -|\rho|} \\
      \min \begin{cases} 
        \left[(1 -\rho^2)\sqrt{r} -\lambda'\right]^2\\
        k(\lambda',a)
      \end{cases} & \text{if } \sqrt{r} \geq \frac{2\lambda'}{1 -|\rho|}
    \end{cases}
\end{align*}
where 
\begin{equation*}
  k(\lambda',a) = \begin{cases} 
    d^2\left(D,\left((1 -|\rho|)\sqrt{r},-(1-|\rho|)\sqrt{r}\right)\right) \qquad \text{   if } \frac{2\lambda'}{1 -|\rho|} \leq \sqrt{r} \leq \frac{\lambda'}{1 -|\rho|}\left[2 + \frac{|\rho| +\rho^2}{(a - 2)(1 -\rho^2) - (|\rho| +\rho^2)} \right]\\
    \frac{ (1 -\rho^2)}{1 +\frac{\rho^2(a - 1)^2}{(a - 2)^2} -\frac{2\rho^2(a - 1)}{a - 2}}\left[ -\lambda'\left(1 + \frac{a|\rho|}{a - 2}\right) + (1 -|\rho|)\sqrt{r} \cdot \left(1 +\frac{|\rho|(a - 1)}{a - 2}\right)\right]^2 \\
    \qquad\qquad\qquad{if } \sqrt{r} \geq \frac{\lambda'}{1 -|\rho|}\left[2 + \frac{|\rho| +\rho^2}{(a - 2)(1 -\rho^2) - (|\rho| +\rho^2)} \right]
  \end{cases} 
\end{equation*}
  \end{theorem}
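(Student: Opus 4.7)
\textbf{Plan of proof for Theorem~\ref{suppthm:hamm.scad.1}.} The proof follows the template laid out in Section~\ref{suppsec:sketch}: given the rejection region ${\cal R}$ derived in \eqref{proof-scad-rjRegion_1}, the Hamming error decomposes via \eqref{proof-sketch2}--\eqref{proof-sketch3} into contributions from the four conditional mean vectors
\begin{equation*}
\mu_{00}=(0,0)',\ \mu_{01}=(|\rho|\sqrt{r},\sqrt{r})',\ \mu_{10}=(\sqrt{r},|\rho|\sqrt{r})',\ \mu_{11}=((1\pm|\rho|)\sqrt{r},\pm(1\pm|\rho|)\sqrt{r})',
\end{equation*}
where the sign choice in $\mu_{11}$ reflects the sign-flipping trick (if $\rho<0$, re-parametrise $(x_{j+1},\beta_{j+1})\mapsto(-x_{j+1},-\beta_{j+1})$ as in Section~\ref{suppsec:en}, so ${\cal R}$ is always the one in \eqref{proof-scad-rjRegion_1} with $|\rho|$ in place of $\rho$). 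It then suffices to identify $d^2_\Sigma(\mu_{00},{\cal R})=\lambda'^2$, $d^2_\Sigma(\mu_{01},{\cal R})=f_1$, $d^2_\Sigma(\mu_{10},{\cal R}^c)=f_2$, and $d^2_\Sigma(\mu_{11},{\cal R}^c)=f_3$, using the identity $d^2_\Sigma(\mu,S)=(1-\rho^2)^{-1}\inf_{\xi\in S}d^2_{|\rho|}(\mu,\xi)$ from \eqref{proof-en-distance}.

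First I would tabulate the boundary pieces of ${\cal R}$ in the half-plane $h_1>|h_2|$: the vertical segment $\{h_1=\lambda'\}$ near $A=(\lambda',\lambda')$, the slanted segment $\{h_1-|\rho|h_2=(1-|\rho|)\lambda'\}$ between $A$ and $B=((1+|\rho|)\lambda',2\lambda')$, the piece $\{h_1-|\rho|h_2=(1+|\rho|)\lambda'\}$ above $B$ when $a>2/(1-|\rho|)$, the transition through $C=((1+a|\rho|)\lambda',a\lambda')$, and finally the piece $\{h_1-\tfrac{|\rho|(a-1)}{a-2}h_2=\lambda'(1-\tfrac{a|\rho|}{a-2})\}$ above $C$; the symmetric image gives the lower boundary through $D$. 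For each $\mu\in\{\mu_{00},\mu_{01},\mu_{10},\mu_{11}\}$, I apply Lemma~\ref{supplem:distance} to compute the Mahalanobis projection onto the line carrying each piece, check whether the foot of the perpendicular lies in the relevant segment, and otherwise replace it by the distance to the nearest vertex (computed directly from Definition~\ref{def:EllipsDistance}). The infimum over the finitely many candidates gives $d^2_{|\rho|}(\mu,{\cal R})$ or $d^2_{|\rho|}(\mu,{\cal R}^c)$.

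The piecewise formulas for $f_1,f_2,f_3$ then arise by tracking, as $\sqrt{r}$ grows, which segment/vertex realises the infimum. For example, for $f_1$ the center $\mu_{01}$ slides along the ray $\{(|\rho|s,s):s>0\}$; for small $\sqrt{r}$ the nearest point of ${\cal R}$ is on $\{h_1=\lambda'\}$ giving $(\lambda'-|\rho|\sqrt{r})^2$, at the threshold $\sqrt{r}=\lambda'/(1+|\rho|)$ the foot passes through vertex $A$ (yielding $d^2_{|\rho|}(A,\mu_{01})/(1-\rho^2)$), then it slides along the slanted segment to vertex $B$, and finally onto the $C$-branch. The transition thresholds are obtained by equating the candidate distances pairwise and simplifying. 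The calculations for $f_2$ (center moves along $\{(s,|\rho|s)\}$ exiting ${\cal R}$ upward through $C$) and $f_3$ (center moves along $\{((1\pm|\rho|)s,\pm(1\pm|\rho|)s)\}$) are analogous, with the sign of $\rho$ dictating whether $\mu_{11}$ lies above $B$ (for $\rho>0$) or below $D$ (for $\rho<0$), which is exactly why $f_3$ splits into two sign-dependent formulas.

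The main obstacle is purely bookkeeping: the rejection region has many boundary pieces, the infimum-attaining piece changes with $\sqrt{r}$, and for each transition one must verify that the candidate foot of the perpendicular actually lies in its intended segment (and not on an extension beyond a vertex). Getting the transition thresholds right---especially the one at $\sqrt{r}=\lambda'[2+\tfrac{|\rho|-\rho^2}{(a-2)(1+|\rho|)}]$ in $f_1$ and the analogous one involving $a$ in $f_2$ and $f_3$---requires carefully solving the system that sets two adjacent candidate distances equal and using the restriction $a<2/(1-|\rho|)$ (respectively $a\geq 2/(1-|\rho|)$) to decide whether the $C$-branch is active. Once these thresholds are pinned down, every individual distance is a direct application of Lemma~\ref{supplem:distance}, and plugging into \eqref{proof-sketch2}--\eqref{proof-sketch3} yields the stated rates.
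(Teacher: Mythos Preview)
Your plan matches the paper's exactly: reduce via \eqref{proof-sketch2}--\eqref{proof-sketch3} to computing $d^2_\Sigma(\mu,\cdot)$ for the four centers against the region \eqref{proof-scad-rjRegion_1}, apply Lemma~\ref{supplem:distance} to each boundary line, and track which segment or vertex realises the minimum as $\sqrt r$ varies through the named points $A,B,C,D$. One caution on your boundary tabulation: this theorem treats the case $a>2/(1-|\rho|)$ (Figure~\ref{suppfig:hamm.scad.large.a}), and there the segment $BC$ lies on $h_1-\tfrac{|\rho|(a-1)}{a-2}h_2=\lambda'(1-\tfrac{a|\rho|}{a-2})$ while the piece beyond $C$ lies on $h_1-|\rho|h_2=\lambda'$---you have these swapped, which matters for the $a$-dependent thresholds in $f_1,f_2,f_3$; the paper also explicitly checks and rules out the centrosymmetric ``left-side'' boundary as a candidate minimiser for $f_1$.
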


\begin{proof}[Proof of Theorem~\ref{suppthm:hamm.scad.1}]
  See the rejection region in Figure~\ref{suppfig:hamm.scad.large.a} for a visualization of the rejection region. 
  \begin{figure}[h!]
    \centering
    \includegraphics[width=1.1\textwidth]{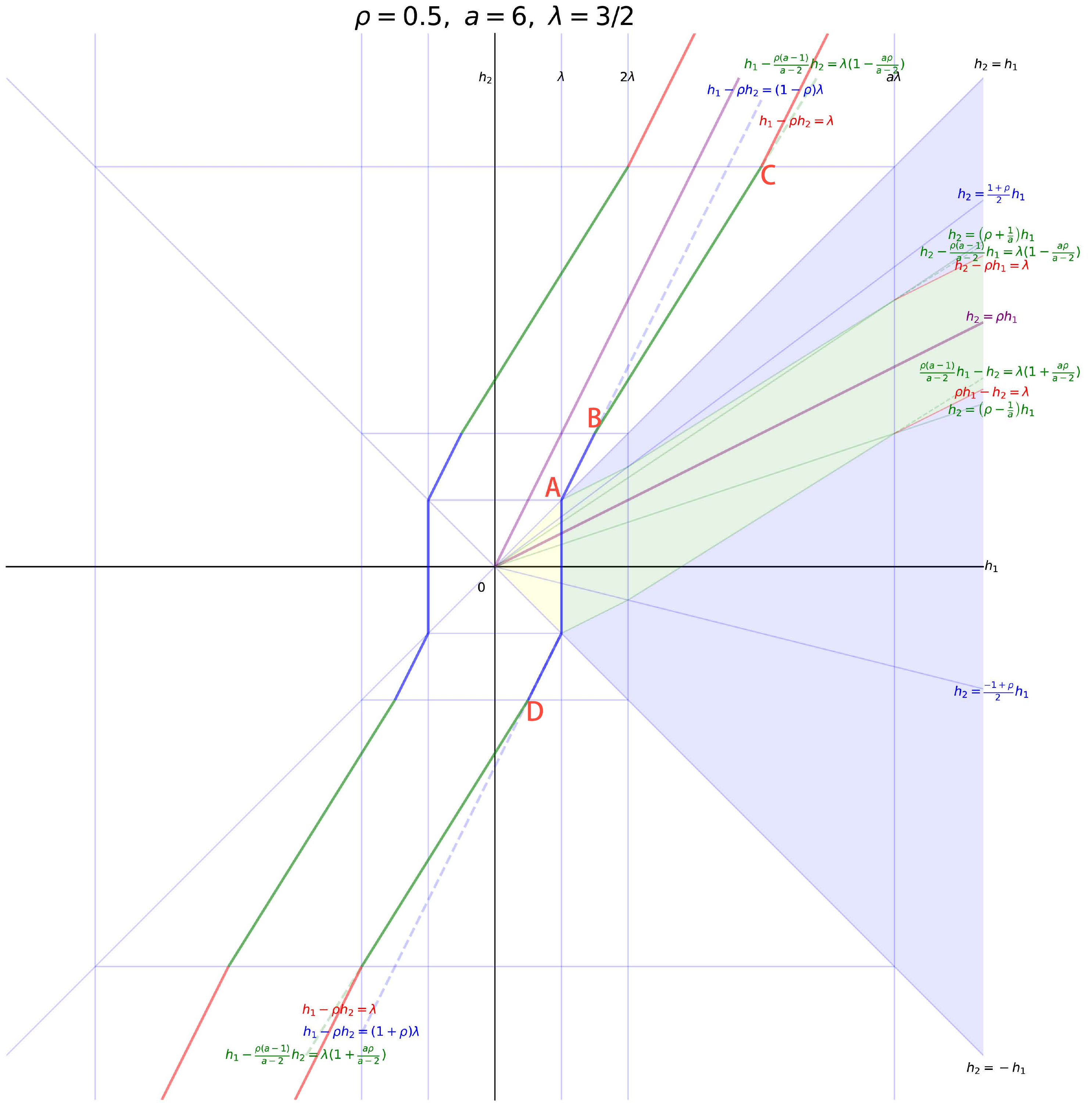}
    \caption{When \( a > \frac{2}{1 -|\rho|} \), the rejection region looks like this}
    \label{suppfig:hamm.scad.large.a}
  \end{figure}

\textit{The ellipsoid centered at the point \( \mu_{00} \)}: Easy to see the rate is \( L_p \cdot p^{1 -\lambda'^2} \). 

\textit{The ellipsoid centered at the point \( \mu_{01} \) }: We set out to find out \( f_1(\sqrt{r},\lambda') \).
Similar to Lasso, we have: (i) When \( \sqrt{r} \leq \frac{\lambda'}{1 +|\rho|} \), \( f_1(\sqrt{r},\lambda') =(\lambda'-|\rho| \sqrt{r})^2 \).
(ii) When  \( \frac{\lambda'}{1 +|\rho|} \leq \sqrt{r} \leq \lambda' \), \( f_1(\sqrt{r},\lambda') = \frac{1}{1 -\rho^2} d^2(A, (|\rho| \sqrt{r},\sqrt{r}))\). The point \( A \) is noted in Figure~\ref{suppfig:hamm.scad.large.a}.
(iii) When \( \lambda' \leq \sqrt{r} \leq 2\lambda' \), \( f_1(\sqrt{r},\lambda') = \frac{1 -|\rho|}{1 +|\rho|}\lambda'^2\).

Then we need to investigate the green segment in Figure~\ref{suppfig:hamm.scad.large.a}. When the ellipsoid is tangent to the green segment on the \textit{right} side (i.e. \( BC \)), and the tangent point is above Point \( B \), then using Lemma~\ref{supplem:distance},
\begin{equation*}
  \sqrt{r} +\frac{ -\frac{|\rho|}{a - 2}\left[\lambda'\left(1 - \frac{a|\rho|}{a - 2}\right) + \sqrt{r} \cdot \frac{|\rho|}{a - 2}\right]}{1 +\frac{\rho^2(a - 1)^2}{(a - 2)^2} -\frac{2\rho^2(a - 1)}{a - 2}} \geq 2\lambda'
\end{equation*}
which implies \( \sqrt{r} \geq \lambda'\left[2 + \frac{|\rho| -\rho^2}{(a - 2)(1 +|\rho|)}\right]  \)

When \( \sqrt{r} \geq \lambda'\left[2 + \frac{|\rho| -\rho^2}{(a - 2)(1 +\rho)}\right]  \), the ellipsoid either intersects with the green line segment \( BC \), or the red segment beyond \( C \). However, we need to eliminate the possibility of the ellipsoid having a smaller radius when tangent to the segments on the left. 

We will see that the line segments on the left can indeed be eliminated. This is because when the ellipsoid is tangent to both the green lines on the left and right, 
\begin{align*}
  \frac{(1 -\rho^2)\left[ \lambda'(1 -\frac{a|\rho|}{a - 2}) +\frac{|\rho| \sqrt{r}}{a - 2} \right]^2}{1 +\frac{\rho^2(a - 1)^2}{(a - 2)^2} -\frac{2\rho^2(a - 1)}{a - 2}} =&~ \frac{(1 -\rho^2)\left[ -\lambda'(1 +\frac{a|\rho|}{a - 2}) +\frac{|\rho| \sqrt{r}}{a - 2} \right]^2}{1 +\frac{\rho^2(a - 1)^2}{(a - 2)^2} -\frac{2\rho^2(a - 1)}{a - 2}}\\
  \implies \lambda'(1 -\frac{a|\rho|}{a - 2}) +\frac{|\rho| \sqrt{r}}{a - 2} =&~ \lambda'(1 +\frac{a|\rho|}{a - 2}) -\frac{|\rho| \sqrt{r}}{a - 2} \implies \sqrt{r} = a\lambda
\end{align*}
As a result, when  \( \sqrt{r} \leq a\lambda \), we can ignore the possibility of the ellipsoid intersecting the green or red segments on the left side. When \( \sqrt{r} \geq a\lambda \), the right side still has the smaller distance. 

when \( 2\lambda' \leq \sqrt{r} \leq  \lambda'\left[2 + \frac{|\rho| -\rho^2}{(a - 2)(1 +|\rho|)}\right]  \), \( f_1(\sqrt{r},\lambda') = \frac{1}{1 -\rho^2} d^2(B, (|\rho| \sqrt{r},\sqrt{r}))\). The point \( B \) is noted in Figure~\ref{suppfig:hamm.scad.large.a}.

when \( \sqrt{r} \geq  \lambda'\left[2 + \frac{|\rho| -\rho^2}{(a - 2)(1 +|\rho|)}\right]  \), \begin{equation*}
  f_1(\sqrt{r},\lambda') = \frac{1}{1 -\rho^2} \min\left\{\lambda'^2,\ \frac{(1 -\rho^2)\left[ \lambda'(1 -\frac{a|\rho|}{a - 2}) +\frac{|\rho| \sqrt{r}}{a - 2} \right]^2}{1 +\frac{\rho^2(a - 1)^2}{(a - 2)^2} -\frac{2\rho^2(a - 1)}{a - 2}}\right\}. 
\end{equation*}

\textit{The ellipsoid centered at the  \( \mu_{10} \)}: We only explain one tricky point: When the tangent point to the segment \( BC \) is precisely Point \( C \), 
\begin{equation*}
  |\rho|\sqrt{r} +\frac{ -\frac{|\rho|}{a - 2}\left[\lambda'\left(1 - \frac{a|\rho|}{a - 2}\right) - \sqrt{r} \cdot \left(1 -\frac{\rho^2(a - 1)}{a - 2}\right)\right]}{1 +\frac{\rho^2(a - 1)^2}{(a - 2)^2} -\frac{2\rho^2(a - 1)}{a - 2}} = a\lambda'
\end{equation*}
then \( \sqrt{r} = \frac{a(a - 2)(1 -\rho^2) +|\rho|}{|\rho|(a - 1)(1 - \rho^2)} \). 






  \textit{The ellipsoid centered at  \( \mu_{11} =\left((1 +\rho)\sqrt{r},(1 +\rho)\sqrt{r}\right) \), when \( \rho > 0 \) }: We still explain only one important point: When the ellipsoid is tangent to the green segment precisely at Point \( C \), 
  \begin{equation*}
    (1 +\rho)\sqrt{r} +\frac{ -\frac{\rho}{a - 2}\left[\lambda'\left(1 - \frac{a\rho}{a - 2}\right) - (1 +\rho)\sqrt{r} \cdot \left(1 -\frac{\rho(a - 1)}{a - 2}\right)\right]}{1 +\frac{\rho^2(a - 1)^2}{(a - 2)^2} -\frac{2\rho^2(a - 1)}{a - 2}} = a\lambda'
  \end{equation*}
  then 
  \( (1 +\rho)\sqrt{r} =\frac{a(a - 2)(1 -\rho^2) +\rho}{(a - 2)(1 -\rho^2) +\rho -\rho^2}\lambda' \).
  





  \textit{The ellipsoid centered at  \( \mu_{11} = \left((1 -|\rho|)\sqrt{r}, -(1 -|\rho|)\sqrt{r}\right) \), when \( \rho < 0 \) }: We explain one important point: when the ellipsoid is tangent to the green segment at the Point \( D \). Now we have
  \begin{equation*}
    -(1 -|\rho|)\sqrt{r} +\frac{ -\frac{|\rho|}{a - 2}\left[\lambda'\left(1 + \frac{a|\rho|}{a - 2}\right) - (1 -|\rho|)\sqrt{r} \cdot \left(1 +\frac{|\rho|(a - 1)}{a - 2}\right)\right]}{1 +\frac{\rho^2(a - 1)^2}{(a - 2)^2} -\frac{2\rho^2(a - 1)}{a - 2}} = - 2\lambda'
  \end{equation*}
  then 
  \( (1 -|\rho|)\sqrt{r} =\lambda'\left[2 + \frac{|\rho| +\rho^2}{(a - 2)(1 -\rho^2) - (|\rho| +\rho^2)} \right]\).
  
  
  Note that even when \( \frac{2\lambda'}{1 -|\rho|} \leq \sqrt{r} \leq \frac{\lambda'}{1 -|\rho|}\left[2 + \frac{|\rho| +\rho^2}{(a - 2)(1 -\rho^2) - (|\rho| +\rho^2)} \right] \), and the ellipsoid intersects with the rejection region at Point \( D \), it may be tangent to the red segment without being tangent to the green segment. This is especially true when \( a < 2 +\frac{|\rho|}{1 -|\rho|} \) (but this only happens when \( a < \frac{2}{1 -|\rho|} \), the next section.)
\end{proof}

\begin{theorem}\label{suppthm:hamm.scad.2}
  Suppose the conditions of Theorem~\ref{thm:SCAD} hold. Let $\lambda=\lambda'\sqrt{2\log(p)}$ in SCAD. Define a few important points (as noted in the rejection region in Figure~\ref{suppfig:hamm.scad.small.a}): \( A(\lambda',\lambda'),\,B((1 +|\rho|)\lambda',2\lambda'),\,C(\frac{1 + |\rho|}{1 -|\rho|} \lambda',\frac{2\lambda'}{1 -|\rho|} ),\, D((1 -|\rho|)\lambda', - 2\lambda') \).  As $p\to\infty$, 
  \[
  \FP_p=L_p p^{1- \min\bigl\{ \lambda'^2, \;\; \vt + f_1(\sqrt{r}, \lambda')\bigr\}}, \qquad \FN_p = L_p p^{1-\min\bigl\{\vt + f_2(\sqrt{r}, \lambda'),\;\; 2\vt + f_3(\sqrt{r}, \lambda')\bigr\}}, 
  \]
  where (below, $d^2_{|\rho|}(u,v)$ is as in Definition~\ref{def:EllipsDistance}) 
  \begin{align*}
   f_1(\sqrt{r},\lambda') & =\begin{cases} 
    (\lambda'-|\rho| \sqrt{r})^2 & \text{if }\sqrt{r} \leq \frac{\lambda'}{1 +|\rho|} \\
    \frac{1}{1 -\rho^2}d^2(A, (|\rho| \sqrt{r},\sqrt{r})) & \text{if } \frac{\lambda'}{1+|\rho|} \leq \sqrt{r} \leq \lambda' \\
    \frac{1 -|\rho|}{1 +|\rho|} \lambda'^2 & \text{if }\lambda' \leq \sqrt{r} \leq 2\lambda' \\
    \min\left\{ \frac{\lambda'^2}{1 -\rho^2},\frac{1}{1 -\rho^2}d^2(B, (|\rho| \sqrt{r},\sqrt{r}))  \right\} & \text{if }2\lambda' \leq \sqrt{r} \leq \frac{5 + 3|\rho|}{2 + 2|\rho|} \lambda' \\
    \min\left\{\frac{\lambda'^2}{1 -\rho^2},\ \frac{(1-|\rho|) r}{(5+3|\rho|)}\right\}  & \text{if } \sqrt{r} \geq \frac{5 + 3|\rho|}{2 + 2|\rho|} \lambda'
  \end{cases}  \cr
  f_2(\sqrt{r}, \lambda') &= 
  \begin{cases} 
    \min \begin{cases} 
      (\sqrt{r} -\lambda')_ +^2 \\
      \frac{1}{1 -\rho^2}\left[(1 -\rho^2)\sqrt{r} -(1 -|\rho|)\lambda'\right]^2 \\
      \frac{(1 -|\rho|)(2 +|\rho|)^2}{5 + 3|\rho|} r
      \end{cases} & 
       \text{if }\sqrt{r} \leq \frac{5 + 3|\rho|}{(1 -|\rho|)(1 +|\rho|)^2}\lambda'\\
      \frac{1}{1 -\rho^2}d^2(C, (\sqrt{r},|\rho| \sqrt{r})) & \text{if } \frac{5 + 3|\rho|}{(1 -|\rho|)(1 +|\rho|)^2}\lambda' \leq \sqrt{r} \leq \frac{2\lambda'}{|\rho|(1-|\rho|)} \\
      \frac{1}{1 -\rho^2}\left[(1 -\rho^2)\sqrt{r}-\lambda'\right]^2& \text{if }\sqrt{r} \geq \frac{2\lambda'}{|\rho|(1-|\rho|)}
  \end{cases} 
\end{align*}
The definition of \( f_3(\sqrt{r},\lambda') \) is different for different signs of \( \rho \). When \( \rho > 0\):
\begin{align*}
  f_3(\sqrt{r}, \lambda') &= \frac{1}{1 -\rho^2} \cdot \begin{cases} 
    \min \begin{cases} 
      \left[(1 -\rho^2)\sqrt{r} -(1 -\rho)\lambda'\right]_ +^2 \\
      h(\sqrt{r},\lambda')
      \end{cases} & \text{  if } \sqrt{r} \leq \frac{2\lambda'}{1-\rho^2} \\
      \left[(1 -\rho^2)\sqrt{r}-\lambda'\right]^2 & \text{  if } \sqrt{r} \geq \frac{2\lambda'}{1 -\rho^2}
  \end{cases} 
\end{align*}
where \begin{equation*}
  h(\sqrt{r},\lambda') = \begin{cases} 
    \frac{(1 -\rho^2)^2(1 +\rho)}{5 + 3\rho}r & \text{ if } \sqrt{r} \leq \frac{\lambda'}{1 +\rho} \cdot \frac{5 + 3\rho}{(1 -\rho)(3 +\rho)} \\
    d^2(C, ((1 +\rho)\sqrt{r},(1 +\rho) \sqrt{r})) & \text{ if } \sqrt{r} \geq \frac{\lambda'}{1 +\rho} \cdot \frac{5 + 3\rho}{(1 -\rho)(3 +\rho)}
  \end{cases} 
\end{equation*}
When \( \rho < 0 \), 
\begin{align*}
  f_3(\sqrt{r},\lambda') =\frac{1}{1 -\rho^2} \cdot  \begin{cases} 
      \left[(1 -\rho^2)\sqrt{r}-(1+|\rho|)\lambda'\right]^2 & 
      \text{if } \sqrt{r} \leq \frac{2\lambda'}{1 -|\rho|} \\
      \min \begin{cases} 
        \left[(1 -\rho^2)\sqrt{r} -\lambda'\right]^2\\
        k(\lambda',a)
      \end{cases} & \text{if } \sqrt{r} \geq \frac{2\lambda'}{1 -|\rho|}
    \end{cases}
\end{align*}
where 
\begin{equation*}
  k(\lambda',a) = \begin{cases} 
    d^2\left(D,\left((1 -|\rho|)\sqrt{r},-(1-|\rho|)\sqrt{r}\right)\right) \qquad \text{   if } \frac{2\lambda'}{1 -|\rho|} \leq \sqrt{r} \leq \frac{\lambda'}{1 -|\rho|}\left[2 + \frac{|\rho| +\rho^2}{(a - 2)(1 -\rho^2) - (|\rho| +\rho^2)} \right]\\
    \frac{ (1 -\rho^2)}{1 +\frac{\rho^2(a - 1)^2}{(a - 2)^2} -\frac{2\rho^2(a - 1)}{a - 2}}\left[ -\lambda'\left(1 + \frac{a|\rho|}{a - 2}\right) + (1 -|\rho|)\sqrt{r} \cdot \left(1 +\frac{|\rho|(a - 1)}{a - 2}\right)\right]^2 \\
    \qquad\qquad\qquad{if } \sqrt{r} \geq \frac{\lambda'}{1 -|\rho|}\left[2 + \frac{|\rho| +\rho^2}{(a - 2)(1 -\rho^2) - (|\rho| +\rho^2)} \right]
  \end{cases} 
\end{equation*}
  \end{theorem}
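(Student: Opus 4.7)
My approach is to mirror the three-step template used for Theorem~\ref{suppthm:hamm.scad.1}. The Hamming error decomposition from Section~\ref{suppsec:sketch} reduces the proof to computing the four elliptical distances $d_\Sigma(\mu_{00},{\cal R})$, $d_\Sigma(\mu_{01},{\cal R})$, $d_\Sigma(\mu_{10},{\cal R}^c)$, and $d_\Sigma(\mu_{11},{\cal R}^c)$, with the same means $\mu_{ij}$ introduced in Appendix~\ref{suppsec:en} and with $\Sigma$ having off-diagonal entry $|\rho|$ after the sign-flip reduction for $\rho<0$. The new ingredient relative to Theorem~\ref{suppthm:hamm.scad.1} is the rejection region: for $a\leq 2/(1-|\rho|)$, expression \eqref{proof-scad-rjRegion_2} applies, and its effective boundary in the first quadrant consists of the four lines $\{h_1=\lambda'\}$, $\{h_1-|\rho|h_2=(1-|\rho|)\lambda'\}$, $\{h_2=2\lambda'/(1-|\rho|)\}$, and the ray $\{h_1=(1+|\rho|)h_2/2\}$; the ``concave-SCAD'' line $h_1-\tfrac{\rho(a-1)}{a-2}h_2=\lambda'(1-\tfrac{a\rho}{a-2})$ that was active in Theorem~\ref{suppthm:hamm.scad.1} is now shielded by $\{h_2=2\lambda'/(1-|\rho|)\}$ and does not bind.

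With the region fixed, I would compute each of the four distances by the same routine as in the earlier theorem: take the minimum, over the four boundary lines, of the closed-form tangent distance from $\mu_{ij}$ supplied by Lemma~\ref{supplem:distance}, together with the explicit $d_{|\rho|}^2(\mu_{ij},v)$ from Definition~\ref{def:EllipsDistance} for each vertex $v$ of ${\cal R}$. The relevant vertices are $A=(\lambda',\lambda')$, $B=((1+|\rho|)\lambda',2\lambda')$, $C=((1+|\rho|)\lambda'/(1-|\rho|),\,2\lambda'/(1-|\rho|))$, and, for the $\rho<0$ branch of $f_3$, the reflected point $D=((1-|\rho|)\lambda',-2\lambda')$. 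As $\sqrt r$ increases, the optimizer on $\partial{\cal R}$ migrates continuously from one segment to the next; each transition threshold is located by solving, via Lemma~\ref{supplem:distance}, for the $\sqrt r$ at which the closed-form tangent point coincides with $A$, $B$, or $C$. For $f_1$ this produces the four breakpoints $\lambda'/(1+|\rho|)$, $\lambda'$, $2\lambda'$, and $(5+3|\rho|)\lambda'/(2+2|\rho|)$; repeating the routine at $\mu_{10}$ and at $\mu_{11}$ yields the piecewise forms of $f_2$ and $f_3$ respectively.

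The main obstacle will be the verification that the concave penalty line from the large-$a$ regime really is inactive throughout. Concretely, for each candidate center $\mu_{ij}$ I would need to check that the Lemma~\ref{supplem:distance} optimizer on that line falls outside the portion of the line that lies on $\partial{\cal R}$, so that the tangent point is absorbed at the vertex $C$ (or $D$) before ever reaching the concave segment. This is what collapses the four-case formula of Theorem~\ref{suppthm:hamm.scad.1} into the simpler expressions stated here, and in particular yields the clean quantities $\frac{(1-|\rho|)r}{5+3|\rho|}$ in $f_1$ and $\frac{(1-|\rho|)(2+|\rho|)^2 r}{5+3|\rho|}$ in $f_2$, which I expect to recover by plugging the coordinates of $C$ into $d_{|\rho|}^2(C,(|\rho|\sqrt r,\sqrt r))$ and $d_{|\rho|}^2(C,(\sqrt r,|\rho|\sqrt r))$ respectively and simplifying. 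A second delicate item is the $\rho<0$ branch of $f_3$: since $\mu_{11}$ now lies in the fourth quadrant, I must compare the tangent distance to $\{h_1-|\rho|h_2=\lambda'\}$ against the vertex distance to $D$ and identify the crossover value of $\sqrt r$; by the same argument as in Theorem~\ref{suppthm:hamm.scad.1} this crossover coincides with the threshold appearing in the definition of $k(\lambda',a)$, so the two-case formula carries over verbatim. Once the four distances are assembled in each regime, substituting them into \eqref{proof-sketch2}--\eqref{proof-sketch3} produces the claimed expressions for $\FP_p$ and $\FN_p$.
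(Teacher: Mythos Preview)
Your overall strategy is the paper's own: compute the four elliptical distances via Lemma~\ref{supplem:distance} on boundary segments and Definition~\ref{def:EllipsDistance} on vertices, then substitute into \eqref{proof-sketch2}--\eqref{proof-sketch3}. The identification of $A,B,C,D$ and the handling of the $\rho<0$ branch of $f_3$ through $k(\lambda',a)$ are correct and agree with the paper.

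However, your description of $\partial{\cal R}$ contains a concrete error that would derail the computation. The horizontal line $\{h_2=2\lambda'/(1-|\rho|)\}$ is \emph{not} a boundary segment of ${\cal R}$: it is only an auxiliary constraint in one piece of the union \eqref{proof-scad-rjRegion_2}, and the adjacent piece absorbs it. The actual boundary beyond $C$ is the line $\{h_1-|\rho|h_2=\lambda'\}$ (the ``red'' segment in Figure~\ref{suppfig:hamm.scad.small.a}). This matters directly: the term $\lambda'^2/(1-\rho^2)$ appearing in the last two cases of $f_1$, and the term $\tfrac{1}{1-\rho^2}[(1-\rho^2)\sqrt r-\lambda']^2$ in the last case of $f_2$, are precisely the Lemma~\ref{supplem:distance} tangent distances to this red line from $(|\rho|\sqrt r,\sqrt r)$ and $(\sqrt r,|\rho|\sqrt r)$; your horizontal line would instead give $(\sqrt r-2\lambda'/(1-|\rho|))^2$, which is wrong.

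A related slip: the clean quantities $\tfrac{(1-|\rho|)r}{5+3|\rho|}$ in $f_1$ and $\tfrac{(1-|\rho|)(2+|\rho|)^2 r}{5+3|\rho|}$ in $f_2$ are not vertex distances $d_{|\rho|}^2(C,\cdot)$; they are tangent distances to the \emph{line} $h_1=(1+|\rho|)h_2/2$ through $B$ and $C$, obtained from Lemma~\ref{supplem:distance}. The vertex distance $d_{|\rho|}^2(C,(\sqrt r,|\rho|\sqrt r))$ is what appears in the \emph{middle} case of $f_2$, where the optimizer has left the ray $BC$ but not yet reached the red line. Once you replace the boundary list by $\{h_1=\lambda'\}$, $\{h_1-|\rho|h_2=(1-|\rho|)\lambda'\}$ on $AB$, $\{h_1=(1+|\rho|)h_2/2\}$ on $BC$, and $\{h_1-|\rho|h_2=\lambda'\}$ beyond $C$, your routine reproduces the paper's computation verbatim; the paper also notes that the concave-penalty line on the \emph{upper} side is shielded because $|\rho|(a-1)/(a-2)\geq(1+|\rho|)/2$ when $a\leq 2/(1-|\rho|)$, so no separate check is required there.
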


\begin{proof}[Proof of Theorem~\ref{suppthm:hamm.scad.2}]
See the different rejection region in Figure~\ref{suppfig:hamm.scad.small.a}.
\begin{figure}[h!]
  \centering
  \includegraphics[width=1.1\textwidth]{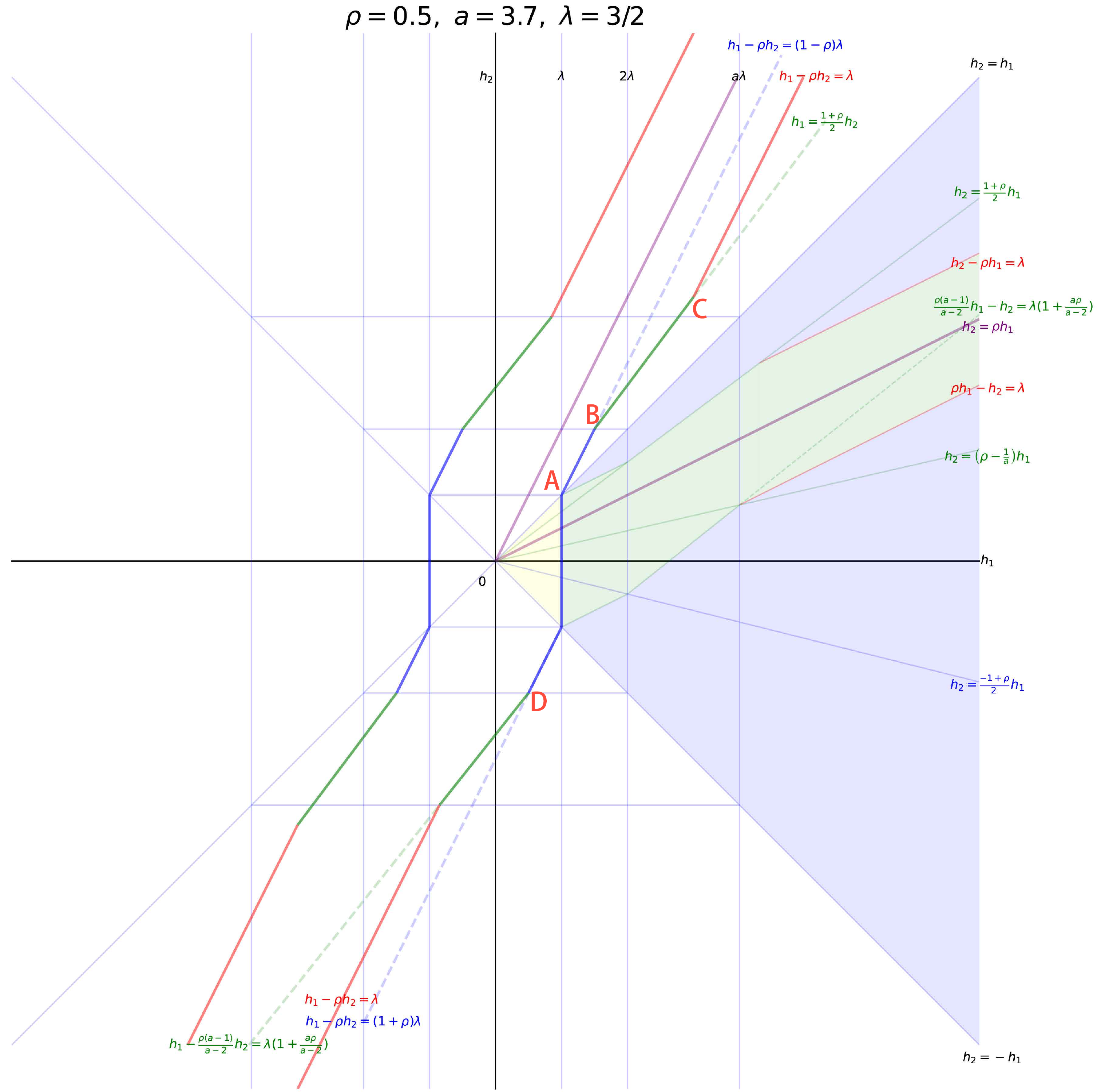}
  \caption{When \( a \leq  \frac{2}{1 -|\rho|} \), the rejection region looks like this}
  \label{suppfig:hamm.scad.small.a}
\end{figure}
  
\textit{The ellipsoid centered at \( \mu_{00} \):} The rate is \( L_p \cdot p^{1 -\lambda'^2} \) 

\textit{The ellipsoid centered at \( \mu_{01} \):} 
Still similar to Lasso, we have
\begin{itemize}
  \item when \( \sqrt{r} \leq \frac{\lambda'}{1 +|\rho|} \), \( f_1(\sqrt{r},\lambda') = (\lambda'-|\rho| \sqrt{r})^2 \).
  \item when  \( \frac{\lambda'}{1 +|\rho|} \leq \sqrt{r} \leq \lambda' \), \( f_1(\sqrt{r},\lambda') = \frac{1}{1 -\rho^2} d^2(A, (|\rho| \sqrt{r},\sqrt{r}))\). The point \( A \) has been defined in Theorem~\ref{suppthm:hamm.scad.2} and noted in Figure~\ref{suppfig:hamm.scad.small.a}. 
  \item when \( \lambda' \leq \sqrt{r} \leq 2\lambda' \), \( f_1(\sqrt{r},\lambda')= \lambda'^2\).
\end{itemize}

Then we need to investigate the green segment in Figure~\ref{suppfig:hamm.scad.small.a}. When the ellipsoid is tangent to the green segment on the \textit{right} side (i.e. \( BC \)), and the tangent point is above Point \( B \), then using Lemma~\ref{supplem:distance},
\begin{equation*}
  \sqrt{r} - \frac{\frac{1}{4}(1 -|\rho|)^2 \sqrt{r}}{1 +(\frac{1 +|\rho|}{2} )^2 -|\rho|(1 +|\rho|)} \geq 2\lambda'
\end{equation*}
which implies \( \sqrt{r} \geq \frac{5 + 3|\rho|}{2 + 2|\rho|} \lambda'. \)

When \( \sqrt{r} \geq \frac{5 + 3|\rho|}{2 + 2|\rho|} \lambda'  \), the ellipsoid either intersects with the green line segment \( BC \), or the red segment beyond \( C \). However, we need to eliminate the possibility of the ellipsoid having a smaller radius when tangent to the segments on the left.

Actually, we will see that the line segments on the left can indeed be eliminated, without doing any computation. The case of \( a \leq \frac{2}{1 -|\rho|} \) is a degenerate case, as we have \( \frac{|\rho|(a - 1)}{a - 2} \geq \frac{1 +|\rho|}{2} \).
when \( a \leq \frac{2}{1 -|\rho|} \). From the computation in the \( a > \frac{2}{1 -|\rho|} \) counterpart, the green and red segments on the left sides can be ignored. 

So we can continue the discussion and present the rest two cases:
\begin{itemize}
  \item when \( 2\lambda' \leq \sqrt{r} \leq  \frac{5 + 3|\rho|}{2 + 2|\rho|} \lambda'  \), \( \text{rate} = d^2(B, (|\rho| \sqrt{r},\sqrt{r}))\). The point \( B \) is noted in Figure~\ref{suppfig:hamm.scad.small.a}.
  \item when \( \sqrt{r} \geq  \frac{5 + 3|\rho|}{2 + 2|\rho|} \lambda' \), \begin{equation*}
    \text{rate} = \min\left\{\lambda'^2,\ \frac{(1 -|\rho|^2)(1-|\rho|) r}{(5+3|\rho|)}\right\} 
  \end{equation*}
\end{itemize}

\textit{The ellipsoid centered at \( \mu_{10} \) }: Only one special point needs to be investigated. When the tangent point to the segment \( BC \) is precisely Point \( C\left( \frac{1 +|\rho|}{1 -|\rho|}\lambda',\ \frac{2\lambda'}{1 -|\rho|} \right) \), 
\begin{equation*}
  |\rho|\sqrt{r} +\frac{ -\frac{1 -|\rho|}{2}\left[\frac{|\rho|(1 +|\rho|)}{2} -1\right]\sqrt{r}}{1 +(\frac{1 +|\rho|}{2} )^2 -|\rho|(1 +|\rho|)} = \frac{2\lambda'}{1 -|\rho|}
\end{equation*}
then \( \sqrt{r} = \frac{5 + 3|\rho|}{(1 -|\rho|)(1 +|\rho|)^2}\lambda' \).


\textit{The ellipsoid centered at \( \mu_{11} =\left((1 +\rho)\sqrt{r},(1 +\rho)\sqrt{r}\right) \),  when \( \rho > 0 \).} We explain one special point: When the ellipsoid is tangent to the green segment precisely at Point \( C \), 
\begin{equation*}
  (1 +\rho)\sqrt{r} +\frac{ \frac{(1 -\rho)(1 -\rho^2)}{4}\sqrt{r}}{1 +(\frac{1 +\rho}{2} )^2 -\rho(1 +\rho)} = \frac{2\lambda'}{1 -\rho}
\end{equation*}
then \( \sqrt{r} = \frac{5 + 3\rho}{(1 -\rho^2)(3 +\rho)}\lambda' \).


\textit{The ellipsoid centered at \( \mu_{11} =\left((1 -|\rho|)\sqrt{r}, -(1 -|\rho|)\sqrt{r}\right) \), only when \( \rho < 0 \)}: This case is identical to the counterpart proof for \( a \geq \frac{2}{1 -|\rho|} \), and nothing needs to be changed.
\end{proof}

\paragraph{Part 3. Calculating the phase diagram, for \( a \leq \frac{2}{1-|\rho|} \).}  The boundary between the Regions of Almost Full Recovery and No Recovery is still \( r =\vt \), and it can be proven in the same manner as that of Elastic net. For the rest of this part, we focus on the boundary between Exact Recovery and Almost Full Recovery.

We focus on the case of \( a < \frac{2}{1 -|\rho|}\), because: First, the phase diagram of SCAD when \( a < \frac{2}{1 -|\rho|} \)  is worse than Lasso's diagram when \( \rho > 0 \), and  becomes the same as Lasso when \( a \) is sufficiently larger than \( \frac{2}{1 -\rho} \). When \( \rho <0 \), the phase diagram is better than that of Lasso when \( a \leq  \frac{2}{1 -|\rho|} \), and numerical results show that when \( a > \frac{2}{1 -|\rho|} \), the diagram is monotonically moving upwards towards Lasso's diagram when \( a \) is increasing. Second, when \( a <\frac{2}{1 -|\rho|} \), Theorem~\ref{suppthm:hamm.scad.2} does not depend on \( a \) in its most part, and is much easier to compute.  To sum up, the case of \( a > \frac{2}{1-|\rho|} \) is much more tedious in computation but less informative.

\paragraph{We start from the case of \( \rho > 0 \).}
Before diving into the proof, we give an overall account for the diagram:
\begin{enumerate}
  \item The diagram is the same as that of Lasso, only except that when \( \rho < 0.179 \), there is a tiny difference in a small neighborhood of \( \vt= 0\), slightly worse than Lasso. See equation~\eqref{eq:extra.curve}
  \item As long as \( a < \frac{2}{1 -\rho} \), the phase diagram does not depend on the specific value of \( a \).    
\end{enumerate}

Then we move on to the proof, which has four cases just like the proof of Elastic net.

\textit{First, \( \lambda'^2 =\vt + f_2(\sqrt{r},\lambda') = 1 \) and \( \vt +f_1(\sqrt{r},\lambda') \geq 1\),\( 2\vt + f_3(\sqrt{r},\lambda') \geq 1\):} We know \( \lambda' = 1 \). From the definitin of \( f_2(\sqrt{r},\lambda') \), we know \( \sqrt{r} \geq  1 + \sqrt{1 -\vt} >1\). 

Then we start our discussion on the conditional expression of \( f_2(\sqrt{r},\lambda') \). (Note that numerically, \( \min_\rho \frac{5 + 3\rho}{(1 -\rho)(1 +\rho)^2} = 4.848\).) 

If \( 1 -\vt = \frac{1}{1 -\rho^2}d^2(C,(\rho \sqrt{r},\sqrt{r})) \): As we know \( d^2(C,(\rho \sqrt{r},\sqrt{r})) \geq \frac{(1 -\rho^2)(1 -\rho)(2 +\rho)^2}{5 + 3\rho} r \), so \( \sqrt{r} \leq \sqrt{1 -\vt}\sqrt{\frac{5 + 3\rho}{(1 -\rho)(2 +\rho)^2}} \), which contradicts the pre-condition that \( \sqrt{r} \geq \frac{5 + 3\rho}{(1 -\rho)(1 +\rho)^2} \). We have no curve in this case. If \( 1 -\vt = \frac{1}{1 -\rho^2}[(1 -\rho^2)\sqrt{r} - 1]^2 \), then \( \sqrt{r} = \sqrt{\frac{1 -\vt}{1 -\rho^2}} + \frac{1}{1 -\rho^2} \). This again contradicts the pre-condition that \( \sqrt{r} \geq \frac{2}{\rho(1 -\rho)} \). We have no curve in this case.

  As a result, we can only have \( 1 <\sqrt{r} <\frac{5 + 3\rho}{(1 -\rho)(1 +\rho)^2} \) and one of the following three:
  \begin{align*}
    & \sqrt{r} = 1 + \sqrt{1 -\vt}\\
    & \sqrt{r} = \sqrt{ \frac{1 -\vt}{1 -\rho^2}} + \frac{1}{1 +\rho} \\
    & \sqrt{r} = \sqrt{1 -\vt}\sqrt{\frac{5 + 3\rho}{(1 -\rho)(2 +\rho)^2}}
  \end{align*}
We then discuss the three curves one by one, starting from the last one. 

\begin{enumerate}
  \item \( \sqrt{r} = \sqrt{1 -\vt}\sqrt{\frac{5 + 3\rho}{(1 -\rho)(2 +\rho)^2}} \). We need to look at \( FP_2 \) to eliminate this curve. 

    When \( 1 \leq \sqrt{r} \leq 2 \), we have \( \sqrt{1 -\vt} \leq \sqrt{\frac{1 -\rho}{1 +\rho}} \). In \( FN_1 \), for the last term to be the minimum among the three 
     \[ \min \begin{cases} 
      (1 -\rho^2)(\sqrt{r} -\lambda')_ +^2 \\
      \left[(1 -\rho^2)\sqrt{r} -(1 -\rho)\lambda'\right]^2 \\
      \frac{(1 -\rho^2)(1 -\rho)(2 +\rho)^2}{5 + 3\rho} r
      \end{cases}, \]
      we need \( \sqrt{1 -\vt}\sqrt{\frac{5 + 3\rho}{(1 -\rho)(2 +\rho)^2}} \geq  \sqrt{ \frac{1 -\vt}{1 -\rho^2}} + \frac{1}{1 +\rho}  \), which implies
      \begin{equation*}
        \sqrt{\frac{1 -\rho}{1 +\rho}} \left[\sqrt{\frac{5 + 3\rho}{(1 -\rho)(2 +\rho)^2}} -\frac{1}{\sqrt{1 -\rho^2}}\right] \geq \frac{1}{1 +\rho}. 
      \end{equation*} 
      Simplify this for a few steps and we can see the contradiction.

      When \( \sqrt{r} \geq \frac{5 + 3\rho}{2 + 2\rho} \), we can see the contradiction by simplifying this inequality itself. 

      When \( 2 < \sqrt{r} < \frac{5 + 3\rho}{2 + 2\rho} \), by looking at 
      \begin{align*}
        & \sqrt{1 -\vt}\sqrt{\frac{5 + 3\rho}{(1 -\rho)(2 +\rho)^2}} \leq \frac{5 + 3\rho}{2 + 2\rho} \\
        & \sqrt{1 -\vt}\sqrt{\frac{5 + 3\rho}{(1 -\rho)(2 +\rho)^2}} \geq \frac{1}{1 +\rho} + \sqrt{\frac{1 -\vt}{1 -\rho^2}}
      \end{align*}
      we can see that no \( \rho\in(0,1) \) can admit a possible \( \sqrt{1 -\vt} \).

  \item \( \sqrt{r} = \frac{1}{1 +\rho} +\sqrt{\frac{1 -\vt}{1 -\rho^2}} \). We need to look at \( FP_2 \) to eliminate this curve. 

    We already have \( \sqrt{r} \geq 1 \); when \( 1 < \sqrt{r} \leq 2 \), from the rate of \( FP_2 \), we have \( 1 -\vt \leq \frac{1 -\rho}{1 + \rho}. \) 
    However, In \( FN_1 \), for the middle term to be the minimum among the three 
    \[ \min \begin{cases} 
     (1 -\rho^2)(\sqrt{r} -\lambda')_ +^2 \\
     \left[(1 -\rho^2)\sqrt{r} -(1 -\rho)\lambda'\right]^2 \\
     \frac{(1 -\rho^2)(1 -\rho)(2 +\rho)^2}{5 + 3\rho} r
     \end{cases}, \] we need
     \begin{equation*}
      \frac{1}{1 +\rho} +\sqrt{\frac{1 -\vt}{1 -\rho^2}} > 1 + \sqrt{1 -\vt}.
     \end{equation*} The upper and lower bound of \( \sqrt{1 -\vt} \) would render this case impossible. 

    When \( 2 < \sqrt{r} \leq \frac{5 + 3\rho}{2 + 2\rho} \), using the expression of \eqref{eq:dist.B}, we need 
     \begin{equation*}
       \rho^2 r - 2(1 +\rho)\sqrt{r} + 4 \geq 0
     \end{equation*} in which we use \( \sqrt{r} \) to express \( \sqrt{1 -\vt} \). By letting \( \sqrt{r} = 2 \) or \( \frac{5 + 3\rho}{2 + 2\rho} \), we can see they are both negative, so we have a contradiction. 

     When \( \sqrt{r} > \frac{5 + 3\rho}{2 + 2\rho}\), we have 
     \begin{align*}
     & \frac{1}{1 +\rho} + \sqrt{\frac{1 -\vt}{1 -\rho^2}} >  \frac{5 + 3\rho}{2 + 2\rho} \\
      & \sqrt{\frac{5 + 3\rho}{1 -\rho}}\sqrt{1 -\vt} \leq \frac{1}{1 +\rho} + \sqrt{\frac{1 -\vt}{1 -\rho^2}}
     \end{align*} 
     and the upper bound on \( \sqrt{1 -\vt} \) is even smaller than the lower bound. Contradiction.

\end{enumerate}

Now we are only left with \( \sqrt{r} = 1 + \sqrt{1 -\vt} \). We need it to meet the following requirements:
\begin{equation*}
  \begin{cases} 
  1 + \sqrt{1 -\vt} \geq \frac{1}{1 + p} + \sqrt{\frac{1 -\vt}{1 -\rho^2}} & \text{for it to be the smallest among the three}\\
  1 + \sqrt{1 -\vt} \geq \sqrt{1 -\vt}\sqrt{\frac{5 + 3\rho}{(1 -\rho)(2 +\rho)^2}} & \text{for it to be the smallest among the three}\\
  1 + \sqrt{1 -\vt} \leq \frac{5 + 3\rho}{(1 -\rho)(1 +\rho)^2} & \text{pre-condition; not restrictive} \\
  \vt >\frac{2\rho}{1 +\rho}  & \text{for \( \vt + f_1(\sqrt{r},\lambda') \geq 1 \) } \\
  2\vt + f_3(\sqrt{r},\lambda') \geq 1
  \end{cases} 
\end{equation*}
Among the first 4 requirements, the fourth one is can imply all of the rest. Then we look at \( FN_{2} \), and show it is always \( o(1) \) when \(\vt >\frac{2\rho}{1 +\rho}   \). When \( \rho > \frac{1}{3} \), \( \vt > \frac{1}{2} \); when \( \rho < \frac{1}{3} \), we discuss as follows: 
\begin{itemize}
  \item When \( \sqrt{r} \leq \frac{\lambda'}{1 +\rho} \cdot \frac{5 + 3\rho}{(1 -\rho)(3 +\rho)} \), 
  
  For the first term, we need \( 1 + \sqrt{1 -\vt} > \sqrt{\frac{1 - 2\vt}{1 -\rho^2}} + \frac{1}{1 +\rho} \) so that the exponent is negative.  \( (LHS - RHS) \) is increasing in \( \vt \), and verifying \( \vt = \frac{2\rho}{1 +\rho} \) is enough.
  
  For the next term, we need \( \sqrt{\frac{5 + 3\rho}{(1 -\rho^2)(1 +\rho)}}\sqrt{1 -2\vt} \leq 1 + \sqrt{1 -\vt} \). Now we need to verify \( \vt =\max\{\frac{2\rho}{1 +\rho},1-\left(\frac{5 +3 \rho }{ (1 -\rho^2) (\rho+3)}-1\right)^2\} \), and it still holds.  
    
    \textit{As long as \( k \leq 2 \), \( k \sqrt{1 -\vt} - \sqrt{1 - 2\vt} \) is increasing in \( \vt \).  \( \sqrt{\frac{(1 -\rho^2)(1 +\rho)}{5 + 3\rho}} \leq 0.454167 \) numerically.  } 
  \item When \(   \sqrt{r} \geq \frac{2\lambda'}{1 -\rho^2}\): impossible, because \( \sqrt{r} = 1 + \sqrt{1 -\vt} \).  
  \item When \(  \frac{\lambda'}{1 +\rho} \cdot \frac{5 + 3\rho}{(1 -\rho)(3 +\rho)} \leq \sqrt{r} \leq \frac{2\lambda'}{1 -\rho^2} \): We know \begin{equation*}
    d^2(C, ((1 +\rho)\sqrt{r},(1 +\rho) \sqrt{r})) \geq \max\left\{\frac{(1 -\rho^2)^2(1 +\rho)}{5 + 3\rho}r,\ \left[(1 -\rho^2)\sqrt{r}-\lambda'\right]^2\right\}
  \end{equation*}
  So a sufficient condition is 
  \begin{equation*}
    \sqrt{\frac{1 - 2\vt}{1 -\rho^2}} + \frac{1}{1 -\rho^2} \leq 1 + \sqrt{1 -\vt}
  \end{equation*}
  with \( \vt =\frac{2\rho}{1 +\rho} \). When \( \rho \leq \frac{1}{3} \), this holds.
\end{itemize}

To conclude, we have verified that \( FP_1 = FN_1 \) can only admit \( \sqrt{r} = 1 + \sqrt{1 -\vt} \), for \( \vt > \frac{2\rho}{1 +\rho} \); and this curve indeed meets all the requirements. 

\textit{Second}, if \( \vt + f_1(\sqrt{r},\lambda') = \vt + f_2(\sqrt{r},\lambda') = 1\), and \( \lambda' \geq  1\), \( 2\vt+ f_3(\sqrt{r},\lambda') \geq 1 \), we also need to discuss along the conditional expression of \( f_1(\sqrt{r},\lambda') \).

When \( \lambda' \leq \sqrt{r} \leq 2\lambda' \) in \( FP_2 \), now we have \( \lambda' = \sqrt{\frac{1 +\rho}{1 -\rho}}\sqrt{1 -\vt}. \) Because we want \( \lambda' \geq 1 \), all we need is to require \( \lambda' > 1 \), which implies \(\vt <\frac{2\rho}{1 +\rho}.  \) 

Since \( \min_{\rho}\frac{5 + 3\rho}{(1 -\rho)(1 +\rho)^2} = 4.848 \), we only need to consider the first case in conditional expression of \( f_2(\sqrt{r},\lambda') \). This gives us three possible curves:
\begin{align*}
  &\sqrt{r} = (1 + \sqrt{\frac{1 +\rho}{1 -\rho}})\sqrt{1 -\vt}  \\
  & \sqrt{r} = 2 \sqrt{\frac{1 -\vt}{1 -\rho^{2}}}\\
  & \sqrt{r} = \sqrt{\frac{5 + 3\rho}{(1 -\rho)(2 +\rho)^2}}\sqrt{1 -\vt}
\end{align*}
Actually, \( \forall \ \rho\in(0,1) \), \( 1 + \sqrt{\frac{1 +\rho}{1 -\rho}} > \max \left\{ \frac{2}{\sqrt{1 -\rho^2}},\ \sqrt{\frac{5 + 3\rho}{(1 -\rho)(2 +\rho)^2}} \right\} \), which means that in the expression of \( f_2 \) when 
\begin{equation*}
  \min \begin{cases} 
    (\sqrt{r} -\lambda')_ +^2 \\
    \frac{1}{1 -\rho^2}\left[(1 -\rho^2)\sqrt{r} -(1 -\rho)\lambda'\right]^2 \\
    \frac{(1 -\rho)(2 +\rho)^2}{5 + 3\rho} r
    \end{cases},
\end{equation*}
 neither of the last two lines cannot produce a curve and be the minimum at the same time. 

So we are only left with \( \sqrt{r} =(1 + \sqrt{\frac{1 +\rho}{1 -\rho}})\sqrt{1 -\vt} \). When \( \vt < \frac{2\rho}{1 +\rho} \), we already have \( \lambda' \geq 1 \), \( \FPtwo \geq 1 \) and \( \FNone \geq 1 \), and we are left to verify \( \FNtwo \geq 1 \). Actually, this does not always hold when \( \rho \) is very small, in which case we need one more curve.

When \( \rho \geq 0.197 \), \( \sqrt{r} \leq \lambda' \cdot \frac{5 + 3\rho}{(1 -\rho^2)(3 +\rho)} \) always holds, and we only need to consider the first case. We need 
\begin{align*}
  \begin{cases} 
    & (1 + \sqrt{\frac{1 +\rho}{1 -\rho}})\sqrt{1 -\vt} \geq \frac{1}{1 +\rho}\sqrt{\frac{1 +\rho}{1 -\rho}}\sqrt{1 -\vt} + \sqrt{\frac{1 - 2\vt}{1 -\rho^2}} \\
    & (1 + \sqrt{\frac{1 +\rho}{1 -\rho}})\sqrt{1 -\vt} \geq \sqrt{1 -2\vt}\sqrt{\frac{5 + 3\rho}{(1 +\rho)(1 -\rho^2)}}
  \end{cases} 
\end{align*}
The first one always holds for \( \rho\in(0,1) \). We can separate \( \vt \) and \( \rho \) into two sides of the inequality and see this.
The second one always holds for \( \rho \geq  0.183 \). We can separate \( \vt \) and \( \rho \) into two sides of the inequality and see this.

As a result, when \(  \rho > 0.197 \), \( FN_2 = o(1) \).

When \( 0.183 \leq \rho < 0.197 \),  \( \sqrt{r} \) is always in the second case, and \( FN_2 = o(1) \) still holds, because \( d^2(C, ((1 +\rho)\sqrt{r},(1 +\rho) \sqrt{r})) \geq \frac{(1 -\rho^2)^2(1 +\rho)}{5 + 3\rho}r. \) 

When \( \rho < 0.183 \), we need to look at the expression of \(  d^2(C, ((1 +\rho)\sqrt{r},(1 +\rho) \sqrt{r})) \). 
\begin{equation}\label{eq:dist.C}
  d^2(C, ((1 +\rho)\sqrt{r},(1 +\rho) \sqrt{r})) = 2(1 - \rho)(1 +\rho)^2 r - 2(1 +\rho)(3 + \rho)\lambda'\sqrt{r} + \frac{5 + 3\rho}{1 -\rho} \lambda'^2 
\end{equation}
In our case, we want \( d^2(C, ((1 +\rho)\sqrt{r},(1 +\rho) \sqrt{r})) \geq (1 -\rho^2)(1 - 2\vt) \) which is \begin{equation*}
  2 (1 +\rho) \left(1 + \sqrt{\frac{1 +\rho}{1 -\rho}}\right)^2 -\frac{2 (3 +\rho)}{1 -\rho} \sqrt{\frac{1 +\rho}{1 -\rho}} \left(1 +\sqrt{\frac{1 +\rho}{1 -\rho}}\right)+\frac{3 \rho+5}{(1 -\rho)^3} \geq \frac{1 - 2\vt}{1 -\vt}.
\end{equation*}
There is no simpler form even if we further break this down. 

When \( \rho > 0.179 \), the LHS is always greater than 1, and not restrictive to \( \vt \). When \( \rho < 0.179 \), this imposes a lower bound on \( \vt \); when \( \vt \) is very small, there will be another curve above \( \sqrt{r} =(1 +\sqrt{\frac{1 +\rho}{1 -\rho}})\sqrt{1 -\vt} \). 

To sum up, now we have \( \sqrt{r} =\max \left\{ 1 + \sqrt{1-\vt}, (1 +\sqrt{\frac{1 +\rho}{1 -\rho}})\sqrt{1 -\vt}\right\} \), but when \( \rho < 0.179 \), we seem to need one more curve which is unknown for now.

When \( \sqrt{r} \geq \frac{5 + 3\rho}{2 + 2\rho}\lambda' \) in \( f_1(\sqrt{r},\lambda') \), if \( \lambda'^2 \leq \frac{(1 -\rho^2)(1 -\rho)r}{5 + 3\rho} \), then we need \( \lambda^2 =(1 -\vt)(1 -\rho^2) \), which contradicts \( \lambda > 1 \) (for \( FP_1 \)). So we must have \( \lambda'^2 > \frac{(1 -\rho^2)(1 -\rho)r}{5 + 3\rho} \), and \( \sqrt{r} = \sqrt{\frac{5 + 3\rho}{1 -\rho}}\sqrt{1 -\vt} \). 

However, this curve \( \sqrt{r} = \sqrt{\frac{5 + 3\rho}{1 -\rho}}\sqrt{1 -\vt} \) is always greater than the \( \sqrt{r} =(1 + \sqrt{\frac{1 +\rho}{1 -\rho}})\sqrt{1 -\vt} \) we have computed. By requiring \( \sqrt{r}\geq \frac{5 + 3\rho}{2 + 2\rho}\lambda' \), we have \( 1 <\lambda' \leq \frac{2(1 +\rho)\sqrt{1 -\vt}}{\sqrt{5 + 3\rho}\sqrt{ 1 -\rho}} \). Notice that \( \sqrt{1 -\vt} \geq \frac{\sqrt{(5 + 3\rho)(1 -\rho)}}{2(1 +\rho)} \) is a strictly tighter requirement than \( \vt <\frac{2\rho}{1 +\rho} \). As a result, even if this curve exists, it is not part of the boundary in the phase diagram.

When \( 2\lambda' < \sqrt{r} < \frac{5 + 3\rho}{2 + 2\rho}\lambda'\), \( \lambda'^2 \) cannot be smaller than \( d^2(B, (\rho \sqrt{r},\sqrt{r}))  \); otherwise \(\lambda'^2 =(1 -\rho^2)(1 -\vt)  \) which contradicts \( \lambda' > 1 \) in \( FP_1 \). 

Then we need the following things:
\begin{equation*}
  \begin{cases} 
    2\lambda' < \sqrt{r} < \frac{5 + 3\rho}{2 + 2\rho}\lambda' \\
    d^2(B, (\rho \sqrt{r},\sqrt{r})) =(1 -\rho^2)(1 -\vt)\\
    \lambda' \geq 1\\
    \FNtwo \geq 1\\
    \sqrt{r} \geq \lambda' + \sqrt{1 -\vt}\\
    \sqrt{r} \geq \sqrt{\frac{1 -\vt}{1 -\rho^2} } +\frac{\lambda'}{1 +\rho} \\
    \sqrt{r} \geq \sqrt{\frac{5 + 3\rho}{(1 -\rho)(2 +\rho)^2} }\sqrt{1 -\vt}
  \end{cases} 
\end{equation*}
and one of the last three inequalities must attain equality. 

If \( \sqrt{r} =\lambda' + \sqrt{1 -\vt} \), then \( \sqrt{r} > 2\lambda' \) would imply \( \lambda' < \sqrt{1 -\vt} \), contradicting \( \lambda' > 1 \).

If \( \sqrt{r} = \sqrt{\frac{1 -\vt}{1 -\rho^2} } +\frac{\lambda'}{1 +\rho} \), we need to look at the expression of \( d^2(B, (\rho \sqrt{r},\sqrt{r})) \) computed in \eqref{eq:dist.B}. We let \( \lambda^* = \frac{1}{\sqrt{1 -\vt}}\lambda' \), and \( \sqrt{r} = \sqrt{\frac{1 -\vt}{1 -\rho^2} } +\frac{\lambda^* \sqrt{1 -\vt}}{1 +\rho} \). Now we have \(\frac{2 + 4\rho + 3\rho^2}{(1 +\rho)^2}{\lambda^*}^2 -\frac{2(1 + 2\rho)}{(1 +\rho)\sqrt{1 -\rho^2}}{\lambda^*} +\frac{\rho^2}{1 -\rho^2} = 0  \). However, 
\begin{align*}
  \sqrt{r} > 2\lambda'\implies&~ \lambda^* < \frac{1 +\rho}{1 + 2\rho}\frac{1}{\sqrt{1 -\rho^2}}\\
  \sqrt{r} < \frac{5 + 3\rho}{2 + 2\rho}\lambda' \implies&~ \lambda^* > \frac{2}{3}\frac{1}{\sqrt{1 -\rho^2}}
\end{align*}
Plug such lower bound and upper bound into the quadratic equation, and the values are negative at both the upper and lower bounds. Thus we know that it has no solution for \( \lambda' \) at all. 

If \( \sqrt{r} = \sqrt{\frac{5 + 3\rho}{(1 -\rho)(2 +\rho)^2} }\sqrt{1 -\vt} \), then we have the following two requirements:
\begin{align*}
\begin{cases} 
  \sqrt{r} \geq&~  \sqrt{\frac{1 -\vt}{1 -\rho^2} } +\frac{\lambda'}{1 +\rho} \implies \lambda' \leq (1 +\rho) \left[ \sqrt{\frac{5 + 3\rho}{(1 -\rho)(2 +\rho)^2}} - \frac{1}{\sqrt{1 -\rho^2}}\right]\sqrt{1 -\vt} \\
\sqrt{r} \leq&~ \frac{5 + 3\rho}{2 + 2\rho}\lambda' \implies \lambda' \geq \frac{2(1 +\rho)}{5 + 3\rho}\sqrt{\frac{5 + 3\rho}{(1 -\rho)(2 +\rho)^2}}\sqrt{1 -\vt}
\end{cases} 
\end{align*}
and the upper bound is smaller than the lower bound, contradiction. 

Tp sum up the \textit{second} case, we have \( \sqrt{r} =\max \left\{ 1 + \sqrt{1-\vt}, (1 +\sqrt{\frac{1 +\rho}{1 -\rho}})\sqrt{1 -\vt}\right\} \), but when \( \rho < 0.179 \), we seem to need one more curve which is unknown for now.

\textit{Third,} if \( \lambda'^2 =\FNtwo = 1 \) and \( \FPtwo \geq 1,\,\FNone \geq  1\), we will eventually have now curve in this case.
We start from some basic requirements:

Now we have \( \lambda' = 1 \). When \( \lambda' = 1 \) is fixed, the exponents of \( FP \) and \( FN \) are all decreasing in \( \sqrt{r} \). For \( FN_1 \), we thus need
\begin{equation*}
  \sqrt{r} \geq \max\left\{ 1+ \sqrt{1 -\vt},\ \sqrt{\frac{1 -\vt}{1 -\rho^2}} + \frac{1}{1 +\rho},\ \sqrt{\frac{5 + 3\rho}{(1 -\rho)(2 +\rho)^2}}\sqrt{1 -\vt} \right\}
\end{equation*}
Even if we finally find an admissible curve with \( \lambda'^2 =\FNtwo = 1 \), it cannot be lower than \( \sqrt{r} = 1 + \sqrt{1 -\vt} \), so we can require \( \vt < \frac{2\rho}{1 +\rho} \).

Now we look at \( \FPtwo \geq 1 \). If \( \sqrt{r} \leq 2 \), then \( \vt < \frac{2\rho}{1 +\rho} \) gives us a contradiction. Thus \( \sqrt{r} \geq 2 \). We still need \( FPtwo \geq  \) additionally to \( \sqrt{r} \geq 2 \).

Then we discuss the conditional expression of \( f_3(\sqrt{r},\lambda') \) one by one.

When \( \sqrt{r} \geq \frac{2\lambda'}{1 -\rho^2} \),  \(
  \sqrt{r} = \sqrt{\frac{1 -2\vt}{1 -\rho^2}} + \frac{1}{1 -\rho^2}
\) which contradicts \( \sqrt{r} \geq \frac{2\lambda'}{1 -\rho^2} \).

When \( \frac{\lambda'}{1 +\rho} \cdot \frac{5 + 3\rho}{(1 -\rho)(3 +\rho)} \leq \sqrt{r} < \frac{2\lambda'}{1 -\rho^2} \), we have \begin{align*}
  & d^2(C, ((1 +\rho)\sqrt{r},(1 +\rho) \sqrt{r})) = (1 -\rho^2)(1 - 2\vt) \\
  & d^2(C, ((1 +\rho)\sqrt{r},(1 +\rho) \sqrt{r})) \geq  \frac{(1 -\rho^2)^2(1 +\rho)}{5 + 3\rho}r 
\end{align*}
and thus
\( \sqrt{r} \leq \sqrt{\frac{5 + 3\rho}{(1 +\rho)^2(1 -\rho)}}\sqrt{1 - 2\vt} \).

For \( \FPtwo \geq 1 \), we look at the conditional expression of \( f_1 \), which now can only take one of the last two cases. 
\begin{itemize}
  \item if it is the last case, then we need \( \sqrt{r} \geq \sqrt{\frac{5 + 3\rho}{1 -\rho}}\sqrt{1 -\vt} \), which does not hold. 
  \item if it is the fourth case, then we need \begin{equation*}
    d^2(B, (\rho \sqrt{r},\sqrt{r})) \geq (1 -\rho^2)(1 -\vt).
  \end{equation*}
  We already have the expression of \( d^2(B, (\rho \sqrt{r},\sqrt{r})) \) in \eqref{eq:dist.B}, and the expression of \( d^2(C, ((1 +\rho)\sqrt{r},(1 +\rho) \sqrt{r})) \) in \eqref{eq:dist.C}:
  \begin{align*}
    & r - 4 \sqrt{r} +\frac{5 + 3\rho}{1 +\rho} \geq 1 -\vt \\
    & r - \frac{3 +\rho}{1 -\rho^2}\sqrt{r} + \frac{5 + 3\rho}{2(1 -\rho^2)^2} = \frac{1 - 2\vt}{2(1 +\rho)} \\
    \implies & \frac{1 - 2\vt}{2(1 +\rho)} +  \frac{3 +\rho}{1 -\rho^2}\sqrt{r} - \frac{5 + 3\rho}{2(1 -\rho^2)^2} - 4 \sqrt{r} +\frac{5 + 3\rho}{1 +\rho} \geq 1 -\vt \\
    \implies & \frac{1 }{2(1 +\rho)} +  \frac{3 +\rho}{1 -\rho^2}\sqrt{r} - \frac{5 + 3\rho}{2(1 -\rho^2)^2} - 4 \sqrt{r} +\frac{5 + 3\rho}{1 +\rho} \geq 1 -\frac{\rho}{1 +\rho} \vt 
  \end{align*}
  It turns out that when we regard \( \sqrt{r} \) as an independent variable, and let it vary in the interval \( (2,2.5) \) as in \( FP_2 \), we always have, \( \forall \ \rho\in(0,1) \), 
\begin{equation*}
  \frac{1 }{2(1 +\rho)} +  \frac{3 +\rho}{1 -\rho^2}\sqrt{r} - \frac{5 + 3\rho}{2(1 -\rho^2)^2} - 4 \sqrt{r} +\frac{5 + 3\rho}{1 +\rho} < 1 -\frac{\rho}{1 +\rho} 
\end{equation*}
so we have a contradiction.
\end{itemize}

To sum up, no curve in this case.

When \( \sqrt{r} \leq \frac{\lambda'}{1 +\rho} \cdot \frac{5 + 3\rho}{(1 -\rho)(3 +\rho)} \), if \( \left[(1 -\rho^2)\sqrt{r} -(1 -\rho)\lambda'\right]_ +^2 \leq \frac{(1 -\rho^2)^2(1 +\rho)}{5 + 3\rho}r \), then we have \( \sqrt{r} = \sqrt{\frac{1 - 2\vt}{1 -\rho^2}} + \frac{1}{1 +\rho}\), which contradicts \( \sqrt{r} \geq  \sqrt{\frac{1 - \vt}{1 -\rho^2}} + \frac{1}{1 +\rho}\) required by \( FN_1 \). If \( \left[(1 -\rho^2)\sqrt{r} -(1 -\rho)\lambda'\right]_ +^2 > \frac{(1 -\rho^2)^2(1 +\rho)}{5 + 3\rho}r \), then we have \( \sqrt{r} = \sqrt{\frac{5 + 3\rho}{(1 +\rho)^2(1 -\rho)}}\sqrt{1 - 2\vt} \). This is a tedious case.
\begin{itemize}
  \item Upper bound on \( \vt \): \( \sqrt{r} \geq 2 \), which implies \( \vt \leq \min\left\{ \frac{2\rho}{1+\rho},\ \frac{1}{2} -\frac{2(1 +\rho)^2(1 -\rho)}{5 + 3\rho} \right\} \).
  \item Lower bound on \( \vt \): Even if the curve is admissible, it only makes a difference if it is smaller than \( (1 + \sqrt\frac{1 +\rho}{1 -\rho})\sqrt{1 -\vt} \). This gives us \( \sqrt{\frac{1 - 2\vt}{1 -\vt}} < \left(1 + \sqrt\frac{1 +\rho}{1 -\rho}\right) \sqrt{\frac{(1 +\rho)^2(1 -\rho)}{5 + 3\rho}} = : \phi(\rho). \)
  Also, \( \sqrt{r} \leq \frac{5 + 3\rho}{(2 + 2\rho)} \) implies \( \vt \geq \frac{1}{2} - \frac{1}{8}(5 + 3\rho)(1 -\rho) \).
\end{itemize}
We then examine \( \FPtwo \). 
\begin{itemize}
  \item When \( \sqrt{r} \geq \frac{5 + 3\rho}{2 + 2\rho}\lambda' \geq \frac{5 + 3\rho}{2 + 2\rho} \), we easily have a contradiction because \( \sqrt{r} = \sqrt{\frac{5 + 3\rho}{(1 +\rho)^2(1 -\rho)}}\sqrt{1 - 2\vt} \). 
  \item When  \( \sqrt{r} < \frac{5 + 3\rho}{2 + 2\rho}\lambda'  \), we need to look at \( d^2(B, (\rho \sqrt{r},\sqrt{r})) \). It now becomes \(\sqrt{r} - 2 \geq \sqrt{\frac{2\rho}{1 +\rho} -\vt}  \).  
The derivative of \( (LHS - RHS) \) w.r.t. \( \vt \) is
\begin{equation*}
  \frac{1}{2 \sqrt{\frac{2\rho}{1 +\rho} -\vt}}\left[ 1 - \sqrt{\frac{2(5 + 3\rho)}{(1 +\rho)^2(1 - \rho)} } \sqrt{\frac{\frac{2\rho}{1 +\rho} -\vt}{\frac{1}{2} -\vt}}\right]
\end{equation*} 
from which we can see \( (LHS - RHS) \) is either increasing, decreasing, or first-decreasing-then-increasing. If we evaluate \( (LHS - RHS) \) at the smallest and largest \( \vt \) and they are both negative (\( \forall \rho \)), then we have a contradiction.
\begin{itemize}
  \item When \( \rho \geq 0.183 \), \( \phi(\rho) > 1 \). The \( (LHS - RHS) \) is below 0 at both \( \max\{0,\ \frac{1}{2}-\frac{1}{8} (1 -\rho) (5 +3\rho)\} \) and \( \min\left\{ \frac{2\rho}{1+\rho},\ \frac{1}{2} -\frac{2(1 +\rho)^2(1 -\rho)}{5 + 3\rho} \right\} \), for all \( 0.183 \leq \rho \leq 1 \).
  \item When \( \rho < 0.183 \), \( 0 <\phi(\rho) < 1 \), and it poses a lower bound on \( \vt \). 
  \begin{itemize}
    \item When \( \rho \leq  0.091 \), the lower bound is greater than the upper bound, so this case does not exist for any \( \vt \).
    \item When \( \rho\in(0.091, 0.183) \), we can verify that the \( (LHS - RHS) \) is below 0 at both \( \max\{0,\ \frac{1}{2}-\frac{1}{8} (1 -\rho) (5 +3\rho)\} \) and \( \min\left\{ \frac{2\rho}{1+\rho},\ \frac{1}{2} -\frac{2(1 +\rho)^2(1 -\rho)}{5 + 3\rho},\ \frac{1 -\phi^2(\rho)}{2 -\phi^2(\rho)} \right\} \).
  \end{itemize}
\end{itemize}
\end{itemize}
We finally know that \( FP_1 = FN_2 \) gives nothing. 

\textit{Fourth}, if \( \FPtwo = \FNtwo = 1 \) and \( \lambda' \geq 1\), \( \FNone \geq 1 \), we will get the last curve. 

From \( \FNone \geq 1 \), we know \( \sqrt{r} \geq \lambda' \), so we start from the case \( \lambda' \leq  \sqrt{r} \leq 2\lambda' \) in \( f_1(\sqrt{r},\lambda') \).

When \( \lambda' \leq  \sqrt{r} \leq 2\lambda' \) in \( f_1(\sqrt{r},\lambda') \), we have \( \lambda' = \sqrt{\frac{1 +\rho}{1 -\rho}}\sqrt{1 -\vt} \). If \( \sqrt{r} \leq \frac{\lambda'}{1 +\rho} \cdot \frac{5 + 3\rho}{(1 -\rho)(3 +\rho)} \) in \( f_3(\sqrt{r},\lambda') \), we could have either of the following two,
\begin{align*}
  & \sqrt{r} = \frac{1}{\sqrt{1 -\rho^2}}\left( \sqrt{1 -\vt} + \sqrt{1 - 2\vt} \right)\\
  & \sqrt{r} = \sqrt{\frac{5 + 3\rho}{(1 +\rho)^2(1 - \rho)}}\sqrt{1 -2\vt}
\end{align*} 
When it is the former, for \( \FNone \geq 1 \), we need \( \sqrt{r} \geq (1 + \sqrt{\frac{1 +\rho}{1 -\rho}})\sqrt{1 -\vt} \), which implies \( \sqrt{\frac{1 - 2\vt}{1 -\vt}} \geq \rho+ \sqrt{1 -\rho^2} > 1 \), which is impossible.

When it is the latter,  for \( \FNone \geq 1\), we need \( \sqrt{r} \geq (1 + \sqrt{\frac{1 +\rho}{1 -\rho}})\sqrt{1 -\vt} \), which implies \( \sqrt{\frac{1 - 2\vt}{1 -\vt}} \geq \sqrt{\frac{(1 +\rho)^2(1 -\rho)}{5 + 3\rho}}\left( 1 + \sqrt{\frac{1 +\rho}{1 -\rho}} \right). \) 
Also, \( \sqrt{r} \leq \frac{\lambda'}{1 +\rho} \cdot \frac{5 + 3\rho}{(1 -\rho)(3 +\rho)} \) implies \( \sqrt{\frac{1 - 2\vt}{1 -\vt}} \leq \frac{\sqrt{(5 + 3\rho) (1 +\rho)}}{(1 -\rho) (3 +\rho)} \). 
However, now either the lower bound \( \sqrt{\frac{(1 +\rho)^2(1 -\rho)}{5 + 3\rho}}\left( 1 + \sqrt{\frac{1 +\rho}{1 -\rho}} \right) > 1 \geq \sqrt{\frac{1 - 2\vt}{1 -\vt}} \), or the upper bound is smaller than the lower bound. 

If \( \frac{\lambda'}{1 +\rho} \cdot \frac{5 + 3\rho}{(1 -\rho)(3 +\rho)} \leq \sqrt{r} \leq \frac{2\lambda'}{1 -\rho^2} \) in \( f_3(\sqrt{r},\lambda') \), we need to solve a quadratic function of \( \sqrt{r} \), i.e. \( d^2(C, ((1 +\rho)\sqrt{r},(1 +\rho) \sqrt{r})) =(1 -\rho^2)(1 - 2\vt) \). The expression of the LHS is already in \eqref{eq:dist.C}. Then we have 
\begin{equation}\label{eq:extra.curve}
  \sqrt{r} =\frac{3 +\rho}{2(1 -\rho^2)}\sqrt{\frac{1 +\rho}{1 -\rho}}\sqrt{1 -\vt} + \frac{1}{2}\sqrt{\frac{2(1 - 2\vt)}{1 +\rho} - \frac{(1 -\vt)}{(1 -\rho)^2}}.
\end{equation}
We take the larger root, because when \( \sqrt{r} \) takes the smaller one, the ellipsoid is actually still tangent to the green line segment in Figure~\ref{suppfig:hamm.scad.small.a}. Thus the smaller root should be discarded. 

Of course, we also list all the requirements it must meet. They are loose only except the first one.
\begin{equation*}
  \begin{cases} 
    \sqrt{r} \geq (1 + \sqrt{\frac{1 +\rho}{1 -\rho}})\sqrt{1 -\vt}\\
  \vt \leq \frac{2\rho}{1 +\rho}\\
  \frac{5 + 3\rho}{(1 -\rho^2)(3 +\rho)} \sqrt{\frac{1 +\rho}{1 -\rho}} \sqrt{1 -\vt}\leq \sqrt{r} \leq 2\sqrt{\frac{1 +\rho}{1 -\rho}}\sqrt{1 -\vt}
  \end{cases} 
\end{equation*}

When \( \sqrt{r} \geq\frac{5 + 3\rho}{2 + 2\rho}\lambda' \) in \( f_1(\sqrt{r},\lambda') \),  We know from \( FP_2 \) that \( \sqrt{r} =\sqrt{\frac{5 + 3\rho}{1 -\rho}}\sqrt{1 -\vt} \). (
  Because we need \( \lambda' \geq  1 \) for \( FP_1 \), the other term is not possible.) If \( \sqrt{r} \leq \frac{\lambda'}{1 +\rho} \cdot \frac{5 + 3\rho}{(1 -\rho)(3 +\rho)} \) in \( f_3(\sqrt{r},\lambda') \), it is only possible that \( \sqrt{r} = \sqrt{\frac{1 - 2\vt}{1 -\rho^2}} +\frac{\lambda'}{1 +\rho} \). However, \( FN_1 = o(1)\) requires that 
\( \sqrt{r} \geq  \sqrt{\frac{1 - \vt}{1 -\rho^2}} +\frac{\lambda'}{1 +\rho} \), so this case is not possible. If \( \sqrt{r} \geq \frac{2\lambda'}{1 -\rho^2} \) in \( f_3(\sqrt{r},\lambda') \), we have \( \sqrt{r} =  \sqrt{\frac{1 - 2\vt}{1 -\rho^2}} +\frac{\lambda'}{1 -\rho^2}\). Because \( \sqrt{r} \geq 2\left( \sqrt{r} - \sqrt{\frac{1 - 2\vt}{1 -\rho^2}} \right) \), we have \( \sqrt{r} \leq 2\sqrt{\frac{1 - 2\vt}{1 -\rho^2}} \implies \lambda' \leq \sqrt{(1 - 2\vt)(1 -\rho^2)}\), which contradicts \( \lambda' >1 \). If \( \frac{\lambda'}{1 +\rho} \cdot \frac{5 + 3\rho}{(1 -\rho)(3 +\rho)} < \sqrt{r} < \frac{2\lambda'}{1 -\rho^2}\) in \( f_3(\sqrt{r},\lambda') \), we know that 
\begin{equation*}
  (1 -\rho^2)(1 - 2\vt) =d^2(C, ((1 +\rho)\sqrt{r},(1 +\rho) \sqrt{r})) \geq \frac{(1 -\rho^2)^2(1 +\rho)}{5 + 3\rho}r
\end{equation*}
and thus \( \sqrt{r} \leq \sqrt{\frac{5 + 3\rho}{(1 -\rho)(1 +\rho)^2}}\sqrt{1 - 2\vt} \) which contradicts \( \sqrt{r} =\sqrt{\frac{5 + 3\rho}{1 -\rho}}\sqrt{1 -\vt} \).

When \( \sqrt{r} \in (2\lambda', \frac{5 + 3\rho}{2 + 2\rho}\lambda') \) in \( f_1(\sqrt{r},\lambda') \): 

If \( \sqrt{r} \leq \frac{\lambda'}{1 +\rho} \cdot \frac{5 + 3\rho}{(1 -\rho)(3 +\rho)} \) in \( f_3(\sqrt{r},\lambda') \), and  \( \sqrt{r} = \sqrt{\frac{1 - 2\vt}{1 -\rho^2}} +\frac{\lambda'}{1 +\rho} \), we have a contradiction. This is because \( FN_1 = o(1)\) requires that 
\( \sqrt{r} \geq  \sqrt{\frac{1 - \vt}{1 -\rho^2}} +\frac{\lambda'}{1 +\rho} \), so this case is not possible.

If \( \sqrt{r} \leq \frac{\lambda'}{1 +\rho} \cdot \frac{5 + 3\rho}{(1 -\rho)(3 +\rho)} \) in \( f_3(\sqrt{r},\lambda') \), and \( \sqrt{r} =\sqrt{\frac{5 + 3\rho}{(1 +\rho)^2(1 -\rho)}}\sqrt{1 - 2\vt} \), this turns out a very tedious case because we generally need to work with \( (\vt,\ \lambda',\ \rho) \) at the same time. For completeness, we include a rigorous proof anyway.


Briefly speaking, we let \( \lambda^* = \frac{1}{\sqrt{1 -\vt}} \lambda'\). Because all we need is 
\begin{equation*}
  \begin{cases} 
    d^2(B, (\rho \sqrt{r},\sqrt{r})) =(1 -\rho^2)(1 -\vt)\quad (\FPtwo \geq 1)\\
    \sqrt{r} =\sqrt{\frac{5 + 3\rho}{(1 +\rho)^2(1 -\rho)}}\sqrt{1 - 2\vt}\quad (\FNtwo \geq 1)\\
    \lambda' \geq 1\\
    \sqrt{r} \geq \lambda' + \sqrt{1 -\vt}\quad (\FNone \geq 1)\\
    \sqrt{r} \geq \sqrt{\frac{1 -\vt}{1 -\rho^2}} +\frac{\lambda'}{1 +\rho}\quad (\FNone \geq 1) \\
    \sqrt{r} \geq \sqrt{\frac{5 + 3\rho}{(1 -\rho)(2 +\rho)^2}}\sqrt{1 -\vt}\quad (\FNone \geq 1) \\
    2\lambda' \leq \sqrt{r} \leq \frac{5 + 3\rho}{2 + 2\rho}\lambda' \quad (\FPtwo \geq 1 )\\
    \sqrt{r} \leq \frac{5 + 3\rho}{(1 -\rho^2)(3 +\rho)}\lambda'\quad (\FNtwo \geq 1)\\
  \end{cases} 
\end{equation*}
by cleaning this up, we have, letting \( x = \sqrt{\frac{1 - 2\vt}{1 -\vt}} \), 
\begin{equation}\label{eq:complex}
  \begin{cases} 
  q(x)\defeq\frac{5 + 3\rho}{(1 +\rho)^2(1 -\rho)}x^2 - 4 \sqrt{\frac{5 + 3\rho}{(1 +\rho)^2(1 -\rho)}} \lambda^* x + \left( \frac{5 + 3\rho}{1 +\rho}  \right) (\lambda^*)^2 - 1 = 0 \\
  \lambda^* \geq 1\\
  x \geq \max \left\{ \frac{(1 +\rho)^2(1 -\rho)}{5 + 3\rho}\left( \frac{\lambda^*}{1 +\rho} + \frac{1}{\sqrt{1 -\rho^2}} \right),\ \frac{1 +\rho}{2 +\rho},\ 2\lambda^*\sqrt{\frac{(1 +\rho)^2(1 -\rho)}{5 + 3\rho}} \right\} \\
  x \leq  \min\left\{1,\ \frac{\lambda^*}{2}\sqrt{(5 + 3\rho)(1 -\rho)},\ \frac{\lambda^*}{3 +\rho}\sqrt{\frac{5 + 3\rho}{1 -\rho}} \right\}
  \end{cases} 
\end{equation} 
If we want some admissible \( x \) to exist, we need the upper bound to be greater than the lower bound in the last two inequalities. This will give us \( \underline{B}(\rho) \leq \lambda^* \leq \overline{B}(\rho) \). (The expressions of \( \underline{B}(\rho) \) and \( \overline{B}(\rho) \) can be explicitly written, but are omitted for brevity.)  
\begin{itemize}
  \item For some \( \rho\in(0,1) \) and \( \lambda^*\in[\underline{B}(\rho),\,   \overline{B}(\rho)] \), we try to plot \( q(x) \) at a suitable \( x \). If \( q(x) < 0 \) always holds, then \( x \) has no solution, and this case is eliminated.
  \item \( q(x) \) is a quadratic function of \( x \). Its axis of symmetry is \( 2\lambda^*\sqrt{\frac{(1 +\rho)^2(1 -\rho)}{5 + 3\rho}}, \) which we already know is smaller than \( x \). Thus \( q(x) \) is increasing in admissible \( x \), and we only need to evaluate \( q(x) \) at the maximum \( x \): \( x = \min\left\{1,\ \frac{\lambda^*}{2}\sqrt{(5 + 3\rho)(1 -\rho)},\ \frac{\lambda^*}{3 +\rho}\sqrt{\frac{5 + 3\rho}{1 -\rho}} \right\}\).
  \item Now we are left to prove a bivariate function is below zero, which can be easily verified given all the requirements on \( \lambda' \) and \( \forall\, \rho >0 \). 
\end{itemize}

When \( \sqrt{r} \geq \frac{2\lambda'}{1 -\rho^2} \) in \( f_3(\sqrt{r},\lambda') \) , we have \( \sqrt{r} =  \sqrt{\frac{1 - 2\vt}{1 -\rho^2}} +\frac{\lambda'}{1 -\rho^2}\). Because \( \sqrt{r} \geq 2\left( \sqrt{r} - \sqrt{\frac{1 - 2\vt}{1 -\rho^2}} \right) \), we have \( \sqrt{r} \leq 2\sqrt{\frac{1 - 2\vt}{1 -\rho^2}} \implies \lambda' \leq \sqrt{(1 - 2\vt)(1 -\rho^2)}\), which contradicts \( \lambda' >1 \).

When \( \frac{\lambda'}{1 +\rho} \cdot \frac{5 + 3\rho}{(1 -\rho)(3 +\rho)} < \sqrt{r} < \frac{2\lambda'}{1 -\rho^2}\) in \( f_3(\sqrt{r},\lambda') \), we have 
\begin{align*}
  &d^2(C, ((1 +\rho)\sqrt{r},(1 +\rho) \sqrt{r})) =(1 -\rho^2)(1 - 2\vt)\\
  & d^2(B, (\rho\sqrt{r}, \sqrt{r})) = (1 -\rho^2)(1 - \vt).
\end{align*}
This is an even more tedious case, and we eliminate this case as follows:
We first list all of the requirements we have:
  \begin{equation*}
    \begin{cases} 
      d^2(B, (\rho\sqrt{r}, \sqrt{r})) = (1 -\rho^2)(1 - \vt)\quad (\FPtwo \geq 1)\\
      d^2(C, ((1 +\rho)\sqrt{r},(1 +\rho) \sqrt{r})) =(1 -\rho^2)(1 - 2\vt)\quad (\FNtwo \geq 1)\\
      \lambda' \geq 1\\
      2\lambda' \leq \sqrt{r} \leq \frac{5 + 3\rho}{2 + 2\rho}\lambda'\quad (\FPtwo \geq 1)\\
      \sqrt{r} \geq \max \left\{ \sqrt{\frac{1 -\vt}{1 -\rho^2}} +\frac{\lambda'}{1 +\rho},\quad  \sqrt{\frac{5 + 3\rho}{(1 -\rho)(2 +\rho)^2}}\sqrt{1 -\vt}\right\}\quad (\FNone \geq 1) \\
      \sqrt{r} \geq \frac{5 + 3\rho}{(1 -\rho^2)(3 +\rho)}\lambda'\quad (\FPtwo \geq 1)
    \end{cases} 
  \end{equation*}
  Define \( x = \sqrt{\frac{r}{1 -\vt}} \) and \( \lambda^* = \frac{1}{\sqrt{1 -\vt}}\lambda' \). We know the upper and lower bounds of \( x \), from the last three inequalities:
  \begin{equation*}
    \max \left\{\frac{5 + 3\rho}{(1 -\rho^2)(3 +\rho)}\lambda^*,
    2\lambda^*, 
    \frac{1}{\sqrt{1 -\rho^2}} +\frac{\lambda^*}{1 +\rho},
    \sqrt{\frac{5 + 3\rho}{(1 -\rho)(2 +\rho)^2}}
    \right\}
    \leq x \leq \frac{5 + 3\rho}{2 + 2\rho}\lambda^*.
  \end{equation*}
  For admissible \( x \) to exist, we need \( \rho \leq 0.415 \) and \( \lambda^* \geq \max\left\{ \frac{2}{3 \sqrt{1 -\rho^2}}, \frac{2(1 +\rho)}{(2 +\rho) \sqrt{(5 + 3\rho)(1 -\rho)}} \right\} \). 

  Then we want to know the upper and lower bounds of \( \lambda^* \) given \( \rho \leq 0.415 \). In terms of the upper bound, we know from \( d^2(B, (\rho\sqrt{r}, \sqrt{r})) = (1 -\rho^2)(1 - \vt) \geq (1 -\rho)^2 \lambda'^2 \) 
  that \( \lambda^* \leq \sqrt{\frac{1 +\rho}{1 -\rho}} \). To sum up, 
  \begin{equation*}
    \max\left\{1,\ \frac{2}{3 \sqrt{1 -\rho^2}}, \frac{2(1 +\rho)}{(2 +\rho) \sqrt{(5 + 3\rho)(1 -\rho)}} \right\} \leq \lambda^* \leq \sqrt{\frac{1 +\rho}{1 -\rho}}.
  \end{equation*}

  Having the upper and lower bound on \( \lambda' \), we look at the first two quadratic functions, and we know that 
  \begin{equation*}
    \begin{cases} 
    x^2 - 4x\lambda^* +\frac{5 + 3\rho}{1 +\rho}{\lambda^*}^2 = 1\\
    2(1 +\rho)x^2 - \frac{2(3 +\rho)}{1 -\rho}x\lambda^* + \frac{5 + 3\rho}{(1 -\rho)^2(1 +\rho)}{\lambda^*}^2 = \frac{1 - 2\vt}{1 -\vt} \leq 1
    \end{cases} 
  \end{equation*}
  The LHS of the second quadratic equation, as a function of \( x \), has the axis of symmetry at \( \frac{3 +\rho}{2(1 -\rho^2)}\lambda^* > \frac{5 + 3\rho}{2 + 2\rho}\lambda^*\), so it is decreasing in \( x \) (fixing \( \lambda^* \)). 

  We look closer at \( x^2 - 4x\lambda^* +\frac{5 + 3\rho}{1 +\rho}{\lambda^*}^2 = 1 \),
  which gives \( x = 2\lambda^* + \sqrt{1 -\frac{1 -\rho}{1 +\rho} {\lambda^*}^2} \leq 2\lambda^* + \sqrt{1 -\frac{1 -\rho}{1 +\rho} {\lambda^*}} \). 
  
  Let \( y = \min \left\{ \frac{5 + 3\rho}{2 + 2\rho}\lambda^*, 2\lambda^* + \sqrt{1 -\frac{1 -\rho}{1 +\rho} {\lambda^*}} \right\}\) be an upper bound on \( x \). Both terms in \( y \) are   increasing in \( \lambda^* \leq  \sqrt{\frac{1 +\rho}{1 -\rho}} \), while \( x \) has no such monoticity. 
  \begin{align*}
     2(1 +\rho)x^2 - \frac{2(3 +\rho)}{1 -\rho}x\lambda^* + \frac{5 + 3\rho}{(1 -\rho)^2(1 +\rho)}{\lambda^*}^2 \geq 2(1 +\rho)y^2 - \frac{2(3 +\rho)}{1 -\rho}y\lambda^* + \frac{5 + 3\rho}{(1 -\rho)^2(1 +\rho)}{\lambda^*}^2
  \end{align*}

  The RHS of the above line, viewed as a function of \( \lambda^* \) and fixing \( y \), is also decreasing in \( \lambda^* \), because the axis of symmetry
  \begin{equation*}
    \frac{(1 -\rho^2)(3 +\rho)}{5 + 3\rho}y =\frac{(1 -\rho^2)(3 +\rho)}{5 + 3\rho}\min \left\{ \frac{5 + 3\rho}{2 + 2\rho}\lambda^*, 2\lambda^* + \sqrt{1 -\frac{1 -\rho}{1 +\rho} {\lambda^*}} \right\} \geq \lambda^*.
  \end{equation*}
  (The above line can be proven for \( \lambda^* < \sqrt{\frac{1 +\rho}{1 -\rho}} \) and \( \rho < 0.415 \) as we have required.)

  Thus, as a whole, \( \frac{\dif{RHS(y(\lambda^*),\lambda^*)}}{\dif{\lambda^*}} = \frac{\partial RHS(y,\lambda^*)}{\partial y} \cdot \frac{\partial y}{\partial \lambda^*} +\frac{\partial RHS(y,\lambda^*)}{\partial \lambda^*} \leq 0 \). So the \( RHS \) is decreasing in \( \lambda^* \). 
  
  When we let \( \lambda^* = \sqrt{\frac{1 +\rho}{1 -\rho}} \) which is the maximum, the \( RHS \) is a univariate function of \( \rho\in(0,0.415) \), which is always greater than \( 1 \). It cannot be equal to \( \frac{1 - 2\vt}{1 -\vt} \), and now we have a contradiction. 

  \paragraph{We then look at the case of \( \rho < 0 \).}

Before diving into the proof, we first re-iterate the phase curves in Theorem~\ref{thm:SCAD} in an equivalent way. As the proof is tedious, the simplified form of the diagram in Theorem~\ref{thm:SCAD} may not be recognizable, so we describes the diagram in an equivalent way in Theorem~\ref{suppthm:scad.sayagain} again, making it more consistent with what we will see in the proof. 

To ease the notation, we recall in Theorem~\ref{thm:SCAD} we defined 
\begin{equation*}
  \sqrt{h_6(\vt)} = \sqrt{\frac{1 - 2\vt}{1 -\rho^2}} + \frac{\frac{1 - 2|\rho|}{1 -|\rho|}\sqrt{\frac{1 -2\vartheta}{1 -\rho^2}} + \sqrt{\left[ \left( \frac{1 - 2|\rho|}{1 -|\rho|} \right)^2 + \frac{1 -|\rho|}{1 +|\rho|} \right](1 -\vt) - \frac{1 - 2\vt}{(1 +|\rho|)^2}}}{(1 -|\rho|)\left[ \left( \frac{1 - 2|\rho|}{1 -|\rho|} \right)^2 + \frac{1 -|\rho|}{1 +|\rho|} \right]}
\end{equation*}

\begin{theorem}[Re-iterating Theorem~\ref{thm:SCAD} in an equivalent way for \( \rho < 0 \)]\label{suppthm:scad.sayagain} For \( \rho < 0 \) and \( a <\frac{2}{1-|\rho|} \), the half of the phase diagram of SCAD when \( \vt\in[\frac{1}{2},1) \) is the same as that of Lasso and SCAD for positive \( \rho \). When \( \vt < \frac{1}{2} \), from left to right:

When \( |\rho| \geq 0.535 \) (approximately), \( \sqrt{r} =\max \left\{ \sqrt{\frac{5 + 3|\rho|}{1 -|\rho|}} \sqrt{1 -\vt}, \  \sqrt{\frac{1 - 2\vt}{1 -\rho^2}} + \frac{1}{1 -|\rho|}\right\}. \) 

    When \( \frac{1}{2} \leq |\rho| < 0.535 \), 
    \begin{equation*}
      \sqrt{r} = \begin{cases} 
        \sqrt{\frac{5 + 3|\rho|}{1 -|\rho|}} \sqrt{1 -\vt} & \mbox{if} \sqrt{\frac{1 - 2\vt}{1 -\vt}} \geq  \left( 1 - \frac{2(1 +|\rho|)}{(1 -|\rho|)(5 + 3|\rho|)} \right) \sqrt{(5 + 3|\rho|)(1 +|\rho|)} \\
        \sqrt{h_6(\vt)} & \mbox{if} \sqrt{\frac{1 - 2\vt}{1 -\vt}} <  \left( 1 - \frac{2(1 +|\rho|)}{(1 -|\rho|)(5 + 3|\rho|)} \right) \sqrt{(5 + 3|\rho|)(1 +|\rho|)}
      \end{cases} 
    \end{equation*}

When \( 0.3965 \leq |\rho| < \frac{1}{2} \),
    \begin{equation*}
      \sqrt{r} = \begin{cases} 
        \sqrt{\frac{5 + 3|\rho|}{1 -|\rho|}} \sqrt{1 -\vt} & \mbox{if} \sqrt{\frac{1 - 2\vt}{1 -\vt}} \geq  \left( 1 - \frac{2(1 +|\rho|)}{(1 -|\rho|)(5 + 3|\rho|)} \right) \sqrt{(5 + 3|\rho|)(1 +|\rho|)} \\
        \sqrt{h_6(\vt)} & \mbox{if} \sqrt{\frac{1 - 2\vt}{1 -\vt}} <  \left( 1 - \frac{2(1 +|\rho|)}{(1 -|\rho|)(5 + 3|\rho|)} \right) \sqrt{(5 + 3|\rho|)(1 +|\rho|)} \mbox{ and} \sqrt{\frac{1 - 2\vt}{1 -\vt}} > \frac{(1 +|\rho|)(1 - 2|\rho|)}{1 -|\rho|}\\
        \sqrt{\frac{1 - 2\vt}{1 -|\rho|^2}} +  \sqrt{\frac{1 +|\rho|}{1 -|\rho|}}\frac{\sqrt{1 -\vt}}{1-|\rho|} & \mbox{if} \sqrt{\frac{1 - 2\vt}{1 -\vt}} \leq  \frac{(1 +|\rho|)(1 - 2|\rho|)}{1 -|\rho|}
      \end{cases} 
    \end{equation*}

When \( \frac{1}{3} \leq |\rho| < 0.3965 \),
    \begin{equation*}
      \sqrt{r} = \begin{cases} 
        \sqrt{\frac{5 + 3|\rho|}{1 -|\rho|}} \sqrt{1 -\vt} & \mbox{if} \sqrt{\frac{1 - 2\vt}{1 -\vt}} \geq  \left( 1 - \frac{2(1 +|\rho|)}{(1 -|\rho|)(5 + 3|\rho|)} \right) \sqrt{(5 + 3|\rho|)(1 +|\rho|)} \\
        \sqrt{h_6(\vt)} & \mbox{if} \sqrt{\frac{1 - 2\vt}{1 -\vt}} <  \left( 1 - \frac{2(1 +|\rho|)}{(1 -|\rho|)(5 + 3|\rho|)} \right) \sqrt{(5 + 3|\rho|)(1 +|\rho|)} \mbox{ and} \sqrt{\frac{1 - 2\vt}{1 -\vt}} > \frac{(1 +|\rho|)(1 - 2|\rho|)}{1 -|\rho|}\\
        \sqrt{\frac{1 - 2\vt}{1 -|\rho|^2}} +  \sqrt{\frac{1 +|\rho|}{1 -|\rho|}}\frac{\sqrt{1 -\vt}}{1-|\rho|} & \mbox{if} \sqrt{\frac{1 - 2\vt}{1 -\vt}} \leq  \frac{(1 +|\rho|)(1 - 2|\rho|)}{1 -|\rho|} \mbox{ and}\sqrt{\frac{1 - 2\vt}{1 -\vt}} \geq \sqrt{1 -|\rho|^2} - \frac{|\rho|(1 +|\rho|)}{(1 -|\rho|)} \\
        \left( 1 + \sqrt{\frac{1 +|\rho|}{1 -|\rho|}} \right) \sqrt{1 -\vt} & \mbox{if} \sqrt{\frac{1 - 2\vt}{1 -\vt}} < \sqrt{1 -|\rho|^2} - \frac{|\rho|(1 +|\rho|)}{(1 -|\rho|)}
      \end{cases} 
    \end{equation*}

When \( 0.311 \leq |\rho| < \frac{1}{3} \),
    \begin{equation*}
      \sqrt{r} = \begin{cases} 
        \sqrt{\frac{5 + 3|\rho|}{1 -|\rho|}} \sqrt{1 -\vt} & \mbox{if} \sqrt{\frac{1 - 2\vt}{1 -\vt}} \geq  \left( 1 - \frac{2(1 +|\rho|)}{(1 -|\rho|)(5 + 3|\rho|)} \right) \sqrt{(5 + 3|\rho|)(1 +|\rho|)} \\
        \sqrt{h_6(\vt)} & \mbox{if} \sqrt{\frac{1 - 2\vt}{1 -\vt}} <  \left( 1 - \frac{2(1 +|\rho|)}{(1 -|\rho|)(5 + 3|\rho|)} \right) \sqrt{(5 + 3|\rho|)(1 +|\rho|)} \mbox{ and} \sqrt{\frac{1 - 2\vt}{1 -\vt}} > \frac{(1 +|\rho|)(1 - 2|\rho|)}{1 -|\rho|}\\
        \sqrt{\frac{1 - 2\vt}{1 -|\rho|^2}} +  \sqrt{\frac{1 +|\rho|}{1 -|\rho|}}\frac{\sqrt{1 -\vt}}{1-|\rho|} & \mbox{if} \sqrt{\frac{1 - 2\vt}{1 -\vt}} \leq  \frac{(1 +|\rho|)(1 - 2|\rho|)}{1 -|\rho|} \mbox{ and}\sqrt{\frac{1 - 2\vt}{1 -\vt}} \geq \sqrt{1 -|\rho|^2} - \frac{|\rho|(1 +|\rho|)}{(1 -|\rho|)} \\
        \max \{\left( 1 + \sqrt{\frac{1 +|\rho|}{1 -|\rho|}} \right) \sqrt{1 -\vt},& 1 + \sqrt{1 -\vt}\}  \mbox{ if} \sqrt{\frac{1 - 2\vt}{1 -\vt}} < \sqrt{1 -|\rho|^2} - \frac{|\rho|(1 +|\rho|)}{(1 -|\rho|)}
      \end{cases} 
    \end{equation*}

 When \( 0.28832 \leq |\rho| < 0.311 \),
    \begin{equation*}
      \sqrt{r} = \begin{cases} 
        \sqrt{h_6(\vt)} & \mbox{if} \sqrt{\frac{1 - 2\vt}{1 -\vt}} > \frac{(1 +|\rho|)(1 - 2|\rho|)}{1 -|\rho|}\\
        \sqrt{\frac{1 - 2\vt}{1 -|\rho|^2}} +  \sqrt{\frac{1 +|\rho|}{1 -|\rho|}}\frac{\sqrt{1 -\vt}}{1-|\rho|} & \mbox{if} \sqrt{\frac{1 - 2\vt}{1 -\vt}} \leq  \frac{(1 +|\rho|)(1 - 2|\rho|)}{1 -|\rho|} \mbox{ and}\sqrt{\frac{1 - 2\vt}{1 -\vt}} \geq \sqrt{1 -|\rho|^2} - \frac{|\rho|(1 +|\rho|)}{(1 -|\rho|)} \\
        \max \{\left( 1 + \sqrt{\frac{1 +|\rho|}{1 -|\rho|}} \right) \sqrt{1 -\vt},& 1 + \sqrt{1 -\vt}\}  \mbox{ if} \sqrt{\frac{1 - 2\vt}{1 -\vt}} < \sqrt{1 -|\rho|^2} - \frac{|\rho|(1 +|\rho|)}{(1 -|\rho|)}
      \end{cases} 
    \end{equation*}

 When \( 0 \leq |\rho| < 0.28832 \),
    \begin{equation*}
      \sqrt{r} = \begin{cases} 
        \sqrt{h_6(\vt)} & \mbox{if} \sqrt{\frac{1 - 2\vt}{1 -\vt}} > \frac{(1 +|\rho|)(1 - 2|\rho|)}{1 -|\rho|}\\
        \sqrt{\frac{1 - 2\vt}{1 -|\rho|^2}} +  \sqrt{\frac{1 +|\rho|}{1 -|\rho|}}\frac{\sqrt{1 -\vt}}{1-|\rho|} & \mbox{if} \sqrt{\frac{1 - 2\vt}{1 -\vt}} \leq  \frac{(1 +|\rho|)(1 - 2|\rho|)}{1 -|\rho|} \mbox{ and}\sqrt{\frac{1 - 2\vt}{1 -\vt}} \geq \sqrt{1 -|\rho|^2} - \frac{|\rho|(1 +|\rho|)}{(1 -|\rho|)} \\
        \max \{ \sqrt{\frac{1-2 \vartheta}{1-|\rho|^{2}}}+\frac{1}{1-|\rho|},& 1 + \sqrt{1 -\vt} \} \mbox{ if} \sqrt{\frac{1 - 2\vt}{1 -\vt}} < \sqrt{1 -|\rho|^2} - \frac{|\rho|(1 +|\rho|)}{(1 -|\rho|)}
      \end{cases} 
    \end{equation*}
\end{theorem}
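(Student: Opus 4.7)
The strategy mirrors Part~3 of the Elastic net proof and the positive-$\rho$ case of SCAD that precedes this theorem. By Theorem~\ref{suppthm:hamm.scad.2}, the Hamming error has the form $\FP_p+\FN_p = L_p p^{1-h(\lambda';\vartheta,r)}$ with
\[
h(\lambda';\vartheta,r) = \min\bigl\{\,\lambda'^2,\ \vartheta+f_1(\sqrt{r},\lambda'),\ \vartheta+f_2(\sqrt{r},\lambda'),\ 2\vartheta+f_3(\sqrt{r},\lambda')\,\bigr\}.
\]
The region $\vartheta\in(\tfrac{1}{2},1)$ is handled exactly as for $\rho>0$: the relevant signal-pair term $f_3$ is inactive because $p^{1-2\vartheta}=o(1)$, so only $\lambda'^2$, $\vartheta+f_1$, $\vartheta+f_2$ matter, yielding the same curve $U_{\mathrm{Lasso}}(\vartheta)$. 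The substantive work is for $\vartheta\in(0,\tfrac{1}{2})$, where the new expression of $f_3$ (determined by $\mu_{11}=((1-|\rho|)\sqrt{r},-(1-|\rho|)\sqrt{r})$) produces diagrams that differ from the $\rho>0$ case.

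Using the key fact that at the ER/AFR boundary the false-positive and false-negative exponents must both equal $1$ (cf.\ equation~\eqref{suppeq:important.relationship}), I split into the same four case combinations that arose for $\rho>0$:
(i) $\lambda'^2 = \vartheta+f_2 = 1$ with $\vartheta+f_1\ge 1,\ 2\vartheta+f_3\ge 1$;
(ii) $\lambda'^2 = 2\vartheta+f_3 = 1$ with $\vartheta+f_1\ge 1,\ \vartheta+f_2\ge 1$;
(iii) $\vartheta+f_1 = \vartheta+f_2 = 1$ with $\lambda'\ge 1,\ 2\vartheta+f_3\ge 1$;
(iv) $\vartheta+f_2 = 2\vartheta+f_3 = 1$ with $\lambda'\ge 1,\ \vartheta+f_1\ge 1$.
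For each case I substitute the piecewise branches of $f_1,f_2,f_3$ from Theorem~\ref{suppthm:hamm.scad.2}, solve the resulting algebraic equations for $(\lambda',\sqrt{r})$, and then eliminate those solutions that violate the piecewise conditions or the inequality side-constraints. The curves that survive are $\sqrt{r}=1+\sqrt{1-\vartheta}$, $\sqrt{r}=(1+\sqrt{(1+|\rho|)/(1-|\rho|)})\sqrt{1-\vartheta}$, $\sqrt{r}=\sqrt{(5+3|\rho|)/(1-|\rho|)}\sqrt{1-\vartheta}$, $\sqrt{r}=\sqrt{(1-2\vartheta)/(1-\rho^2)}+\sqrt{(1+|\rho|)/(1-|\rho|)}\sqrt{1-\vartheta}/(1-|\rho|)$, and $\sqrt{r}=\sqrt{h_6(\vartheta)}$.

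The main obstacle is the proliferation of sub-cases: there are six branches in $f_1$, three in $f_2$, and two in $f_3$ (for $\rho<0$), and the transition thresholds themselves depend on $|\rho|$. Consequently, whether a candidate curve is admissible (i.e.\ whether the tangency point actually lies on the declared branch and whether all other error exponents are $\ge 1$) requires systematically comparing each pair of curves as a function of $|\rho|$ and $\vartheta$. This is what produces the six $|\rho|$-windows in the statement: the crossover values $|\rho|\in\{0.535,0.5,0.3965,1/3,0.311,0.28832\}$ are exactly the $|\rho|$ at which one of the sufficient/necessary inequalities between candidate curves flips sign. Concretely, these thresholds come from solving equations such as $\sqrt{(5+3|\rho|)(1+|\rho|)}\bigl[1-\tfrac{2(1+|\rho|)}{(1-|\rho|)(5+3|\rho|)}\bigr]=\tfrac{(1+|\rho|)(1-2|\rho|)}{1-|\rho|}$ and the analogous boundaries separating where $h_6$, the $f_3$-curve, and the Lasso-type envelope $1+\sqrt{1-\vartheta}$ dominate.

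To keep the verification tractable I would handle it as follows: first derive the candidate curves by solving the four equality systems symbolically (this part closely parallels the positive-$\rho$ computation, the only change being the new $f_3$); second, intersect each pair of curves and identify the critical $|\rho|$-thresholds; third, for each $|\rho|$-window list the piecewise envelope $\sqrt{r}=\max\{\,\cdot\,\}$ and verify termwise that the remaining inequality constraints ($\lambda'\ge 1$, branch-condition on $\sqrt{r}/\lambda'$, and $\vartheta+f_{\text{other}}\ge 1$) hold. The bookkeeping is heavy but each individual verification is an algebraic inequality that reduces (after clearing square roots) to a polynomial in $|\rho|$ and $\sqrt{(1-2\vartheta)/(1-\vartheta)}$, verifiable by monotonicity at the endpoints of the relevant interval, exactly as was done for the contradictions in the Elastic net and positive-$\rho$ SCAD proofs.
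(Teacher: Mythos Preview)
Your approach is essentially the paper's: same four-way case split on which FP/FN term is tight at the boundary, same branch-by-branch traversal of the piecewise $f_1,f_2,f_3$ from Theorem~\ref{suppthm:hamm.scad.2}, and the same endpoint-monotonicity verifications. Two slips to fix before you execute it. First, your case~(iv) is misstated: the fourth tight pair must be $\vartheta+f_1=2\vartheta+f_3=1$ (one FP term and one FN term), not $\vartheta+f_2=2\vartheta+f_3=1$, which has both equalities on the FN side and leaves the FP minimum strictly above~$1$; your side constraints $\lambda'\ge 1$, $\vartheta+f_1\ge 1$ already hint at the intended pairing. Second, your list of surviving curves omits $\sqrt{r}=\sqrt{(1-2\vartheta)/(1-\rho^2)}+1/(1-|\rho|)$, which is exactly $h_4$ in Theorem~\ref{thm:SCAD} and appears in the statement for both the $|\rho|\ge 0.535$ window and the $|\rho|<0.28832$ window; in the paper it arises from the case $\lambda'^2=2\vartheta+f_3=1$ on the branch $\sqrt{r}\le 2\lambda'/(1-|\rho|)$ of $f_3$. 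With those two corrections the plan matches the paper's proof.
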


Wo note two things additionally: First, the exact computation of the numerical results (e.g. ``0.28832'') are covered in the rest of the proof. Second, 
\begin{equation*}
  \left( 1 - \frac{2(1 +|\rho|)}{(1 -|\rho|)(5 + 3|\rho|)} \right) \sqrt{(5 + 3|\rho|)(1 +|\rho|)} \equiv \frac{3-4|\rho|-3 \rho^{2}}{(1-|\rho|)} \sqrt{\frac{1+|\rho|}{5+3|\rho|}}
\end{equation*}
always holds, and the shorter RHS is shown in Theorem~\ref{thm:SCAD}.

Before proving Theorem~\ref{suppthm:scad.sayagain} (and Theorem~\ref{thm:SCAD} at the same time). we put two important results here.
\begin{itemize}
  \item In \( f_1(\sqrt{r},\lambda') \) (recall Theorem~\ref{suppthm:hamm.scad.2}), when \( 2\lambda' < \sqrt{r} < \frac{5 + 3|\rho|}{2 + 2|\rho|}\lambda' \), 
      \begin{align}
        d^2(B,(|\rho| \sqrt{r},\sqrt{r})) =&~ (1 -|\rho|^2)\left[r - 4\sqrt{r}\lambda' +\frac{5 + 3|\rho|}{1 +|\rho|}\lambda'^2\right] \label{eq:dist.B}.
      \end{align}
  \item In \( f_3(\sqrt{r},\lambda') \), when \( \sqrt{r} \geq \frac{2\lambda'}{1 -|\rho|} \), \begin{align}
    d^2\left(D,\left((1 -|\rho|)\sqrt{r},-(1-|\rho|)\sqrt{r}\right)\right) =(1 -|\rho|^2)\left[ 2(1 -|\rho|)r - 2(3 -|\rho|)\lambda' \sqrt{r} + \frac{5 - 3|\rho|}{1 -|\rho|}\lambda'^2\right]
  \end{align}
\end{itemize} 
Then we move on to the proof, which has four sections:
\( FP_1 = FN_1 \), \( FP_2 = FN_1 \), \( FP_1 = FN_2 \), \( FP_2 = FN_2 \). 

\textit{First}, if \( \lambda'^2 =\FNone =1 \) and \( \FPtwo \geq 1 \), \( \FNtwo \geq 1 \), we will prove that \( \sqrt{r} = 1 + \sqrt{1 -\vt} \) is part of the diagram with the condition
\begin{equation*}
  \begin{cases} 
    \vt \geq \frac{2|\rho|}{1 +|\rho|}\\
    1 + \sqrt{1 -\vt} \geq \sqrt{\frac{1 - 2\vt}{1 -\rho^2}} + \frac{1}{1 -|\rho|}
  \end{cases} 
\end{equation*}
When \( \vt > \frac{1}{2} \), the second condition can be ignored. Even when \( \vt \leq  \frac{1}{2} \), 
the second condition is restrictive (stronger than the first one) only when \( |\rho| < 0. 28832 \) approximately.  (\( LHS - RHS \) is increasing in \( \vt \).)

We know \( \lambda' = 1 \) and \( \sqrt{r} > 1 \). From the previous discussion in the case of positive correlation, we already know that only one curve is possible, which is \begin{equation*}
  \sqrt{r} = 1 + \sqrt{1 -\vt}
\end{equation*}

We still need it to meet the following requirements:
\begin{equation*}
  \begin{cases} 
  1 + \sqrt{1 -\vt} \geq \frac{1}{1 + |\rho|} + \sqrt{\frac{1 -\vt}{1 -\rho^2}} & \text{for it to be the smallest among the three in }f_2\\
  1 + \sqrt{1 -\vt} \geq \sqrt{1 -\vt}\sqrt{\frac{5 + 3|\rho|}{(1 -|\rho|)(2 +|\rho|)^2}} & \text{for it to be the smallest among the three in }f_2\\
  1 + \sqrt{1 -\vt} \leq \frac{5 + 3|\rho|}{(1 -|\rho|)(1 +|\rho|)^2} & \text{pre-condition in \( f_2 \); not restrictive} \\
  \vt >\frac{2|\rho|}{1 +|\rho|}  & \text{for \( \FPtwo \geq 1 \) } \\
  \FNtwo \geq 1
  \end{cases} 
\end{equation*}
Among the first 4 requiremenrs, the fourth one can imply the rest. Finally, we look at \( \FNtwo \geq 1 \).

When \( \vt \geq  \frac{1}{2} \), we naturally have \( \FNtwo \geq 1 \). When \( |\rho| \geq \frac{1}{3} \), \( \vt >\frac{2|\rho|}{1 +|\rho|} \geq \frac{1}{2} \) always holds, and no more discussion is needed. When \( |\rho| < \frac{1}{3} \), we proceed to the following discussion. 

For \( |\rho| < \frac{1}{3} \), since  \(\sqrt{r} = 1 + \sqrt{1 -\vt} \leq 2 < \frac{2\lambda'}{1 -|\rho|} \), we need \(1 + \sqrt{1 -\vt} \geq \sqrt{\frac{1 - 2\vt}{1 -\rho^2}} + \frac{1}{ 1-|\rho|}  \). 


\textit{Second}, if \( \FPtwo =\FNone = 1\) and \( \lambda' \geq  1 \), \( \FNtwo \geq 1 \), we will prove that we can have one curve at most, \( \sqrt{r} = \left( 1 + \sqrt{\frac{1 +|\rho|}{1 -|\rho|}} \right) \sqrt{1 -\vt} \), and it exists in the interval:
  (Define \( \psi(|\rho|)\defeq  \sqrt{1 -\rho^2}\left( 1 - \frac{|\rho|}{1 -|\rho|}\sqrt{\frac{1 +|\rho|}{1 -|\rho|}} \right)\).) 
\begin{itemize}
  \item When \( 0.28832 \leq |\rho| \leq 0.3965 \), the curve exists in the interval \( \left[ \frac{1 - \psi(|\rho|)^2}{2 - \psi(|\rho|)^2} ,\ \frac{2|\rho|}{1 +|\rho|} \right)\).
  \item When \( |\rho| < 0.28832\), the curve does not exist. 
  \item When \( |\rho| > 0.3965 \), the curve exists in the interval \( [\frac{1}{2},\frac{2|\rho|}{1 +|\rho|}) \). 
\end{itemize}

We discuss the conditional expression of \( f_1(\sqrt{r},\lambda') \) to prove such result:

When \( \lambda' \leq \sqrt{r} \leq 2\lambda' \) in \( f_1(\sqrt{r},\lambda') \), we have \( \lambda' = \sqrt{\frac{1 +|\rho|}{1 -|\rho|}}\sqrt{1 -\vt}. \) Because we want \( \lambda' > 1 \), it implies \(  \vt <\frac{2|\rho|}{1 +|\rho|}. \) 

We have discussed and eliminated several curves in the case of positive correlation in the case of positive correlation, so now we are only left with one curve: \( \sqrt{r} = \left( 1 + \sqrt{\frac{1 +|\rho|}{1 -|\rho|}} \right) \sqrt{1 -\vt} \) and we only need to additionally verify \( \FNtwo \geq 1 \). 

Since \(  \vt <\frac{2|\rho|}{1 +|\rho|} \), when \( |\rho| > \frac{1}{3} \), \( FN_2 = o(1) \) naturally holds for \( \vartheta\in[\frac{1}{2},\frac{2|\rho|}{1 +|\rho|}) \). We only need to discuss \( \vt\in(0,\frac{1}{2}) \); when  \( |\rho| \leq \frac{1}{3} \), we need to verify  \( FN_2 = o(1) \) for all \( \vt \leq [0, \frac{2|\rho|}{1 +|\rho|})\). 

Now it can be verified that \( \sqrt{r} \leq \frac{2}{1 -|\rho|}\lambda' \) always holds, so we need 
\begin{equation*}
  \left( 1 + \sqrt{\frac{1 +|\rho|}{1 -|\rho|}} \right) \sqrt{1 -\vt} \geq \sqrt{\frac{1 - 2\vt}{1-\rho^2}} + \frac{1}{1 -|\rho|} \sqrt{\frac{1 +|\rho|}{1 -|\rho|}}\sqrt{1 -\vt}
\end{equation*}
which implies 
\begin{equation}\label{eq:FP_2.FN_1}
  \psi(|\rho|) =\sqrt{1 -\rho^2}\left( 1 - \frac{|\rho|}{1 -|\rho|}\sqrt{\frac{1 +|\rho|}{1 -|\rho|}} \right) \geq \sqrt{\frac{1 - 2\vt}{1 -\vt}}. 
\end{equation}

By taking a close look at the function of \( |\rho| \) on the LHS:
\begin{itemize}
  \item When \( \frac{1}{3} <|\rho| < 0.3965 \) (approximately), \( \psi(|\rho|) \)  is positive and smaller than 1. If we define \( \psi(|\rho|)\defeq  \sqrt{1 -\rho^2}\left( 1 - \frac{|\rho|}{1 -|\rho|}\sqrt{\frac{1 +|\rho|}{1 -|\rho|}} \right)\), then the curve \( \sqrt{r} = \left( 1 + \sqrt{\frac{1 +|\rho|}{1 -|\rho|}} \right) \sqrt{1 -\vt} \) 
  exists in the interval \( \left[ \frac{1 - \psi(|\rho|)^2}{2 - \psi(|\rho|)^2} ,\ \frac{2|\rho|}{1 +|\rho|} \right)\). 
  \item When \( |\rho| \geq 0.3965 \), \( \psi(|\rho|) \) is negative. The curve \( \sqrt{r} = \left( 1 + \sqrt{\frac{1 +|\rho|}{1 -|\rho|}} \right) \sqrt{1 -\vt} \) 
  exists only in the interval \( [\frac{1}{2},\frac{2|\rho|}{1 +|\rho|}) \).  
  \item When \( 0.28832 \leq |\rho| \leq \frac{1}{3} \), the curve \( \sqrt{r} = \left( 1 + \sqrt{\frac{1 +|\rho|}{1 -|\rho|}} \right) \sqrt{1 -\vt} \) 
  exists in the interval \( \left[ \frac{1 - \psi(|\rho|)^2}{2 - \psi(|\rho|)^2} ,\ \frac{2|\rho|}{1 +|\rho|} \right)\). 
  \item When \( |\rho| < 0.28832\), this curve does not exist at all, because on the RHS of the inequality~\eqref{eq:FP_2.FN_1}, the smallest value it can take is \( \sqrt{\frac{1 - 2(2|\rho|)/(1 +|\rho|)}{1 -(2|\rho|)/(1 +|\rho|)}} \). When \( |\rho| < 0.28832\), even this smallest value is greater than the LHS, so the inequality cannot hold. 
\end{itemize}

When \( \sqrt{r} \geq \frac{5 + 3|\rho|}{2 + 2|\rho|}\lambda' \) in \( FP_2 \), or when \( 2\lambda' < \sqrt{r} < \frac{5 + 3|\rho|}{2 + 2|\rho|}\lambda'\), the same proof for positive correlation can be used to eliminate these cases. 

To sum up the whole case of \( \FPtwo = \FNone = 1 \): We can have one curve at most, \( \sqrt{r} = \left( 1 + \sqrt{\frac{1 +|\rho|}{1 -|\rho|}} \right) \sqrt{1 -\vt} \), which exists in the interval mentioned above.

Now we are left with the \textit{third} and \textit{fourth} cases, both requiring \( \FNtwo = 1 \). We assume \( \vt \leq \frac{1}{2} \) from now on, because the different definition of \( f_3(\sqrt{r},\lambda') \) makes no difference when \( \vt \geq  \frac{1}{2}\), and the proof for \( \rho >0 \) can be copied. 

\textit{Third,} if \( \lambda'^2 =\FNtwo = 1 \) and \( \FPtwo \geq 1 \), \( \FNone \geq 1 \), since \( \lambda' =1 \) is fixed, all of \( (f_1(\sqrt{r},\lambda'),f_2(\sqrt{r},\lambda'),f_3(\sqrt{r},\lambda')) \) are increasing in \( \sqrt{r} \). As a result, the requirement from \( \FNone \geq 1 \) is just 
\begin{equation}\label{eq:req.FN_1}
  \sqrt{r} \geq \max\left\{ 1+ \sqrt{1 -\vt},\ \sqrt{\frac{1 -\vt}{1 -\rho^2}} + \frac{1}{1 +|\rho|},\ \sqrt{\frac{5 + 3|\rho|}{(1 -|\rho|)(2 +|\rho|)^2}}\sqrt{1 -\vt} \right\}
\end{equation}

When \( \sqrt{r} \leq \frac{2\lambda'}{1 -|\rho|}  \) in \( f_3(\sqrt{r},\lambda') \), we have \( \sqrt{r} = \sqrt{\frac{1 -2\vt}{1 -|\rho|^2}} + \frac{1}{1 -|\rho|} \), and we will see this curve does not exist in the diagram. We need to look at \( \FPtwo \geq 1 \):

If \( \sqrt{r} \leq 2 \) in \( f_1(\sqrt{r},\lambda') \), then we can limit \( |\rho| <\frac{1}{2} \) because we have assumed \( \vt < \frac{1}{2} \), and now we need \( \frac{2|\rho|}{1 +|\rho|} \leq \vt < \frac{1}{2} \),  
    \begin{itemize}
      \item When \( 0.28832 \leq |\rho| \leq \frac{1}{2} \), this case does not exist. This is because we also need \( \sqrt{r} =\sqrt{\frac{1 -2\vt}{1 -|\rho|^2}} + \frac{1}{1 -|\rho|} \geq 1 + \sqrt{1 -\vt}. \) 
      But we already know from the start of the \textit{first} case, that \( \vt \geq \frac{2|\rho|}{1 +|\rho|}  \) implies \(1 + \sqrt{1 -\vt} \geq  \sqrt{\frac{1 -2\vt}{1 -|\rho|^2}} + \frac{1}{1 -|\rho|} \). 
      \item When \( |\rho| < 0.28832\), this case {indeed exists}, though it is visually only a tiny segment. The curve exists in the interval defined by 
      \begin{equation*}
        \begin{cases} 
        \vt \geq \frac{2|\rho|}{1 +|\rho|}\\
        \sqrt{r} =\sqrt{\frac{1 -2\vt}{1 -|\rho|^2}} + \frac{1}{1 -|\rho|} \geq 1 + \sqrt{1 -\vt}
        \end{cases} 
      \end{equation*}
      in which the second line is an upper bound. (Actually, $\vt \geq \frac{2|\rho|}{1 +|\rho|} \implies \sqrt{\frac{1 -2\vt}{1 -|\rho|^2}} + \frac{1}{1 -|\rho|} \leq 2$)

      In terms of \eqref{eq:req.FN_1} (the requirements of \( \FNone \)), we still need to verify \( \sqrt{r} \geq \max\left\{\sqrt{\frac{1 -\vt}{1 -|\rho|^2}} + \frac{1}{1 +|\rho|},\ \sqrt{\frac{5 + 3|\rho|}{(1 -|\rho|)(2 +|\rho|)^2}}\sqrt{1 -\vt}  \right\} \). Using \( \sqrt{1 -\vt} \leq \sqrt{r} - 1 \) to replace \( \sqrt{1 -\vt} \), we will find both of these are much weaker than \( \sqrt{r} \leq 2 \) and not restrictive.
    \end{itemize}

 If \( \sqrt{r} \geq \frac{5 + 3|\rho|}{2 + 2|\rho|} \) in \( f_1(\sqrt{r},\lambda') \), then, in terms of \( \FPtwo \geq 1 \), we are required to have \begin{equation*}
  \begin{cases} 
  \sqrt{r} \geq  \frac{5 + 3|\rho|}{2 + 2|\rho|}\\
  \sqrt{r} \geq \sqrt{\frac{5 + 3|\rho|}{1 -|\rho|}}\sqrt{1 -\vt}
  \end{cases} 
\end{equation*}
Now, by using \( \frac{1}{2}\sqrt{r} \geq \frac{5 + 3|\rho|}{2(2 + 2|\rho|)} \) and \( \frac{1}{2}\sqrt{r}  \geq \frac{1}{2}\sqrt{\frac{5 + 3|\rho|}{1 -|\rho|}}\sqrt{1 -\vt}\), we can easily verify that the requirements of \( \FNone \geq 1 \) in \eqref{eq:req.FN_1} all holds. So we only need to focus on \( \FPtwo \geq 1 \). 

 We further argue that \( \sqrt{r} \geq \sqrt{\frac{5 + 3|\rho|}{1 -|\rho|}}\sqrt{1 -\vt} \) implies \( \sqrt{r} \geq  \frac{5 + 3|\rho|}{2 + 2|\rho|} \).
    \begin{itemize}
      \item If \( \sqrt{\frac{5 + 3|\rho|}{1 -|\rho|}}\sqrt{1 -\vt} <  \frac{5 + 3|\rho|}{2 + 2|\rho|} \), this condition itself imposes a lower bound on \( \vt \). Since \( \vt < \frac{1}{2} \), we actually need \( |\rho| \leq 0.3798 \) for such lower bound to be smaller than \( \frac{1}{2} \). However, when \( |\rho| \) is too small, \( \sqrt{r} \geq \sqrt{\frac{5 + 3|\rho|}{1 -|\rho|}}\sqrt{1 -\vt} \) admits no solution at all. Namely, 
      \begin{equation*}
        \sqrt{(5 + 3|\rho|)(1 +|\rho|)}\sqrt{1 -\vt} - \sqrt{1 - 2\vt} \leq \frac{\sqrt{1 -|\rho|^2}}{1 -|\rho|}
      \end{equation*}
      admits no solution. The LHS is not monotone, but its minimum in \( \vt\in(0,\frac{1}{2}) \) is taken at 
      \begin{equation*}
        \vt = \frac{(5 + 3|\rho|)(1 +|\rho|) - 4}{2(5 + 3|\rho|)(1 +|\rho|) - 4}.
      \end{equation*}
      At this point, the LHS is greater than the RHS, so it admits no solution. 
      \item Since \( \sqrt{\frac{5 + 3|\rho|}{1 -|\rho|}}\sqrt{1 -\vt} \geq  \frac{5 + 3|\rho|}{2 + 2|\rho|} \), we have \( \sqrt{r} \geq \sqrt{\frac{5 + 3|\rho|}{1 -|\rho|}}\sqrt{1 -\vt} \implies \sqrt{r} \geq  \frac{5 + 3|\rho|}{2 + 2|\rho|} \).
    \end{itemize}
 As a result, \( \sqrt{r} = \sqrt{\frac{1 -2\vt}{1 -|\rho|^2}} + \frac{1}{1 -|\rho|} \) makes part of the boundary when it is greater than \( \sqrt{\frac{5 + 3|\rho|}{1 -|\rho|}}\sqrt{1 -\vt} \). We will see later, that the latter is also part of the boundary, when it is greater than  \(  \sqrt{\frac{1 -2\vt}{1 -|\rho|^2}} + \frac{1}{1 -|\rho|}  \).

   If \( 2 < \sqrt{r} < \frac{5 + 3|\rho|}{2 + 2|\rho|} \) in \( f_1(\sqrt{r},\lambda') \) , then in terms of \( \FPtwo \geq 1 \), we need \( d^2(B,(|\rho| \sqrt{r},\sqrt{r})) \geq (1 -|\rho|^2)(1 -\vt) \). Because \( 2 <  \sqrt{\frac{1 -2\vt}{1 -|\rho|^2}} + \frac{1}{1 -|\rho|} < \frac{5 + 3|\rho|}{2 + 2|\rho|} \), we need \( |\rho| \leq 0.535 \); otherwise no solution for \( \vt \).

   We now need to use \eqref{eq:dist.B}. When \( \vt \geq \frac{2|\rho|}{1 +|\rho|} \), \( d^2(B,(|\rho| \sqrt{r},\sqrt{r})) \geq (1 -\rho^2)(1 -\vt) \) always holds. However, \( \vt \geq \frac{2|\rho|}{1 +|\rho|} \) also makes \( \sqrt{r} < 2 \), which contradicts \( \sqrt{r} \geq 2 \). When \( \vt < \frac{2|\rho|}{1 +|\rho|} \), \( d^2(B,(|\rho| \sqrt{r},\sqrt{r})) \geq (1 -\rho^2)(1 -\vt) \) implies \( \sqrt{r} \geq 2 + \sqrt{\frac{2|\rho|}{1 +|\rho|} -\vt}. \) 
    which is  
    \begin{equation*}
      \sqrt{1 -\rho^2}\sqrt{\frac{2|\rho|}{1 +|\rho|} -\vt} - \sqrt{1 - 2\vt} \leq (\frac{1}{1 -|\rho|} - 2)\sqrt{1 -\rho^2}.
    \end{equation*}
    We take a close look at the LHS, as a function of \( \vt \). 
    \begin{itemize}
      \item Using the lower bound of \( \vt \) implied by \( \sqrt{\frac{1 -2\vt}{1 -\rho^2}} + \frac{1}{1 -|\rho|} < \frac{5 + 3|\rho|}{2 + 2|\rho|} \), the LHS is greater than the RHS. 
      \item Using the upper bound of \( \vt \) implied by \( \sqrt{\frac{1 -2\vt}{1 -\rho^2}} + \frac{1}{1 -|\rho|} > 2 \), the LHS is greater than the RHS. 
      \item The LHS is either increasing in \( \vt \), or decreasing in \( \vt \), or first-increasing-then-decreasing. Since \( LHS > RHS \) holds at both ends of the interval, it holds for all \( \vt \). Now we have a contradition. 
  \end{itemize}

  When \( \sqrt{r} > \frac{2\lambda'}{1 -|\rho|}  \) in \( f_3(\sqrt{r},\lambda') \), this case produces no curve in the diagram, but
there is no easy way to  eliminate this case. 
We still need to use some tedious calculation.

If the smallest term is \( \left[(1 -\rho^2)\sqrt{r} -\lambda'\right]^2 \) in \( f_3(\sqrt{r},\lambda') \), then we have \( \sqrt{r} = \sqrt{\frac{1 - 2\vt}{1 -\rho^2}} + \frac{1}{1 -\rho^2} \)
which contradicts \( \sqrt{r} \geq \frac{2}{1 -|\rho|} \).

If \( \frac{2 -|\rho|}{1 -|\rho|} < a < \frac{2}{1 -|\rho|} \), we recall the form of \( k(\lambda',a) \):
\begin{equation*}
  k(\lambda',a) = \begin{cases} 
    d^2\left(D,\left((1 -|\rho|)\sqrt{r},-(1-|\rho|)\sqrt{r}\right)\right) \qquad \text{   if } \frac{2\lambda'}{1 -|\rho|} \leq \sqrt{r} \leq \frac{\lambda'}{1 -|\rho|}\left[2 + \frac{|\rho| +\rho^2}{(a - 2)(1 -\rho^2) - (|\rho| +\rho^2)} \right]\\
    \frac{ (1 -\rho^2)}{1 +\frac{\rho^2(a - 1)^2}{(a - 2)^2} -\frac{2\rho^2(a - 1)}{a - 2}}
    \left[ -\lambda'\left(1 + \frac{a|\rho|}{a - 2}\right) + (1 -|\rho|)\sqrt{r} \cdot \left(1 +\frac{|\rho|(a - 1)}{a - 2}\right)\right]^2 \\
    \qquad\qquad\qquad{if } \sqrt{r} \geq \frac{\lambda'}{1 -|\rho|}\left[2 + \frac{|\rho| +\rho^2}{(a - 2)(1 -\rho^2) - (|\rho| +\rho^2)} \right]
  \end{cases} 
\end{equation*}

\begin{itemize}
  \item When \(  \sqrt{r} \geq \frac{\lambda'}{1 -|\rho|}\left[2 + \frac{|\rho| +\rho^2}{(a - 2)(1 -\rho^2) - (|\rho| +\rho^2)} \right]  \), the expression of \( \sqrt{r} \) is 
  \begin{equation*}
    \sqrt{r} =\frac{1 + \frac{a|\rho|}{a - 2} + \sqrt{1 - 2\vt}\sqrt{1 +\frac{\rho^2(a - 1)^2}{(a - 2)^2} -\frac{2\rho^2(a - 1)}{a - 2}}}{(1 -|\rho|)(1 +\frac{|\rho|(a - 1)}{a - 2})}
  \end{equation*}
  and actually \( \sqrt{r} <  \frac{\lambda'}{1 -|\rho|}\left[2 + \frac{|\rho| +\rho^2}{(a - 2)(1 -\rho^2) - (|\rho| +\rho^2)} \right] \), which gives a contradiction. To prove this, we take \( \vt = 0 \), and re-arrange the terms:
  \begin{align*}
    \sqrt{1 +\frac{\rho^2(a - 1)^2}{(a - 2)^2} -\frac{2\rho^2(a - 1)}{a - 2}} <&~ 1 +|\rho| +\frac{|\rho|(1 +\frac{|\rho|(a - 1)}{a - 2})}{(a - 2)(1 -|\rho|) -|\rho|}\\
    =&~ \frac{\rho^2 + (a - 2)^2(1 -\rho^2)}{(a - 2)^2(1 -|\rho|) -|\rho|(a - 2)} \\
    \Leftrightarrow (a - 2)^2 + \rho^2 (a - 1)^2 - 2\rho^2 (a - 1)(a -2)  <&~ \frac{\left[ \rho^2 + (a - 2)^2(1 -\rho^2) \right]^2}{\left[ (a - 2)(1 -|\rho|) -|\rho| \right]^2}
  \end{align*}
  Multiply each side with \( \left[ (a - 2)(1 -|\rho|) -|\rho| \right]^2 \), and we have a polynomial. We can then factorize \( LHS - RHS \)  and get \( -2(a - 2)(a - 1)|\rho|(1 -|\rho|)\left[\rho^2 + (1 -\rho^2)(a - 2)^2\right] < 0 \).
\item When \(  \sqrt{r} < \frac{\lambda'}{1 -|\rho|}\left[2 + \frac{|\rho| +\rho^2}{(a - 2)(1 -\rho^2) - (|\rho| +\rho^2)} \right]  \), we calculate the distance related to Point \( D \) and eventually get \( 2(1 -|\rho|)\sqrt{r} - 2(3 -|\rho|)\sqrt{r}+ \frac{5 -3|\rho|}{1 -|\rho|} = 1 - 2\vt \) 
which implies \( \sqrt{r} =\frac{3 -|\rho|}{2(1 -|\rho|)} + \frac{1}{2}\sqrt{1 -\frac{4\vt}{1 -|\rho|}} \).
This expression is not too complicated, and we can easily verify \( \sqrt{r} < \frac{2}{1 -|\rho|} \) which gives us a contradition. 
\end{itemize}

If \( a \leq \frac{2 -|\rho|}{1 -|\rho|}  \), we still have \( \sqrt{r} =\frac{3 -|\rho|}{2(1 -|\rho|)} + \frac{1}{2}\sqrt{1 -\frac{4\vt}{1 -|\rho|}} < \frac{2}{1 -|\rho|}\) like the case above, which is still a contradiction. 

\textit{Fourth}, if \( \FPtwo =\FNtwo = 1 \) and \( \lambda' \geq 1 \), \( \FNone \geq  1 \), then: 

For \( \FNone \geq  1 \): When \( \lambda' \) is fixed, the exponents of \( FP \) and \( FN \) are all decreasing in \( \sqrt{r} \). We thus need
\begin{equation}\label{eq:FN_1.new}
  \sqrt{r} \geq \max\left\{ \lambda' + \sqrt{1 -\vt},\ \sqrt{\frac{1 -\vt}{1 -\rho^2}} + \frac{\lambda'}{1 +|\rho|},\ \sqrt{\frac{5 + 3|\rho|}{(1 -|\rho|)(2 +|\rho|)^2}}\sqrt{1 -\vt} \right\}
\end{equation}

When \( \sqrt{r} \leq \frac{2\lambda'}{1 -|\rho|} \) in \( \FNtwo \geq 1 \), for \( f_1(\sqrt{r},\lambda') \):

If \( \lambda' \leq \sqrt{r} \leq 2\lambda' \) in \( f_1(\sqrt{r},\lambda') \), we have \(  \lambda' = \sqrt{\frac{1 +|\rho|}{1 -|\rho|}}\sqrt{1 -\vt} \), and thus \( \vt \leq \frac{2|\rho|}{1 +|\rho|} \) and \( \sqrt{r} = \sqrt{\frac{1 - 2\vt}{1 -|\rho|^2}} + \frac{1}{1 -|\rho|} \sqrt{\frac{1 +|\rho|}{1 -|\rho|}}\sqrt{1 -\vt}. \) Since \( \sqrt{r} \leq 2\lambda' \), we need both \( |\rho| <\frac{1}{2} \) and \( \sqrt{\frac{1 - 2\vt}{1 -\vt}} \leq \frac{(1 +|\rho|)(1 - 2|\rho|)}{1 -|\rho|} \).
When we come to verify the conditions in \eqref{eq:FN_1.new}, the first one still dominates the others. As a result, the curve \begin{equation*}
  \sqrt{r} = \sqrt{\frac{1 - 2\vt}{1 -|\rho|^2}} + \frac{1}{1 -|\rho|} \sqrt{\frac{1 +|\rho|}{1 -|\rho|}}\sqrt{1 -\vt}.
\end{equation*} exists under the following conditions:
\begin{equation*}
  \begin{cases} 
    |\rho| < \frac{1}{2}\\
    \sqrt{\frac{1 - 2\vt}{1 -\vt}} \leq \frac{(1 +|\rho|)(1 - 2|\rho|)}{1 -|\rho|} \\
    \sqrt{\frac{1 - 2\vt}{1 -\vt}} \geq \sqrt{1 -|\rho|^2} - \frac{|\rho|(1 +|\rho|)}{(1 -|\rho|)}
  \end{cases} 
\end{equation*}
(The last one implies \( \vt \leq \frac{2|\rho|}{1 +|\rho|} \).)

If \( \sqrt{r} \geq \frac{5 + 3|\rho|}{2 + 2|\rho|} \) in \( f_1(\sqrt{r},\lambda') \) , we have \( \sqrt{r} = \sqrt{\frac{5 + 3|\rho|}{1 -|\rho|}} \sqrt{1 -\vt} \)
and \( \lambda' \) is computed with \( \sqrt{r} = \sqrt{\frac{1 - 2\vt}{1 -|\rho|^2}} + \frac{\lambda'}{1 -|\rho|}. \)

Now, \( \sqrt{r} = \sqrt{\frac{5 + 3|\rho|}{1 -|\rho|}} \sqrt{1 -\vt} \geq \frac{5 + 3|\rho|}{2 + 2|\rho|} \) requires \begin{equation}\label{suppeq:one.more.requirement}
  \sqrt{\frac{1 - 2\vt}{1 -\vt}} \geq \left( 1 - \frac{2(1 +|\rho|)}{(1 -|\rho|)(5 + 3|\rho|)} \right) \sqrt{(5 + 3|\rho|)(1 +|\rho|)}.
\end{equation}
and  \( \FNone \geq 1 \) requires \( \sqrt{\frac{1 - 2\vt}{1 -\vt}} \geq \frac{1 +|\rho|}{1 -|\rho|} \left( 1 - 2|\rho| \sqrt{\frac{5 + 3|\rho|}{1 +|\rho|}} \right) \) and \( \sqrt{\frac{1 - 2\vt}{1 -\vt}} \geq \frac{\sqrt{1 -\rho^2}}{1 -|\rho|} - \frac{|\rho|}{1 -|\rho|} \sqrt{(5 + 3|\rho|)(1 +|\rho|)}  \). The conditions required by \( \FNone \geq 1 \) are actually even weaker than \( \sqrt{r} \geq \frac{5 + 3|\rho|}{2 + 2|\rho|} \). \( \lambda' \geq 1 \) requires \( \sqrt{\frac{5 + 3|\rho|}{1 -|\rho|}} \sqrt{1 -\vt} \geq \sqrt{\frac{1 - 2\vt}{1 -\rho^2}} + \frac{1}{1 -|\rho|}. \)  
    \begin{itemize}
      \item When \( |\rho| \geq  0.535 \), the RHS of \begin{equation*}
        \sqrt{\frac{1 - 2\vt}{1 -\vt}} \geq \left( 1 - \frac{2(1 +|\rho|)}{(1 -|\rho|)(5 + 3|\rho|)} \right) \sqrt{(5 + 3|\rho|)(1 +|\rho|)}.
      \end{equation*} is negative, thus not restrictive. 
      
      Taking the requirement \( \sqrt{\frac{5 + 3|\rho|}{1 -|\rho|}} \sqrt{1 -\vt} \geq \sqrt{\frac{1 - 2\vt}{1 -\rho^2}} + \frac{1}{1 -|\rho|} \) into account, now the boundary consists of 
      \begin{equation*}
        \max \left\{ \sqrt{\frac{5 + 3|\rho|}{1 -|\rho|}} \sqrt{1 -\vt}, \  \sqrt{\frac{1 - 2\vt}{1 -\rho^2}} + \frac{1}{1 -|\rho|}\right\}.
      \end{equation*}
      \item When \( |\rho| < 0.535 \): \( \sqrt{\frac{5 + 3|\rho|}{1 -|\rho|}} \sqrt{1 -\vt} \geq \sqrt{\frac{1 - 2\vt}{1 -\rho^2}} + \frac{1}{1 -|\rho|} \) is not restrictive, because it can be re-written as  \begin{equation*}
        \sqrt{(5 + 3|\rho|)(1 +|\rho|)}\sqrt{1 -\vt} - \sqrt{1 - 2\vt} \geq \frac{\sqrt{1 -\rho^2}}{1 -|\rho|}
      \end{equation*}
      which always holds, because we have shown the minimum of the LHS is taken at \begin{equation*}
        \vt = \frac{(5 + 3|\rho|)(1 +|\rho|) - 4}{2(5 + 3|\rho|)(1 +|\rho|) - 4},
      \end{equation*} and the minimum can be verified to be greater than the RHS. Thus \begin{equation*}
        \sqrt{\frac{5 + 3|\rho|}{1 -|\rho|}} \sqrt{1 -\vt} \geq \sqrt{\frac{1 - 2\vt}{1 -\rho^2}} + \frac{1}{1 -|\rho|}.\end{equation*} is not restrictive. We only need requirement~\eqref{suppeq:one.more.requirement}.
    \end{itemize}

If \( 2\lambda' < \sqrt{r} < \frac{5 + 3|\rho|}{2 + 2|\rho|}\lambda' \) in \( f_1(\sqrt{r},\lambda') \), this case is very tedious. We present the closed form of \( \lambda' \) and \( \sqrt{r} \):
  \begin{align*}
    \lambda' =&~ \left[ \left( \frac{1 - 2|\rho|}{1 -|\rho|} \right)^2 + \frac{1 -|\rho|}{1 +|\rho|} \right]^{ - 1} \cdot \left[ \frac{1 - 2|\rho|}{1 -|\rho|}\sqrt{\frac{1 -2\vartheta}{1 -\rho^2}} + \sqrt{\left[ \left( \frac{1 - 2|\rho|}{1 -|\rho|} \right)^2 + \frac{1 -|\rho|}{1 +|\rho|} \right](1 -\vt) - \frac{1 - 2\vt}{(1 +|\rho|)^2}}\right] \\
    \sqrt{r} = &~ \sqrt{\frac{1 - 2\vt}{1 -\rho^2}} + \frac{\lambda'}{1 -|\rho|} = 2\lambda' + \sqrt{1 -\vt -\frac{1 -|\rho|}{1 +|\rho|}\lambda'^2}
  \end{align*}

  In terms of the requirements from \( 2\lambda' < \sqrt{r} < \frac{5 + 3|\rho|}{2 + 2|\rho|}\lambda' \): First, \( \sqrt{r} > 2\lambda' \) will give us \begin{align*}
    \text{either } & |\rho| \geq \frac{1}{2} 
    \text{ or } \left( |\rho| < \frac{1}{2} \text{ and } \sqrt{\frac{1 - 2\vt}{1 -\vt}} > \frac{(1 +|\rho|)(1 - 2|\rho|)}{1 -|\rho|}\right)
  \end{align*}
  Second, \( \sqrt{r} < \frac{5 + 3|\rho|}{2 + 2|\rho|}\lambda' \) will give us     \begin{align*}
    |\rho| < 0.535& \text{ (the same numerical value which appreared before)} \\
    \text{and }\sqrt{\frac{1 - 2\vt}{1 -\vt}} < &~ \left( 1 - \frac{2(1 +|\rho|)}{(1 -|\rho|)(5 + 3|\rho|)} \right) \sqrt{(5 + 3|\rho|)(1 +|\rho|)}.
  \end{align*}

  In terms of other requirements, we show that they are can be implied by the two conditions we have just arrived at. 

  We first look at the requirement \( \lambda' \geq 1 \). We need \( \sqrt{\frac{1 - 2\vt}{1 -\vt}} \leq (1 +|\rho|) \sqrt{\left( \frac{1 - 2|\rho|}{1 -|\rho|} \right)^2 + \frac{1 -|\rho|}{1 +|\rho|}} \) to make the content of the square root positive, but this is implied by \( \sqrt{r} \leq \frac{5 + 3|\rho|}{2 + 2|\rho|}\lambda'\). Then \( \lambda' \geq  1 \) is equivalent to: (let \( x = \sqrt{\frac{1 - 2\vt}{1 -\vt}} \))
  \begin{align*}
    & \frac{1 - 2|\rho|}{1 -|\rho|}\sqrt{\frac{1 -2\vartheta}{1 -\rho^2}} + \sqrt{\left[ \left( \frac{1 - 2|\rho|}{1 -|\rho|} \right)^2 + \frac{1 -|\rho|}{1 +|\rho|} \right](1 -\vt) - \frac{1 - 2\vt}{(1 +|\rho|)^2}} \geq  \left( \frac{1 - 2|\rho|}{1 -|\rho|} \right)^2 + \frac{1 -|\rho|}{1 +|\rho|} \\
    \Leftrightarrow& \frac{1}{\sqrt{2 - x^2}} \left[ \frac{1 - 2|\rho|}{1 -|\rho|}\frac{x}{\sqrt{1 -\rho^2}} + \sqrt{\left[ \left( \frac{1 - 2|\rho|}{1 -|\rho|} \right)^2 + \frac{1 -|\rho|}{1 +|\rho|} \right] - \frac{x^2}{(1 +|\rho|)^2}}  \right] \geq  \left( \frac{1 - 2|\rho|}{1 -|\rho|} \right)^2 + \frac{1 -|\rho|}{1 +|\rho|} \\
    &\text{for } \max\left\{ 0,  \frac{(1 +|\rho|)(1 - 2|\rho|)}{1 -|\rho|}\right\} \leq x \leq \min\left\{ 1, \left( 1 - \frac{2(1 +|\rho|)}{(1 -|\rho|)(5 + 3|\rho|)} \right) \sqrt{(5 + 3|\rho|)(1 +|\rho|)} \right\} \\ &\text{and } |\rho|<0.535
  \end{align*}
  It always holds, because we can verify the graph of \( (LHS - RHS) \) as a bi-variate function of \( (|\rho|,x) \) is always above zero.

We then look at the requirement \( \sqrt{r} \leq \frac{2\lambda'}{1 -|\rho|} \): This is equivalent to 
\begin{equation*}
  \frac{1 - 2\vt}{1 -\vt} \leq \frac{\left(\frac{1-2 |\rho|}{1 -|\rho|}\right)^2+\frac{1 -|\rho|}{1 +|\rho|}}{\frac{\left(\frac{1 -|\rho|}{1 +|\rho|}-\frac{2 |\rho| (1-2 |\rho|)}{(1 -|\rho|)^2}\right)^2}{1 -\rho^2}+\frac{1}{(1 +|\rho|)^2}}
\end{equation*}
which is always weaker than \( \sqrt{\frac{1 - 2\vt}{1 -\vt}} \leq \min\left\{ 1, \left( 1 - \frac{2(1 +|\rho|)}{(1 -|\rho|)(5 + 3|\rho|)} \right) \sqrt{(5 + 3|\rho|)(1 +|\rho|)} \right\}. \) 

We finally look at the requirement from \( \FNone \geq 1 \), or equivalently \eqref{eq:FN_1.new}. First, we need to verify \( \sqrt{r} \geq \lambda' + \sqrt{1 -\vt} \). Since \( \sqrt{r} = \sqrt{\frac{1 - 2\vt}{1 -\rho^2}} + \frac{\lambda'}{1 -|\rho|}\), this is equivalent to 
\( \sqrt{\frac{1 - 2\vt}{1 -\rho^2}} + \frac{|\rho|}{1 -|\rho|}\lambda' \geq \sqrt{1 -\vt} \). It naturally holds if \( |\rho| \leq 0.5 \), because \( \lambda' \geq 1 \). When \( 0.5 <|\rho| < 0.535 \), still letting \( x = \sqrt{\frac{1 - 2\vt}{1 -\vt}} \), we have 
\begin{align*}
  &\sqrt{\frac{1 - 2\vt}{1 -\rho^2}} + \frac{|\rho|}{1 -|\rho|}\lambda' \geq \sqrt{1 -\vt}\\
  \Leftrightarrow & \frac{x}{\sqrt{1 -\rho^2}} + \frac{|\rho|\lambda'}{(1 -|\rho|)}\sqrt{2 - x^2} \geq 1 \\
  \Leftarrow & \frac{x}{\sqrt{1 -\rho^2}} + \frac{|\rho|}{(1 -|\rho|)}\sqrt{2 - x^2} \geq 1
\end{align*}
The LHS is either increasing in \( x\in(0,1) \), or first-increasing-then-decreasing. When \( x = 0 \) or \( 1 \), the inequality holds for \( 0.5 < |\rho| < 0.535 \), so it always holds.

We still need to verify \( \sqrt{r} \geq \max \left\{ \sqrt{\frac{1 -\vt}{1 -\rho^2}} + \frac{\lambda'}{1 +|\rho|},\ \sqrt{\frac{5 + 3|\rho|}{(1 -|\rho|)(2 +|\rho|)^2}}\sqrt{1 -\vt} \right\} \). With \( \sqrt{1 -\vt} \leq \sqrt{r} -\lambda' \), we can get rid of \( \sqrt{1 -\vt} \) and arrange either of the requirements as an inequality between \( \sqrt{r} \) and \( \lambda' \). Such an inequality will be weaker than \( \sqrt{r} \leq \frac{5 + 3|\rho|}{2 + 2|\rho|}\lambda' \).

When \( \sqrt{r} > \frac{2\lambda'}{1 -|\rho|} \) in \( f_3(\sqrt{r},\lambda') \), we will see this case does not produce any curve in the diagram. We still need to discuss two cases in terms of \( f_1(\sqrt{r},\lambda') \):

If \( \sqrt{r} \geq  \frac{5 + 3|\rho|}{2 + 2|\rho|}\lambda' \) in \( f_1(\sqrt{r},\lambda') \): We have \( \sqrt{r} = \sqrt{\frac{5 + 3|\rho|}{1 -|\rho|}}\sqrt{1 -\vt} \). Because \( \sqrt{r} > \frac{2\lambda'}{1 -|\rho|} \), we have \begin{equation*}
  \lambda' < \frac{1}{2}\sqrt{(5 + 3|\rho|)(1 -|\rho|)}\sqrt{1 -\vt}
\end{equation*}
To admit a solution for \( \lambda' \geq 1 \), we need \( |\rho| < \frac{1}{3} \) so that the RHS is large enough.  
\begin{itemize}
  \item When \( \sqrt{r} = \sqrt{\frac{1 - 2\vt}{1 -\rho^2}} + \frac{\lambda'}{1 -\rho^2}\), \( \lambda' =(1 -\rho^2)\left( \sqrt{\frac{5 + 3|\rho|}{1 -|\rho|}}\sqrt{1 -\vt} - \sqrt{\frac{1 - 2\vt}{1 -\rho^2}}\right) \). Combining this with \( \lambda' < \frac{1}{2}\sqrt{(5 + 3|\rho|)(1 -|\rho|)}\sqrt{1 -\vt} \), we can get a requirement for \( x = \sqrt{\frac{1 - 2\vt}{1 -\vt}} \):
  \begin{equation*}
    x > \sqrt{(5 + 3|\rho|)(1 +|\rho|)} - \frac{1}{2}\sqrt{\frac{5 + 3|\rho|}{1 +|\rho|}} 
  \end{equation*}
  but the RHS is always greater than 1. Thus we have a contradition.
  \item When \( \sqrt{r} =\frac{3 -|\rho|}{2(1 -|\rho|)}\lambda' + \frac{1}{2} \sqrt{\frac{2(1 - 2\vt)}{1 -|\rho|} -\frac{1 +|\rho|}{1 -|\rho|}\lambda'^2} \): We temporarily ignore the relationship between \( \lambda' \) and \( \vt \), and the \( \lambda' \) which maximizes \( \sqrt{r} =\frac{3 -|\rho|}{2(1 -|\rho|)}\lambda' + \frac{1}{2} \sqrt{\frac{2(1 - 2\vt)}{1 -|\rho|} -\frac{1 +|\rho|}{1 -|\rho|}\lambda'^2} \) is \( \lambda' = \sqrt{\frac{2(3 -|\rho|)^2(1 - 2\vt)}{(3 -|\rho|)^2(1 +|\rho|) +(1 -|\rho|)(1 +|\rho|)^2}} \). Even with the maximizer \( \lambda' \), we still have \[ \frac{3 -|\rho|}{2(1 -|\rho|)}\lambda' + \frac{1}{2} \sqrt{\frac{2(1 - 2\vt)}{1 -|\rho|} -\frac{1 +|\rho|}{1 -|\rho|}\lambda'^2} < \sqrt{\frac{5 + 3|\rho|}{1 -|\rho|}}\sqrt{1 -\vt} \] for \( |\rho| < \frac{1}{3} \). Thus we have no solution for \( \lambda' \).
  \item When \( \frac{2 -|\rho|}{1-|\rho|} < a < \frac{2}{1 -|\rho|} \) and \( \sqrt{r} \geq \frac{\lambda'}{1 -|\rho|}\left[2 + \frac{|\rho| +\rho^2}{(a - 2)(1 -\rho^2) - (|\rho| +\rho^2)} \right]  \), the contradiction comes from the fact that 
  \begin{equation*}
    \sqrt{r} = \sqrt{\frac{5 + 3|\rho|}{1 -|\rho|}}\sqrt{1 -\vt} < \frac{\lambda'}{1 -|\rho|}\left[2 + \frac{|\rho| +\rho^2}{(a - 2)(1 -\rho^2) - (|\rho| +\rho^2)} \right]
  \end{equation*}
  for any \( \frac{2 -|\rho|}{1-|\rho|} < a < \frac{2}{1 -|\rho|} \), \( |\rho| < \frac{1}{3} \) and \( \lambda' \geq 1 \). 
\end{itemize}

If \( \frac{2\lambda'}{1 -|\rho|} < \sqrt{r} < \frac{5 + 3|\rho|}{2 + 2|\rho|}\lambda' \) in \( f_1(\sqrt{r},\lambda') \):
  For this case to exist, we need \( \frac{2}{1 -|\rho|} <\frac{5 + 3|\rho|}{2 + 2|\rho|} \), which requires \( |\rho| < 0.1547 \). Solving \( d^2(B,(|\rho| \sqrt{r},\sqrt{r})) =(1 -\rho^2)(1 -\vt) \) in \( FP_2 \), we have \( \sqrt{r} = 2\lambda' + \sqrt{1 -\vt -\frac{1 -|\rho|}{1 +|\rho|}\lambda'^2} \) 
  \begin{itemize}
    \item When \( \sqrt{r} = \sqrt{\frac{1 - 2\vt}{1 -\rho^2}} + \frac{\lambda'}{1 -\rho^2}\), we need \( \sqrt{\frac{1 - 2\vt}{1 -\rho^2}} + \frac{\lambda'}{1 -\rho^2} > \frac{2}{1 -|\rho|}\lambda' \) which contradicts \( \lambda' \geq 1 \) when \( |\rho| < 0. 1547\).
    \item When \( d^2\left(D,\left((1 -|\rho|)\sqrt{r},-(1-|\rho|)\sqrt{r}\right)\right) \), the \( (\sqrt{r},\lambda') \) pair are given by 
    \begin{align*}
      \begin{cases} 
        \sqrt{r} =&~ 2\lambda' + \sqrt{1 -\vt -\frac{1 -|\rho|}{1 +|\rho|}\lambda'^2}\\
        \sqrt{r} =&~ \frac{3 -|\rho|}{2(1 -|\rho|)}\lambda' + \frac{1}{2} \sqrt{\frac{2(1 - 2\vt)}{1 -|\rho|} -\frac{1 +|\rho|}{1 -|\rho|}\lambda'^2}
      \end{cases} 
    \end{align*}
    which implies \( \frac{1 - 3|\rho|}{2(1 -|\rho|)}\lambda' + \sqrt{1 -\vt -\frac{1 -|\rho|}{1 +|\rho|}\lambda'^2} = \frac{1}{2} \sqrt{\frac{2(1 - 2\vt)}{1 -|\rho|} -\frac{1 +|\rho|}{1 -|\rho|}\lambda'^2}. \) 

    However, the above equation has no solution for \(\lambda' \), because the LHS is always greater than the RHS. We look at \( (LHS - RHS) \) from now on, and prove it is positive:
    
    Let \( \lambda^* =\frac{\lambda'}{\sqrt{1 -\vt}} \), and \( \lambda^* \geq 1 \) as well. Then
    \begin{align*}
      \frac{(LHS - RHS)}{\sqrt{1 -\vt}} = &~ 
      \frac{1 - 3|\rho|}{2(1 -|\rho|)}\lambda^* + \sqrt{1 -\frac{1 -|\rho|}{1 +|\rho|}{\lambda^*}^2} - \frac{1}{2} \sqrt{\frac{2}{1 -|\rho|}\frac{1 - 2\vt}{1-\vt} -\frac{1 +|\rho|}{1 -|\rho|}{\lambda^*}^2}\\
      \geq &~ \frac{1 - 3|\rho|}{2(1 -|\rho|)}\lambda^* + \sqrt{1 -\frac{1 -|\rho|}{1 +|\rho|}{\lambda^*}^2} - \frac{1}{2} \sqrt{\frac{2}{1 -|\rho|} -\frac{1 +|\rho|}{1 -|\rho|}{\lambda^*}^2}
    \end{align*}
    Thus we only need to prove \begin{equation*}
      \frac{1 - 3|\rho|}{2(1 -|\rho|)}\lambda^* + \sqrt{1 -\frac{1 -|\rho|}{1 +|\rho|}{\lambda^*}^2} > \frac{1}{2} \sqrt{\frac{2}{1 -|\rho|} -\frac{1 +|\rho|}{1 -|\rho|}{\lambda^*}^2}
    \end{equation*}
    From the content of the square roots, we can see that \( \lambda^* \leq \sqrt{\frac{1 +|\rho|}{1 - |\rho|}} \).

    Square both sides, and ignore the cross term on the LHS, and we can actually prove a stronger result, 
    \begin{equation*}
      \left( \frac{1 - 3|\rho|}{2(1 -|\rho|)} \right)^2 {\lambda^*}^2 + \left( 1 -\frac{1 -|\rho|}{1 +|\rho|}{\lambda^*}^2  \right) \geq \frac{1}{2(1 -|\rho|)} -\frac{1 +|\rho|}{4(1 -|\rho|)}{\lambda^*}^2.
    \end{equation*}
    We only need to verify the two ends, \( \lambda^* = 1 \) and \( \lambda^* = \sqrt{\frac{1 +|\rho|}{1 -|\rho|}} \), to see that this inequality holds for \( 0 <|\rho| < 0.1547 \).  
    \item When \( \frac{2 -|\rho|}{1-|\rho|} < a < \frac{2}{1 -|\rho|} \) and \( \sqrt{r} \geq \frac{\lambda'}{1 -|\rho|}\left[2 + \frac{|\rho| +\rho^2}{(a - 2)(1 -\rho^2) - (|\rho| +\rho^2)} \right]  \):
    
    \( \frac{\lambda'}{1 -|\rho|}\left[2 + \frac{|\rho| +\rho^2}{(a - 2)(1 -\rho^2) - (|\rho| +\rho^2)} \right]  \) is at least \( \frac{3\lambda'}{1 -|\rho|} \), which is still greater than  \( \sqrt{r} = 2\lambda' + \sqrt{1 -\vt -\frac{1 -|\rho|}{1 +|\rho|}\lambda'^2} \). 
  \end{itemize}
We have finished discussing the last case. To sum up the whole phase diagram, it is exactly Theorem~\ref{suppthm:scad.sayagain}. 

\subsection{Proof of Lemma~\ref{suppthm:sol.path.scad}}  \label{suppsec:sol.path.scad}

(We have assumed \( \rho > 0 \) in Lemma~\ref{suppthm:sol.path.scad}.)

Recall optimization \ref{suppeq:optimization.scad} and our assumption \( b_1 >\abs{h_2} \). The equation of the sub-gradient for \( b =(b_1,b_2)' \) is:
  \begin{equation}\label{suppeq:subgrad.equations.scad}
    \begin{bmatrix} 1 & \rho \\ \rho & 1 \end{bmatrix} \begin{bmatrix}b_1 \\ b_2 \end{bmatrix} + \begin{bmatrix} q'(b_1)\\q'(b_2) \end{bmatrix} = \begin{bmatrix} h_1 \\ h_2 \end{bmatrix}. 
  \end{equation}

When \( \lambda' \) is sufficiently large, neither of \( (b_1,b_2) \) is nonzero. We investigate the process of decreasing \( \lambda' \) from \( \infty \), and discuss the major stages along the way.

  \textit{Stage 1:} When \( \lambda' \) is large, both \( (b_j,b_{j + 1}) \) are zero, and SCAD behaves like Lasso.  When \( \lambda' \) is large, Equation~\eqref{suppeq:subgrad.equations.scad} becomes \(  \lambda' \cdot \sgn(0) = h_1,\ \lambda' \cdot \sgn(0) = h_2 \) and so we need 
  \begin{equation} \label{eq:lambda-1}
    \lambda' \geq \lambda'_1 = \max\{\abs{h_1},\abs{h_2}\}.  
  \end{equation}
  
  \textit{Stage 2:}
  When \( \lambda' \) crosses \( \lambda'_1 =\max \left\{ \abs{h_1},\abs{h_2} \right\} \), since we have assumed \( h_1 >\abs{h_2} \geq 0 \), \( b_1\) becomes positive. To see this, consider \( \lambda' \) in a very small interval \( (\lambda_1' -\delta,\lambda'_1) =(\abs{h_1} -\delta,\abs{h_1}) \):
  \begin{equation*}
    \begin{bmatrix} 1 & \rho \\ \rho & 1 \end{bmatrix} \begin{bmatrix}b_1  \\ 0 \end{bmatrix} + \begin{bmatrix} \lambda'\cdot \sgn(b_1) \\ \lambda' \cdot \sgn0 \end{bmatrix} = \begin{bmatrix} h_1 \\ h_2 \end{bmatrix}. 
  \end{equation*}
  Now we have \( b_1 = h_1 -\lambda'\cdot \sgn(b_1)  \). 
  From this equation, we know \( b_1 \) has the same sign as \( h_1 \), so \( b_1 \) enters the model as a positive number. On the other hand, \( b_2 \) cannot enter the model before \(b_1\), or we have a contradiction. This is because we would have 
  \( b_2 = h_2 -\lambda' \cdot \sgn(b_2) \), but the signs of the LHS and RHS can never agree.
  
  Now that \( b_1 \) is positive, for the above system of equations to admit a solution, we also need 
  \begin{equation}\label{suppeq:sol.path.constraint-1.scad}
    \abs{h_2 +\rho \lambda' -\rho h_1} <\lambda'
  \end{equation}
  
  \textit{Stage 3:} When \( \lambda' \) continues to decrease, we have two possible cases when the solution path enters the next stage. First, \( b_1 \) continues to increase and becomes larger than \( \lambda' \); then its gradient will change according to the definition of SCAD (see \eqref{suppeq:definition.SCAD}), while \( b_2 = 0 \) all along. Second, \( b_2 \) enters the model before \( b_1 \) gets larger than \( \lambda' \). 

  We start from the first case mentioned above. In this case, at the next critical point \( \lambda' =\lambda'_2 \), we would have \( b_1 =\lambda' \) while \( b_2 = 0 \) still. Then
  \begin{equation*}
    \begin{bmatrix} 1 & \rho \\ \rho & 1 \end{bmatrix} \begin{bmatrix}\lambda'  \\ 0 \end{bmatrix} + \begin{bmatrix} \lambda' \\ \lambda' \cdot \sgn0 \end{bmatrix} = \begin{bmatrix} h_1 \\ h_2 \end{bmatrix}. 
  \end{equation*} 
  Then we have \( \lambda_2'^{(1)} = \frac{1}{2}h_1 \). 
  
  For this equation to admit a solution, we need \( \abs{h_2 -\rho \lambda'_2} =\abs{h_2 - \frac{1}{2}\rho h_1} <\lambda'_2 = \frac{1}{2}h_1 \)   which is \begin{equation}\label{suppeq:sol.path.constraint-2.scad}
    \frac{ - 1 +\rho}{2}h_1 < h_2 < \frac{1 +\rho}{2}h_1.
  \end{equation}
  With this constraint~\eqref{suppeq:sol.path.constraint-2.scad} and \( \lambda'\in[\lambda_2'^{(1)},\lambda'_1] \), we can also go back to check the Condition~\eqref{suppeq:sol.path.constraint-1.scad} in the previous stage, and we can see it holds.

  We then consider the next case in which \( b_2 \) enters the model first. 
  This case is essentially Lasso. We solve
  \begin{equation*}
    \begin{bmatrix} 1 & \rho \\ \rho & 1 \end{bmatrix} \begin{bmatrix} b_1  \\ b_2 \end{bmatrix} + \begin{bmatrix} \lambda' \\ \lambda' \cdot \sgn(b_2) \end{bmatrix} = \begin{bmatrix} h_1 \\ h_2 \end{bmatrix}. 
  \end{equation*}
  when \( \lambda'\in(\lambda_2'^{(2)} -\delta,\lambda_2'^{(2)}) \). We then have two cases, depending on the sign of \( b_2 \) when it enters the model. If \( b_2 \) enters the model as a positive number, then
 eventually we have 
  \begin{equation*}
    0 < \rho h_1 < h_2 < h_1,\text{ and } \lambda' < \lambda'_2 =\frac{h_2-\rho h_1}{1-\rho}
  \end{equation*}
If \( b_2 \) enters the model as a negative number, then 
   eventually we have 
  \begin{equation*}
    - h_1 < h_2 <\rho h_1, \text{ and } \lambda' < \lambda'_2 = \frac{\rho h_1 - h_2}{1+\rho}.
  \end{equation*}

  With these constraints and \( \lambda'\in[\lambda'^{(2)},\lambda'_1] \), we can also go back to check the Condition~\eqref{suppeq:sol.path.constraint-1.scad} in the previous stage. It holds, and we omit the details for brevity.

We have discussed the two cases in the third stage, and we need to decide which one actually happens. When \( h_2 \geq \frac{1 +\rho}{2}h_1 \) or \( h_2 \leq \frac{ - 1 +\rho}{2}h_1 \), requirement~\ref{suppeq:sol.path.constraint-2.scad} is not met, and thus the second case is the case that happens. The subset of rejection region when \( h_2 \geq \frac{1 +\rho}{2}h_1 \) or \( h_2 \leq \frac{ - 1 +\rho}{2}h_1 \) is the same as that of Lasso. Now both \( b_1 \) and \( b_2 \) are nonzero, and we need not discuss any further.

When \( \frac{ - 1 +\rho}{2}h_1 < h_2 < \frac{1 +\rho}{2}h_1 \), we can verify that 
\begin{align*}
  \lambda_2'^{(1)} =\frac{1}{2}h_1 > \lambda_2'^{(2)} =\frac{h_2 -\rho h_1}{1 -\rho} \text{ when } h_2 \geq  \rho h_1 \\
  \lambda_2'^{(1)} =\frac{1}{2}h_1 > \lambda_2'^{(2)} = \frac{\rho h_1 - h_2}{1 +\rho} \text{ when } h_2 <  \rho h_1
\end{align*}
So \( b_1 \) becomes larger than \( \lambda' \) before \( b_2 \) enters the model.

\textit{Stage 3:} When \( (h_1,h_2) \) satisfies \( \frac{ - 1 +\rho}{2}h_1 < h_2 < \frac{1 +\rho}{2}h_1 \) in the last stage, we stil need to find out when \( b_2 \) enters the model after \( b_1 \) becomes greater than \( \lambda' \).
When  \( \lambda' < \lambda_2'^{(1)} = \frac{1}{2}h_1 \) we still have two possible cases to discuss: First, \( b_1 \) continues to grow larger than \( (a\lambda') \), making the expression of \( q'(b_1) \) different again,  while \( b_2 \) is still at zero. Second, \( b_2 \) enters the model before \( b_1 \)  hits \( (a\lambda') \). 

Before the discussion of the two cases, we look at the system of equations when \( \lambda'\in(\lambda'_2 -\delta,\lambda'_2) \) for a very small \( \delta \).
\begin{equation*}
  \begin{bmatrix} 1 & \rho \\ \rho & 1 \end{bmatrix} \begin{bmatrix} b_1  \\ 0 \end{bmatrix} + \begin{bmatrix} \frac{a\lambda' - b_1}{a - 1} \\ \lambda' \cdot \sgn(0) \end{bmatrix} = \begin{bmatrix} h_1 \\ h_2 \end{bmatrix}. 
\end{equation*}

The solution of \( b_1 \) is \( b_1 = \frac{(a - 1)h_1 - a\lambda'}{a - 2} \) and  the sub-gradient for \( b_2 \) requires
\begin{equation}\label{suppeq:sol.path.constraint-3.scad}
  \abs{h_2 -\rho b_1} <\lambda'
\end{equation}

We start from the first case in which \( b_1 \) reaches \( (a\lambda) \)  first. At the point \( \lambda' =\lambda_3'^{(1)} \), we have \( b_1 = a\lambda' \) and 
\begin{equation*}
  \begin{bmatrix} 1 & \rho \\ \rho & 1 \end{bmatrix} \begin{bmatrix} a\lambda'  \\ 0 \end{bmatrix} + \begin{bmatrix} 0 \\ \lambda' \cdot \sgn(0) \end{bmatrix} = \begin{bmatrix} h_1 \\ h_2 \end{bmatrix}. 
\end{equation*}
Thus \( \lambda_3'^{(1)} = \frac{h_1}{a} \). In terms of the sub-gradient \( \sgn(0) \) , we need \(  \abs{h_2 -\rho h_1} < \frac{h_1}{a}, \) and thus \(  (\rho - \frac{1}{a})h_1< h_2 <(\rho + \frac{1}{a})h_1 \).
Compare the above equation with Equation~\ref{suppeq:sol.path.constraint-2.scad}, and we get
\begin{equation}\label{suppeq:sol.path.constraint-4.scad}
  (\rho - \frac{1}{a})h_1 < h_2 < \min \left\{(\rho + \frac{1}{a})h_1,\frac{1+\rho}{2}h_1  \right\} = \begin{cases} (\rho + \frac{1}{a})h_1 & \text{if }a > \frac{2}{1-\rho} \\ \frac{1+\rho}{2}h_1 & \text{if } a \leq  \frac{2}{1-\rho} \end{cases} 
\end{equation}

Then we consider the second case in which \( b_2 \) enters the model first. 
We look at the condition~\ref{suppeq:sol.path.constraint-3.scad} to find \( \lambda_3'^{(2)} \), because  condition~\ref{suppeq:sol.path.constraint-3.scad} would become tight at \( \lambda =\lambda_3'^{(2)} \). It requires
\begin{equation*}
  -\lambda' + \frac{\rho(a - 1)h_1 - a\rho \lambda'}{a - 2} < h_2 < \lambda' + \frac{\rho(a - 1)h_1 - a\rho \lambda'}{a - 2}
\end{equation*}
The left half of the inequality is equivalent to  (We have implicitly used \( \rho > 0 \).)
\begin{equation*}
  \lambda' > \frac{\rho(a - 1)h_1 -(a - 2)h_2}{a + a\rho - 2}  
\end{equation*}
The right half of the inequality is \begin{equation}\label{suppeq:sol.path.constraint-5.scad}
  (a - 2)h_2 -\rho(a - 1)h_1 <(a - 2 - a\rho)\lambda'.
\end{equation}
It turns out that we still need to discuss whether \( a > \frac{2}{1 -\rho} \):

When \( a > \frac{2}{1 -\rho} \), \( a - 2 - a\rho > 0 \) and requirement~\eqref{suppeq:sol.path.constraint-5.scad} is restrictive. We have
\begin{equation*}
  \lambda' > \lambda_3'^{(2)} =\max\left\{\frac{(a - 2)h_2 -\rho(a - 1)h_1}{a - 2 - a\rho},\:\frac{\rho(a - 1)h_1 -(a - 2)h_2}{a + a\rho - 2}\,  \right\}.
\end{equation*}
Of course only one of the two terms will be positive, and it involves the discussion of whether \((a - 2)h_2 > \rho(a - 1)h_1 \) or not.

When \( a \leq  \frac{2}{1 -\rho} \), we can prove requirement~\eqref{suppeq:sol.path.constraint-5.scad}
  always holds without any requirement. To see this, just plug  \( \lambda' =\lambda'_2 = \frac{h_1}{2} \) into requirement~\eqref{suppeq:sol.path.constraint-5.scad}, and we will see \( (a - 2)h_2 -\rho(a - 1)h_1 \leq (a - 2 - a\rho)\lambda'_2 < 0 \) because it is equivalent to \( h_2 <\frac{1 +\rho}{2}h_1  \). In other words, requirement~\eqref{suppeq:sol.path.constraint-5.scad} is not restrictive, and we only need the left half:
  \begin{equation*}
    \lambda' > \lambda_3'^{(2)} =\frac{\rho(a - 1)h_1 -(a - 2)h_2}{a + a\rho - 2}.
  \end{equation*}

Finally, we need to decide how to choose between the two cases:
\begin{itemize}
  \item When \( a > \frac{2}{1 -\rho} \) and \( (\rho - \frac{1}{a})h_1 < h_2 < (\rho + \frac{1}{a})h_1 \), we should choose \( \lambda_3'^{(1)} = \frac{h_1}{a} \). This is because now \( \frac{h_1}{a} > \max\left\{\frac{(a - 2)h_2 -\rho(a - 1)h_1}{a - 2 - a\rho},\:\frac{\rho(a - 1)h_1 -(a - 2)h_2}{a + a\rho - 2}\,  \right\}\) always holds. 
  \item When \( a \leq \frac{2}{1 -\rho} \) and \( (\rho - \frac{1}{a})h_1 < h_2 < \frac{1 +\rho}{2}h_1 \), we should choose \( \lambda_3'^{(1)} = \frac{h_1}{a} \). This is because now \( \frac{h_1}{a} > \frac{\rho(a - 1)h_1 -(a - 2)h_2}{a + a\rho - 2}\) always holds.
  \item When \( a > \frac{2}{1 -\rho} \) and \( h_2 >(\rho + \frac{1}{a})h_1 \), we should choose \( \lambda_3'^{(2)} = \frac{(a - 2)h_2 -\rho(a - 1)h_1}{a - 2 - a\rho} > 0\). 
  \item When \( h_2 <(\rho - \frac{1}{a})h_1 \), for arbitrary \( a \), we should choose \( \lambda_3'^{(2)} = \frac{\rho(a - 1)h_1 -(a - 2)h_2}{a + a\rho - 2} > 0\).
\end{itemize}

In the discussion above, if the Condition~\eqref{suppeq:sol.path.constraint-4.scad} is met and \( \lambda' <\lambda_3'^{(1)} = \frac{h_1}{a} \), we still have not seen \( b_2 \) in the model when \( b_1 \) hits \( a\lambda' \), and we need to discuss further.

\textit{Stage 4:} 
When \( \lambda' \) fall below \( \lambda_3'^{(1)} \), \( b_1 \) is greater than \( a\lambda' \). Now the system of equations:
\begin{equation*}
  \begin{bmatrix} 1 & \rho \\ \rho & 1 \end{bmatrix} \begin{bmatrix} b_1  \\ 0 \end{bmatrix} + \begin{bmatrix} 0 \\ \lambda' \cdot \sgn(0) \end{bmatrix} = \begin{bmatrix} h_1 \\ h_2 \end{bmatrix}. 
\end{equation*}
Now we have \( b_1 = h_1 \), and \( \lambda' > \abs{h_2 -\rho h_1}. \)  
Thus we know \( \lambda'_4 =\abs{h_2 -\rho h_1} \). (The sign of \( (h_2 -\rho h_1) \) is not determined yet.) When \( \lambda' \) fall below \( \lambda'_4 \), \( b_2 \) inevitably enters the model. 

\section{Proof of Proposition~\ref{prop:SCAD-larger-a} (Comparing SCAD and Lasso)}
Proposition~\ref{prop:SCAD-larger-a} makes two assertions, the first about positive \( \rho \) and the second about negative \( \rho\). We prove them respectively.

When \( \rho < 0 \), it would be obvious that the diagram of SCAD is lower than the diagram of Lasso. Reviewing the diagram of Lasso in the main text, it is the maximum of four curves:
\beq 
 U_{\mathrm{Lasso}}(\vt)= \begin{cases} 
    \max \left\{ h_1(\vt),h^*_2(\vt) \right\}, & \text{ when }\rho \geq 0, \\
    \max \left\{ h_1(\vt),h^*_2(\vt),h^*_3(\vt),h^*_4(\vt) \right\}, &\text{ when }\rho < 0,
    \end{cases} 
\eeq
where \( h_1(\vt) =(1 + \sqrt{1 -\vt})^2 \),  $h^*_2(\vt) = \bigl( 1 + \sqrt{\frac{1 +\abs{\rho}}{1 -\abs{\rho}}} \bigr)^2 (1 -\vt)$, $h^*_3(\vt) = \frac{1}{(1 -\abs*{\rho})^2} \bigl( 1 + \sqrt{\frac{1 -\abs{\rho}}{1 +\abs{\rho}}}\sqrt{1 - 2\vt} \bigr)^2$, and $h^*_4(\vt) = \frac{1}{(1 -\abs*{\rho})^2} \bigl(\sqrt{\frac{1 +\abs{\rho}}{1 -\abs{\rho}}} \sqrt{1 -\vt} + \sqrt{\frac{1 -\abs{\rho}}{1 +\abs{\rho}}}\sqrt{1 - 2\vt} \bigr)^2$. 

In terms of SCAD, \( h_1(\vt) =(1 + \sqrt{1 -\vt})^2 \),  $h^*_2(\vt) = \bigl( 1 + \sqrt{\frac{1 +\abs{\rho}}{1 -\abs{\rho}}} \bigr)^2 (1 -\vt)$ and $h^*_3(\vt) = \frac{1}{(1 -\abs*{\rho})^2} \bigl( 1 + \sqrt{\frac{1 -\abs{\rho}}{1 +\abs{\rho}}}\sqrt{1 - 2\vt} \bigr)^2$ are also present in Theorem~\ref{thm:SCAD}. (The notation of the corresponding curve of \( h^*_3(\vt) \) is different, \( h^*_3(\vt) = \left( \frac{1}{1 -\rho} +\sqrt{\frac{1- 2\vt}{1 -\rho^2}} \right) \).) The only different curve is the last one; we compare only the last curve below:
\begin{align*}
  h_{\text{Lasso}}(\vt) =&~ \frac{1}{(1 -\abs*{\rho})^2} \bigl(\sqrt{\frac{1 +\abs{\rho}}{1 -\abs{\rho}}} \sqrt{1 -\vt} + \sqrt{\frac{1 -\abs{\rho}}{1 +\abs{\rho}}}\sqrt{1 - 2\vt} \bigr)^2 \\
  h_{\text{SCAD}}(\vartheta)=&~\left\{\begin{array}{ll}\left(\frac{5+3|\rho|}{1-|\rho|}\right)(1-\vartheta), & \text { if } \sqrt{\frac{1-2 \vartheta}{1-\vartheta}} \geq \frac{3-4|\rho|-3 \rho^{2}}{(1-|\rho|)} \sqrt{\frac{1+|\rho|}{5+3|\rho|}} \\ \frac{1}{(1-|\rho|)^{2}}\left(\sqrt{\frac{1+|\rho|}{1-|\rho|}} \sqrt{1-\vartheta}+\sqrt{\frac{1-|\rho|}{1+|\rho|}} \sqrt{1-2 \vartheta}\right)^{2}, & \text { if } \sqrt{\frac{1-2 \vartheta}{1-\vartheta}} \leq \frac{(1+|\rho|)(1-2|\rho|)}{1-|\rho|} \\ h_{6}(\vartheta) & \text { other wise }\end{array}\right.\\
  \leq &~ \min \left\{ \left(\frac{5+3|\rho|}{1-|\rho|}\right)(1-\vartheta),\, \frac{1}{(1 -\abs*{\rho})^2} \bigl(\sqrt{\frac{1 +\abs{\rho}}{1 -\abs{\rho}}} \sqrt{1 -\vt} + \sqrt{\frac{1 -\abs{\rho}}{1 +\abs{\rho}}}\sqrt{1 - 2\vt} \bigr)^2 \right\} \\
  \leq&~ h_{\text{Lasso}}(\vt)
\end{align*}
Thus the assertion when \( \rho < 0 \) is proven. 




When \( \rho> 0 \),  from the details proof in Section~\ref{suppsec:scad}, we know that when \( a \leq  \frac{2}{1-\rho} \), the diagram of SCAD is the same as that of Lasso except when \( \rho < 0.179 \) in a tiny neighborhood of \( \vt = 0 \). See Figure~\ref{suppfig:scad.tinydifference} for an example.
\begin{figure}[h!]
  \centering
  \includegraphics[width=0.4\textwidth]{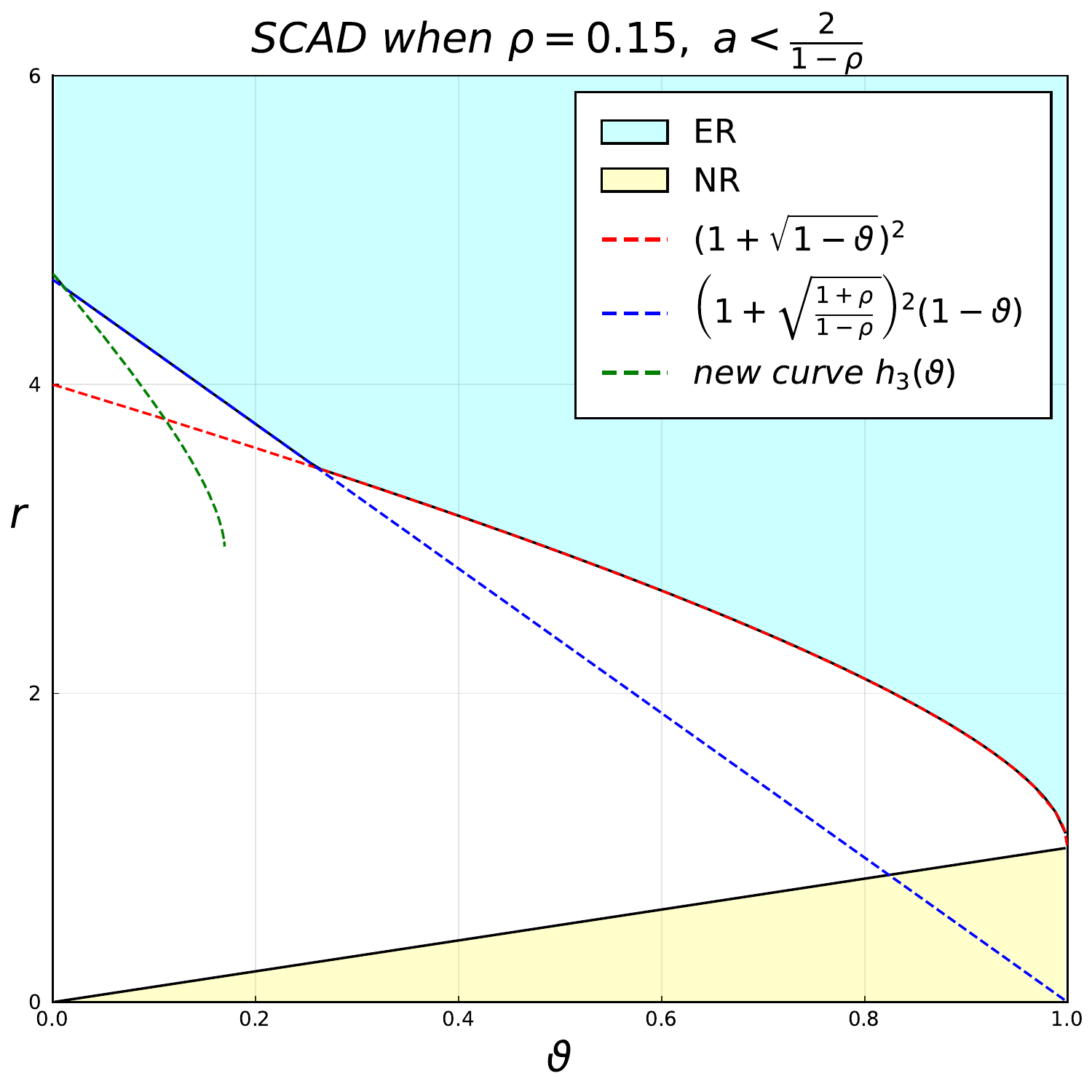}
  \caption{The phase diagram when \( 0 \leq \rho < 0.179\) with the newly added curve.}
  \label{suppfig:scad.tinydifference}
\end{figure}

When \( a > \frac{2}{1 -\rho} \) and increases, the penalty function of SCAD converges to that of Lasso and so does the rejection region. Eventually the tiny corner will vanish, and the diagram of SCAD with optimal \( (a^*,\lambda^*) \) and \( \rho > 0 \) will be the same as Lasso.

\section{Proof of Theorem~\ref{thm:thresh-lasso} (Thresholded Lasso)}\label{suppsec:thresLasso}

As described in Section~\ref{suppsec:sketch}, our proof has three parts: (a) deriving the rejection region, (b) obtaining the rate of convergence of $\mathbb{E}[H(\hat{\beta},\beta)]$, and (c) calculating the phase diagram. 

\paragraph{Part 1: Deriving the rejection region.} 
Recall that the rejection region ${\cal R}$ is as defined in \eqref{def:RejRegion}.
Still use the scaled version of \( (\lambda,t,x_j'y,x_{j + 1}'y) \): Define $h_1=x_j'y/\sqrt{2\log(p)}$, $h_2=x_{j+1}'y/\sqrt{2\log(p)}$, $\lambda'=\lambda/\sqrt{2\log(p)}$ and \( t' = t/\sqrt{2\log(p)} \). Consider Lasso decomposed into bivariate sub-problems, and for \( (x_j,x_{j + 1}) \), $(\hat{b}_1, \hat{b}_2)$ minimizes
\beq \label{suppeq:Lassoproof-optimization}
L(b)\equiv \frac{1}{2}b'\begin{bmatrix}1&\rho\\\rho' & 1\end{bmatrix}b + b'h+\lambda'\|b\|_1
\eeq
It is seen that $(\hat{\beta}_j, \hat{\beta}_{j+1})=\sqrt{2\log(p)}(\hat{b}_1, \hat{b}_2)$. Thresholded Lasso applies threshold \( t \) to \( (\hat{\beta}_j, \hat{\beta}_{j+1}) \), which is equivalent to thresholding \( (\hat{b}_1, \hat{b}_2) \) with \( t' \). 

Fix $\rho\geq 0$. The next lemma gives the explicit solution to \eqref{enproof-optimization} in the case of $h_1>|h_2|$. It is proved in Section~\ref{subsec:proof-thresLasso-solution}. 
\begin{lemma}\label{supplem:sol.path.thresLasso}
  Consider the variable selection method by solving the optimization~\ref{suppeq:Lassoproof-optimization} and then thresholding the solution with \( t' \); if \( (\hat b_1) \) (or \( (\hat b_2) \)) survives the thresholding, then variable \( x_j \) (or \( x_{j + 1} \)) is selected. Suppose \( h_1 >\abs{h_2} \) and \( \rho \geq  0 \), then
  \begin{itemize}
    \item When \( \lambda' > h_1 \), neither of \( (x_j,x_{j + 1}) \) is selected.
    \item If \( h_1 \geq \lambda' \) and \( \rho h_1 -\lambda'( +\rho) \leq h_2 \leq \rho h_1 +\lambda'(1 -\rho) \), then:  When \( h_1 \leq \lambda' + t' \), neither of \( (x_j,x_{j + 1}) \) is selected. When \( h_1 >\lambda'+ t' \), only \( x_j \)  is selected.
    \item If \( h_1 \geq \lambda' \) and \( h_2 > \rho h_1 +\lambda'(1 -\rho) \), then 
    \begin{enumerate}
      \item When \( h_1 <\rho h_2 +\lambda'(1 -\rho) + t'(1 -\rho^2) \), neither of \( (x_j,x_{j + 1}) \) is selected.  
      \item When \( h_1 \geq \rho h_2 +\lambda'(1 -\rho) + t'(1 -\rho^2) \) and \( h_2 \leq \rho h_1 + \lambda'(1 -\rho) + t'(1 -\rho^2) \),  only \( x_j \)  is selected.
      \item When \( h_2 > \rho h_1 + \lambda'(1 -\rho) + t'(1 -\rho^2) \),  both \( (x_j,x_{j + 1}) \)  are selected.
    \end{enumerate}
    \item If \( h_1 \geq \lambda' \) and \( h_2 < \rho h_1 -\lambda'(1 +\rho) \), then 
    \begin{enumerate}
      \item When \( h_1 <\rho h_2 +\lambda'(1 +\rho) + t'(1 -\rho^2) \), neither of \( (x_j,x_{j + 1}) \) is selected.  
      \item When \( h_1 \geq \rho h_2 +\lambda'(1 +\rho) + t'(1 -\rho^2) \) and \( h_2 \leq \rho h_1 + \lambda'(1 +\rho) + t'(1 -\rho^2) \),  only \( x_j \)  is selected.
      \item When \( h_2 > \rho h_1 + \lambda'(1 +\rho) + t'(1 -\rho^2) \),  both \( (x_j,x_{j + 1}) \)  are selected.
    \end{enumerate}
  \end{itemize}
\end{lemma}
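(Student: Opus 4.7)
The plan is to reduce this lemma to Lemma~\ref{suppthm:sol.path.en} by setting $\mu = 0$ (so $\eta = \rho$), which yields the full Lasso solution path as a function of $(h_1, h_2, \lambda')$. The thresholding step is then just a coordinate-wise inequality check, so the entire argument amounts to translating ``$|\hat{b}_i| > t'$'' into explicit conditions on $(h_1, h_2)$ under each branch of that existing solution path.

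First I would recall from Lemma~\ref{suppthm:sol.path.en} with $\mu = 0$ that, under $h_1 > |h_2|$ and $\rho \geq 0$: (i) when $\lambda' \geq h_1$ both coordinates vanish; (ii) when $\frac{|h_2 - \rho h_1|}{1 - \rho \cdot \mathrm{sgn}(h_2 - \rho h_1)} \leq \lambda' < h_1$ we have $\hat{b}_1 = h_1 - \lambda'$ and $\hat{b}_2 = 0$; (iii) when $\lambda'$ is smaller, both coordinates are nonzero with closed forms that depend on the sign of $h_2 - \rho h_1$. Case (i) immediately gives the first bullet. For case (ii) I would observe that the one-sparse regime occurs exactly when $\rho h_1 - \lambda'(1+\rho) \leq h_2 \leq \rho h_1 + \lambda'(1-\rho)$, and then $\hat{b}_1 = h_1 - \lambda'$ crosses the threshold $t'$ iff $h_1 > \lambda' + t'$, which yields the second bullet directly.

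For the two-sparse regimes, I would split on the sign of $h_2 - \rho h_1$. If $h_2 > \rho h_1 + \lambda'(1-\rho)$, then both solutions are positive with
\begin{equation*}
\hat{b}_1 = \frac{h_1 - \rho h_2 - \lambda'(1-\rho)}{1-\rho^2}, \qquad \hat{b}_2 = \frac{h_2 - \rho h_1 - \lambda'(1-\rho)}{1-\rho^2}.
\end{equation*}
The thresholds $\hat{b}_1 > t'$ and $\hat{b}_2 > t'$ translate literally into the inequalities on $(h_1, h_2)$ stated in the third bullet; the three subcases come from whether $\hat{b}_1$ alone, both, or neither exceed $t'$. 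If instead $h_2 < \rho h_1 - \lambda'(1+\rho)$, then $\hat{b}_2 < 0$ and the formulas become
\begin{equation*}
\hat{b}_1 = \frac{h_1 - \rho h_2 - \lambda'(1+\rho)}{1-\rho^2}, \qquad \hat{b}_2 = \frac{h_2 - \rho h_1 + \lambda'(1+\rho)}{1-\rho^2}.
\end{equation*}
Here $\hat{b}_1 > t'$ and $-\hat{b}_2 > t'$ give the fourth bullet after simple rearrangement.

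I expect no serious obstacle: the entire content is algebraic manipulation of the closed forms already supplied by Lemma~\ref{suppthm:sol.path.en}. The one place demanding care is bookkeeping the sign of $\hat{b}_2$ in the two-sparse regime, since the threshold condition $|\hat{b}_2| > t'$ becomes $\hat{b}_2 > t'$ or $\hat{b}_2 < -t'$ depending on whether $h_2 - \rho h_1$ is positive or negative; a clean case split on $\mathrm{sgn}(h_2 - \rho h_1)$ at the outset avoids any confusion. Finally, consistency of the threshold regions with the activation boundary (e.g.\ ensuring that the one-sparse branch does not spill into the two-sparse branch) is automatic because the boundary $|h_2 - \rho h_1| = \lambda'(1 \mp \rho)$ is precisely where $\hat{b}_2$ transitions continuously through $0$, so applying a strictly positive threshold $t'$ only tightens, never confuses, the regions.
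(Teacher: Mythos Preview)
Your proposal is correct and essentially identical to the paper's own proof: the paper also specializes the Elastic net solution path (Lemma~\ref{suppthm:sol.path.en}) to $\mu=0$ to obtain the Lasso solution in each regime, then applies the threshold $t'$ coordinatewise and splits on the sign of $\hat b_2$. The only additional remark worth making is that the monotone ordering $\hat b_1 > |\hat b_2|$ (which follows from $h_1>|h_2|$) is what guarantees the three subcases in bullets three and four are exhaustive, a point you implicitly use but might state explicitly.
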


Following  similar reasoning to that of Elastic net, we can use Lemma~\ref{supplem:sol.path.thresLasso} to write explicitely the rejection region \( \cal R \), which is the region in \( \R^2 \) where the value of \( (h_1,h_2) \) implies \( x_j \) will get selected eventually. The rejection region of Thresholded Lasso  and \( \rho > 0 \) is
\begin{align}\label{suppeq:rejRegion.thresLasso}
  {\cal R} &= \{(h_1,h_2): h_1>\rho h_2 + \lambda'(1-\rho) + t(1 -\rho^2),\, h_1 >\lambda' + 't'\}\cr
&\;\; \cup \{(h_1, h_2): h_1>\rho h_2 + \lambda'(1+\rho) + t'(1 -\rho^2)\} \cup \{(h_1, h_2): h_1 <\rho h_2 - \lambda'(1+\rho) - t'(1 -\rho^2)\}\cr
&\;\; \cup \{(h_1,h_2): h_1 <\rho h_2 - \lambda'(1-\rho) - t(1 -\rho^2),\, h_1 <-\lambda' - 't'\}. 
\end{align}

See Figure~\ref{suppfig:rejection.region.en} for a visualization of the rejection region.

\begin{figure}[H]
  \centering
  \includegraphics[width=0.7\textwidth]{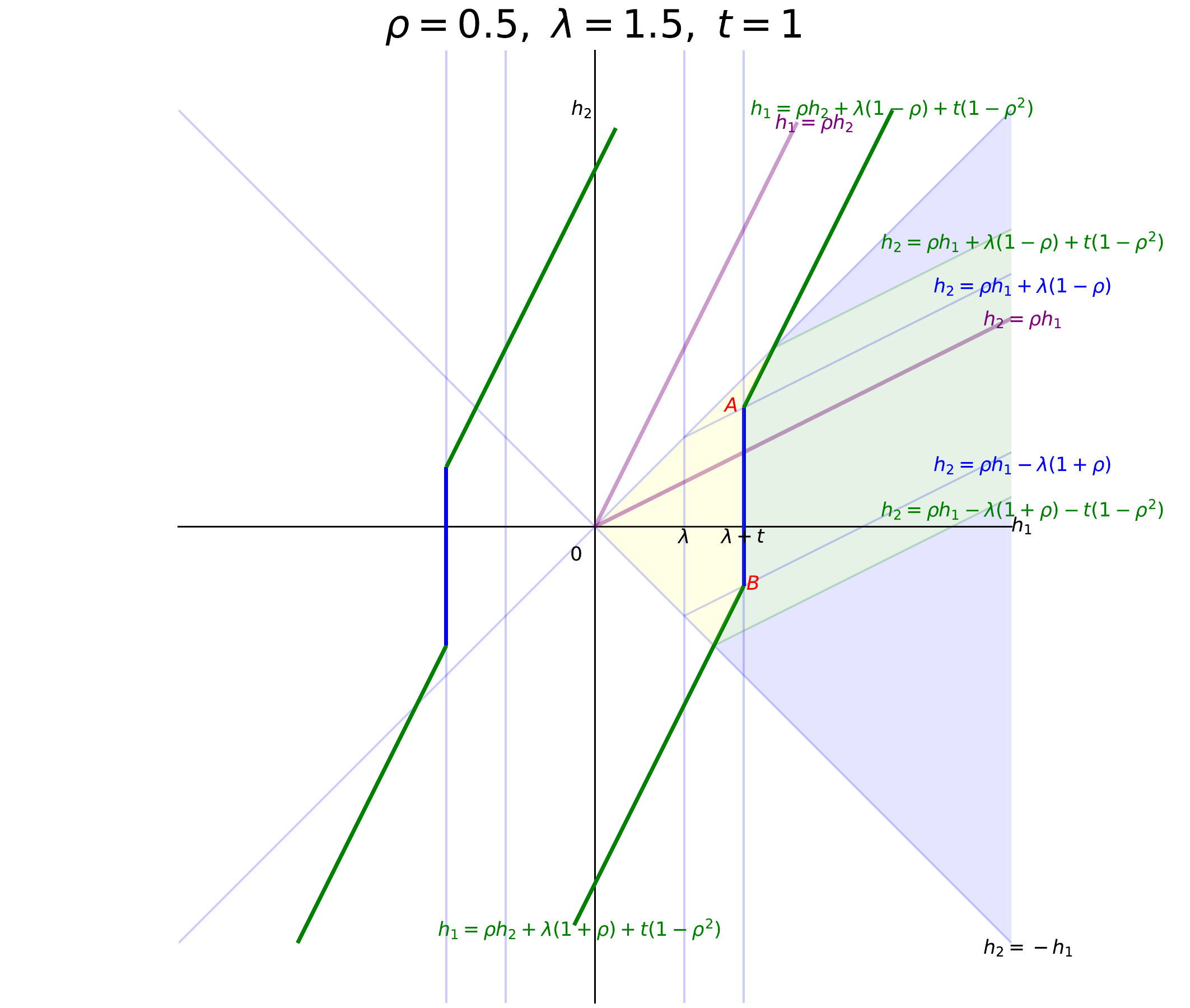}
  \caption{The rejection region of Thresholded Lasso for $\rho\geq 0$.}
  \label{suppfig:rejection.region.thresLasso}
\end{figure}

\paragraph{Part 2. Analyzing the Hamming error.} 
The discussion of Elastic net can be applied here as well, and we present Theorem~\ref{suppthm:hamm.thresLasso} directly.  
\begin{theorem}\label{suppthm:hamm.thresLasso}
  Suppose the conditions of Theorem~\ref{thm:thresh-lasso} hold. Let $\lambda'=\lambda/\sqrt{2\log(p)}$ and \( t' = t/\sqrt{2\log(p)} \)  in Thresholded Lasso. The correlation \( \rho\in( - 1,1) \). 
  As $p\to\infty$, 
  \[
  \FP_p=L_p p^{1- \min\bigl\{ f_1(\sqrt{r}, \lambda',t'), \;\; \vt + f_2(\sqrt{r}, \lambda',t')\bigr\}}, \qquad \FN_p = L_p p^{1-\min\bigl\{\vt + f_3(\sqrt{r}, \lambda',t'),\;\; 2\vt + f_4(\sqrt{r}, \lambda',t')\bigr\}}, 
  \]
  where (below, $d^2_{|\rho|}(u,v)$ is as in Definition~\ref{def:EllipsDistance}) 
  \begin{align*}
    f_1(\sqrt{r},\lambda',t') & = \min \Bigl\{ \frac{1}{1 -\rho^2}\left[(1 +|\rho|) \lambda' + (1 -\rho^2)t'\right]^2,\;\; (\lambda' + t')^2  \Bigr\} \cr
    f_2(\sqrt{r},\lambda',t') & = \begin{cases} 
     (\lambda' + t' -|\rho| \sqrt{r})^2 & \text{if } \sqrt{r} \leq \frac{\lambda'(1-|\rho|)}{1-\rho^2} \\
     d^2_{|\rho|}((\lambda' + t',\lambda' +|\rho|t'),(|\rho| \sqrt{r},\sqrt{r})) & \text{if } \sqrt{r} \in (\frac{\lambda'(1-|\rho|)}{1-\rho^2}, \lambda' + |\rho| t')\\
     \frac{1}{1 -\rho^2}\left[\lambda'(1 -|\rho|) + t'(1 -\rho^2)\right]^2 & \text{if } \sqrt{r} \geq \lambda' + |\rho| t' 
   \end{cases} 
  \cr
  f_3(\sqrt{r}, \lambda',t') &= \min\Bigl\{ (\sqrt{r} -\lambda' - t')_ +^2, \;\;\frac{1}{1 -\rho^2}\left[(1 -\rho^2)\sqrt{r} - t'(1 -\rho^2) -\lambda'(1 -|\rho|)\right]^2
          \Bigr\},\cr
  f_4(\sqrt{r}, \lambda',t') &= \frac{1}{1 -\rho^2}\left[(1 -\rho^2)\sqrt{r} - t'(1 -\rho^2) -\lambda'(1 -\rho)\right]^2
  \end{align*}
  \end{theorem}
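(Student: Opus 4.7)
The plan is to follow the three-step template laid out in Section~\ref{suppsec:sketch}. Part 1 (deriving the rejection region $\mathcal{R}$) is already completed in the excerpt via Lemma~\ref{supplem:sol.path.thresLasso} and the explicit form \eqref{suppeq:rejRegion.thresLasso}; note that the thresholded Lasso rejection region is a strict subset of the Lasso rejection region, with each boundary line of the latter shifted outward by an amount proportional to $t'$. What remains is Part 2: computing the four distances $d^2_{\Sigma}(\mu_{00},\mathcal{R})$, $d^2_{\Sigma}(\mu_{01},\mathcal{R})$, $d^2_{\Sigma}(\mu_{10},\mathcal{R}^c)$, $d^2_{\Sigma}(\mu_{11},\mathcal{R}^c)$ and then invoking \eqref{proof-sketch2}--\eqref{proof-sketch3}. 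Part 3 (the phase diagram) will be taken up separately in the next subsection.

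First, I would set the stage as in Section~\ref{suppsec:en}: conditioning on each of the four realizations of $(\beta_j,\beta_{j+1})$, the rescaled $\tilde y=(x_j'y, x_{j+1}'y)/\sqrt{2\log p}$ is bivariate normal with variance $\Sigma/(2\log p)$ and with mean $\mu_{00},\mu_{01},\mu_{10},\mu_{11}$ exactly as in the Elastic net proof, so \eqref{proof-en-Hamming} applies verbatim. For $\rho\geq 0$, the rejection region from \eqref{suppeq:rejRegion.thresLasso} is bounded by three half-plane boundaries in its upper component: the vertical line $\mathcal{L}_0:h_1=\lambda'+t'$ and the two diagonal lines $\mathcal{L}_1: h_1-\rho h_2=\lambda'(1-\rho)+t'(1-\rho^2)$ and $\mathcal{L}_2: h_1-\rho h_2=\lambda'(1+\rho)+t'(1-\rho^2)$, together with their sign-flipped counterparts. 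I would apply Lemma~\ref{supplem:distance} to each center-line pair, then compare against the vertex distances at the intersection points $\mathcal{L}_0\cap\mathcal{L}_1$, etc., keeping only those tangent points that actually lie on $\partial\mathcal{R}$. This directly yields $f_1$ (distance from origin, minimum of the $\mathcal{L}_0$ and $\mathcal{L}_1$ distances), $f_3$ (distance from $(\sqrt{r},\rho\sqrt{r})$ to $\mathcal{R}^c$, minimum of the $\mathcal{L}_0$ and $\mathcal{L}_1$ distances), and the three-regime expression of $f_2$ depending on where the tangency from $(\rho\sqrt{r},\sqrt{r})$ lands.

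For $\rho<0$, I would invoke the sign-flipping trick from the Elastic net proof (replace $(x_{j+1},\beta_{j+1})$ by their negatives), which preserves the rejection region in its $|\rho|$-form and the centers $\mu_{00},\mu_{01},\mu_{10}$, so $f_1,f_2,f_3$ carry over after substituting $|\rho|$ for $\rho$. Only $\mu_{11}$ moves to the signal-cancellation location $((1-|\rho|)\sqrt r, -(1-|\rho|)\sqrt r)$, requiring a fresh computation of $d^2_{\Sigma}(\mu_{11},\mathcal{R}^c)$; the closest boundary from this point is always $\mathcal{L}_1$, yielding the clean single-expression formula for $f_4$.

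The main obstacle will be the three-regime splitting of $f_2$: I need to identify precisely when the tangent from $(|\rho|\sqrt r,\sqrt r)$ to $\mathcal{R}$ falls on $\mathcal{L}_0$, on $\mathcal{L}_1$, or at their vertex. Following the pattern in the Elastic net proof around \eqref{proof-en-Hamming-exponent}, this amounts to solving for the two critical values of $\sqrt r$ at which the foot-of-perpendicular from $(|\rho|\sqrt r,\sqrt r)$ (in the $d_{|\rho|}$ metric) coincides with the vertex $A=(\lambda'+t',\lambda'+|\rho|t')$; substituting the explicit expression from Lemma~\ref{supplem:distance} yields the thresholds $\sqrt r=\lambda'(1-|\rho|)/(1-\rho^2)$ and $\sqrt r=\lambda'+|\rho|t'$ stated in the theorem. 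The remaining computations are routine applications of Lemma~\ref{supplem:distance} and Definition~\ref{def:EllipsDistance}, analogous to but simpler than the SCAD calculations since $\mathcal{R}$ here has only three distinct boundary slopes rather than SCAD's more intricate kinked geometry.
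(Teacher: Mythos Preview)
Your plan is exactly the one the paper follows: it states that ``the discussion of Elastic net can be applied here as well'' and then omits the actual distance computations because the rejection region is simple. Reducing to \eqref{proof-sketch2}--\eqref{proof-sketch3}, applying Lemma~\ref{supplem:distance} to each bounding line, comparing against vertex distances, and handling $\rho<0$ by the sign-flip trick is precisely the intended argument.

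One small bookkeeping slip to watch: for $f_1$ (distance from $\mu_{00}=(0,0)$ to $\mathcal{R}$) the relevant oblique boundary is $\mathcal{L}_2: h_1-\rho h_2=\lambda'(1+\rho)+t'(1-\rho^2)$, not $\mathcal{L}_1$. The tangent point from the origin to $\mathcal{L}_1$ lands at $(\lambda'(1-\rho)+t'(1-\rho^2),\,0)$, which fails the side constraint $h_1>\lambda'+t'$ and so is not on $\partial\mathcal{R}$; the first term of $f_1$ in the theorem statement, namely $\frac{1}{1-\rho^2}[(1+|\rho|)\lambda'+(1-\rho^2)t']^2$, is indeed the $\mathcal{L}_2$ distance. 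The same swap applies to $f_4$ when $\rho<0$: in the $|\rho|$-picture the signal-cancellation center $((1-|\rho|)\sqrt r,\,-(1-|\rho|)\sqrt r)$ sits in the fourth quadrant and exits $\mathcal{R}$ through $\mathcal{L}_2$, which is why $f_4$ carries the factor $\lambda'(1-\rho)=\lambda'(1+|\rho|)$ rather than $\lambda'(1-|\rho|)$. These are labeling corrections only; the method you describe goes through unchanged once you route the origin and the $\rho<0$ center $\mu_{11}$ to $\mathcal{L}_2$ instead of $\mathcal{L}_1$.
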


\begin{remark}\label{suppremark:threslasso}
  When \( \rho > 0 \) in Theorem~\ref{suppthm:hamm.thresLasso}, we notice that \( f_3(\sqrt{r},\lambda',t') \leq f_4(\sqrt{r},\lambda',t') \), and thus \( \FN_p \)  can be simplified:
  \begin{equation*}
    \FN_p = L_p p^{1 -\vt - f_3(\sqrt{r}, \lambda',t')}.
  \end{equation*}
  When \( \rho < 0 \), such simplification is not available.
\end{remark}

The proof of Theorem~\ref{suppthm:hamm.thresLasso} is easy given the simple rejection region shown in Figure~\ref{suppfig:rejection.region.thresLasso}, and we omit it for brevity.

\paragraph{Part 3. Calculating the phase diagram.} 
The boundary line between Almost Full Recovery and No Recovery is still \( r =\vt \), and the proof is similar to that of Elastic net. The rest of this part calculates the curve between Almost Full Recovery and Exact Recovery.

In such calculation, thresholded Lasso has two tuning parameters, \( (\lambda',t') \), and thus we need one more equality additional to the important fact noted in the Part 3 of Elastic net. In other words, we not only need 
\begin{equation*}
  \min\bigl\{ f_1(\sqrt{r}, \lambda',t'), \;\; \vt + f_2(\sqrt{r}, \lambda',t')\bigr\} =\min\bigl\{\vt + f_3(\sqrt{r}, \lambda',t'),\;\; 2\vt + f_4(\sqrt{r}, \lambda',t')\bigr\} = 1
\end{equation*}
but also need one more equation. This gives us more than \( 4 \) cases for other methods. (For brevity, we use \( f_i\, (i = 1,2,3,4)\) as shorthand of \( f_i(\sqrt{r},\lambda',t') \) for the rest of this part.)

For the rest of this section, we use a clearer way to discuss all the cases; that is, we discuss each possible curve and find out whether they can be present in some interval of \( \vt \). 

\textit{We first talk about \( \rho > 0 \)}: In this case, since we always need \( \vt +f_3 \geq 1 \), we much have \( \sqrt{r} \geq \lambda' + t' \). As a result, for \( \vt +f_2 \geq 1 \), it can only be \( \tfpthree \geq 1 \) since \( \sqrt{r} \geq \lambda' + t' \). Also, according to Remark~\ref{suppremark:threslasso}, we can ignore the requirement \( 2\vt +f_4 \geq 1 \) for \( \rho \geq 0 \).

\textit{First}, we study the curve \( \sqrt{r} = 2 \sqrt{\frac{1 -\vt}{1 -\rho^2}} \), which is the curve given by letting \[ \tfpthree =\tfntwo = 1 \] in \( f_2 \) and \( f_3 \). We also need \( \vt +f_1,\vt +f_3 \geq 1 \) and one more equality. One possible case is
\begin{equation*}
  \begin{cases} 
  \tfpone \geq 1 \\
  \tfptwo \geq 1\\
  \tfpthree = 1\\
  \tfnone = 1\\
  \tfntwo = 1 
  \end{cases} 
\end{equation*}
which gives \( \lambda' = \frac{1 +\rho}{\rho} \left(\frac{1}{\sqrt{1 -\rho^2}} - 1\right)\sqrt{1 -\vt} \) and \( t' = \sqrt{\frac{1 +\vt}{1 -\rho^2} } - \frac{1}{\rho}\left(\frac{1}{\sqrt{1 -\rho^2}} - 1\right)\sqrt{1 -\vt}\). In this case, \( t' \geq 0 \) always holds; \( \tfptwo \geq 1 \implies \tfpone \geq 1 \), and \( \tfptwo \geq 1 \) is equivalent to \( \sqrt{r} = 2 \sqrt{\frac{1 -\vt}{1 -\rho^2}} \geq  1 + \sqrt{1 -\vt}\). This is a sufficient condition for this curve to show up in the diagram. 

\textit{Second}, we study the curve \( \sqrt{r} = 1 + \sqrt{1 -\vt} \), which is given by \( \tfptwo =\tfnone = 1 \). We also need \( f_1,\vt +f_2,\vt +f_3 \geq 1 \) and one more equality. Depending on which requirement to take equality, we discuss two possible cases:

If \( \tfpone =1 \), and \( \tfpthree \geq 1 \), \( \tfntwo \geq 1 \), then we will get \( \lambda' = \frac{\sqrt{1 -\rho^2} -(1 -\rho^2)}{\rho(1 +\rho)} \) and \( t' = \frac{(1 +\rho) - \sqrt{1 -\rho^2}}{\rho(1 +\rho)}\). From the two inequality requirements, we get 
\begin{equation*}
  \sqrt{1 -\vt} \leq \min \left\{ \frac{ - \sqrt{1 -\rho^2} + 2\rho + 2}{(1 +\rho)^2}\sqrt{1 -\rho^2},\,\frac{1 -\rho}{1 +\rho} \right\} =\frac{1 -\rho}{1 +\rho}
\end{equation*}

If \( \tfntwo = 1 \) and \( \tfpone \geq 1 \), \( \tfpthree \geq 1 \), then we will get \( \lambda' = \frac{1 +\rho}{\rho} \left(\frac{1}{\sqrt{1 -\rho^2}} - 1\right)\sqrt{1 -\vt}\), \( t' = 1 -\lambda' \). From \( t' \geq 0 \) and the two inequality requirements, we get 
\begin{equation*}
  \frac{1 -\rho}{1 +\rho} \leq \sqrt{1 -\vt} \leq \min \left\{ \left(\frac{2}{\sqrt{1 -\rho^2} } - 1\right)^{ - 1},\,\frac{\rho}{1 +\rho}\left(\frac{1}{\sqrt{1 -\rho^2}} - 1\right)^{ - 1} \right\} = \left(\frac{2}{\sqrt{1 -\rho^2} } - 1\right)^{ - 1}
\end{equation*}

Taking the intersection of the first two cases, we already know that \( \sqrt{r} = 1 + \sqrt{1 -\vt} \) exists as long as \( \sqrt{r} = 1+ \sqrt{1 -\vt} \geq 2 \sqrt{\frac{1 -\vt}{1 -\rho^2}} \). There is one more case left, but the interval of \( \vt \) for \( \sqrt{r} = 1 + \sqrt{1 -\vt} \) to exist will be a subset of what we already have, so we omit it.

Now we already seem to have the whole phase curve, but the tricky part of thresholded Lasso having two tunable parameters is that we might have multiple curves for the same \( \vt \), and we need to take the minimum across all the curves. Thus we need to continue discussing all the other curves. For \( \rho > 0 \), we have three more to go.

\textit{Third}, we study the curve \( \sqrt{r} = \frac{2\sqrt{1 -\rho^2} -(1 +\rho)}{(1 -\rho)\sqrt{1 -\rho^2}}\sqrt{1 -\vt} + \frac{1}{\sqrt{1 -\rho^2}}\), given by \[ \tfpone =\tfnone =\tfntwo = 1. \] 
Now we have \( \lambda' = \frac{1 +\rho}{\rho} \left(\frac{1}{\sqrt{1 -\rho^2}} - 1\right)\sqrt{1 -\vt}\) and \( t' =\frac{1}{\sqrt{1 -\rho^2}} - \frac{1 +\rho}{\rho(1 -\rho)}\left(\frac{1}{\sqrt{1 -\rho^2}} - 1\right)\sqrt{1 -\vt} \). From \( t' \geq 0 \) and \( f_1,\vt + \vt+f_2 \geq 1 \), we get 
\begin{equation*}
  \sqrt{1 -\vt} \leq \min \left\{ \left[ \frac{(1 +\rho)^2}{\rho \sqrt{1 -\rho^2}}\left(\frac{1}{\sqrt{1 -\rho^2}} - 1\right) \right]^{ - 1},\,\frac{1 -\rho}{1 +\rho},\,\left(2 -\frac{2 \sqrt{1 -\rho^2}}{1 -\rho} +\frac{1 +\rho}{1 -\rho}\right)^{ - 1} \right\} =\frac{1 -\rho}{1 +\rho}
\end{equation*}
As we can see, when \( \sqrt{1 -\vt} \leq \frac{1 -\rho}{1 +\rho} \), now we have 2 curves, both of which seem to be the boundary. We must take the \textit{lower one} then.

\textit{Fourth}, we study the curve \( \sqrt{r} = \left(1+\frac{1+\rho}{2 \sqrt{1-\rho^{2}}}\right) \sqrt{1-\vartheta}+\frac{1-\rho}{2 \sqrt{1-\rho^{2}}}\) given by \[ \tfpone =\tfpthree =\tfnone = 1. \]
Now we have \( \lambda' = \frac{1}{2\rho}\left(1 - \sqrt{1 -\vt}\right)\sqrt{1 -\rho^2}\) and \( t' = \frac{1}{2\rho \sqrt{1 -\rho^2}}\left[(1 + \rho)\sqrt{1 -\vartheta} -(1 -\rho)\right]\). 

From \( t' \geq 0 \) and \( f_1, \vt+f_3 \geq 1 \), we need \begin{equation*}
  \max \left\{ \frac{1 -\rho}{1 +\rho},\,\frac{2 \sqrt{1 -\rho^2} -(1 -\rho)}{1 +\rho} \right\} \leq \sqrt{1 -\vt} \leq \frac{1 -\rho}{2} \cdot \left(\frac{3 -\rho}{2} - \sqrt{1 -\rho^2}\right)^{ - 1}
\end{equation*}
This gives us an empty set, because actually
\begin{equation*}
  \frac{1 -\rho}{2} \cdot \left(\frac{3 -\rho}{2} - \sqrt{1 -\rho^2}\right)^{ - 1} < \frac{2 \sqrt{1 -\rho^2} -(1 -\rho)}{1 +\rho}.
\end{equation*}

\textit{Fifth}, we study the curve \( \sqrt{r} = \sqrt{\frac{1-\vartheta}{1-\rho^{2}}}+\frac{2 \rho+2-\sqrt{1-\rho^{2}}}{(1+\rho)^{2}}\) given by \begin{equation*}
  \tfpone =\tfptwo =\tfntwo = 1.
\end{equation*}
We get \( \lambda' =\frac{\sqrt{1 -\rho^2} -(1 -\rho^2)}{\rho(1 +\rho)} \) and \( t' = \frac{(1 +\rho) - \sqrt{1 -\rho^2}}{\rho(1 +\rho)}\). 
From \(  \vt+f_2, \vt+f_3 \geq 1\), we get 
\begin{equation*}
  \frac{1 -\rho}{1 +\rho} \leq \sqrt{1 -\vt} \leq \frac{2 \sqrt{1 -\rho^2} -(1 -\rho)}{1 +\rho}
\end{equation*}

\textit{Summarising all the five curves when \( \rho > 0 \)}: We will elimiate the \textit{third} and \textit{fifth} curve. For the \textit{third} curve, when \( \sqrt{1 -\vt} \leq \frac{1 -\rho}{1 +\rho} \), it is always larger than the other curve \( \sqrt{r} = 1 + \sqrt{1 -\vt} \). In other words, when \( \sqrt{1 -\vt} \leq \frac{1 -\rho}{1 +\rho} \), we always have \begin{equation*}
  \sqrt{r} = \frac{2\sqrt{1 -\rho^2} -(1 +\rho)}{(1 -\rho)\sqrt{1 -\rho^2}}\sqrt{1 -\vt} + \frac{1}{\sqrt{1 -\rho^2}} \geq \sqrt{1 -\vt } + 1
\end{equation*}
In fact, \( (LHS - RHS) \) takes its minimum at \( \sqrt{1 -\vt} =\frac{1 -\rho}{1 +\rho} \), which is exactly zero.

For the \textit{fifth} curve, when \( \frac{1 -\rho}{1 +\rho} \leq \sqrt{1 -\vt} \leq \frac{2 \sqrt{1 -\rho^2} -(1 -\rho)}{1 +\rho} \), we have
\begin{equation*}
  \sqrt{r} = \sqrt{\frac{1-\vartheta}{1-\rho^{2}}}+\frac{2 \rho+2-\sqrt{1-\rho^{2}}}{(1+\rho)^{2}} \geq \max \left\{ 1 + \sqrt{1 -\vt}, 2 \sqrt{\frac{1 -\vt}{1 -\rho^2}} \right\}
\end{equation*}
which can be verified using  \( \frac{1 -\rho}{1 +\rho} \leq \sqrt{1 -\vt} \leq \frac{2 \sqrt{1 -\rho^2} -(1 -\rho)}{1 +\rho} \) in a similar manner. To sum up, for \( \rho \geq 0 \), the phase curve of thresholded Lasso is 
\begin{equation*}
  \sqrt{r} =\max \left\{ 1 + \sqrt{1 -\vt}, 2 \sqrt{\frac{1 -\vt}{1 -\rho^2}}  \right\}.
\end{equation*}

\textit{We then talk about \( \rho < 0 \) }, which now requires additionally \( 2\vt + f_4 \geq 1 \), or 
 \[ 2\vt + \left[(1 -\rho^2)\sqrt{r}-\lambda'(1 +|\rho|)-(1 -\rho^2)t'\right]_+ ^2 \geq 1. \]
When \( \vartheta \geq \frac{1}{2} \), this newly added requirement has no effects, and the right half (\( \vt \geq \frac{1}{2} \))  of the phase diagram should be the same as that of \( \rho \geq 0 \). As a result, we can limit ourselve to consider \( \vt \leq \frac{1}{2} \).

Also note that we used to ignore \( 2\vt + f_4 \geq 1 \) for \( \rho \geq 0 \) because it is not restrictive; now we have added it, and it is the only difference between the cases of \( \rho \geq 0 \) and \( \rho < 0 \) (because \( f_1, \vt+f_2,g_3 \) only rely on \( |\rho| \)). As a result, we only need to discuss this additional requirement; since we have eliminated three curves when \( \rho > 0 \), there is no need to discuss them again.

\textit{First}, we study the curve \( \sqrt{r} = 2 \sqrt{\frac{1 -\vt}{1 -\rho^2}} \), which is the curve given by letting \[ \tfpthree =\tfntwo = 1 \] in \(  \vt+f_2 \) and \(  \vt+f_3 \). We also need \( f_1, \vt+f_3, 2\vt+f_4 \geq 1 \) and one more quality. When \( \rho > 0 \), we used to consider only one case, but now we need to discuss all four cases and take the union to get the interval of \( \vt \), because each case has different \( (\lambda',t') \) and lead to different intervals of \( \vt \).

If \( \tfpthree =\tfnone =\tfntwo =1 \), and \(\vt + f_1, 2\vt + f_4 \geq 1 \), this is the case we covered when \( \rho \geq  0 \). Then \( \tfnthree \geq 1 \) implies \(  \sqrt{\frac{1 - 2\vt}{1 -\vt} } \leq \frac{2 \sqrt{1 -\rho^2} -(1 +|\rho|)}{1 -|\rho|}\) and other requirements imply \( \sqrt{1-\vt}\left(\frac{2}{\sqrt{1-\rho^2}}-1\right) > 1 \). (The first constraint implies \( \vt \geq  \) some value, and the second one implies \( \vt \leq  \) some value.) The overall requirements are:
\begin{equation*}
  \sqrt{1-\vt}\left(\frac{2}{\sqrt{1-\rho^2}}-1\right) > 1,\text{and }\sqrt{\frac{1 - 2\vt}{1 -\vt} } \leq \frac{2 \sqrt{1 -\rho^2} -(1 +|\rho|)}{1 -|\rho|}
\end{equation*}

If \( \tfpone =\tfpthree =\tfntwo = 1 \), and other terms are greater than one, then we have \( \lambda' = \frac{1 -\rho^2}{2|\rho|} \cdot \frac{1 - \sqrt{1 -\vt}}{\sqrt{1 -\rho^2}}\), \( t' = \frac{1}{\sqrt{1 -\rho^2}} -\frac{\lambda'}{1 -|\rho|} \). In this case, \( t' \geq 0 \) and \( f_1,\vt + f_3,2\vt + f_4 \geq 1 \)  implies 
\begin{equation*}
  \sqrt{1 -\vt} \geq \max \left\{ \frac{1 -|\rho|}{1 +|\rho|},\,\frac{2 \sqrt{1 -\rho^2} -(1 -|\rho|)}{1 +|\rho|} ,\,\frac{1 -|\rho|}{(3 - |\rho|) - 2 \sqrt{1 -\rho^2}} \right\} =\frac{2 \sqrt{1 -\rho^2} -(1 -|\rho|)}{1 +|\rho|} 
\end{equation*}

If \( \tfptwo =\tfpthree =\tfntwo = 1 \) and other terms are greater than one, then we have \( \lambda' = \frac{1 +|\rho|}{|\rho|}\left(1 - \sqrt{\frac{1 -\vt}{1 -\rho^2}} \right)\) and \( t' = 1 -\lambda' \). In this case, \( \lambda' \geq 0 \) and \( t' \geq 0 \) implies \( \frac{\sqrt{1 -\rho^2}}{1 +|\rho|} \leq \sqrt{1 -\vt} \leq \sqrt{1 -\rho^2 } \), which is weaker than the requirements from \( f_1,\vt + f_3 \) and \( 2\vt + f_4 \):
\begin{equation*}
  \left(\frac{2}{\sqrt{1 -\rho^2}} - 1\right)^{ - 1} \leq \sqrt{1 -\vt} \leq  \frac{2 \sqrt{1 -\rho^2} -(1 -|\rho|)}{1 +|\rho|},\text{and}\;\; \frac{2 \sqrt{1 -\rho^2}}{1 -|\rho|} + \sqrt{1 - 2\vt} \leq \frac{3 -|\rho|}{1 -|\rho|} \sqrt{1 -\vt}
\end{equation*}

If \( \tfpthree =\tfntwo =\tfnthree = 1 \) and other terms are greater than one, then we have \( \lambda' = \frac{1 -\rho^2}{2|\rho|}\frac{1}{\sqrt{1 -\rho^2}}\left(\sqrt{1 -\vt} - \sqrt{1 - 2\vt}\right) \) and \( t' = \sqrt{\frac{1 -\vt}{1 -\rho^2}} -\frac{\lambda'}{1 +|\rho|} \). In this case, \( \lambda' \geq 0 \) and \( t' \geq 0 \) implies \( \sqrt{\frac{1 - 2\vt}{1 -\vt}} \geq 1 - \frac{2|\rho|}{1 -|\rho|} \), which is still weaker than the requirements from \( f_1,\vt + f_3 \geq 1 \):
\begin{equation*}
  \sqrt{\frac{1 - 2\vt}{1 -\vt}} \geq \frac{2 \sqrt{1 -\rho^2} -(1 +|\rho|)}{1 -|\rho|},\text{and}\;\;\frac{2 \sqrt{1 -\rho^2}}{1 -|\rho|} + \sqrt{1 - 2\vt} \leq \frac{3 -|\rho|}{1 -|\rho|} \sqrt{1 -\vt} 
\end{equation*}

Taking the union over all the four cases of curve \( \sqrt{r} = 2 \sqrt{\frac{1 -\vt}{1 -\rho^2}} \), it exists for \( \vt \) satisfying \(  \sqrt{1 -\vt} \geq \left(\frac{2}{\sqrt{1 -\rho^2}} - 1\right)^{ - 1} \) and \( \frac{2 \sqrt{1 -\rho^2}}{1 -|\rho|} + \sqrt{1 - 2\vt} \leq \frac{3 -|\rho|}{1 -|\rho|} \sqrt{1 -\vt} \), 
which is equivalent to 
\begin{equation*}
  \sqrt{r} = 2 \sqrt{\frac{1 -\vt}{1 -\rho^2}} \geq \max \left\{ 1 + \sqrt{1 -\vt},\;1+\frac{1+|\rho|}{2} \sqrt{\frac{1-\vartheta}{1-\rho^{2}}}+\frac{1-|\rho|}{2} \sqrt{\frac{1-2 \vartheta}{1-\rho^{2}}}   \right\}
\end{equation*}

\textit{Second}, we study the curve \( \sqrt{r} = 1 + \sqrt{1 -\vt} \), which is given by \( \tfptwo =\tfnone = 1 \). We also need \( f_1,\vt +f_2,\vt +f_3 \geq 1 \) and one more equality. When \( \rho \geq 0 \), we have discussed two cases; now we discuss the two old cases with the additional requirement \( 2\vt + f_4 \geq 1 \), and two additional cases.

If \( \tfpone = \tfptwo =\tfnone = 1 \) and other terms are greater than one, we have already considered this in a previous section. Now we only add the requirement \( \tfnthree \geq 1 \). The final requirement on \( \vt \) is  \( \sqrt{1 -\vt} \leq \frac{1 -|\rho|}{1 +|\rho|} \) and \( (1 - \sqrt{1 -\rho^2}) + \sqrt{1 - 2\vt} \leq \sqrt{1 - \rho^2}\sqrt{1 - \vt} \).

If \( \tfptwo =\tfnone = \tfntwo = 1 \) and other terms are greater than one, we have already considered this in a previous section. Now we only add the requirement \( \tfnthree \geq 1 \). The final requirement on \( \vt \) is \( \frac{1 -|\rho|}{1 + |\rho|} \leq \sqrt{1 -\vt} \leq \left(\frac{2}{\sqrt{1 -\rho^2}} - 1\right)^{ - 1} \) and \( \sqrt{\frac{1 - 2\vt}{1 -\vt}} \leq \frac{2 \sqrt{1 -\rho^2} -(1 +|\rho|)}{1 -|\rho|} \)

If \( \tfpthree =\tfptwo =\tfnone = 1 \) and other terms are greater than one, this is a new case, and we have \( \lambda' = \frac{1 +|\rho|}{|\rho|}\left(1 - \sqrt{\frac{1 -\vt}{1 -\rho^2} }\right)\), \( t' = 1 -\lambda' \). \( \lambda',t' \geq 0 \) requires \( \frac{\sqrt{1 -\rho^2}}{1 +|\rho|} \leq \sqrt{1 -\vt} \leq \sqrt{1 -\rho^2} \), and the requirements from \( f_1,\vt + f_3,2\vt + f_4 \geq 1 \) are \( \sqrt{1 -\vt} \leq \min \left\{ \frac{2 \sqrt{1 -\rho^2} -(1 -|\rho|)}{1 +|\rho|},\,\left(\frac{2}{\sqrt{1 -\rho^2}} - 1\right)^{ - 1} \right\} \) and \( \frac{1 +|\rho|}{1 -|\rho|}\sqrt{1 -\rho^2} + \sqrt{1 - 2\vt} \leq \left(\frac{1 +|\rho|}{1 -|\rho|} +\sqrt{1 -\rho^2}\right)\sqrt{1 -\vt} \). Taking the intersection, the overall requirement on \( \vt \) is 
\begin{equation*}
  \frac{\sqrt{1 -\rho^2}}{1 +|\rho|} \leq \sqrt{1 -\vt} \leq \left(\frac{2}{\sqrt{1 -\rho^2}} - 1\right)^{ - 1}\text{and}\;\frac{1 +|\rho|}{1 -|\rho|}\sqrt{1 -\rho^2} + \sqrt{1 - 2\vt} \leq \left(\frac{1 +|\rho|}{1 -|\rho|} +\sqrt{1 -\rho^2}\right)\sqrt{1 -\vt}
\end{equation*}

If \( \tfptwo =\tfnone =\tfnthree = 1 \) and other terms are greater than one, then we have \( \lambda' = \frac{1 -|\rho|}{|\rho|} \left(\sqrt{1 -\vt} - \sqrt{\frac{1 - 2\vt}{1 -\rho^2}}\right)\), \( t' = 1 -\lambda' \). The requirements from \( \lambda',t' \geq 0 \) and \( f_1,\vt + f_2,\vt + f_3 \geq 1 \) are:
\begin{align*}
  \sqrt{\frac{1 - 2\vt}{1 -\vt}} \leq&~ \frac{2 \sqrt{1 -\rho^2} -(1 +|\rho|)}{1 -|\rho|} \\
  \frac{|\rho| \sqrt{1 -\rho^2}}{1 -|\rho|} + \sqrt{1 - 2\vt} \geq&~ \sqrt{1 -\rho^2} \cdot \sqrt{1 -\vt} \\
  (1 - \sqrt{1 -\rho^2}) + \sqrt{1 - 2\vt} \leq&~  \sqrt{1 -\rho^2} \cdot \sqrt{1 -\vt} \\
  \frac{1 +|\rho|}{1 -|\rho|}\sqrt{1 -\rho^2} + \sqrt{1 - 2\vt} \geq&~ \left(\frac{1 +|\rho|}{1 -|\rho|} +\sqrt{1 -\rho^2}\right)\sqrt{1 -\vt}
\end{align*}
Taking intersection, the first and the last can imply the other two, so the final requirements are just the first and the last one. 

Taking the union over all the four cases of \( \sqrt{r} = 1 + \sqrt{1 -\vt} \), we have \( \sqrt{1 -\vt} \leq \left(\frac{2}{\sqrt{1 -\rho^2}} - 1\right)^{ - 1} \) and \( \sqrt{\frac{1 - 2\vt}{1 -\vt}} \leq \frac{2 \sqrt{1 -\rho^2} -(1 +|\rho|)}{1 -|\rho|} \), which is equivalent to 
\begin{equation*}
  \sqrt{r} = 1 + \sqrt{1 -\vt} \geq \max \left\{ 2 \sqrt{\frac{1 -\vt}{1 -\rho^2}},\,1+\frac{1+|\rho|}{2} \sqrt{\frac{1-\vartheta}{1-\rho^{2}}}+\frac{1-|\rho|}{2} \sqrt{\frac{1-2 \vartheta}{1-\rho^{2}}} \right\}
\end{equation*}  

\textit{Third}, we study the curve \( \sqrt{r} =\sqrt{\frac{1 - 2\vt}{1 -\rho^2}} +\frac{1}{\sqrt{1 -\rho^2}} \) given by \( \tfpone =\tfnthree = 1 \). We need one more equality constraint and other terms greater than one. We will see this curve does not exist for ant \( \vt \) at all. 

If the additional equality is \( \tfptwo = 1 \), then we have \( \lambda' = \frac{\sqrt{1 -\rho^2} -(1 -\rho^2)}{|\rho|(1 +|\rho|)}\) and \( t' = \frac{(1 +|\rho|) - \sqrt{1 -\rho^2}}{|\rho|(1 +|\rho|)}\). However, this case admits no \( \vt \), because \( \tfpthree \geq 1 \) requires \( \sqrt{1 -\vt} \leq \frac{2 \sqrt{1 -\rho^2} -(1 -|\rho|)}{1 +|\rho|} \) and \( \tfnone \geq 1 \) requires \( (1 - \sqrt{1 -\rho^2}) + \sqrt{1 - 2\vt} \geq~  \sqrt{1 -\rho^2} \cdot \sqrt{1 -\vt} \); these two requirements have no intersection.

If the additional equality is \( \tfpthree = 1 \), then we have \( \lambda' = \frac{1 -\rho^2}{2 |\rho|}\frac{1}{\sqrt{1 -\rho^2}}\left(1 -\sqrt{1 -\vt}\right)\) and \(  t' = \frac{1}{\sqrt{1 -\rho^2}} -\frac{\lambda'}{1 -|\rho|}\).  However, this case admits no \( \vt \), because \( t' \geq 0 \) requires \( \sqrt{1 -\vt} > \frac{1 -|\rho|}{1 +|\rho|} \) and \( \tfntwo \geq 1 \) requires \( 1 + \sqrt{1 - 2\vt} \geq 2 \sqrt{1 -\vt} \);  these two requirements have no intersection.

If the additional equality is \( \tfnone = 1 \), then \( \lambda' = \frac{1 -|\rho|}{|\rho|} \left(\sqrt{1 -\vt} - \sqrt{\frac{1 - 2\vt}{1 -\rho^2}}\right)\) and \(  t' = \frac{1}{\sqrt{1 -\rho^2}} -\frac{\lambda'}{1 -|\rho|}\). However, this case admits no \( \vt \), because \( \tfptwo \geq 2 \) requires \( (1 -\sqrt{1 -\rho^2}) + \sqrt{1 - 2\vt} \geq \sqrt{1 -\rho^2}\sqrt{1 -\vt} \) and \( \tfntwo \geq 1 \) requires \( \sqrt{\frac{1 - 2\vt}{1 -\vt}} \leq \frac{2 \sqrt{1 -\rho^2} -(1 +|\rho|)}{1 -|\rho|} \); these two requirements have no intersection.

If the additional equality is \( \tfntwo = 1 \), then \( \lambda' = \frac{1 -\rho^2}{2|\rho|}\frac{1}{\sqrt{1 -\rho^2}}\left(\sqrt{1 -\vt} - \sqrt{1 - 2\vt}\right)\) and \( t' = \frac{1}{\sqrt{1 -\rho^2}} -\frac{\lambda'}{1 -|\rho|} \). However, this case admits no \( \vt \), because \( t' \geq 0 \), \( \tfptwo \geq 1 \) and \( \tfpthree \geq 1 \) respectively requires 
\begin{align*}
&\frac{2|\rho|}{1 +|\rho|} + \sqrt{1 - 2\vt} \geq \sqrt{1 -\vt} \\
& \frac{2(1 - \sqrt{1 -\rho^2})}{1 +|\rho|} + \sqrt{1 - 2\vt} \geq \sqrt{1 -\vt}\\
&1 + \sqrt{1 - 2\vt} \geq 2 \sqrt{1 -\vt}
\end{align*}
These three requirements admit no \( \vt\in(0,1) \). 

To sum up, the \textit{third} surve  does not show up in the phase diagram.

\textit{Fourth}, we study the curve \(\sqrt{r} =\sqrt{\frac{1-2 \vartheta}{1-\rho^{2}}}+\frac{2(1-|\rho|)-\sqrt{(1-\vartheta)\left(1-\rho^{2}\right)}}{(1-|\rho|)^{2}}\) given by  \[ \tfptwo = \tfpthree = \tfnthree = 1.\] We have \( \lambda' = \frac{1 +|\rho|}{|\rho|}\left(1 - \sqrt{\frac{1 -\vt}{1 -\rho^2} }\right) \) and \( t' = 1 -\lambda' \). \( \lambda',t' \geq 0 \) requires \( \frac{\sqrt{1 -\rho^2}}{1 +|\rho|} \leq \sqrt{1 -\vt} \leq \sqrt{1 -\rho^2} \), and we also need the requirements from \( f_1,\vt +f_3 \geq 1 \). The overall requirement on \( \vt \) is:
\begin{align*}
  \frac{2 \sqrt{1 -\rho^2} -(1 -|\rho|)}{1 +|\rho|} \leq&~ \sqrt{1 -\vt} \leq\sqrt{1 -\rho^2} \\
  \frac{1 +|\rho|}{1 -|\rho|}\sqrt{1 -\rho^2} + \sqrt{1 - 2\vt} \geq&~ \left(\frac{1 +|\rho|}{1 -|\rho|} +\sqrt{1 -\rho^2}\right)\sqrt{1 -\vt}\\
  \frac{2 \sqrt{1 -\rho^2}}{1 -|\rho|} + \sqrt{1 - 2\vt} \geq&~ \frac{3 -|\rho|}{1 -|\rho|} \sqrt{1 -\vt}
\end{align*}
This is also an empty set, and this curve can never be present in the phase diagram.

To prove this, we note that \( \phi(\vt) = \frac{2 \sqrt{1 -\rho^2}}{1 -|\rho|} + \sqrt{1 - 2\vt} - \frac{3 -|\rho|}{1 -|\rho|} \sqrt{1 -\vt} \) is a ``first increasing, then decreasing'' function of \( \vt\in[0,\frac{1}{2}] \). When \( |\rho| \geq \frac{1 + 2 \sqrt{2}}{7} \), the maximum of \( \frac{2 \sqrt{1 -\rho^2}}{1 -|\rho|} + \sqrt{1 - 2\vt} - \frac{3 -|\rho|}{1 -|\rho|} \sqrt{1 -\vt} \) is not positive, and does not admit a curve. When \( |\rho| < \frac{1 + 2 \sqrt{2}}{7} \), we add the requirement of \( \sqrt{1 -\vt} \geq \frac{2 \sqrt{1 -\rho^2} -(1 -|\rho|)}{1 +|\rho|} \), even the largest \( \vt \) is still on the left side of the peak of the maximum point of \( \phi(\vt) \)  and still makes it negative.

\textit{Fifth}, we study the curve \(\sqrt{r} =\left(\frac{2}{1+|\rho|}+\frac{1}{\sqrt{1-\rho^{2}}}\right) \sqrt{1-\vartheta}-\frac{1-|\rho|}{1+|\rho|} \sqrt{\frac{1-2 \vartheta}{1-\rho^{2}}}\), which is given by \[ \tfpthree =\tfnone =\tfnthree = 1. \] Now we have \( \lambda' = \frac{1 -|\rho|}{|\rho|} \left(\sqrt{1 -\vt} - \sqrt{\frac{1 - 2\vt}{1 -\rho^2}}\right)\) and \( t' = \frac{1}{\sqrt{1 -\rho^2}} -\frac{\lambda'}{1 -|\rho|} \). The requirements of \( \lambda',t' \geq 0 \) and \( f_1,\vt +f_3 \geq 0\) are
\begin{align*}
  \sqrt{1 -\rho^2} - \frac{|\rho|(1 +|\rho|)}{1 -|\rho|} \leq \sqrt{\frac{1 - 2\vt}{1 -\vt}} \leq&~ \frac{2 \sqrt{1 -\rho^2} -(1 +|\rho|)}{1 -|\rho|} \\
  \frac{1 +|\rho|}{2} + \sqrt{1 - 2\vt} \leq&~ \left(\frac{1 +|\rho|}{2} + \sqrt{1 -\rho^2}\right)\sqrt{1 -\vt} \\
  \frac{1 +|\rho|}{1 -|\rho|}\sqrt{1 -\rho^2} + \sqrt{1 - 2\vt} \leq&~ \left(\frac{1 +|\rho|}{1 -|\rho|} +\sqrt{1 -\rho^2}\right)\sqrt{1 -\vt}
\end{align*}

This may not be an empty set. However, even when it is not an empty set, the curve is actually either greater than \( \sqrt{r} = 1 + \sqrt{1 -\vt} \) or \( \sqrt{r} = 2 \sqrt{\frac{1 -\vt}{1 -\rho^2}} \). To see this:
\begin{itemize}
  \item In terms of the existence of \( \sqrt{r} = 1 + \sqrt{1 -\vt} \) and \( \sqrt{r} = 2 \sqrt{\frac{1 -\vt}{1 -\rho^2}} \): When \( \frac{1 -|\rho|}{1 + |\rho|} \leq \sqrt{1 -\vt} \leq \left(\frac{2}{\sqrt{1 -\rho^2}} - 1\right)^{ - 1} \) and \( \sqrt{\frac{1 - 2\vt}{1 -\vt}} \leq \frac{2 \sqrt{1 -\rho^2} -(1 +|\rho|)}{1 -|\rho|} \), we have proven that \( \sqrt{r} = 1 + \sqrt{1 -\vt} \) is one segment of the phase diagram.  When \( \sqrt{1 -\vt} \geq \left(\frac{2}{\sqrt{1 -\rho^2}} - 1\right)^{ - 1} \) and \( \sqrt{\frac{1 - 2\vt}{1 -\vt}} \leq \frac{2 \sqrt{1 -\rho^2} -(1 +|\rho|)}{1 -|\rho|} \), we have proven that \( \sqrt{r} = 2 \sqrt{\frac{1 -\vt}{1 -\rho^2}} \)  is one segment of the phase diagram. We can prove that 
  \begin{align*}
    &\sqrt{1 -\rho^2} - \frac{|\rho|(1 +|\rho|)}{1 -|\rho|} \leq \sqrt{\frac{1 - 2\vt}{1 -\vt}} \leq~ \frac{2 \sqrt{1 -\rho^2} -(1 +|\rho|)}{1 -|\rho|}\\ 
    \implies & \begin{cases} 
    \text{either }\frac{1 -|\rho|}{1 + |\rho|} \leq \sqrt{1 -\vt} \leq \left(\frac{2}{\sqrt{1 -\rho^2}} - 1\right)^{ - 1} \text{ and }\sqrt{\frac{1 - 2\vt}{1 -\vt}} \leq \frac{2 \sqrt{1 -\rho^2} -(1 +|\rho|)}{1 -|\rho|} \\
    \text{or }\sqrt{1 -\vt} \geq \left(\frac{2}{\sqrt{1 -\rho^2}} - 1\right)^{ - 1} \text{ and }\sqrt{\frac{1 - 2\vt}{1 -\vt}} \leq \frac{2 \sqrt{1 -\rho^2} -(1 +|\rho|)}{1 -|\rho|} 
    \end{cases}
  \end{align*}
  so one of \( \sqrt{r} = 1 + \sqrt{1 -\vt} \) and \( \sqrt{r} = 2 \sqrt{\frac{1 -\vt}{1 -\rho^2}} \) exists as long as the \textit{fifth} curve exists.
  \item In the latter case, it is greater than \( \sqrt{r} = 2 \sqrt{\frac{1 -\vt}{1 -\rho^2}} \) which exists in the same region. (can be easily verified)
  \item In the former case, we can assume \( |\rho| \leq 3 - 2 \sqrt{2} \) because  we need \( \sqrt{1 -\vt} \geq \frac{1 -|\rho|}{1 + |\rho|} \) to hold for some \( \vt\in(0,\frac{1}{2}) \). Using \( |\rho| \leq 3 - 2 \sqrt{2} \), we can prove  \( \sqrt{\frac{1 - 2\vt}{1 -\vt}} \geq \sqrt{1 -\rho^2} - \frac{|\rho|(1 +|\rho|)}{1 -|\rho|} \implies \sqrt{1 -\vt} \geq \frac{1 -|\rho|}{1 + |\rho|} \). Now we can easily verify the curve is greater than \( \sqrt{r} = 1 + \sqrt{1 -\vt} \) which exists in the same region
\end{itemize}
As a result, this curve does not play a part in the final phase diagram either. 

\textit{Sixth}, we study the curve \(\sqrt{r} =1+\frac{1+|\rho|}{2} \sqrt{\frac{1-\vartheta}{1-\rho^{2}}}+\frac{1-|\rho|}{2} \sqrt{\frac{1-2 \vartheta}{1-\rho^{2}}}\), given by 
\begin{equation*}
  \tfptwo =\tfntwo =\tfnthree = 1
\end{equation*}
We get \( \lambda' =\frac{1 -\rho^2}{2|\rho|}\frac{1}{\sqrt{1 -\rho^2}}\left(\sqrt{1 -\vt} - \sqrt{1 - 2\vt}\right) \) and \( t' = 1 -\lambda' \). The requirements from \( t' \geq 0 \), \( \tfpone \geq 1 \), \( \tfpthree \geq 1 \) and \( \tfnone \geq 1 \) are respectively
\begin{align*}
 &  \frac{2|\rho|}{\sqrt{1 -\rho^2}} +\sqrt{1 - 2\vt}\geq \sqrt{1 -\vt} \\
 & \frac{2(1 - \sqrt{1 -\rho^2})}{1 +|\rho|} +\sqrt{1 - 2\vt} \leq \sqrt{1 -\vt} \\
 & \frac{2 \sqrt{1 -\rho^2}}{1 -|\rho|} + \sqrt{1 - 2\vt} \geq \frac{3 -|\rho|}{1 -|\rho|} \sqrt{1 -\vt} \\
 & \sqrt{\frac{1 - 2\vt}{1 -\vt}} \geq \frac{2 \sqrt{1 -\rho^2} -(1 +|\rho|)}{1 -|\rho|}
\end{align*}

Taking the intersection, the last two inequalities can imply the rest, and it is equivalent to 
\begin{equation*}
  \sqrt{r} =1+\frac{1+|\rho|}{2} \sqrt{\frac{1-\vartheta}{1-\rho^{2}}}+\frac{1-|\rho|}{2} \sqrt{\frac{1-2 \vartheta}{1-\rho^{2}}}  \geq \max \left\{1 + \sqrt{1 -\vt},\, 2 \sqrt{\frac{1 -\vt}{1 -\rho^2}} \right\}.
\end{equation*}  

Now we have studied all the curves for \( \rho < 0 \). To sum up, the phase curve is 
\begin{equation*}
  \sqrt{r} =  \max \left\{1 + \sqrt{1 -\vt},\, 2 \sqrt{\frac{1 -\vt}{1 -\rho^2}},\,1+\frac{1+|\rho|}{2} \sqrt{\frac{1-\vartheta}{1-\rho^{2}}}+\frac{1-|\rho|}{2} \sqrt{\frac{1-2 \vartheta}{1-\rho^{2}}} \right\}
\end{equation*}

\subsection{Proof of Lemma~\ref{supplem:sol.path.thresLasso}}  \label{subsec:proof-thresLasso-solution}

Recall the optimization in \eqref{enproof-optimization}; the solution \( b =(b_1,b_2) \) has to set the sub-gradient of the objective function to zero. As a result, the equation of the sub-gradient for \( b =(b_1,b_2) \)  is:
\begin{equation*}
  \begin{bmatrix} 
  1 & \rho \\
  \rho & 1
  \end{bmatrix} \begin{bmatrix} b_1 \\ b_2 \end{bmatrix} 
  + \lambda' \begin{bmatrix} 
  \sgn(b_1) \\
  \sgn(b_2)
  \end{bmatrix} 
   = \begin{bmatrix} h_1 \\ h_2 \end{bmatrix} 
\end{equation*}

Now we begin to find out the solution path. Thresholded Lasso has two steps: First, we run Lasso to select variables from \( (x_j,x_{j + 1})  \); second, a thresholding step with \( t = t'\sqrt{2\log(p)} \) is further performed, and the surviving variables of the two steps are the finally selected ones. Also note that we have required \( \rho \geq 0 \).    

First, we study the behavior of Lasso, and decrease \( \lambda' \) from a sufficiently large value to see when the variables enter the model. We assume \( h_1 > 0 \) and \( 0 <\abs{h_2} < h_1 \). 

The procedure is just setting \( \mu = 0 \) in the proof of Lemma~\ref{suppthm:sol.path.en}, and we summarise the results below:

\begin{itemize}
  \item When \( \lambda' \geq h_1 \), we have \( \hat{b}_1 = \hat{b}_2 = 0 \).
  \item If $h_2\geq \rho h_1$, when \(  \frac{{h_2 -\rho h_1}}{1 -\rho}\leq \lambda' <h_1 \), we have $\hat{b}_1 ={h_1 -\lambda'}$, and $\hat{b}_2 = 0$; 
  
  When $\lambda'<\frac{{h_2 -\rho h_1}}{1 -\rho}$, we have   \[
  \hat{b}_1 =\frac{(h_1 -\rho h_2) -(1 -\rho)\lambda'}{1 -\rho^2},\qquad 
    \hat{b}_2 =\frac{(h_2 -\rho h_1) -(1 -\rho)\lambda'}{1 - \rho^2}; 
   \]
   \item if $h_2<\rho h_1$, when \(  \frac{{-h_2 +\rho h_1}}{1 +\rho}\leq \lambda' <h_1 \), we have $\hat{b}_1 ={h_1 -\lambda'}$, and $\hat{b}_2 = 0$; 
   
   When $\lambda'< \frac{{-h_2 +\rho h_1}}{1 +\rho}$, we have 
\[
    \hat{b}_1 =\frac{(h_1 -\rho h_2) -(1 +\rho)\lambda'}{1 - \rho^2},\quad 
  \hat{b}_2 =\frac{(h_2 -\rho h_1) +(1 +\rho)\lambda'}{1 - \rho^2}.
\]
\end{itemize}

When \( \lambda' \geq h_1 \), for any \( t' \), we have \( \hat b_1 =\hat b_2 = 0 \).

For \( h_2 \geq \rho h_1 \), when \(  \frac{{h_2 -\rho h_1}}{1 -\rho }\leq \lambda' <h_1 \), if \( h_1 \leq \lambda' + t' \), then we still selected neither of \( (x_j,x_{j + 1}) \) in the end. If \( h_1 >\lambda' + t' \), then we will select only \( x_j \). When $\lambda'<\frac{{h_2 -\rho h_1}}{1 -\rho}$, we have  \( \hat b_1 >\hat b_2 > 0 \) and it depends on whether \( t \geq \hat b_1 \), \( \hat b_2 \leq  t < b_1 \) or \( \hat b_2 > t \) how \( (x_j,x_{j + 1}) \) are selected in the end.

For \( h_2 < \rho h_1 \), when \(  \frac{{ -h_2 +\rho h_1}}{1 +\rho }\leq \lambda' <h_1 \), if \( h_1 \leq \lambda' + t' \), then we still selected neither of \( (x_j,x_{j + 1}) \) in the end. If \( h_1 >\lambda' + t' \), then we will select only \( x_j \). When $\lambda'<\frac{{-h_2 +\rho h_1}}{1 -\rho}$, we have  \( \hat b_1 >-\hat b_2 > 0 \) and it depends on whether \( t \geq \hat b_1 \), \( -\hat b_2 \leq  t < b_1 \) or \( -\hat b_2 > t \) how \( (x_j,x_{j + 1}) \) are selected in the end.

\section{Proof of Theorem~\ref{thm:forward} (Forward Selection)}\label{suppsec:forward}

The proof for Forward Selection still consists of three parts: (a) deriving the rejection region, (b) obtaining the rate of convergence of \( \mathbb{E}[H(\hat\beta,\beta)] \), and (c) calculating the phase diagram.

For Forward Selection, we first write $X=[x_1,x_2,\ldots,x_p]$, where $x_i\in \mathbb{R}^n$ for $1\leq i\leq p$. For any subset $A\subset\{1,2,\ldots,p\}$, let $P^{\bot}_{A}$ be the projection  onto the orthogonal complement of the linear space spanned by \( \{ x_i:i\in  A \}  \). 
Before the first part, we formally define Forward Selection in Algorithm~\ref{alg:forward.selection}.
\begin{algorithm}[H]
 \caption{forward selection}
 \label{alg:forward.selection}
 \begin{algorithmic}[1]
     \State Input \( X \) and \( y \) (generated with our own setting.)
     \State Fix \( t > 0 \).
     \State Initialize \( S^{(0)} =\emptyset, \hat\beta^{(0)} = 0 \), \( \hat r^{(0)} = y \).
     \State Initialize \( k = 0 \). 
     \While{true}~\Comment{forward step}
     \State \( k \gets k + 1 \) 
     \State \( \hat r^{(k - 1)}\gets P_{k - 1}^{\bot}y \) \Comment{\( \hat r^{(k - 1)} \) is the residual of the OLS fit of \( Y \) onto \( X_{S^{(k - 1)}} \).}
       \State \( i^* \gets \argmax_{i\notin S^{(k - 1)}}\abs{x_i'\hat r^{(k - 1)}} \)\label{step:entry.rule}
       \State \( \delta^ + \gets  
       \frac{ \abs{x_{i^* }\hat r^{(k - 1)} }}{\norm{ P^{\bot}_{k - 1} x_{i^*}}} \)
       \Comment{the forward gain, equivalent to the decrease in the loss function.}
       \If{\( \delta^ + \leq t \) }
       \State Break.
       \EndIf
       \State \( S^{(k)}\gets S^{(k - 1)}\cup \{ i^* \}  \) \Comment{Note  this step is after checking \(\delta^ +  \leq t \)}
     \EndWhile
     \State \( k\gets k - 1 \) 
     
     \Comment{Because the \( k \)th variable hasn't been added when the ``while'' loop is broken.}
     \State \( \hat\beta =\hat\beta^{\text{ols}}(S^{(k)}) \) 
     \Comment{\( S^{(k)} \) is the set of selected variables.}
 \end{algorithmic}
\end{algorithm}

\begin{remark}
  The stopping rule is equivalent to measuring the decrease in the residual sum of squares. To see this, suppose \( i\in \{ 1,2,\dots ,p \} \) is enrolled at step \( k \), ans \( S^{(k)} = S^{(k - 1)}\cup \{ i \}  \). Then \( ||y - X_{S^{(k)}}\hat\beta^{\text{ols}}(S^{(k)})||^2 = ||P_k^{\bot} y||^2  \) and  \( ||y - X_{S^{(k - 1)}}\hat\beta^{\text{ols}}(S^{(k - 1)})||^2 = ||P_{k - 1}^{\bot} y||^2   \). By adding variable \( i \) into \( S^{(k - 1)} \), the decrease \( ||P_{k - 1}^{\bot} y||^2 -||P_{k}^{\bot} y||^2 \) is equal to the squared norm of the projection of \( P_{k - 1}^{\bot} y \) onto the direction of \( P_{k - 1}^{\bot} x_i \), which is  \( \left(\frac{ \abs{x_{i }\hat r^{(k - 1)} }}{\norm{ P^{\bot}_{k - 1} x_{i}} } \right)^2\) where \( \hat r^{(k - 1)} = P_{k - 1}^{\bot}y \).  
\end{remark}

\paragraph{Part 1: Deriving the rejection region.}
Forward selection is a sequential method, and we first need to show it can be decomposed into bivariate sub-problems. The main reason is that whether some variable \( x_j \) is selected in the end only depends on \( (x_j,x_{j + 1}) \), and has nothing to do with other variables, or the number of steps \( k \). We still use \( (x_j,x_{j + 1}) \) to denote an arbitrary pair of correlated variables:

In terms of forward gain, whenever \( x_{j + 1} \) is not in \( \hat S^{(k - 1)} \) for arbitrary \( k \), \( P^{\bot}_{k - 1}x_j = x_j \), and \( \frac{\abs{x_j' \hat r^{(k - 1)}} }{\norm{ P^{\bot}_{k - 1} x_j}} = \abs{x'_j y} = |h_1|\sqrt{2\log(p)} \). When \( x_{j + 1} \) is already in \( \hat S^{(k - 1)} \) for arbitrary \( k \), \( P^{\bot}_{k - 1}x_j = x_j -\rho x_{j + 1} \), and  \( \frac{\abs{x_j' \hat r^{(k - 1)}} }{\norm{ P^{\bot}_{k - 1} x_j}} = \frac{\abs{(x'_j -\rho x'_{j + 1}) y}} {\sqrt{1 -\rho^2}} = \frac{\abs{h_1 -\rho h_2}}{\sqrt{1 -\rho^2}}\sqrt{2\log(p)} \). 

In terms of the entry rule (``\( i^* \gets \argmax_{i\notin S^{(k - 1)}}\abs{x_i'\hat r^{(k - 1)}} \)'' in Algorithm~\ref{alg:forward.selection}), since  \( x_j'\hat r^{(k - 1)} = x_j'P^\bot_{k - 1}y \), it is still \( x_j'y \) or \( (x_j'y -\rho x_{j + 1}y) \) depending on whether \( x_{j + 1}\in S^{(k - 1)} \). It has nothing to do with specific \( k \) or other variables than \( (x_j,x_{j + 1}) \).


As a result, Algorithm~\ref{alg:forward.selection} under the block-wise diagonal design can be viewed as many bivariate sub-problems going on simultaneously for each pair of correlated variables. In each ``while'' loop, variables from different pairs may be enrolled, but (i) the order of \( (x_j,x_{j + 1}) \) does not depend on \( k \) or other variables, and (ii) the bivariate problem must have terminated when the whole algorithm terminates, and the result of the bivariate problem does not depend on \( k \) or other variables.

Of course, we have assumed that Algorithm~\ref{alg:forward.selection} will always terminate in finite steps, which is true, because each bivariate sub-problem always terminates as we will see in the proof of Lemma~\ref{suppthm:forward.path}. 


Working on a bivariate problem, we can scale everything down by \( \sqrt{2\log(p)} \) and define \( t' = t/\sqrt{2\log(p)} \). Then the solution path can be described in Lemma~\ref{suppthm:forward.path}.

\begin{lem}[Solution path of Forward Selection]\label{suppthm:forward.path}
  Consider the bivariate problem of running Algorithm~\ref{alg:forward.selection} with \( y \) and \( (x_j,x_{j + 1}) \). Suppose \( h_1 > \abs{h_2} \geq 0 \), and \( \rho > 0 \).
\begin{itemize}
  \item When \( t' \geq  h_1\), none of \( (x_j,x_{j + 1}) \) will get selected when the algorithm ends. 
  \item If \( - h_1 < h_2 \leq (\rho - \sqrt{1 -\rho^2}) h_1 \), when \( t' <  h_1\),  both \( (x_j,x_{j + 1}) \) will get selected when the algorithm ends. 
  \item If \( (\rho - \sqrt{1 -\rho^2}) h_1 < h_2 < h_1  \), when \( t\in\left[\frac{\abs{h_2 -\rho h_1}}{\sqrt{1 -\rho^2}},h_1\right) \), only \( x_j \) is in the model before the algorithm ends. 
  \item If \( (\rho - \sqrt{1 -\rho^2}) h_1 < h_2 < h_1  \), when \( t <\frac{\abs{h_2 -\rho h_1}}{\sqrt{1 -\rho^2}} \), both \( (x_j,x_{j + 1}) \) will get selected when the algorithm ends. 
\end{itemize}
  \end{lem}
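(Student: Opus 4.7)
The plan is to execute the forward-selection algorithm on the bivariate sub-problem directly, since the block-diagonal structure (already noted in the paragraphs preceding the lemma) guarantees that the actions inside the pair $(x_j,x_{j+1})$ are decoupled from every other pair: $P^{\bot}_S x_j = x_j$ and $P^{\bot}_S x_{j+1} = x_{j+1}$ whenever $S\cap\{j,j+1\}=\emptyset$, and the residual $P^{\bot}_S y$ has the same inner products with $x_j, x_{j+1}$ as $y$ itself. So I may assume $n=2$ and trace at most two iterations of the algorithm.

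First iteration: with $S^{(0)}=\emptyset$, $\hat r^{(0)}=y$, and $\|x_j\|=\|x_{j+1}\|=1$, the gain for $x_j$ equals $|x_j'y|=|h_1|\sqrt{2\log p}$ and for $x_{j+1}$ equals $|h_2|\sqrt{2\log p}$. Under the hypothesis $h_1>|h_2|\geq 0$ the entry rule selects $i^*=j$ and $\delta = h_1\sqrt{2\log p}$. If $t'\geq h_1$ the algorithm halts with $S=\emptyset$, giving the first bullet. Otherwise $x_j$ is added to the model.

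Second iteration: now $S^{(1)}=\{j\}$, and the only remaining candidate in the pair is $x_{j+1}$. A direct computation gives $x_{j+1}'\hat r^{(1)} = x_{j+1}'P^{\bot}_j y = (h_2-\rho h_1)\sqrt{2\log p}$ and $\|P^{\bot}_j x_{j+1}\|=\sqrt{1-\rho^2}$, hence
\[
\delta \;=\; \frac{|h_2-\rho h_1|}{\sqrt{1-\rho^2}}\,\sqrt{2\log p}.
\]
So $x_{j+1}$ is added iff $|h_2-\rho h_1|/\sqrt{1-\rho^2}>t'$, and otherwise the algorithm halts with only $x_j$ selected.

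It remains to compare the threshold $t'$ against the two critical values $h_1$ and $|h_2-\rho h_1|/\sqrt{1-\rho^2}$, organized by the sign of $h_2-(\rho-\sqrt{1-\rho^2})h_1$. A short algebraic check shows that $|h_2-\rho h_1|/\sqrt{1-\rho^2}\geq h_1$ is equivalent to $h_2\leq (\rho-\sqrt{1-\rho^2})h_1$ or $h_2\geq (\rho+\sqrt{1-\rho^2})h_1$; since $\rho+\sqrt{1-\rho^2}\geq 1$ for $\rho\in(0,1)$, the second alternative is ruled out by the assumption $h_2<h_1$. Therefore when $h_2\leq (\rho-\sqrt{1-\rho^2})h_1$ the second-step threshold is automatically crossed whenever the first-step one is, yielding the second bullet; when $(\rho-\sqrt{1-\rho^2})h_1<h_2<h_1$ the two critical values satisfy $|h_2-\rho h_1|/\sqrt{1-\rho^2}<h_1$, and splitting on whether $t'$ lies above or below the smaller one gives the third and fourth bullets. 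The main technical point requiring care is precisely the bound $\rho+\sqrt{1-\rho^2}\geq 1$ that eliminates the spurious upper branch of $|h_2-\rho h_1|/\sqrt{1-\rho^2}\geq h_1$; everything else is a routine one- and two-step trace of the algorithm.
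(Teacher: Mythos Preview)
Your proposal is correct and follows essentially the same argument as the paper: trace the two iterations of the algorithm on the pair, compute the second-step gain $|h_2-\rho h_1|/\sqrt{1-\rho^2}$, and use $\rho+\sqrt{1-\rho^2}\geq 1$ to eliminate the upper branch $h_2\geq(\rho+\sqrt{1-\rho^2})h_1$. The only cosmetic difference is that the paper keeps the discussion inside the full algorithm (``at some step $k$'') rather than reducing formally to a two-variable problem, but the computation and the key inequality are identical.
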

\begin{proof}[Proof of Lemma~\ref{suppthm:forward.path}]
  Note that we have required \( \rho > 0 \) to avoid unnecessary discussion.
Since \( h_1 >\abs{h_2} \), at any step \( k \) when neither of \( (x_j,x_{j + 1}) \) is in the model, we have \( |x_j' \hat r_{k - 1}| = |h_1| > |x_{j + 1}' \hat r_{k - 1}|=\abs{h_2}\). As a result, if \( t \geq h_1 \), then the algorithm will terminate without selecting either of \( (x_j,x_{j + 1}) \). If \( t > h_1 \), it will select \( x_j \) at some step and proceed to the next ``while'' loop.

After \( x_j \) has been selected, if \( h_2 \geq (\rho + \sqrt{1 -\rho^2})h_1 \) or \( h_2 \leq (\rho - \sqrt{1 -\rho^2})h_1 \), we have \( \frac{\abs{x_{j + 1}' \hat r^{(k - 1)}} }{\norm{ P^{\bot}_{k - 1} x_{j + 1}}} = \frac{\abs{h_2 -\rho h_1}}{\sqrt{1 -\rho^2}} \geq h_1 > t\), and \( x_{j + 1} \) will be selected at some later step. However, since \( \rho > 0 \), we have \( \rho + \sqrt{1 -\rho^2} > 1 \), so we can have \( \frac{\abs{x_{j + 1}' \hat r^{(k - 1)}} }{\norm{ P^{\bot}_{k - 1} x_{j + 1}}} = \frac{\abs{h_2 -\rho h_1}}{\sqrt{1 -\rho^2}} \geq h_1 \) only when \( h_2 \geq (\rho - \sqrt{1 -\rho^2})h_1 \). 

If \( (\rho - \sqrt{1 -\rho^2})h_1 < h_2 < h_1 \), we have \( \frac{\abs{h_2 -\rho h_1}}{\sqrt{1 -\rho^2}} <h_1 \). When \( t <\frac{\abs{h_2 -\rho h_1}}{\sqrt{1 -\rho^2}} \), both of the variables  will be selected; when \( t\in\left[\frac{\abs{h_2 -\rho h_1}}{\sqrt{1 -\rho^2}},h_1\right) \), only \( x_j \) will be selected. 
\end{proof}



We use Lemma~\ref{suppthm:forward.path} to write down the rejection region, as in Figure~\ref{fig:rej}, still for \( \rho > 0 \).
\begin{align} \label{suppeq:forward.rjRegion}
  {\cal R} &= \{(h_1,h_2): h_1-\rho h_2> t' \sqrt{1 -\rho^2},\, h_1> \frac{t' \sqrt{1 -\rho^2}}{1 -\rho} \}\cr
  &\;\; \cup \{(h_1, h_2): h_1 > t',\, h_1> h_2\} \cup \{(h_1, h_2):  h_2<- t,\,h_1-\rho h_2> t' \sqrt{1 -\rho^2} \}\cr
  &\;\; \cup \{(h_1,h_2): -h_1 +\rho h_2> t' \sqrt{1 -\rho^2},\, h_1 <- \frac{t' \sqrt{1 -\rho^2}}{1 -\rho} \}\cr
  &\;\; \cup \{(h_1, h_2): h_1 <- t',\, h_1 < h_2\} \cup \{(h_1, h_2):  h_2 > t,\, -h_1 +\rho h_2> t' \sqrt{1 -\rho^2} \}
  \end{align}  

  \paragraph{Part 2. Analyzing the Hamming error.} 
\begin{figure}[h!]
  \centering
  \includegraphics[width=0.85\textwidth]{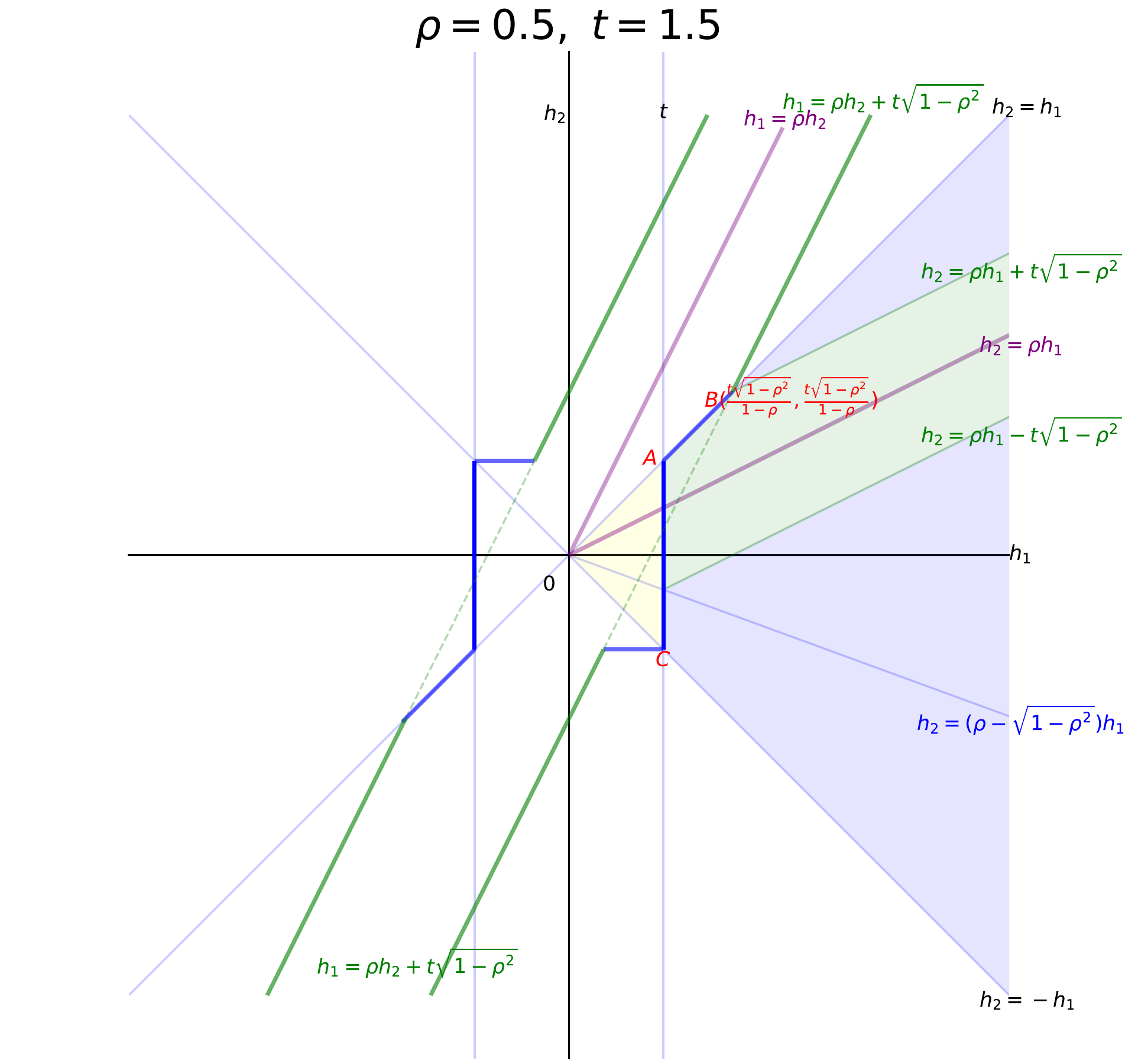}
  \caption{the rejection region of forward selection (\( \rho \geq 0 \)) }
  \label{fig:rej}
\end{figure}

\begin{theorem}\label{suppthm:hamm.forward}
  Suppose the conditions of Theorem~\ref{thm:forward} holds. Let \( t' = t /\sqrt{2\log(p)}\) and \( h_1 = x_j' y /\sqrt{2\log(p)} \), \( h_2 = x_{j + 1}' y /\sqrt{2\log(p)} \). As \( p\to\infty \), 
  \[
  \FP_p=L_p p^{1- \min\bigl\{ t'^2, \;\; \vt + f_1(\sqrt{r}, t')\bigr\}}, \qquad \FN_p = L_p p^{1-\min\bigl\{\vt + f_2(\sqrt{r}, t'),\;\; 2\vt + f_3(\sqrt{r}, t')\bigr\}}, 
  \]
  where (below, $d^2_{|\rho|}(u,v)$ is as in Definition~\ref{def:EllipsDistance}), 
  \begin{align*}
    f_1(\sqrt{r},t') & = \begin{cases} 
      (t' -|\rho| \sqrt{r})^2 & \text{ if } \sqrt{r} \leq \frac{t'}{1 +|\rho|}\\
      \frac{1}{1 -\rho^2}d_{\abs{\rho}}^2((t',t'), (|\rho| \sqrt{r},\sqrt{r})) & \text{ if } \frac{t'}{1 +|\rho|} <\sqrt{r} \leq \frac{2t'}{1+|\rho|} \\
      \min\left\{ \frac{1}{2}(1 -|\rho|) r,\ t'^2 \right\} & \text{ if } \sqrt{r} > \frac{2t'}{1+|\rho|}
    \end{cases}  \cr
   f_2(\sqrt{r}, t') &= \begin{cases} 
    \min \left\{ (\sqrt{r} - t')_ + ^2,\  \frac{1}{2}(1 -|\rho|) r  \right\} & \text{ if } \sqrt{r} \leq \frac{2t'}{\sqrt{1-\rho^2}} \\
    \min \left\{ (\sqrt{r} - t')_ + ^2,\ \frac{1}{1 -\rho^2}d^2_{|\rho|}(B,(\sqrt{r},|\rho| \sqrt{r}))  \right\}& \text{ if }\frac{2t'}{\sqrt{1-\rho^2}} < \sqrt{r} \leq \frac{t'\sqrt{1-\rho^2}}{|\rho|(1-|\rho|)} \\
    \left[ \sqrt{1 -\rho^2}\sqrt{r} - t' \right]^2 & \text{ if } \sqrt{r}>\frac{t' \sqrt{1-\rho^2}}{|\rho|(1-|\rho|)}
  \end{cases}\end{align*}
  The definition of \( f_3(\sqrt{r},t') \) depends on the sign of \( \rho \). When \( \rho > 0 \), 
  \begin{equation*}
    f_3(\sqrt{r}, t') = \left[ \sqrt{1 -\rho^2}\sqrt{r} - t  \right]^2
  \end{equation*}
    When \( \rho < 0 \), 
    \begin{align*}
      f_3(\sqrt{r},t') =&~ \min \left\{  \left[ \sqrt{1 -\rho^2}\sqrt{r} - t  \right]^2,\ d^2_{|\rho|}(C,((1 -|\rho|)\sqrt{r}, -(1 -|\rho|)\sqrt{r})) \right\} \\
      =&~ \min \left\{  \left[ \sqrt{1 -\rho^2}\sqrt{r} - t  \right]^2,\ \frac{2}{1 -|\rho|}\left[ (1 -|\rho|)\sqrt{r} - t
       \right]^2 \right\}
    \end{align*}
  \end{theorem}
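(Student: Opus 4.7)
The plan is to apply the general reduction from Section~\ref{suppsec:sketch} directly. Part 1 of this section has already derived the rejection region $\cal R$ in~\eqref{suppeq:forward.rjRegion} (visualized in Figure~\ref{fig:rej}), expressed as a finite union/intersection of half-planes that is centrosymmetric about the origin. What remains is to identify the conditional distribution of $\tilde y = (x_j'y, x_{j+1}'y)/\sqrt{2\log p}$ under each of the four realizations of $(\beta_j, \beta_{j+1})$, and then evaluate the four elliptical distances $d^2_\Sigma(\mu_{00},\cal R)$, $d^2_\Sigma(\mu_{01},\cal R)$, $d^2_\Sigma(\mu_{10},\cal R^c)$, $d^2_\Sigma(\mu_{11},\cal R^c)$ that govern the exponents via \eqref{proof-sketch2}--\eqref{proof-sketch3}. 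As in the Elastic net proof, $\Sigma$ has diagonal $1$ and off-diagonal $\rho$, and the four mean vectors $\mu_{ab}$ are the same $\sqrt{r}$-scaled points already listed in Part 2 of Section~\ref{suppsec:en}.

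Next, I would compute each distance using the routine described after \eqref{proof-en-distance}: the minimizer of $d^2_\rho$ over a region bounded by finitely many lines must lie either on the foot of perpendicular to one of the boundary lines (obtainable from Lemma~\ref{supplem:distance}) or at one of the vertices (evaluated directly via Definition~\ref{def:EllipsDistance}). The computation $d^2_\Sigma(\mu_{00},\cal R)=t'^2$ is immediate because $\mu_{00}=0$ and the nearest boundary pieces are the vertical segments $h_1=\pm t'$. For $d^2_\Sigma(\mu_{01},\cal R)$, which yields $f_1$, I would sweep $\sqrt{r}$: as $\sqrt{r}$ grows, the nearest point on $\cal R$ to $\mu_{01}=(|\rho|\sqrt r,\sqrt r)$ transitions from the vertical segment $h_1=t'$, to a vertex labeled $A=(t',t')$ in Figure~\ref{fig:rej}, and finally to the diagonal segment $h_1-|\rho|h_2=t'\sqrt{1-\rho^2}$. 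The crossover thresholds $\sqrt{r}=t'/(1+|\rho|)$ and $\sqrt{r}=2t'/(1+|\rho|)$ will emerge by equating the foot-of-perpendicular with the vertex $A$. An analogous three-case decomposition handles $d^2_\Sigma(\mu_{10},\cal R^c)$, this time involving the competing candidates $(\sqrt r - t')_+^2$ (horizontal boundary $h_2=t'$), the vertex $B$, and the diagonal boundary, producing the stated $f_2$ with thresholds $2t'/\sqrt{1-\rho^2}$ and $t'\sqrt{1-\rho^2}/[|\rho|(1-|\rho|)]$.

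For $d^2_\Sigma(\mu_{11},\cal R^c)$ I would treat the two signs of $\rho$ separately, as in the Elastic net proof. When $\rho>0$, $\mu_{11}=((1+\rho)\sqrt r,(1+\rho)\sqrt r)$ lies on the main diagonal, the nearest point on $\cal R^c$ is on the line $h_1-\rho h_2=t'\sqrt{1-\rho^2}$, and Lemma~\ref{supplem:distance} gives the formula $[\sqrt{1-\rho^2}\sqrt r-t']^2$ in a single step. When $\rho<0$, the sign-flipping trick of Section~\ref{suppsec:en} keeps $\cal R$ the same (with $|\rho|$ in place of $\rho$) but moves $\mu_{11}$ to $((1-|\rho|)\sqrt r,-(1-|\rho|)\sqrt r)$; now both the diagonal line segment and the vertex $C$ can realize the minimum, yielding the stated minimum of $[\sqrt{1-\rho^2}\sqrt r-t']^2$ and $\tfrac{2}{1-|\rho|}[(1-|\rho|)\sqrt r-t']^2$. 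After each distance is obtained, substitution into \eqref{proof-sketch2}--\eqref{proof-sketch3} gives the claimed exponents of $\FP_p$ and $\FN_p$.

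The main obstacle is bookkeeping rather than any single hard calculation: for every mean vector and every regime of $(\sqrt r, t')$ I must identify exactly which vertex or line realizes the projection, verify that the foot of perpendicular actually falls on the boundary segment rather than on its line extension (otherwise the vertex at the endpoint dominates), and confirm that the algebraic thresholds separating the piecewise formulas for $f_1, f_2, f_3$ coincide with the transition values at which one candidate becomes smaller than another. A second subtle point is to check that the $\rho<0$ case for $f_1$ and $f_2$ really does coincide with the $\rho>0$ formulas (i.e.\ depends only on $|\rho|$), which follows from the centrosymmetry of $\cal R$ together with the fact that flipping the sign of $\rho$ simultaneously reflects $\mu_{00}, \mu_{01}, \mu_{10}$ to equivalent positions — so only $f_3$ genuinely requires a separate treatment.
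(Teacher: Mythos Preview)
Your proposal is correct and follows exactly the paper's approach: the paper's entire proof of this theorem is the single sentence ``The calculation of the elliptical distances are easy given Lemma~\ref{supplem:distance},'' and you have spelled out precisely that routine---identify the boundary segments and vertices of $\cal R$ from \eqref{suppeq:forward.rjRegion}, apply Lemma~\ref{supplem:distance} and Definition~\ref{def:EllipsDistance} to each candidate, and track the crossover thresholds as $\sqrt r$ varies. One small labeling slip to fix when you write it up: the term $(\sqrt r-t')_+^2$ in $f_2$ comes from the \emph{vertical} boundary $h_1=t'$ (not $h_2=t'$), and the third case of $f_1$ is a minimum over \emph{two} diagonal candidates (the line $h_1=h_2$ giving $\tfrac12(1-|\rho|)r$ and the line $h_1-|\rho|h_2=t'\sqrt{1-\rho^2}$ giving $t'^2$), not just one.
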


The calculation of the elliptical distances are easy given Lemma~\ref{supplem:distance}. 

\paragraph{Part 3. Calculating the phase diagram.} The computation of the boundary between Alomst Full Recovery and No Recovery is almost the same for every method, so we omit the details and conclude that such boundary is \( r =\vt \). 

Then we set out to calculate the boundary between Alomst Full Recovery and Exact Recovery. As usual, we have four cases respectively for \( \rho > 0 \) and \( \rho < 0 \). However, unlike previous methods, forward selection is very easy, so we combine \( \rho > 0 \) and \( \rho < 0 \) in our discussion and use \( \abs{rho} \) all along.

When \( \rho > 0 \), the phase curve is \begin{equation*}
  \sqrt{r} = \max \left\{ 1 + \sqrt{1 -\vt}, \sqrt{\frac{2(1 -\vt)}{1 -\rho}}, \sqrt{\frac{1 - 2\vt}{1 -\rho^2}} + \frac{1}{\sqrt{1 -\rho^2}}\right\}.
\end{equation*}
When \( \rho < 0 \), the phase curve is \begin{equation*}
  \sqrt{r} =\max \left\{ 1 + \sqrt{1 -\vt}, \sqrt{\frac{2(1 -\vt)}{1 -|\rho|}},\sqrt{\frac{1 - 2\vt}{1 -\rho^2}} + \frac{1}{\sqrt{1 -\rho^2}}, \sqrt{\frac{1 - 2\vt}{2(1 -|\rho|)}} + \frac{1}{1 -|\rho|}\right\}
\end{equation*}

\textit{First}, if \( t'^2 = \vt + f_2(\sqrt{r},t') = 1 \) and \( \vt + f_1(\sqrt{r},t') \geq 1 \), \( 2\vt + f_3(\sqrt{r},t') \geq 1 \), we discuss the conditional expression of \( f_2(\sqrt{r},t') \):

When \( \sqrt{r} \leq \frac{2t'}{\sqrt{1 -\rho^2}} \) in \( f_3(\sqrt{r},t') \): Now \( \sqrt{r} =\max \left\{ 1 + \sqrt{1 -\vt},\, \sqrt{\frac{2(1 -\vt)}{1 -|\rho|}} \right\}. \)   We need the following requirements: First, \( \sqrt{r} \leq \frac{2t'}{\sqrt{1 -\rho^2}} = \frac{2}{\sqrt{1 -\rho^2}} \) itself. It is not restrictive, because \( 1 + \sqrt{1 -\vt} \leq 2 \leq \frac{2}{\sqrt{1 -\rho^2}} \) and \( \sqrt{\frac{2(1 -\vt)}{1 -|\rho|}} \leq \sqrt{\frac{2}{1 -|\rho|}} \leq \frac{2}{\sqrt{1 -\rho^2}} \). Second, \( \vt + f_2(\sqrt{r},t') \geq 1 \), which is still not restrictive. When \( \sqrt{r} \geq \frac{2t'}{1 +|\rho|} \), we know \( FP_2 = o(1) \) from \( \sqrt{r} \geq \sqrt{\frac{2(1 -\vt)}{1 -|\rho|}} \) and \( t' = 1 \).When \( 1 < \sqrt{r} <  \frac{2t'}{1 +|\rho|} \), we know    \( FP_2 = o(1) \) from \( d^2(B,(\sqrt{r},|\rho| \sqrt{r})) \geq  \frac{1}{2}(1 +|\rho|)(1 -|\rho|)^2 r \). Third, \( 2\vt + f_3(\sqrt{r},t') \geq 1 \), which requires \( \sqrt{r} \geq \sqrt{\frac{1 - 2\vt}{1 -\rho^2}} + \frac{1}{\sqrt{1 -\rho^2}} \) when the correlation is positive, and \( \sqrt{r} \geq \max \left\{\sqrt{\frac{1 - 2\vt}{1 -\rho^2}} + \frac{1}{\sqrt{1 -\rho^2}},\,\sqrt{\frac{1 - 2\vt}{2(1 -|\rho|)}} + \frac{1}{1 -|\rho|}  \right\}. \) when the correlation is negative.

When \( \frac{2t'}{\sqrt{1-\rho^2}} < \sqrt{r} \leq \frac{t' \sqrt{1 -\rho^2}}{|\rho|(1-|\rho|)} \) in \( f_3(\sqrt{r},t') \): If \( (1 -\vt)(1 -\rho^2) =  d^2(B,(\sqrt{r},|\rho| \sqrt{r}))\), then since \(d^2(B,(\sqrt{r},|\rho| \sqrt{r})) \geq  \frac{1}{2}(1 +|\rho|)(1 -|\rho|)^2 r \), we have \( \sqrt{r} \leq \sqrt{\frac{2(1 -\vt)}{1 -|\rho|}}.  \) 
However, we also need \( \sqrt{r} >\frac{2}{\sqrt{1-\rho^2}} \), which gives a contradiction. This case does not exist. If \( \sqrt{r} = 1 + \sqrt{1 -\vt} \), then it also contradicts \( \sqrt{r} >\frac{2}{\sqrt{1-\rho^2}} \). 

When \( \sqrt{r}>\frac{t' \sqrt{1 -\rho^2}}{|\rho|(1-|\rho|)} \) in \( f_3(\sqrt{r},t') \): \( \sqrt{r} = \sqrt{\frac{1 -\vt}{1 -\rho^2}} + \frac{1}{\sqrt{1 -\rho^2}} \). It cannot meet the requirement \( \sqrt{r} > \frac{t' \sqrt{1 -\rho^2}}{|\rho|(1-|\rho|)}\), so this case does not exist.

To sum up, the \textit{first} case gives
\begin{equation*}
  \sqrt{r} =\max \left\{ 1 + \sqrt{1 -\vt},\, \sqrt{\frac{2(1 -\vt)}{1 -|\rho|}} \right\}.
\end{equation*}
which exists in the region:
\begin{itemize}
  \item \( \left(\frac{1}{2},1\right]\cup \{ \vt \leq \frac{1}{2}: \sqrt{r} \geq \sqrt{\frac{1 - 2\vt}{1 -\rho^2}} + \frac{1}{\sqrt{1 -\rho^2}}\}  \) for \( \rho > 0 \) .
  \item  \( \left(\frac{1}{2},1\right]\cup \left\{ \vt \leq \frac{1}{2}: \sqrt{r} \geq \max \left\{\sqrt{\frac{1 - 2\vt}{1 -\rho^2}} + \frac{1}{\sqrt{1 -\rho^2}},\,\sqrt{\frac{1 - 2\vt}{2(1 -|\rho|)}} + \frac{1}{1 -|\rho|}  \right\}\right\} \) for \( \rho < 0 \) .
\end{itemize}

\textit{Second}, if \( \vt + f_1(\sqrt{r},t') = \vt + f_2(\sqrt{r},t') = 1\) and \( t' \geq 1 \), \( 2\vt + f_3(\sqrt{r},t') \geq 1 \), then we will get nothing in this case. To see this, we first list a few requirements: 
\begin{itemize}
  \item We know that \( t' \geq 1 \);
  \item We know from \( \vt + f_2(\sqrt{r},t') = 1 \) that \( \sqrt{r} \geq \max \left\{ t' + \sqrt{1 -\vt},\, \sqrt{\frac{2(1 -\vt)}{1 -|\rho|}} \right\} \);
  \item We know from \( 2\vt + f_3(\sqrt{r},t') \geq 1 \) that, when \( \vt \leq \frac{1}{2} \), we need \begin{itemize}
    \item \( \sqrt{r} \geq \sqrt{\frac{1 - 2\vt}{1 -\rho^2}} + \frac{t'}{\sqrt{1 -\rho^2}} \) for positive correlation;
    \item \( \sqrt{r} \geq \max \left\{\sqrt{\frac{1 - 2\vt}{1 -\rho^2}} + \frac{t'}{\sqrt{1 -\rho^2}},\,\sqrt{\frac{1 - 2\vt}{2(1 -|\rho|)}} + \frac{t'}{1 -|\rho|}  \right\} \) for negative correlation.
  \end{itemize}
\end{itemize}

From these requirements, even if this case does admit some curve, it can only be higher than the one in the previous \textit{first} case, and exist in a smaller region. 

\textit{Third}, if \( t'^2 = 2\vt + f_3(\sqrt{r},t') = 1  \) and \( \vt + f_1(\sqrt{r},t') \geq  1 \), \( \vt + f_2(\sqrt{r},t') \geq  1 \), then:

When the correlation is positive, we have only one possible curve \( \sqrt{r} =\sqrt{\frac{1 - 2\vt}{1 -\rho^2}} + \frac{1}{\sqrt{1 -\rho^2}}. \) From \( \vt + f_2(\sqrt{r},t') \geq  1 \), we get the requirement \( \sqrt{r} \geq \max \left\{ 1 + \sqrt{1 -\vt},\, \sqrt{\frac{2(1 -\vt)}{1 -\rho}} \right\} \).  For \( \vt + f_1(\sqrt{r},t') \geq  1 \), since we already have \( \sqrt{r} \geq \sqrt{\frac{2(1 -\vt)}{1 -\rho}}  \), we know \( \vt + f_1(\sqrt{r},t') \geq  1 \) always holds. 

When the correlation is negative, we have \( \sqrt{r} =\max \left\{\sqrt{\frac{1 - 2\vt}{1 -\rho^2}} + \frac{1}{\sqrt{1 -\rho^2}},\,\sqrt{\frac{1 - 2\vt}{2(1 -|\rho|)}} + \frac{1}{1 -|\rho|}  \right\}. \) From \( \vt + f_2(\sqrt{r},t') \geq  1 \), we get the requirement \( \sqrt{r} \geq \max \left\{ 1 + \sqrt{1 -\vt},\, \sqrt{\frac{2(1 -\vt)}{1 -|\rho|}} \right\} \). For \( \vt + f_1(\sqrt{r},t') \geq  1 \), since we already have \( \sqrt{r} \geq \sqrt{\frac{2(1 -\vt)}{1 -|\rho|}}  \), we know \( FP_2 = o(1) \) always holds. 

\textit{Fourth}, if \( \vt + f_1(\sqrt{r},t') = 2\vt + f_3(\sqrt{r},t') = 1  \) and \( t' \geq  1 \), \( \vt + f_2(\sqrt{r},t') \geq  1 \), then we will get nothing from this case. To see this, we still list a few requirement:
\begin{itemize}
  \item We know that \( t' \geq 1 \). 
  \item From \( \vt + f_2(\sqrt{r},t') \geq  1 \), we know that \(  \sqrt{r} \geq \max \left\{ t' + \sqrt{1 -\vt},\, \sqrt{\frac{2(1 -\vt)}{1 -|\rho|}} \right\} \).
  \item From \( 2\vt + f_3(\sqrt{r},t') = 1 \), we know that \( \sqrt{r} = \sqrt{\frac{1 - 2\vt}{1 -\rho^2}} + \frac{t'}{\sqrt{1 -\rho^2}}\) for \( \rho > 0 \) and   \( \sqrt{r} =\max \left\{\sqrt{\frac{1 - 2\vt}{1 -\rho^2}} + \frac{t'}{\sqrt{1 -\rho^2}},\,\sqrt{\frac{1 - 2\vt}{2(1 -|\rho|)}} + \frac{t'}{1 -|\rho|}  \right\}\)  for \( \rho < 0 \).
\end{itemize}

Even if this case admits any curve, that curve would be above the curve in the previous \textit{third} case, and exist within a smaller region of \( \vt \).

To sum up, we have the following results:
\begin{itemize}
    \item Phae diagram when the correlation is positive:
    \begin{equation*}
      \sqrt{r} = \max \left\{ 1 + \sqrt{1 -\vt}, \sqrt{\frac{2(1 -\vt)}{1 -\rho}}, \sqrt{\frac{1 - 2\vt}{1 -\rho^2}} + \frac{1}{\sqrt{1 -\rho^2}}\right\}.
    \end{equation*}
    \item Phae diagram when the correlation is negative:
    \begin{equation*}
      \sqrt{r} =\max \left\{ 1 + \sqrt{1 -\vt}, \sqrt{\frac{2(1 -\vt)}{1 -|\rho|}},\sqrt{\frac{1 - 2\vt}{1 -\rho^2}} + \frac{1}{\sqrt{1 -\rho^2}}, \sqrt{\frac{1 - 2\vt}{2(1 -|\rho|)}} + \frac{1}{1 -|\rho|}\right\}
    \end{equation*}
\end{itemize}

\section{Proof of Theorem~\ref{thm:forward-backward} (Forward Backward Selection)}\label{suppsec:forward.backward}

The proof for Forward Selection still has three tasks: (a) deriving the rejection region, (b) obtaining the rate of convergence of \( \mathbb{E}[H(\hat\beta,\beta)] \), and (c) calculating the phase diagram. However, as we will see later, forward backward selection has six cases, each of which has a different shape of the rejection region. After deriving the rejection region, we consider the  \( \mathbb{E}[H(\hat\beta,\beta)] \) and phase curves of the six cases one by one, and summarise the results at the end. 

Defore deriving the rejection region in the first part, we need some clarification about the definition of the forward backward selection we have investigated. 
More precisely, we have simplified the backward step into one thresholding step after the forward selection algorithm, so it is more precisely ``thresholded forward selection''. 

The reason why we have not used a sequential algorithm with alternating forward and backward steps, like FoBa defined in \citet{zhang2011adaptive}, is not compuational simplicity, but to avoid degeneration. We explain briefly why any sequential algorithm with alternating forward and backward steps will either have nonfunctional backward steps, or be unable to terminate at a finite step.

To see this, we review the setting of Lemma~\ref{suppthm:forward.path} about the solution path of forward selection, in which \( h_1 >\abs{h_2} \) and we only consider a bivariate problem. Using the same argument, some version of FoBa can also be decomposed into bivariate subproblems, and it is equivalent to running the algorithm only on \( y \) and \( (x_j,x_{j + 1}) \). 

In brief, in such a bivariate problem with \( h_1 >\abs{h_2} \), if either or both of \( (x_j,x_{j + 1}) \) ever get selected and then deleted at some backward step, then they will be selected back again because they still meet the requirements for a variable to get enrolled. When \( h_1 >\abs{h_2} \), the case of deleting \( x_j \) while leaving \( x_{j + 1} \) still in the model cannot happen, because no deletion rule based on \( (x_j'y,x_{j + 1}'y) \) can delete \( x_j \) without touching \( x_{j +1} \). As a result, the algorithm cannot terminate at a finite step.

If the algorithm termininates at a finite step, then the backward step much have not deleted any of \( (x_j,x_{j + 1}) \), and such algorithm performs the same as forward selection. 

We have explained the degeneration of Foba \citep{zhang2011adaptive}, but we still want to implement some kind of backward step additional to forward selection, because the problem with forward selection is inability to correct the mistakes made in the early steps. Thus it is natural to use one thresholding step at the end.

\paragraph{Part 1: Deriving the rejection region.}
We first work on the solution path, and then compute the rejection region. The forward selection part has been discussed before, and we recall the results in Lemma~\ref{suppthm:forward.path} (re-iterated in an equivalent way):
\begin{enumerate}
  \item When \( t \geq  h_1 \), neither is selected.
  \item When \( t < h_1 \), and \( \rho h_1 - t \sqrt{1 -\rho^2} \leq h_2 \leq \rho h_1 + t  \sqrt{1 -\rho^2}\), only \( x_j \) is selected.
  \item When \( t < h_1 \), and \( \begin{cases} 
  \text{either }h_2 >  \rho h_1 + t  \sqrt{1 -\rho^2} \\
  \text{or }- h_1 < h_2 < \rho h_1 - t  \sqrt{1 -\rho^2}
  \end{cases}  \), both \( x_j \) and \( x_{j + 1} \) are selected.   
\end{enumerate}

Now this is followed by a thresholding step. Before using \( v \) to threshold the results, we note that:
\begin{enumerate}
  \item When \( x_{j + 1} \) is not selected, \( \hat\beta_j = h_1 \);
  \item When both \( (x_j,x_{j + 1}) \) are selected, \( \hat\beta_j = \frac{h_1 -\rho h_2}{1 -\rho^2} \) and \( \hat\beta_{j + 1} =\frac{h_2 -\rho h_1}{1 -\rho^2} \).
\end{enumerate}

When the thresholding is performed, we can naturally describe the solution path of thresholded forward selection as:
\begin{lemma}\label{supplem:sol.path.thres.forward}
  Consider the bivariate problem of running Algorithm~\ref{alg:forward.selection} with \( y \) and \( (x_j,x_{j + 1}) \), followed by thresholding \( (\hat\beta_j,\hat\beta_{j + 1}) \) with \( v \). Define \( h_1 = x_j' y /\sqrt{2\log(p)} \), \( h_2 = x_{j + 1}' y /\sqrt{2\log(p)} \) and \( t' = t/\sqrt{2\log(p)} \), \( v' = v/\sqrt{2\log(p)} \). Suppose \( h_1 > \abs{h_2} \geq 0 \), and \( \rho > 0 \).  Then 
  \begin{itemize}
    \item When \( t' \geq h_1 \), neither is selected. 
    \item When \( t' < h_1 \) and \( \rho h_1 - t'  \sqrt{1 -\rho^2} \leq h_2 \leq \rho h_1 + t'  \sqrt{1 -\rho^2}\), 
    \begin{itemize}
      \item If \(  v' \geq h_1 \), neither is selected. 
      \item If \(  v' < h_1 \), only \( x_j \) is selected.  
    \end{itemize}
    \item When \( t' < h_1 \), and \( \begin{cases} 
      \text{either }h_2 >  \rho h_1 + t'  \sqrt{1 -\rho^2} \\
      \text{or }- h_1 < h_2 < \rho h_1 - t'  \sqrt{1 -\rho^2}
      \end{cases}  \),
      \begin{itemize}
        \item If \( h_2 >  \rho h_1 + t'   \sqrt{1 -\rho^2} \), and \( h_2 -\rho h_1 >  v'(1 -\rho^2) \), both \( (x_j,x_{j + 1}) \) are selected.
        \item If \( h_2 >  \rho h_1 + t'   \sqrt{1 -\rho^2} \), and \(  h_2 -\rho h_1 \leq   v'(1 -\rho^2) < h_1 -\rho h_2\), only \( x_j \) is selected.
        \item  If \( h_2 >  \rho h_1 + t'   \sqrt{1 -\rho^2} \), and \( h_1 -\rho h_2 \leq  v'(1 -\rho^2) \), neither is selected. 
        \item If \( h_2 <  \rho h_1 - t'   \sqrt{1 -\rho^2} \), and  \(  \rho h_1 - h_2 >  v'(1 -\rho^2) \), both \( (x_j,x_{j + 1}) \) are selected.
        \item If \( h_2 <  \rho h_1 - t'   \sqrt{1 -\rho^2} \), and \( \rho h_1 - h_2 \leq   v'(1 -\rho^2) < h_1 -\rho h_2\), only \( x_j \) is selected.
        \item  If \( h_2 <  \rho h_1 - t' \sqrt{1 -\rho^2} \), and \( h_1 -\rho h_2 \leq  v'(1 -\rho^2) \), neither is selected.
      \end{itemize}
  \end{itemize}
\end{lemma}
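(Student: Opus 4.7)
The plan is to treat the procedure as the composition of two stages: invoke Lemma~\ref{suppthm:forward.path} to determine the support produced by forward selection, then compute the OLS estimator on that support and apply coordinate-wise thresholding at level $v$. Under the standing assumption $h_1 > |h_2| \geq 0$ and $\rho > 0$, Lemma~\ref{suppthm:forward.path} (re-phrased in the three-item form already stated) partitions the $(h_1, h_2)$-plane into exactly the three regimes that appear as the top-level bullets of the conclusion. The case $t' \geq h_1$ is trivial: forward selection outputs the empty support, so thresholding has nothing to act on and neither variable is selected.

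In the slab regime $t' < h_1$ and $|h_2 - \rho h_1| \leq t'\sqrt{1-\rho^2}$, forward selection outputs the singleton $\{j\}$, and the univariate OLS coefficient is $\hat\beta_j = x_j' y = h_1 \sqrt{2\log p}$ (since $\|x_j\| = 1$). Rescaling by $\sqrt{2\log p}$, survival under the threshold $v$ is exactly the inequality $v' < h_1$ versus $v' \geq h_1$, matching the two sub-bullets.

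In the remaining regime, forward selection outputs $\{j, j+1\}$ and the OLS estimator solves the $2\times 2$ normal equations with Gram block $\begin{bsmallmatrix}1&\rho\\ \rho &1\end{bsmallmatrix}$ and right-hand side $(h_1, h_2)'\sqrt{2\log p}$; inverting gives $\hat\beta_j = \sqrt{2\log p}\,(h_1-\rho h_2)/(1-\rho^2)$ and $\hat\beta_{j+1} = \sqrt{2\log p}\,(h_2-\rho h_1)/(1-\rho^2)$. After scaling by $\sqrt{2\log p}$, the survival conditions become $|h_1 - \rho h_2| > v'(1-\rho^2)$ and $|h_2 - \rho h_1| > v'(1-\rho^2)$ respectively. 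Splitting on the sign of $h_2 - \rho h_1$, which is exactly the split between $h_2 > \rho h_1 + t'\sqrt{1-\rho^2}$ and $h_2 < \rho h_1 - t'\sqrt{1-\rho^2}$, yields the six sub-bullets. The assumption $h_1 > |h_2|$ (together with $\rho > 0$) guarantees $|\hat\beta_j| > |\hat\beta_{j+1}|$, so the threshold always kills $\hat\beta_{j+1}$ before $\hat\beta_j$, which is why no subcase selects only $x_{j+1}$.

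None of the steps is mathematically hard; the only real obstacle is careful bookkeeping, namely keeping the sign of $(h_2 - \rho h_1)$ aligned with the correct absolute value in $|h_2 - \rho h_1|$, and verifying that the forward-selection triggering condition and the thresholding conditions are jointly consistent within each sub-bullet. Once this is done, each sub-bullet is matched to a sub-region of the $(h_1, h_2)$-plane and the lemma follows by enumeration.
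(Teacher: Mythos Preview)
Your proposal is correct and follows exactly the same approach as the paper: the paper also records the OLS values $\hat\beta_j=h_1$ (singleton case) and $\hat\beta_j=(h_1-\rho h_2)/(1-\rho^2)$, $\hat\beta_{j+1}=(h_2-\rho h_1)/(1-\rho^2)$ (two-variable case), then states the lemma as the direct result of thresholding these at $v'$ on top of the forward-selection output from Lemma~\ref{suppthm:forward.path}. Your added remark that $h_1>|h_2|$ forces $|\hat\beta_j|>|\hat\beta_{j+1}|$, so the threshold always eliminates $x_{j+1}$ first, is a nice explicit justification of why the case ``only $x_{j+1}$ selected'' never appears.
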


The rejection region can be complicated, and it has many cases visually. See Figure~\ref{fig:rjRegion.thres.forward}.
\begin{align} \label{suppeq:foba.rjRegion}
  {\cal R} &= \{(h_1,h_2): h_1-\rho h_2> \max\{t' \sqrt{1 -\rho^2},v'(1 -\rho^2)\},\, h_2 >\rho h_1 + t\sqrt{1 -\rho^2} \}\cr
  &\;\; \cup \{(h_1,h_2):  h_1> \max\{t',v'\},\, h_2 \leq \rho h_1 + t\sqrt{1 -\rho^2},\,h_2 \geq \rho h_1 - t\sqrt{1 -\rho^2} \}\cr
  &\;\; \cup \{(h_1, h_2): h_1> h_2,\,h_1 >\max\{t',v'\},\, h_2 >\rho h_1 - t\sqrt{1 -\rho^2}\} \cr
  &\;\; \cup \{(h_1, h_2): h_1 > t',\, h_2 \leq \rho h_1 - t\sqrt{1 -\rho^2},\, h_1 -\rho h_2 > v'(1 -\rho^2) \} \cr
  &\;\; \cup \{(h_1, h_2):  h_2<- t,\,h_1-\rho h_2> \max\{t' \sqrt{1 -\rho^2},v'(1 -\rho^2)\} \}\cr
  &\;\; \cup \{(h_1,h_2): -h_1 +\rho h_2> \max\{t' \sqrt{1 -\rho^2},v'(1 -\rho^2)\},\, h_2 <\rho h_1 - t\sqrt{1 -\rho^2} \}\cr
  &\;\; \cup \{(h_1,h_2):  h_1 <- \max\{t',v'\},\, h_2 \geq \rho h_1 - t\sqrt{1 -\rho^2},\,h_2 \leq \rho h_1 + t\sqrt{1 -\rho^2} \}\cr
  &\;\; \cup \{(h_1, h_2): h_1 < h_2,\,h_1 >\max\{t',v'\},\, h_2 <\rho h_1 + t\sqrt{1 -\rho^2}\} \cr
  &\;\; \cup \{(h_1, h_2): h_1 < -t',\, h_2 \geq \rho h_1 + t\sqrt{1 -\rho^2},\, -h_1 +\rho h_2 > v'(1 -\rho^2) \} \cr
  &\;\; \cup \{(h_1, h_2):  h_2 > t,\, -h_1 +\rho h_2> \max\{t' \sqrt{1 -\rho^2},v'(1 -\rho^2)\} \}
  \end{align}  

\begin{figure}
  \centering
  \begin{subfigure}[b]{0.495\textwidth}
    \centering
    \includegraphics[width=\textwidth]{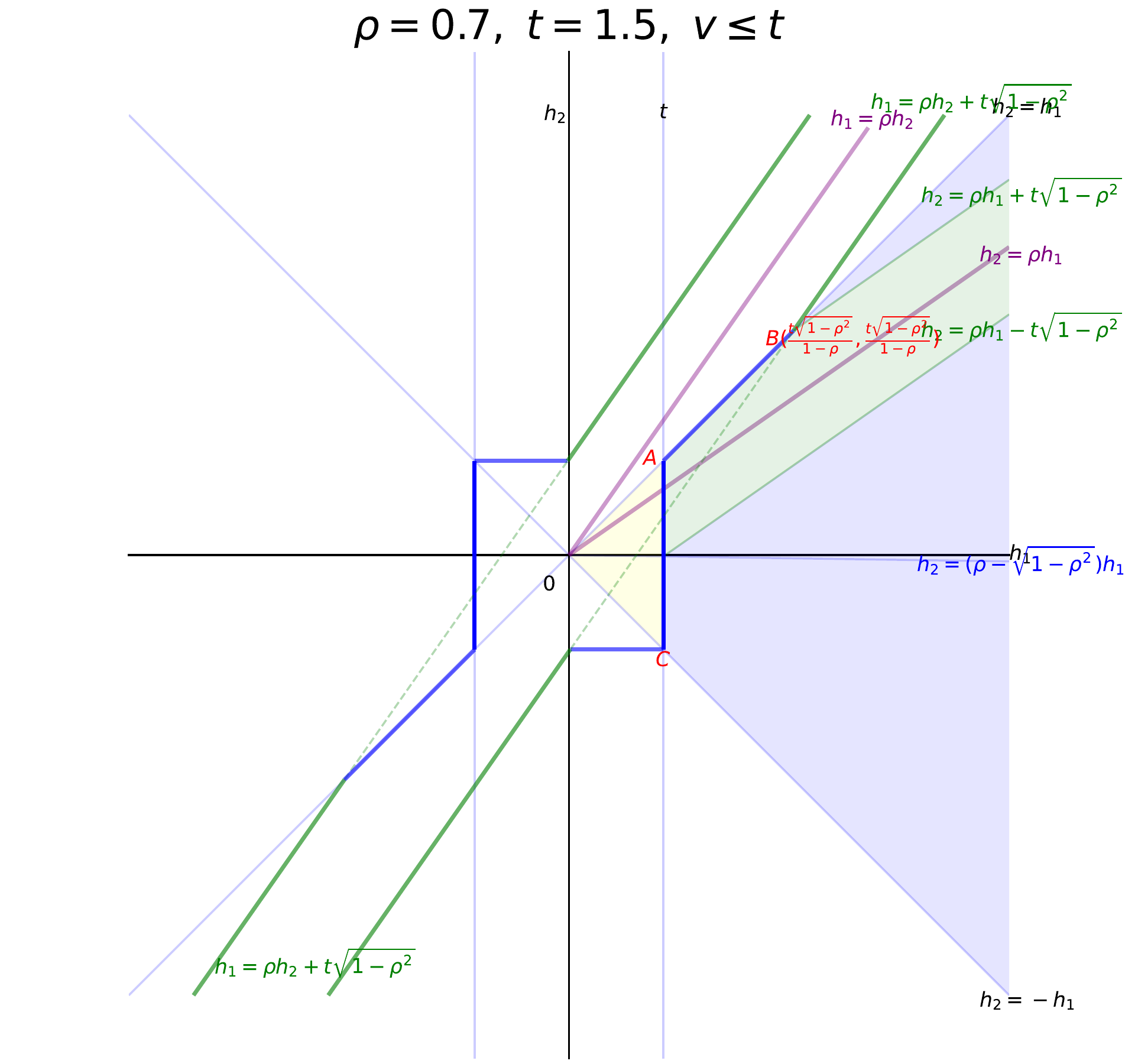}
    \caption[]%
    {{\small When \( v' \leq t' \)}}    
    \label{subfig:small.v}
\end{subfigure}
\hfill
  \begin{subfigure}[b]{0.495\textwidth}  
      \centering 
      \includegraphics[width=\textwidth]{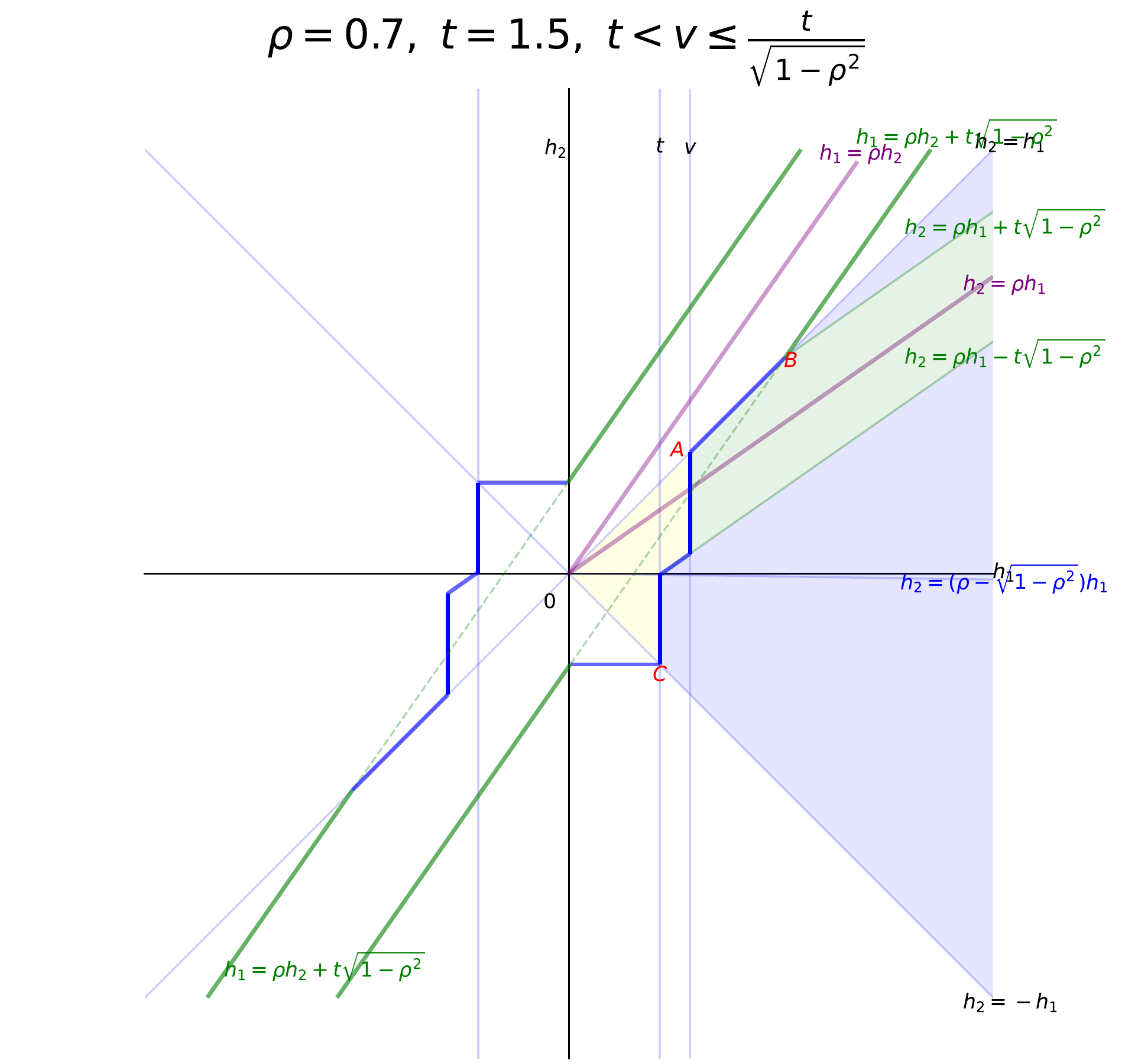}
      \caption[]%
      {{\small When \( t' <v' \leq  \frac{t}{\sqrt{1 -\rho^2}} \)}}    
      \label{subfig:middle.v}
  \end{subfigure}
  \vskip\baselineskip
  \begin{subfigure}[b]{0.495\textwidth}  
    \centering 
    \includegraphics[width=\textwidth]{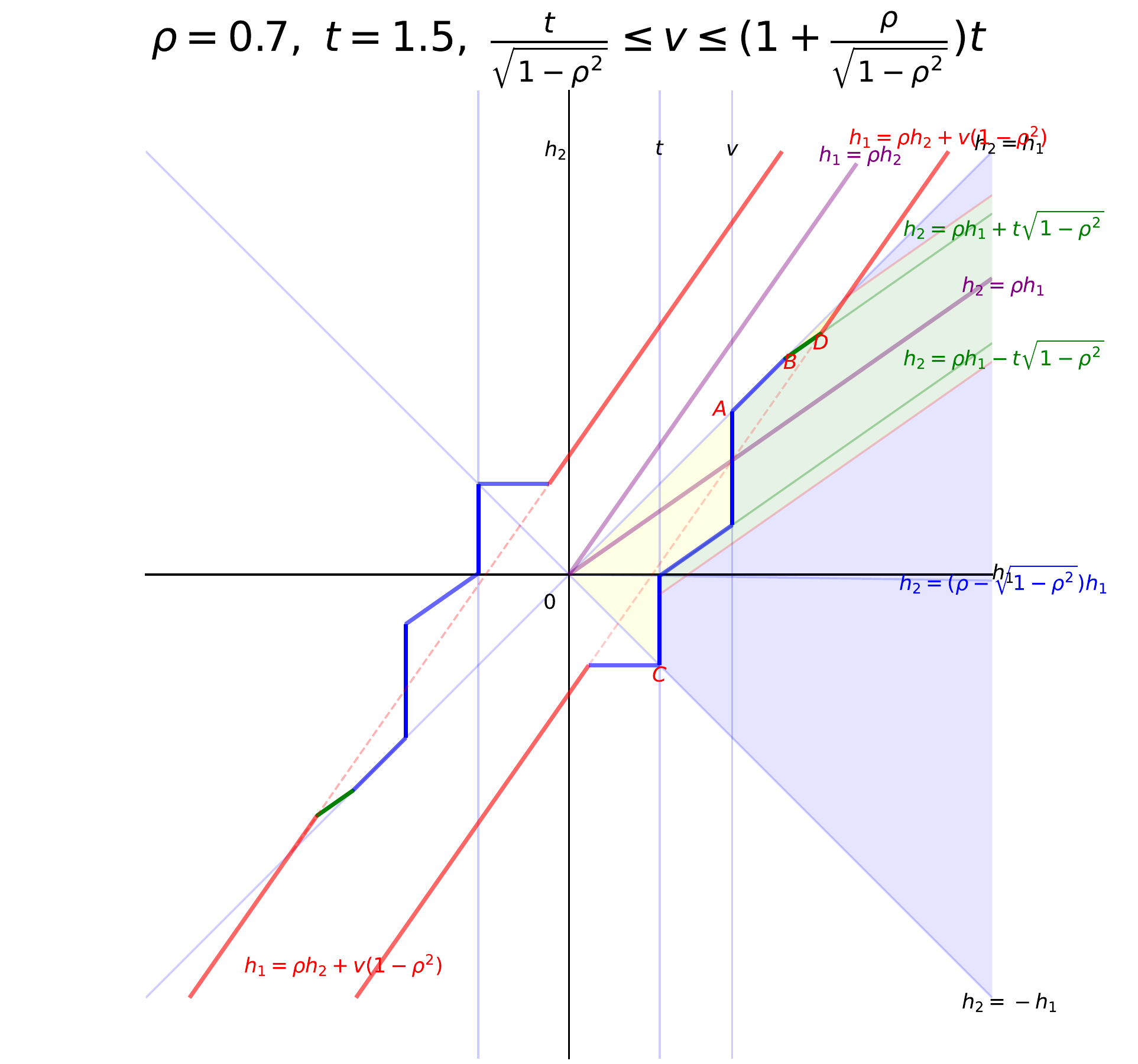}
    \caption[]%
    {{\small When \( \frac{t'}{\sqrt{1 -\rho^2}} < v' \leq t'\left( 1 +\frac{\rho}{\sqrt{1 -\rho^2}} \right)  \)}}    
    \label{subfig:large.v}
\end{subfigure}
\hfill
\begin{subfigure}[b]{0.495\textwidth}  
  \centering 
  \includegraphics[width=\textwidth]{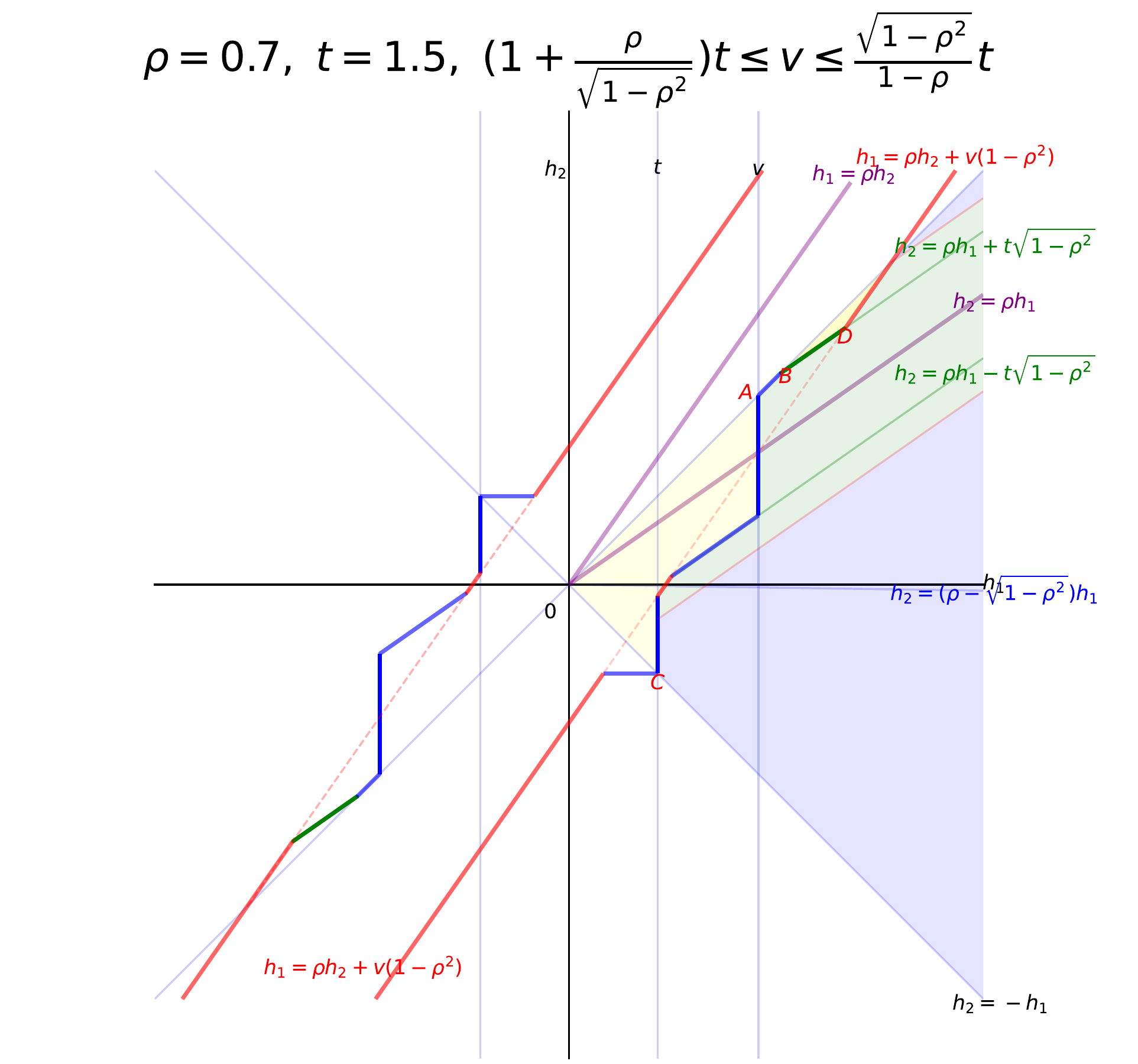}
  \caption[]%
  {{\small When \(t'\left( 1 +\frac{\rho}{\sqrt{1 -\rho^2}} \right)\leq v' \leq\frac{t' \sqrt{1 -\rho^2}}{1 -\rho} \)}}    
  \label{subfig:large.plus.v}
\end{subfigure}
\vskip\baselineskip
\begin{subfigure}[b]{0.495\textwidth}  
  \centering 
  \includegraphics[width=\textwidth]{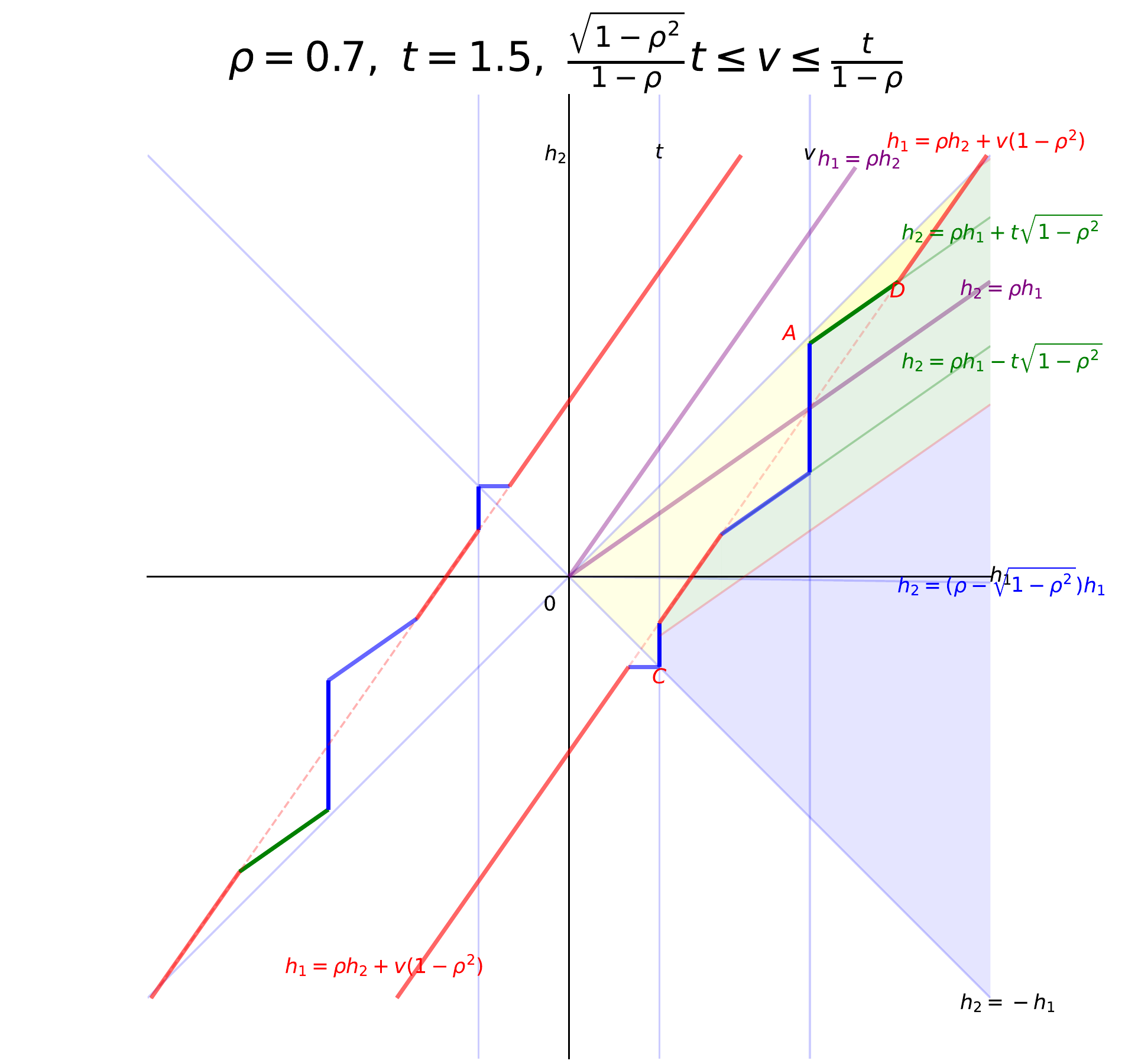}
  \caption[]%
  {{\small When \(\frac{t' \sqrt{1 -\rho^2}}{1 -\rho}\leq v' \leq \frac{t'}{1 -\rho} \)}}    
  \label{subfig:large.plus.plus.v}
\end{subfigure}
\hfill
\begin{subfigure}[b]{0.495\textwidth}  
  \centering 
  \includegraphics[width=\textwidth]{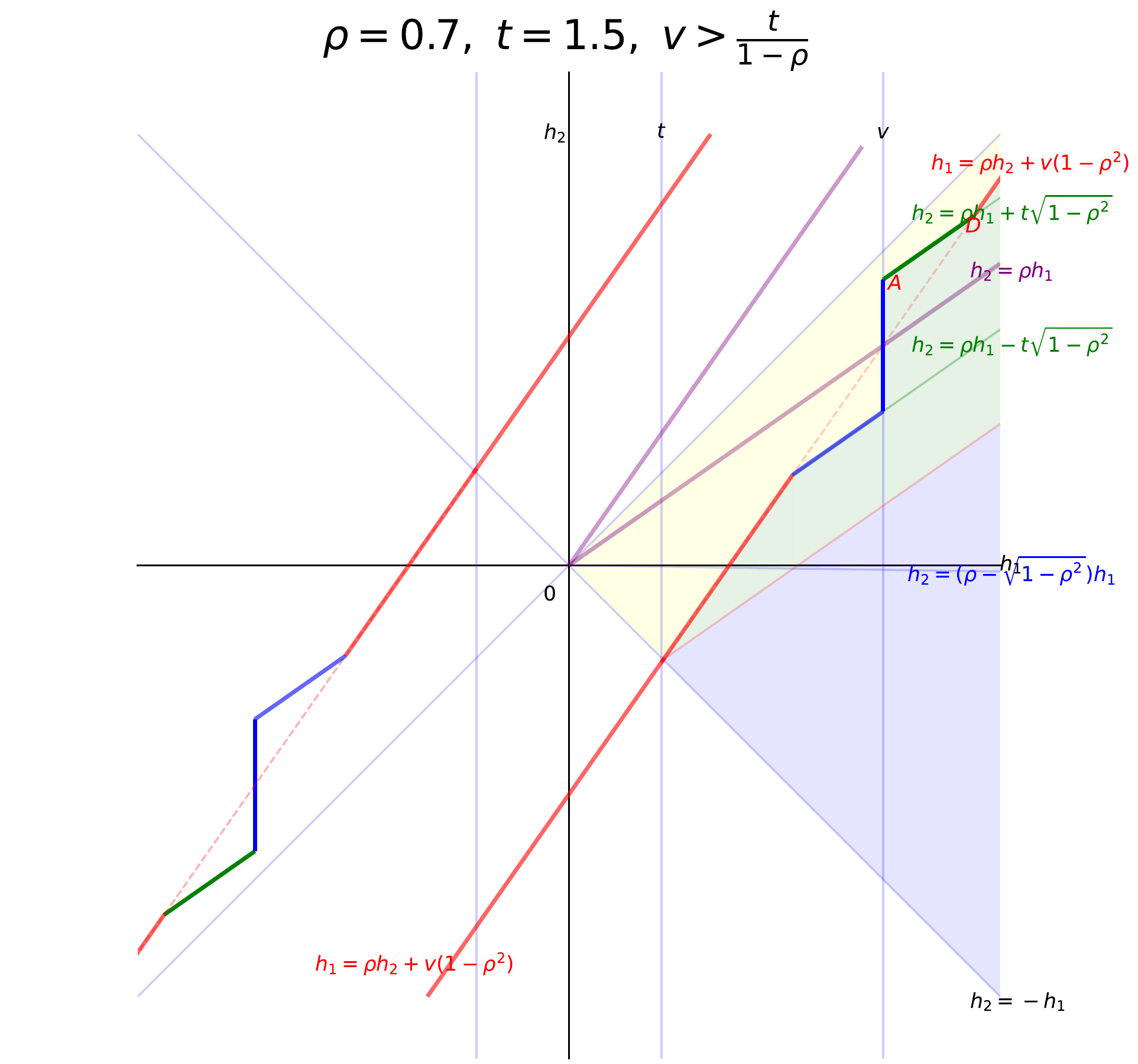}
  \caption[]%
  {{\small When \( v' > \frac{t'}{1 -\rho} \) }}    
  \label{subfig:large.plus.plus.plus.v}
\end{subfigure}
\caption{The rejection region of thresholded forward selection has many cases \( (\rho \geq  0) \).}
\label{fig:rjRegion.thres.forward}
\end{figure}


Due to the many cases of thresholded forward selection, we structure the rest of the proof in a different way: We discuss the six cases shown in Figure~\ref{fig:rjRegion.thres.forward} in the next six parts, and summarise the results for \( \rho \geq 0 \) and \( \rho < 0 \) respectively, at the end of the proof. In other words, each of the six cases has its phase curves, and we take the minimum of all the curves to be the final phase curve. 

\paragraph{Case 1: When \( v' \leq t'\).} 
From the rejection region defined in Equation~\eqref{suppeq:foba.rjRegion}, and Figure~\ref{subfig:small.v}, we know that thresholding does not have any effects in this case. Everything can be copied from forward selection:

The curve between Almost Full Recovery and No Recovery is \( r =\vt \).  The curve between Almost Full Recovery and Exact Recovery is: When \( \rho \geq  0 \),
\begin{equation}\label{eq:boundary.small.v.positive}
  \sqrt{r} =\max \left\{ 1 + \sqrt{1 -\vt},\sqrt{\frac{2(1 -\vt)}{1 -\rho}},\sqrt{\frac{1 - 2\vt}{1 -\rho^2}} + \frac{1}{\sqrt{1 -\rho^2}} \right\}.
\end{equation}
When \( \rho < 0 \),
\begin{equation}\label{eq:boundary.small.v.negative}
  \sqrt{r} =\max \left\{ 1 + \sqrt{1 -\vt},\sqrt{\frac{2(1 -\vt)}{1 - |\rho|}},\sqrt{\frac{1 - 2\vt}{1 - \rho^2}} + \frac{1}{\sqrt{1 - \rho^2}},\sqrt{\frac{1 - 2\vt}{2(1 - |\rho|)}} + \frac{1}{1 - |\rho|} \right\}.
\end{equation}

\paragraph{Case 2: When \( t' \leq v' \leq \frac{t'}{\sqrt{1 -\rho^2}} \).}

\begin{theorem}[The Hamming error rate When \( t' \leq v' \leq \frac{t'}{\sqrt{1 -\rho^2}} \) ]\label{suppthm:hamm.middle.v}
  Suppose the conditions of Theorem~\ref{thm:forward-backward} holds. Let  \( h_1 = x_j' y /\sqrt{2\log(p)} \), \( h_2 = x_{j + 1}' y /\sqrt{2\log(p)} \) and \( v' = v/\sqrt{2\log(p)} \), \( t' = t /\sqrt{2\log(p)}\). We require \( t' \leq v' \leq \frac{t'}{\sqrt{1 -\rho^2}}\). As \( p\to\infty \), 
  \[
  \FP_p=L_p p^{1- \min\bigl\{ \min\{v'^2,2t'^2\}, \;\; \vt + f_1(\sqrt{r}, t',v')\bigr\}}, \qquad \FN_p = L_p p^{1-\min\bigl\{\vt + f_2(\sqrt{r}, t',v'),\;\; 2\vt + f_3(\sqrt{r}, t',v')\bigr\}}, 
  \]
  where (below, $d^2_{|\rho|}(u,v)$ is as in Definition~\ref{def:EllipsDistance}), 
  \begin{align*}
    f_1(\sqrt{r},t',v') & = \begin{cases} 
      (v' - |\rho| \sqrt{r})^2 & \text{ if } \sqrt{r} \leq \frac{v'}{1 +|\rho|}\\
      \frac{1}{1 -\rho^2}d^2_{|\rho|}((v',v'), (|\rho| \sqrt{r},\sqrt{r})) & \text{ if } \frac{v'}{1 +|\rho|} <\sqrt{r} \leq \frac{2v'}{1+|\rho|} \\
      \min\left\{ \frac{1}{2}(1 -|\rho|) r,\ t'^2 \right\} & \text{ if } \sqrt{r} > \frac{2v'}{1+|\rho|}
    \end{cases}  \cr
   f_2(\sqrt{r}, t',v') &= \begin{cases} 
    \min \left\{ (\sqrt{r} - t')_ + ^2,\  \frac{1}{2}(1 -|\rho|) r  \right\} & \text{ if } \sqrt{r} \leq \frac{2t'}{\sqrt{1-\rho^2}} \\
    \min \left\{ (\sqrt{r} - t')_ + ^2,\ \frac{1}{1 -\rho^2}d^2_{|\rho|}(B,(\sqrt{r},|\rho| \sqrt{r}))  \right\}& \text{ if }\frac{2t'}{\sqrt{1-\rho^2}} < \sqrt{r} \leq \frac{t'\sqrt{1-\rho^2}}{|\rho|(1-|\rho|)} \\
    \left[ \sqrt{1 -\rho^2}\sqrt{r} - t' \right]^2 & \text{ if } \sqrt{r}>\frac{t' \sqrt{1-\rho^2}}{|\rho|(1-|\rho|)}
  \end{cases}\end{align*}
  The definition of \( f_3(\sqrt{r},t') \) depends on the sign of \( \rho \). When \( \rho > 0 \), 
  \begin{equation*}
    f_3(\sqrt{r}, t') = \left[ \sqrt{1 -\rho^2}\sqrt{r} - t  \right]^2
  \end{equation*}
    When \( \rho < 0 \), 
    \begin{align*}
      f_3(\sqrt{r},t') =&~ \min \left\{  \left[ \sqrt{1 -\rho^2}\sqrt{r} - t  \right]^2,\ d^2_{|\rho|}(C,((1 -|\rho|)\sqrt{r}, -(1 -|\rho|)\sqrt{r})) \right\} \\
      =&~ \min \left\{  \left[ \sqrt{1 -\rho^2}\sqrt{r} - t  \right]^2,\ \frac{2}{1 -|\rho|}\left[ (1 -|\rho|)\sqrt{r} - t
       \right]^2 \right\}
    \end{align*}
\end{theorem}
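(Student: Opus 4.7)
The plan is to apply the general framework of Section~\ref{suppsec:sketch} directly: since thresholded forward selection decomposes into independent bivariate problems on the correlated pairs $(x_j,x_{j+1})$, the Hamming error splits as $\mathbb{E}[H(\hat{\beta},\beta)] = \FP_p + \FN_p$ where each contribution admits the representation \eqref{proof-sketch2}. It therefore suffices to evaluate $d^2_{\Sigma}(\mu_{00},{\cal R})$, $d^2_{\Sigma}(\mu_{01},{\cal R})$, $d^2_{\Sigma}(\mu_{10},{\cal R}^c)$, and $d^2_{\Sigma}(\mu_{11},{\cal R}^c)$, with the rejection region ${\cal R}$ given explicitly by Case 2 of Part 1 (Figure~\ref{subfig:middle.v}). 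The four mean vectors $\mu_{ab}$ are the same as in the proofs of Elastic net and SCAD, with the usual sign-flip trick handling $\rho<0$ via reparametrisation of $(x_{j+1},\beta_{j+1})$, so we may always draw the picture with $|\rho|\geq 0$ and only $\mu_{11}$ moves to the ``anti-diagonal'' location when $\rho<0$.

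First, I would compute $d^2_{\Sigma}(\mu_{00},{\cal R})$. By inspection of Figure~\ref{subfig:middle.v}, the nearest boundary pieces from the origin are the horizontal/vertical thresholding lines at $\pm v'$ (contributing $v'^2$) and the diagonal lines $h_1-\rho h_2 = \pm t'\sqrt{1-\rho^2}$ (contributing $\frac{(t'\sqrt{1-\rho^2})^2(1-\rho^2)}{(1-\rho^2)+\rho^2\cdot(\text{coeff})}$, which by Lemma~\ref{supplem:distance} simplifies to $2t'^2$). Taking the minimum gives the exponent $\min\{v'^2, 2t'^2\}$. For $d^2_{\Sigma}(\mu_{01},{\cal R})$, I would sweep $\sqrt{r}$ upward from zero: for small $\sqrt{r}$ the nearest point on ${\cal R}$ is on the vertical threshold line $h_1 = v'$ (giving $(v'-|\rho|\sqrt{r})^2$), then the ellipsoid hits the corner $(v',v')$, and finally it is tangent to the slanted line $h_2=\rho h_1+t'\sqrt{1-\rho^2}$; Lemma~\ref{supplem:distance} plus the critical-$\sqrt{r}$ calculation (equate the tangent point to the corner) produces the three-piece formula for $f_1$ with breakpoints $\sqrt{r}=\frac{v'}{1+|\rho|}$ and $\sqrt{r}=\frac{2v'}{1+|\rho|}$, where the last piece takes $\min\{\tfrac{1}{2}(1-|\rho|)r, t'^2\}$ because beyond the vertex the ellipsoid may first touch either the slanted line $h_1-\rho h_2=t'\sqrt{1-\rho^2}$ or the horizontal thresholding line $h_2=t'$.

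For $d^2_{\Sigma}(\mu_{10},{\cal R}^c)$, I would mirror this analysis on the complement: the relevant boundaries are the horizontal threshold line $h_2=-t'$, the slanted line $h_2=\rho h_1 - t'\sqrt{1-\rho^2}$, and the line $h_1-\rho h_2 = v'(1-\rho^2)$ (which coincides in Case 2 since $v'\leq t'/\sqrt{1-\rho^2}$ means this line lies to the left of the slanted one in the relevant wedge). Chasing through tangent points with Lemma~\ref{supplem:distance} yields the three-piece $f_2$ with breakpoints $\sqrt{r}=\frac{2t'}{\sqrt{1-\rho^2}}$ and $\sqrt{r}=\frac{t'\sqrt{1-\rho^2}}{|\rho|(1-|\rho|)}$; the $\min$ with $(\sqrt{r}-t')_+^2$ accounts for the possibility of the nearest exit being the horizontal threshold line rather than the slanted rejection boundary. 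Finally, for $d^2_{\Sigma}(\mu_{11},{\cal R}^c)$, when $\rho\geq 0$ the mean lies on the diagonal and only the line $h_1-\rho h_2 = t'\sqrt{1-\rho^2}$ matters, giving $f_3=[\sqrt{1-\rho^2}\sqrt{r}-t']^2$; when $\rho<0$, $\mu_{11}$ sits on the antidiagonal at $((1-|\rho|)\sqrt{r},-(1-|\rho|)\sqrt{r})$, so I must additionally check tangency to the line $h_2=-t'$, which by Lemma~\ref{supplem:distance} gives the extra candidate $d^2_{|\rho|}(C,((1-|\rho|)\sqrt{r},-(1-|\rho|)\sqrt{r}))=\frac{2}{1-|\rho|}[(1-|\rho|)\sqrt{r}-t']^2$; taking the minimum yields the stated $f_3$.

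Collecting these four exponents via \eqref{proof-sketch3} gives the advertised expression. The main obstacle is the bookkeeping: Case~2 is the narrow regime $t'\leq v'\leq t'/\sqrt{1-\rho^2}$ in which the diagonal line $h_2=\rho h_1 + t'\sqrt{1-\rho^2}$ and the refit line $h_2-\rho h_1 = v'(1-\rho^2)$ nearly coincide, so one must verify carefully that tangent points computed via Lemma~\ref{supplem:distance} actually lie on the relevant boundary segment (not past its endpoint), at every breakpoint of the piecewise formulas. This is where transitions get delicate, but the computation is entirely elementary once the geometry of Figure~\ref{subfig:middle.v} is parsed. The formulas for $f_1, f_2, f_3$ are the same as the corresponding expressions that appear in Theorem~\ref{suppthm:hamm.forward} for plain forward selection, a coincidence explained by the fact that in the regime $v'\leq t'/\sqrt{1-\rho^2}$, thresholding only shrinks ${\cal R}$ along the $h_1=\pm v'$ rays (which are irrelevant to $\mu_{01},\mu_{10},\mu_{11}$ when $\sqrt{r}$ is moderate), so only the $\FP_p$-from-$\mu_{00}$ term picks up the new $\min\{v'^2,2t'^2\}$ contribution.
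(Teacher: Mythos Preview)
Your overall strategy---apply the framework of Section~\ref{suppsec:sketch}, read off the rejection region from Figure~\ref{subfig:middle.v}, and compute the four distances $d^2_\Sigma(\mu_{ab},\cdot)$ via Lemma~\ref{supplem:distance}---is exactly what the paper does, and the paper's own proof is essentially the one-line remark following the theorem. But two of your specific geometric attributions are wrong, and one of them is precisely the single point that remark singles out.

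For $d^2_\Sigma(\mu_{00},{\cal R})$: Lemma~\ref{supplem:distance} applied to the line $h_1-\rho h_2=t'\sqrt{1-\rho^2}$ yields $t'^2$, not $2t'^2$, and in any case the tangent point $(t'\sqrt{1-\rho^2},0)$ lies outside ${\cal R}$ (since $t'\sqrt{1-\rho^2}<t'$). The $2t'^2$ actually comes from the \emph{corner} $(t',\,t'(\rho-\sqrt{1-\rho^2}))$, where $h_1=t'$ meets $h_2=\rho h_1-t'\sqrt{1-\rho^2}$; computing $d^2_{|\rho|}$ at that corner gives $2t'^2(1-\rho^2)$, hence $d^2_\Sigma=2t'^2$. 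This is exactly the detail the paper's remark highlights. Similarly, in your account of $f_1$, the last piece $\min\{\tfrac12(1-|\rho|)r,\,t'^2\}$ does not come from the right-side slanted line or a horizontal line $h_2=t'$: the $\tfrac12(1-|\rho|)r$ is tangency to the diagonal $h_1=h_2$, and the $t'^2$ is tangency to the \emph{left} slanted line $h_1-\rho h_2=-t'\sqrt{1-\rho^2}$. Finally, your closing claim that $f_1$ coincides with the forward-selection $f_1$ is false: compare Theorem~\ref{suppthm:hamm.forward} with the present statement and you will see $v'$ replaces $t'$ in the first two pieces and the corner is $(v',v')$, not $(t',t')$. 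Only $f_2$ and $f_3$ are unchanged from forward selection, which is the real reason the paper calls this case ``easy''.
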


\begin{remark}
  The proof of Theorem~\ref{suppthm:hamm.middle.v} is easy, but we need to emphasize one thing: For \( FP_1 \), whose ellipsoid is centered at \( (0,0) \), it may be tangent to \( h_1 = v' \) at \( (v',|\rho| v') \); or, it may intersect the rejection region at the corner \( (t,t(\rho - \sqrt{1 -\rho^2})) \). 
\end{remark}

\begin{theorem}[The phase diagram when \( t \leq v \leq \frac{t}{\sqrt{1 -\rho^2}} \) ]\label{suppthm:phase.middle.v} Suppose the conditions of Theorem~\ref{thm:forward-backward} holds. The boundary between Exact Recovery and and Almost Full Recovery is Equation~\ref{suppeq:boundary.middle.v.positive} when the correlation is positive, and Equation~\ref{suppeq:boundary.middle.v.negative} when the correlation is negative. When \( \rho \geq  0 \), 
\begin{equation}\label{suppeq:boundary.middle.v.positive}
  \sqrt{r} =\max \left\{ 1 + \sqrt{1 -\vt},\sqrt{\frac{2(1 -\vt)}{1 -\rho}},\sqrt{\frac{1 - 2\vt}{1 -\rho^2}} + \sqrt{\frac{1 -\vt}{1 -\rho^2}} \right\}.
\end{equation}
When \( \rho < 0 \),
\begin{align}\label{suppeq:boundary.middle.v.negative}
  \sqrt{r} =  \max \Bigg\{&~ 1 + \sqrt{1 -\vt},\sqrt{\frac{2(1 -\vt)}{1 -|\rho|}},\sqrt{\frac{1 - 2\vt}{1 -\rho^2}} + \sqrt{\frac{1 -\vt}{1 -\rho^2}},\nonumber\\ 
  & \sqrt{\frac{1 - 2\vt}{2(1 -|\rho|)}} + \frac{\sqrt{1 -\vt}}{1 -|\rho|},\sqrt{\frac{1 - 2\vt}{2(1 -|\rho|)}} + \frac{\sqrt{1 -\rho^2}}{1 -|\rho|} \Bigg\}.
\end{align}
\end{theorem}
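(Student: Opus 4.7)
The plan is to mimic the three-part routine already used for Elastic net, SCAD, thresholded Lasso, and forward selection: take the Hamming-rate formula from Theorem~\ref{suppthm:hamm.middle.v}, write $\mathbb{E}[H(\hat\beta,\beta)]=L_p p^{1-h(\sqrt{q},\sqrt{u};\vartheta,r)}$ with
\[
h(\sqrt{q},\sqrt{u};\vt,r)=\min\bigl\{\min(v',2t'^2),\,\vt+f_1,\,\vt+f_2,\,2\vt+f_3\bigr\},
\]
(here $t'=\sqrt{q}$, $v'=\sqrt{u}$), optimise $(\sqrt{q},\sqrt{u})$ subject to the constraint $t'\leq v'\leq t'/\sqrt{1-\rho^2}$, and read off the upper phase curve. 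The lower curve $L(\vt)=\vt$ is proved as in the Elastic net argument: for $r<\vt$ every candidate exponent in $h$ is $\leq \vt$ (either the $\min\{v'^2,2t'^2\}$ term or $\vt+(\sqrt{r}-t')_+^2$), while for $r>\vt$ one can select $t'\in(\max\{\sqrt{\vt},\cdot\},\sqrt{r})$ so that all four exponents exceed $\vt$.

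For the upper curve I will use the same \emph{balance principle} stated in \eqref{suppeq:important.relationship}: at the boundary $r=U(\vt)$ one must have $\min\{d^2_\Sigma(\mu_{00},\mathcal R),\,\vt+d^2_\Sigma(\mu_{01},\mathcal R)\}=\min\{\vt+d^2_\Sigma(\mu_{10},\mathcal R^c),\,2\vt+d^2_\Sigma(\mu_{11},\mathcal R^c)\}=1$, together with one additional activity to pin down both $(\sqrt{q},\sqrt{u})$. This yields a finite menu of candidate curves (one per combination of three tight constraints), and the phase curve is the upper envelope of those that are feasible, i.e.\ satisfy $v'\in[t',t'/\sqrt{1-\rho^2}]$ and the remaining $f_i$-inequalities. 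For $\rho\geq 0$ the relevant candidates are $\sqrt{r}=1+\sqrt{1-\vt}$ (from $\min(v'^2,2t'^2)=\vt+f_2=1$ in the $\sqrt{r}\leq 2t'/\sqrt{1-\rho^2}$ branch of $f_2$), $\sqrt{r}=\sqrt{2(1-\vt)/(1-\rho)}$ (from the $\tfrac12(1-|\rho|)r$ branch of $f_2$), and $\sqrt{r}=\sqrt{(1-2\vt)/(1-\rho^2)}+\sqrt{(1-\vt)/(1-\rho^2)}$ (from $\vt+f_2=2\vt+f_3=1$ using the hyperbolic $d^2$ expressions). For $\rho<0$ we additionally need to minimise over $\mu_{11}=((1-|\rho|)\sqrt{r},-(1-|\rho|)\sqrt{r})$ against the lower half of $\mathcal R^c$, which supplies the new candidates $\sqrt{(1-2\vt)/(2(1-|\rho|))}+\sqrt{1-\vt}/(1-|\rho|)$ (from $2\vt+f_3=\vt+f_2=1$ on the $C$-branch) and $\sqrt{(1-2\vt)/(2(1-|\rho|))}+\sqrt{1-\rho^2}/(1-|\rho|)$ (from $2\vt+f_3=\min(v'^2,2t'^2)=1$ on the same branch).

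Once each candidate is written down I will dispose of the inadmissible ones by reproducing the elimination arguments already used in Sections~\ref{suppsec:en} and~\ref{suppsec:thresLasso}: substitute the candidate $\sqrt{r}$ and the associated $(t',v')$ into the remaining inequalities, reduce to an inequality between $\sqrt{1-\vt}$, $\sqrt{1-2\vt}$ and constants depending only on $|\rho|$, and check monotonicity at the endpoints. The constraint $v'\in[t',t'/\sqrt{1-\rho^2}]$ is easy to monitor because along each candidate curve $(t',v')$ has a closed form. I will also verify that the corner of $\mathcal R$ at $(t',t'(|\rho|-\sqrt{1-\rho^2}))$ (highlighted in the Remark after Theorem~\ref{suppthm:hamm.middle.v}) does not produce a strictly better bound than the tangent points on the surrounding half-planes; this is a direct Lemma~\ref{supplem:distance} computation.

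The hard part, as in the SCAD analysis, will be the bookkeeping for $\rho<0$: four candidate curves now compete, the admissibility intervals in $\vt$ have several breakpoints, and one must show that the proposed maximum is achieved by some feasible $(t',v')$ throughout $\vt\in(0,\tfrac12)$. In particular, proving that $\sqrt{(1-2\vt)/(2(1-|\rho|))}+\sqrt{1-\rho^2}/(1-|\rho|)$ is the binding curve in the regime where $v'$ saturates the upper bound $t'/\sqrt{1-\rho^2}$, and that the ``interior'' candidates automatically lie below the maximum whenever they are feasible, will require the kind of case-by-case polynomial inequality verification (splitting on $|\rho|\lessgtr 1/2$, $\vt\lessgtr 2|\rho|/(1+|\rho|)$, etc.) that dominated the SCAD proof. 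Since this is only an upper bound claim in Theorem~\ref{thm:forward-backward}, I only need to exhibit $(t',v')$ realising each displayed curve, so the $\leq$ direction suffices and the tedious ``no lower curve exists'' verification can be skipped.
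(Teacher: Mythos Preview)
Your overall plan—invoke the Hamming-rate formula of Theorem~\ref{suppthm:hamm.middle.v}, apply the balance principle~\eqref{suppeq:important.relationship}, enumerate candidate curves, and check feasibility under the regime constraint $t'\le v'\le t'/\sqrt{1-\rho^2}$—is essentially the paper's approach. The paper organises the enumeration slightly differently: rather than listing triple-active-constraint candidates (as you do, following the thresholded-Lasso template), it runs through the four $FP_i=FN_j=1$ pairings and then minimises over the remaining free tuning parameter. This is shorter here because cases $(\vt+f_1=\vt+f_2=1)$ and $(\vt+f_1=2\vt+f_3=1)$ are dismissed in one line: the constraint $\vt+f_1=1$ forces $t'\ge\sqrt{1-\vt}$, which can only raise the $FN$-derived curves above what cases $1$ and $3$ already give. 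The subsequent simplification step (dropping the $t'_{\min}=\sqrt{2}/2$ and $t'_{\min}=\sqrt{1-\rho^2}$ branches as dominated) is also done explicitly in the paper.

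There is one genuine gap in your plan. In the final paragraph you argue that, because Theorem~\ref{thm:forward-backward} states only an upper bound for $\rho<0$, you may skip the ``no lower curve exists'' verification and establish only the $\le$ direction. That is a misreading of the logical structure. Theorem~\ref{suppthm:phase.middle.v} is stated as an \emph{equality} for both signs of $\rho$: it asserts the exact phase curve for the regime $t'\le v'\le t'/\sqrt{1-\rho^2}$. The upper-bound nature of Theorem~\ref{thm:forward-backward} for $\rho<0$ enters only at the very last step of Section~\ref{suppsec:forward.backward}, where the paper takes the minimum of the six regime-specific curves and declines to simplify the resulting expression; each regime-specific curve, including this one, is derived exactly. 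If you only exhibit feasible $(t',v')$ for each displayed curve, you have proved $U(\vt)\le\max\{\cdots\}$ for this regime but not Theorem~\ref{suppthm:phase.middle.v} as stated. You must also argue (as the paper does via its cases $2$ and $4$) that no choice of $(t',v')$ in this regime achieves exact recovery for smaller $r$.
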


\begin{proof}[Proof of Theorem~\ref{suppthm:phase.middle.v}]
  Like the proof of forward selection and other methods, We still discuss the \( 2\times 2 = 4 \) cases. For brevity, we use \( f_1,f_2,f_3 \) as shorthand of \( f_1(\sqrt{r},t',v'),f_2(\sqrt{r},t',v'),f_3(\sqrt{r},t',v') \).

  \textit{First}, if \( \min(v'^2,2t'^2) =\vt + f_2 = 1\), we have \( \sqrt{r} =\max \left\{ v' + \sqrt{1 -\vt}, \sqrt{\frac{2(1 -\vt)}{1 -|\rho|}} \right\} \). To ensure \( \vt + f_1 \geq 1 \), we need \( t' \geq \sqrt{1 -\vt} \). For \( f_2 \), it need to meet the requirement \( \sqrt{r} \leq \frac{2t'}{\sqrt{1 -\rho^2}} \). This is not restrictive, because:
  \begin{equation*}
    \begin{cases} 
    t' \geq \sqrt{1 -\vt} \\
    1 \leq  v' \leq \frac{t'}{\sqrt{1 -\rho^2}} \implies t' \geq v' \sqrt{1 -\rho^2}
    \end{cases} \implies 2t' \geq \sqrt{1 -\vt} + v'\sqrt{1 -\rho^2}.
  \end{equation*}
  (For the conditional expression of \( f_2 \), \( \sqrt{r} \leq \frac{2t'}{\sqrt{1 -\rho^2}}  \)  is the only possibility here; to see this, we can just refer to the same part of proof for forward selection.) Finally, \( 2\vt + f_4 \geq 1 \) requires \( \sqrt{r} \geq \sqrt{\frac{1 - 2\vt}{1 -\rho^2}} + \frac{t'}{\sqrt{1 -\rho^2}} \) when \( \rho \geq  0 \), and \( \sqrt{r} \geq \max \left\{ \sqrt{\frac{1 - 2\vt}{1 -\rho^2}} + \frac{t'}{\sqrt{1 -\rho^2}}, \sqrt{\frac{1 - 2\vt}{2(1 -|\rho|)}} + \frac{t'}{1 -|\rho|} \right\} \) when \( \rho \leq 0 \). 

  In the above discussion, the \( (v',t') \) refers to any admissible \( t' \) in this case, so we choose \( v'_{\min} = 1 \) and \( t'_{\min} =\min\left\{ \sqrt{1 -\vt},\sqrt{1 -\rho^2}, \frac{\sqrt{2}}{2}  \right\} \). To sum up,  \( \sqrt{r} = \max \left\{ v'_{\min} + \sqrt{1 -\vt}, \sqrt{\frac{2(1 -\vt)}{1 -|\rho|}} \right\} \), and we require \( \sqrt{r} \geq \sqrt{\frac{1 - 2\vt}{1 -\rho^2}} + \frac{t'_{\min}}{\sqrt{1 -\rho^2}} \) when \( \rho \geq  0 \), and \( \sqrt{r} \geq \max \left\{ \sqrt{\frac{1 - 2\vt}{1 -\rho^2}} + \frac{t'_{\min}}{\sqrt{1 -\rho^2}}, \sqrt{\frac{1 - 2\vt}{2(1 -|\rho|)}} + \frac{t'_{\min}}{1 -|\rho|} \right\} \) when \( \rho \leq 0 \).
  
\textit{Second}, if \( \vt + f_1 =\vt + f_2 = 1 \), this case will not give us any curve. We first need \( v' \geq 1 \) and \( t' \geq \frac{\sqrt{2}}{2}  \), and \( \sqrt{r} = \max \left\{ v' + \sqrt{1 -\vt},\sqrt{\frac{2(1 -\vt)}{1 -\rho}} \right\} \). For \( \vt + f_1 \geq 1 \), we already have \( \sqrt{r} \geq \sqrt{\frac{2(1 -\vt)}{1 -\rho}} \), so we only need \( t' \geq \sqrt{1 -\vt} \). The requirement from \( 2\vt + f_3 \geq 1 \) is still the same as that of the \textit{first} case.

We notice that even if this case admits any curve, it is strictly above the curve yielded by \( FP_1 = FN_1 \), and it exists in a smaller interval of \( \vt \). As a result, we need not discuss this case any further. 

\textit{Third}, if \( \min\left\{ v'^2,2t'^2 \right\} =\vt + f_2 = 1 \), then we immdiately have \( v' \geq 1  \), \( t' \geq \frac{\sqrt{2}}{2} \), and we can limit ourselves to consider \( \vt \leq \frac{1}{2} \). From \( \vt + f_1 \geq 1\) and \( \vt + f_2 \geq 1 \), we have the requirement 
\( \sqrt{r} \geq  \max \left\{ v' + \sqrt{1 -\vt},\sqrt{\frac{2(1 -\vt)}{1 -\rho}} \right\}\)
and 
\( t' \geq \sqrt{1 -\vt} \). Also, since \( v' \leq t'/\sqrt{1 -\rho^2}\), we need \( t' \geq \sqrt{1 -\rho^2} \). 

When \( \rho \geq 0 \), we have \( \sqrt{r} = \sqrt{\frac{1 - 2\vt}{1 -\rho^2}} + \frac{t'}{\sqrt{1 -\rho^2}}\); when \( \rho < 0 \), we have \(  \sqrt{r} = \max \left\{ \sqrt{\frac{1 - 2\vt}{1 -\rho^2}} + \frac{t'}{\sqrt{1 -\rho^2}}, \sqrt{\frac{1 - 2\vt}{2(1 -\rho)}} + \frac{t'}{1 -\rho} \right\}  \). Just like the \textit{first} case, we can take \( v'_{\min} = 1 \) and \( t'_{\min} =\min\left\{ \sqrt{1 -\vt},\sqrt{1 -\rho^2}, \frac{\sqrt{2}}{2}  \right\} \) in the expression of \( \sqrt{r} \). 

\textit{Fourth}, if \( \vt + f_1 = 2\vt + f_3 = 1 \), this case does not give any curve. The discussion is exactly the same as the \textit{second} case: even if this case gives us any curve, it would be strictly above the curve in the \textit{third} case.

To sum up, define \( v'_{\min} = 1 \) and \( t'_{\min} =\min\left\{ \sqrt{1 -\vt},\sqrt{1 -\rho^2}, \frac{\sqrt{2}}{2}  \right\} \): When \( \rho \geq  0 \):
\begin{equation*}
  \sqrt{r} =\max \left\{v'_{\min} + \sqrt{1 -\vt}, \sqrt{\frac{2(1 -\vt)}{1 -|\rho|}},\sqrt{\frac{1 - 2\vt}{1 -\rho^2}} + \frac{t'_{\min}}{\sqrt{1 -\rho^2}} \right\}
\end{equation*}
and add \( \sqrt{\frac{1 - 2\vt}{2(1 -\rho)}} + \frac{t'_{\min}}{1 -\rho} \) into the maximum when \( \rho < 0 \).

We can simplify the expression of the curve above, by deleting a few curves in the maximum: 
  \begin{itemize}
    \item the curve \( \sqrt{r} =\sqrt{\frac{1 - 2\vt}{1 -\rho^2}} + \frac{\sqrt{2}/2}{\sqrt{1 -\rho^2}} \) is always below other curves, and can be omitted. This is because when \( \vt \leq \frac{1}{2} \), \( \sqrt{1 -\vt} \geq \frac{\sqrt{2}}{2}  \), which implies \( \sqrt{r} =\sqrt{\frac{1 - 2\vt}{1 -\rho^2}} + \frac{\sqrt{2}/2}{\sqrt{1 -\rho^2}} \leq \sqrt{\frac{1 - 2\vt}{1 -\rho^2}} + \frac{\sqrt{1 -\vt}}{\sqrt{1 -\rho^2}} \).
    \item for the same reason, the curve \( \sqrt{r} =\sqrt{\frac{1 - 2\vt}{2(1 -\rho)}} + \frac{\sqrt{2}/2}{1 -\rho} \) is also always below other curves, and can be omitted. 
    \item   
    the curve \( \sqrt{r} =\sqrt{\frac{1 - 2\vt}{1 -\rho^2}} + \frac{\sqrt{1 -\rho^2}}{\sqrt{1 -\rho^2}} \) is always below other curves, and can be omitted. 
  \begin{itemize}
    \item When \( \rho \geq \frac{\sqrt{2}}{2}  \), \( \sqrt{1 -\rho^2} \leq \sqrt{1 -\vt} \) for all \( \vt \leq \frac{1}{2} \). Thus \( \sqrt{\frac{1 - 2\vt}{1 -\rho^2}} + \frac{\sqrt{1 -\rho^2}}{\sqrt{1 -\rho^2}} \leq \sqrt{\frac{1 - 2\vt}{1 -\rho^2}} + \frac{\sqrt{1 -\vt}}{\sqrt{1 -\rho^2}} \) 
    \item When \( \rho \geq \frac{\sqrt{2}}{2}  \): If \( \vt \leq \rho^2 \), we still have \( \sqrt{\frac{1 - 2\vt}{1 -\rho^2}} + \frac{\sqrt{1 -\rho^2}}{\sqrt{1 -\rho^2}} \leq \sqrt{\frac{1 - 2\vt}{1 -\rho^2}} + \frac{\sqrt{1 -\vt}}{\sqrt{1 -\rho^2}} \). If \( \rho^2 < \vt \leq \frac{1}{2} \), it can be verified that \( \sqrt{\frac{1 - 2\vt}{1 -\rho^2}} + \frac{\sqrt{1 -\rho^2}}{\sqrt{1 -\rho^2}} \leq 1 + \sqrt{1 -\vt} \).
  \end{itemize}
  \end{itemize}
Now we have arrived at the conclusion of Theorem~\ref{suppthm:phase.middle.v}.
\end{proof}

\paragraph{Case 3: When \(  \frac{t'}{\sqrt{1 -\rho^2}}\leq v' \leq t'( 1 +\frac{|\rho|}{\sqrt{1 -\rho^2}} )\).}
\begin{theorem}[The Hamming error rate When \(  \frac{t'}{\sqrt{1 -\rho^2}}\leq v' \leq t'( 1 +\frac{|\rho|}{\sqrt{1 -\rho^2}} )\) ]\label{suppthm:hamm.large.v}
  Suppose the conditions of Theorem~\ref{thm:forward-backward} holds. Let  \( h_1 = x_j' y /\sqrt{2\log(p)} \), \( h_2 = x_{j + 1}' y /\sqrt{2\log(p)} \) and \( v' = v/\sqrt{2\log(p)} \), \( t' = t /\sqrt{2\log(p)}\). As shorthand notation, define the points \( A(v',v') \),  \( B(\frac{t'\sqrt{1 -\rho^2}}{1 -|\rho|},\frac{t'\sqrt{1 -\rho^2}}{1 -|\rho|}) \), and \( D(v' + \frac{\rho t'}{\sqrt{1 -\rho^2}},\rho v' + \frac{ t'}{\sqrt{1 -\rho^2}}) \) as marked in Figure~\ref{subfig:large.v}. 
  We require \( t'/\sqrt{1 -\rho^2}\leq v' \leq t'( 1 + |\rho|/ \sqrt{1 -\rho^2} )\).  As \( p\to\infty \), 
  \[
  \FP_p=L_p p^{1- \min\bigl\{ \min\{v'^2,2t'^2\}, \;\; \vt + f_1(\sqrt{r}, t',v')\bigr\}}, \qquad \FN_p = L_p p^{1-\min\bigl\{\vt + f_2(\sqrt{r}, t',v'),\;\; 2\vt + f_3(\sqrt{r}, t',v')\bigr\}}, 
  \]
  where (below, $d^2_{|\rho|}(u,v)$ is as in Definition~\ref{def:EllipsDistance}), 
  \begin{align*}
    f_1(\sqrt{r},t',v') & = \begin{cases} 
      (v' -|\rho| \sqrt{r})^2 & \text{ if } \sqrt{r} \leq \frac{v'}{1 +\rho}\\
      \frac{1}{1 -\rho^2}d_{|\rho|}^2(A, (|\rho| \sqrt{r},\sqrt{r})) & \text{ if } \frac{v'}{1 +\rho} <\sqrt{r} \leq \frac{2v'}{1+\rho} \\
      \min\left\{ k(v',t'),\ v'^2(1 -\rho^2) \right\} & \text{ if } \sqrt{r} > \frac{2v'}{1+\rho}
    \end{cases}
  \end{align*}
  where \( k(v',t') \) is defined like: 
  \begin{equation*}
    k(v',t')\defeq \begin{cases} 
      \frac{1}{2}(1 -|\rho|) r & \text{ if } \frac{2v'}{1+|\rho|} \leq \sqrt{r} \leq \frac{2t'}{\sqrt{1-\rho^2}}\\
      \frac{1}{1 -\rho^2}d^2_{|\rho|}\left(B,(|\rho| \sqrt{r},\sqrt{r})\right) & \text{ if }  \frac{2t'}{\sqrt{1-\rho^2}} \leq \sqrt{r} \leq \frac{t \sqrt{1-\rho^2}}{|\rho|(1-|\rho|)} \\
      \left[ \sqrt{1 -\rho^2}\sqrt{r} - t' \right]^2& \text{ if } \sqrt{r} \geq \frac{t \sqrt{1-\rho^2}}{|\rho|(1-|\rho|)}
    \end{cases} 
  \end{equation*}
  \begin{align*} 
   f_2(\sqrt{r}, t',v') &= \begin{cases} 
    \min \left\{(\sqrt{r} - v')_ + ^2,\, \frac{1}{2}(1 -|\rho|) r,\, t'^2  \right\} \qquad\qquad\text{ if } \sqrt{r} \leq v' +|\rho| \frac{t'}{\sqrt{1-\rho^2}} \\
    \min \{ (\sqrt{r} - v')_ + ^2,\,\frac{1}{2}(1 -|\rho|) r,\,\frac{1}{1 -\rho^2}d^2_{|\rho|}(D,(\sqrt{r},|\rho| \sqrt{r})) \}\\
 \qquad\qquad\qquad\qquad\qquad\qquad\qquad\qquad \text{ if } v' +|\rho| \frac{t'}{\sqrt{1-\rho^2}} \leq \sqrt{r} \leq \min\{\sqrt{r_2(v',t')},v' + \frac{t'}{|\rho|\sqrt{1-\rho^2}}\} \\
    \min \left\{ (\sqrt{r} - v')_ + ^2,\ \frac{1}{1 -\rho^2} d^2_{|\rho|}(D,(\sqrt{r},|\rho| \sqrt{r}))\right\}   \text{ if } \min\{\sqrt{r_2},v' + \frac{t'}{|\rho|\sqrt{1-\rho^2}}\} \leq \sqrt{r} \leq v' + \frac{t'}{|\rho|\sqrt{1-\rho^2}} \\
    (1 -\rho^2)\left[ \sqrt{r} - v' \right]^2  \qquad\qquad\qquad\qquad\qquad\quad  \text{if }  \sqrt{r} \geq  v' + \frac{t'}{|\rho|\sqrt{1-\rho^2}}
  \end{cases} 
\end{align*}
where \( r_2 =r_2(v',t') \) is the larger root of the quadratic equation 
    \begin{align}
      \frac{1}{1 - \rho^2}d^2_{|\rho|}(D,(\sqrt{r},|\rho| \sqrt{r})) =&~ \frac{1}{2}(1 -|\rho|) r\cr 
      \Leftrightarrow \frac{1 +|\rho|}{2}r - 2 \left( v' +\frac{|\rho| t'}{\sqrt{1 -\rho^2}}\right)  \sqrt{r} +&~ \left( v'^2 + \frac{t'^2}{1 -\rho^2} +\frac{2|\rho| v' t'}{\sqrt{1 -\rho^2}} \right) = 0 \label{eq:r_2}
    \end{align}
and the explicitely form of  \( r_2(v',t') \) is 
    \begin{align*}
      \sqrt{r_2(v',t')} =&~ \frac{1}{1 +|\rho|} \left[ 2 \left( v' +\frac{|\rho| t'}{\sqrt{1 -\rho^2}}\right) + \sqrt{2(1 -|\rho|) \left( v' - \frac{t'}{\sqrt{1 -\rho^2}}\right) \left( v' + (1 + 2|\rho|) \frac{t'}{\sqrt{1 -\rho^2}} \right)} \right].
    \end{align*}

  The definition of \( f_3(\sqrt{r},t') \) depends on the sign of \( \rho \). When \( \rho > 0 \), 
  \begin{equation*}
    f_3(\sqrt{r}, t') = \left[ \sqrt{1 -\rho^2}\sqrt{r} - t  \right]^2
  \end{equation*}
    When \( \rho < 0 \), 
    \begin{align*}
      f_3(\sqrt{r},t') =&~ \min \left\{  \left[ \sqrt{1 -\rho^2}\sqrt{r} - t  \right]^2,\ d^2_{|\rho|}(C,((1 -|\rho|)\sqrt{r}, -(1 -|\rho|)\sqrt{r})) \right\} \\
      =&~ \min \left\{  \left[ \sqrt{1 -\rho^2}\sqrt{r} - t  \right]^2,\ \frac{2}{1 -|\rho|}\left[ (1 -|\rho|)\sqrt{r} - t
       \right]^2 \right\}
    \end{align*}
\end{theorem}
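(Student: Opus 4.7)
\bigskip

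\noindent\textbf{Proof proposal for Theorem~\ref{suppthm:hamm.large.v}.}

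The plan is to follow the general recipe laid out in Section~\ref{suppsec:sketch}. Since the procedure is decomposable across the $2\times 2$ diagonal blocks, I would work on a single block $(x_j,x_{j+1})$ and write $\tilde y = (h_1,h_2)'$, which is bivariate normal with covariance $\frac{1}{2\log(p)}\Sigma$ (diagonal $1$, off-diagonal $\rho$) and mean equal to one of $\mu_{00},\mu_{01},\mu_{10},\mu_{11}$ depending on $(\beta_j,\beta_{j+1})$. Invoking the rejection region $\mathcal R$ already derived in \eqref{suppeq:foba.rjRegion} and visualized for Case 3 in Figure~\ref{subfig:large.v}, the large-deviation identity \eqref{proof-sketch3} reduces the problem to computing the four elliptical distances $d^2_\Sigma(\mu_{00},\mathcal R),\, d^2_\Sigma(\mu_{01},\mathcal R),\, d^2_\Sigma(\mu_{10},\mathcal R^c),\, d^2_\Sigma(\mu_{11},\mathcal R^c)$. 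As in the Elastic-net proof (Section~\ref{suppsec:en}), I would handle $\rho<0$ by the sign-flip $(x_{j+1},\beta_{j+1})\mapsto(-x_{j+1},-\beta_{j+1})$, which preserves $\mathcal R$ (drawn with $|\rho|$) and only reflects $\mu_{11}$.

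For $d^2_\Sigma(\mu_{00},\mathcal R)$, the ellipsoid centered at the origin first hits either the vertical segment $\{h_1=v'\}$ at the tangent point $(v',|\rho|v')$, contributing $v'^2$, or the upper/lower corner where the line $h_2=\rho h_1+t'\sqrt{1-\rho^2}$ meets $h_1=v'$. In the present range $t'/\sqrt{1-\rho^2}\le v'\le t'(1+|\rho|/\sqrt{1-\rho^2})$ the corner lies at the point yielding $2t'^2$ after simplification via Definition~\ref{def:EllipsDistance}, so the $\FP_p$ exponent takes the form $\min\{v'^2,\,2t'^2\}$. Next, for the first piece of $\FP_p$ I would compute $d^2_\Sigma(\mu_{01},\mathcal R)$ with $\mu_{01}=(|\rho|\sqrt r,\sqrt r)$: as $\sqrt r$ increases, the optimal contact point on the boundary moves along (i) the line $h_1=v'$, giving $(v'-|\rho|\sqrt r)^2$; (ii) passes through the vertex $A=(v',v')$ in an intermediate window; and (iii) transitions onto the staircase boundary made of the diagonal segment $h_1=h_2$ up to $B$ and then the sloped line $h_1-\rho h_2=t'\sqrt{1-\rho^2}$ beyond $B$. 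The transition points would be located by the same technique used for \eqref{proof-en-Hamming-exponent}, namely equating the Lagrangian tangent point from Lemma~\ref{supplem:distance} with the relevant vertex. Applying Lemma~\ref{supplem:distance} segment-by-segment yields the piecewise expression for $f_1$ with $k(v',t')$ collecting the distances past $A$.

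The main obstacle is $d^2_\Sigma(\mu_{10},\mathcal R^c)$, i.e.\ $f_2$. The complement $\mathcal R^c$ in Case~3 is bounded by four segments on the relevant side (the vertical $h_1=v'$ for $h_2\le \rho h_1-t'\sqrt{1-\rho^2}$, the oblique $h_1-\rho h_2=v'(1-\rho^2)$, the horizontal $h_2=-t'$, and the diagonal continuation), and $\mu_{10}=(\sqrt r,|\rho|\sqrt r)$ sweeps close to several of them simultaneously. I would (a) compute the four candidate tangent distances via Lemma~\ref{supplem:distance}, (b) take the minimum, and (c) determine the $\sqrt r$-windows in which each candidate attains it. Two candidates coincide along a curve $r=r_2(v',t')$; equating $\tfrac{1}{1-\rho^2}d^2_{|\rho|}(D,(\sqrt r,|\rho|\sqrt r))=\tfrac{1}{2}(1-|\rho|)r$ gives the quadratic \eqref{eq:r_2}, whose larger root is the stated $\sqrt{r_2}$. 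The other transition $\sqrt r=v'+t'/(|\rho|\sqrt{1-\rho^2})$ is obtained by forcing Lemma~\ref{supplem:distance}'s tangent onto the vertex where the oblique meets the horizontal. Combining these thresholds gives the four-case formula for $f_2$. Careful bookkeeping is needed to confirm that, in the regime $t'/\sqrt{1-\rho^2}\le v'\le t'(1+|\rho|/\sqrt{1-\rho^2})$, each tangent point actually lies on the advertised segment and not on its extension.

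For $f_3$ the mean vector is $\mu_{11}=((1+\rho)\sqrt r,(1+\rho)\sqrt r)$ when $\rho\ge 0$, lying on the diagonal $h_1=h_2$, so geometric symmetry forces the closest point of $\mathcal R^c$ to lie on the outer oblique $h_1-\rho h_2=v'(1-\rho^2)$ (equivalently, the piece controlled by the threshold $v'$ in the FB step); applying Lemma~\ref{supplem:distance} yields $(\sqrt{1-\rho^2}\sqrt r-t')^2$, matching the statement upon noting that the relevant threshold in the $d^2_{|\rho|}$ computation collapses to $t'$ under the constraint $v'\le t'(1+|\rho|/\sqrt{1-\rho^2})$. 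For $\rho<0$, the sign-flipped mean $((1-|\rho|)\sqrt r,-(1-|\rho|)\sqrt r)$ may also be closest to the lower horizontal boundary passing through $C$, which introduces the additional candidate $\tfrac{2}{1-|\rho|}[(1-|\rho|)\sqrt r-t']^2$; taking the minimum produces the stated $f_3$. Substituting all four distances into \eqref{proof-sketch2} finishes the proof.
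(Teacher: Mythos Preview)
Your plan matches the paper's: reduce to a single block, invoke \eqref{proof-sketch2}--\eqref{proof-sketch3}, and compute the four elliptical distances against the Case-3 boundary of Figure~\ref{subfig:large.v}. The paper's own proof is equally terse and only pauses to explain the one subtle point you also correctly identify, namely why $\sqrt{r_2}$ enters $f_2$ (the ellipsoid from $\mu_{10}$ pivoting past the corner $D$ so that the diagonal segment $h_1=h_2$ ceases to be the active boundary).

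Two small geometric corrections. First, the $2t'^2$ contribution to the $\FP_p$ exponent comes from the corner $\bigl(t',\,t'(|\rho|-\sqrt{1-\rho^2})\bigr)$ where $h_1=t'$ meets the \emph{lower} oblique $h_2=\rho h_1-t'\sqrt{1-\rho^2}$ (this corner is still on $\partial\mathcal R$ precisely because $v'\le t'(1+|\rho|/\sqrt{1-\rho^2})$), not from $h_1=v'$ meeting the upper oblique. Second, for $f_3$ with $\rho>0$ the relevant boundary of $\mathcal R^c$ in this regime is $h_1-\rho h_2=v'(1-\rho^2)$ (since here $v'(1-\rho^2)\ge t'\sqrt{1-\rho^2}$), which gives $(1-\rho^2)(\sqrt r-v')^2$; your claimed ``collapse to $t'$'' does not actually occur. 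The paper's own phase-diagram derivation (proof of Theorem~\ref{suppthm:phase.large.v}) uses the condition $\sqrt r\ge\sqrt{(1-2\vartheta)/(1-\rho^2)}+v'$ for $2\vartheta+f_3\ge 1$, which is consistent with the $v'$-version, so the displayed $f_3$ in the theorem statement is best read as carried over verbatim from Cases~1--2.
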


\begin{proof}[Proof of Theorem~\ref{suppthm:hamm.large.v}]
  We explain one detail, about why \( \sqrt{r_2(v',t')} \) is introduced in \( f_2(\sqrt{r},t',v') \), which corresponds to the ellipsoid centered at \( \mu_{10} =(\sqrt{r},|\rho| \sqrt{r}) \).  Recall that the point \( D \) as noted in in Figure~\ref{subfig:large.v} has cooredinate \( x_D =v' + \frac{|\rho| t'}{\sqrt{1 -\rho^2}},y_D =|\rho| v' + \frac{ t'}{\sqrt{1 -\rho^2}} \). Suppose \( \rho \geq  0 \), as the case of \( \rho < 0 \) can be obtained with symmetry.

  When \( \sqrt{r} \leq x_D = v' + \frac{\rho t'}{\sqrt{1 -\rho^2}} \), the ellipsoid can  be tangent to any one among the three line segments: (i) \( h_1 = v' \), (ii) \( h_2 = h_1 \) or (iii) \( h_2 = \rho h_1 + t' \sqrt{1 -\rho^2} \).
  
  When \( \rho \sqrt{r} \geq y_D =\rho v' + \frac{ t'}{\sqrt{1 -\rho^2}} \), the ellipsoid can either be tangent to  the red line \( h_1 =\rho h_2 + v'(1 -\rho^2) \), or the blue line \( h_1 = v' \) in Figure~\ref{subfig:large.v}.

  When \(v' + \frac{\rho t'}{\sqrt{1 -\rho^2}} \leq  \sqrt{r} \leq  v' + \frac{ t'}{\rho\sqrt{1 -\rho^2}}\), thing are more tricky:
  \begin{itemize}
    \item The ellipsoid can possibly be tangent to \( h_1 = v'\), or it may  intersect point \( D \) and rotate around it. 
    \item However, it is uncertain whether we should include the segment \( h_2 = h_1 \) into the form of the Hamming error. This is because  when \( \sqrt{r} \)  is large, the ellipsoid is at the upper right side of point \( D \), where \( h_2 = h_1 \) does not form the boundary of the rejection region. If we still include it, the final phase diagram will be worse than it actually is. 
    \item To exclude \( h_2 = h_1 \) when it is unwanted, we require \( \sqrt{r} > \sqrt{r_2(v',t')} \).
    \item The place of \( \sqrt{r_2(v',t')} \) is exchangable to the symmetric axis of the quadratic equation~\eqref{eq:r_2}, which is \( \frac{2}{1 +\rho}v' +\frac{2\rho t'}{(1 +\rho)\sqrt{1 -\rho^2}} \). 
  \end{itemize}
\( \sqrt{r_2(v',t')} \) can be greater than \( v' +\frac{t'}{\rho \sqrt{1 -\rho^2}} \), so it is taken minimum with \( v' +\frac{t'}{\rho \sqrt{1 -\rho^2}} \) in the definition of \( f_2 \). 
\end{proof}

Before moving on to the phase diagram, we first introduce two terms to simplify notation:
\begin{definition}\label{def:min.min}
  Define \( v'_{\min}  \) and \( t'_{\min}  \), both as functions of \( \vt \) and \( \rho \). \( v'_{\min} =\max \left\{ 1,\sqrt{\frac{1 -\vt}{1 -\rho^2}},\frac{\sqrt{2}/2}{\sqrt{1 -\rho^2}} \right\} \) and \( t'_{\min} = \max \left\{ \frac{\sqrt{2}}{2}, \frac{\max\{1,\sqrt{\frac{1 -\vt}{1 -\rho^2}}\}}{1 +|\rho|/\sqrt{1 -\rho^2}}\right\} \). 
\end{definition}
Figure~\ref{suppfig:def.minmin} gives an explanation of how \( v'_{\min} \) and \( t'_{\min} \) are defined. 
\begin{figure}[h!]
  \centering
  \includegraphics[width=0.6\textwidth]{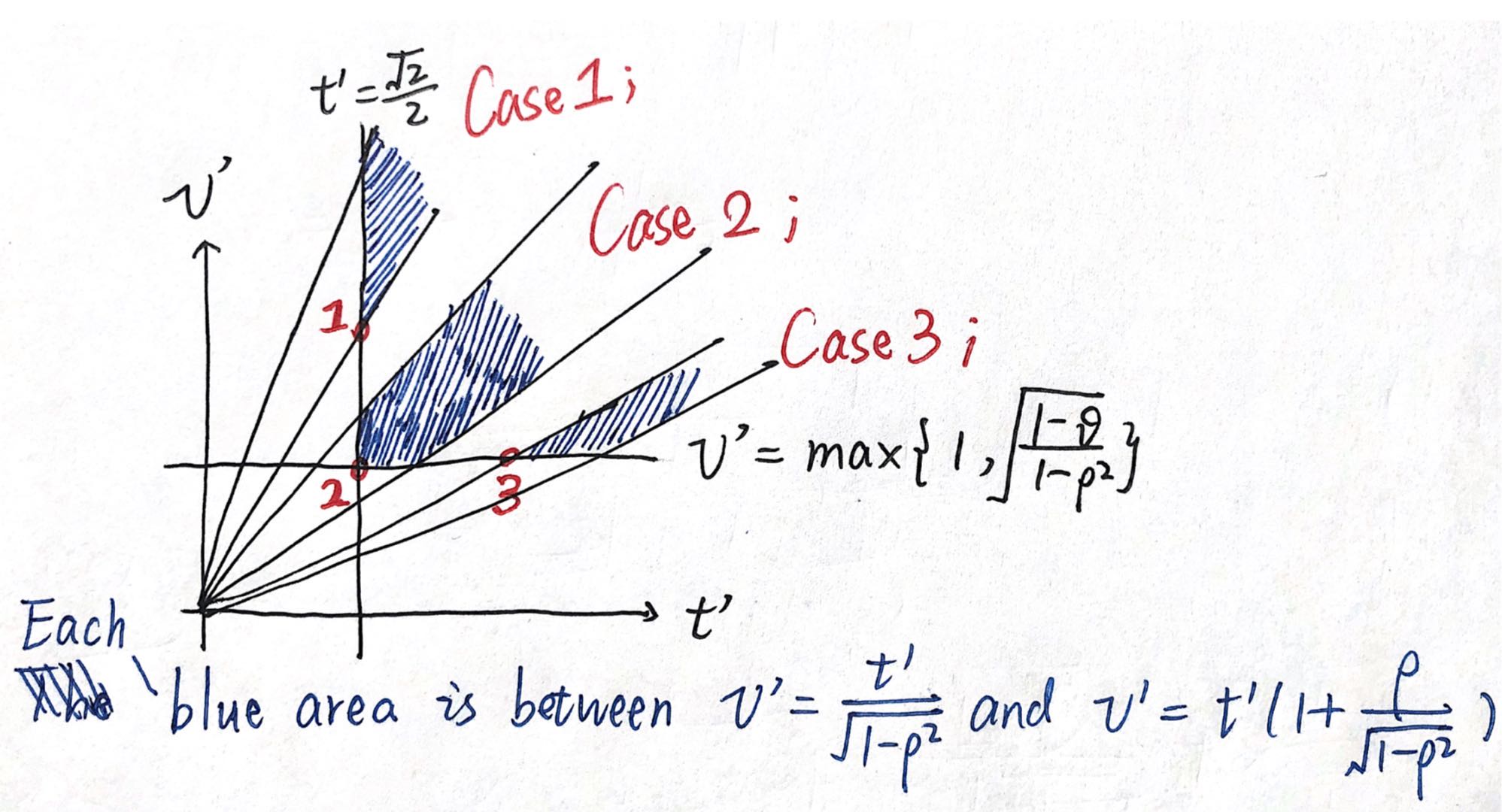}
  \caption{the definition of \( v'_{\min} \) and \( t'_{\min} \)}
  \label{suppfig:def.minmin}
\end{figure}

\begin{theorem}[The phase diagram when \(  \frac{t'}{\sqrt{1 -\rho^2}}\leq v' \leq t'( 1 +\frac{|\rho|}{\sqrt{1 -\rho^2}} ) \) ]\label{suppthm:phase.large.v} Suppose the conditions of Theorem~\ref{thm:forward-backward} holds. The boundary between Exact Recovery and and Almost Full Recovery is Equation~\ref{suppeq:boundary.middle.v.positive} when the correlation is positive, and Equation~\ref{suppeq:boundary.middle.v.negative} when the correlation is negative. When \( \rho \geq  0 \), 
  \begin{align}\label{eq:boundary.large.v.positive}
    \sqrt{r} = \max \left\{ v'_{\min} + \sqrt{1 -\vt},\, \sqrt{\frac{2(1 -\vt)}{1 -|\rho|}},\, \sqrt{\frac{1 - 2\vt}{1 -\rho^2}} + v'_{\min} \right\}
  \end{align}
  When the actual correlation is negative,
  \begin{align}\label{eq:boundary.large.v.negative}
      \sqrt{r} =  \max \Bigg\{&~ v'_{\min} + \sqrt{1 -\vt},\, \sqrt{\frac{2(1 -\vt)}{1 -|\rho|}},\, \sqrt{\frac{1 - 2\vt}{1 -\rho^2}} + v'_{\min},\sqrt{\frac{1 - 2\vt}{2(1 -|\rho|)}} + \frac{t'_{\min}}{1 -|\rho|} \Bigg\}.
    \end{align}
\end{theorem}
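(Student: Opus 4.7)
The plan is to follow the same four-case routine that was used for Elastic net, Lasso, thresholded Lasso, and for Case~2 of forward-backward (Theorem~\ref{suppthm:phase.middle.v}), applied now to the Hamming error formulas given in Theorem~\ref{suppthm:hamm.large.v}. At the boundary between Exact Recovery and Almost Full Recovery, I would use the key fact (already exploited earlier in the supplement) that both
\[
\min\bigl\{\min(v'^2,2t'^2),\ \vt+f_1(\sqrt{r},t',v')\bigr\}=1,\qquad \min\bigl\{\vt+f_2,\ 2\vt+f_3\bigr\}=1,
\]
because (i) continuity of $h(q;\vartheta,r)$ forces the exponent to equal $1$ on the boundary, and (ii) at an optimal pair $(t'^*,v'^*)$ the FP and FN exponents must balance, else one could be decreased without raising the other. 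This reduces the problem to pairing one equality from the left side with one from the right, giving $2\times 2=4$ base cases (plus the extra FN-equality $2\vt+f_3=1$ that only becomes restrictive when $\rho<0$).

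In each case I would solve for $\lambda'$-analog $v'$ and $t'$ in terms of $(\vt,r,\rho)$, substitute into the other two inequalities to produce a candidate curve $r=r(\vt)$, and then check admissibility against the range constraint of Case~3, namely $t'/\sqrt{1-\rho^2}\leq v'\leq t'(1+|\rho|/\sqrt{1-\rho^2})$, together with $v'\geq 1$ and $2t'^2\geq 1$. As in Case~2, the ``first'' case (matching $\min(v'^2,2t'^2)=1$ with $\vt+f_2=1$) produces the curves built from $v'+\sqrt{1-\vt}$ and $\sqrt{2(1-\vt)/(1-|\rho|)}$; the ``third'' case (same left equality with $2\vt+f_3=1$) produces the curves built from $\sqrt{(1-2\vt)/(1-\rho^2)}+v'$ (and, when $\rho<0$, also $\sqrt{(1-2\vt)/(2(1-|\rho|))}+t'/(1-|\rho|)$). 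The ``second'' and ``fourth'' cases, where $\vt+f_1=1$ is tight, would again be shown to lie above the curves just produced, mirroring the elimination argument used for Theorem~\ref{suppthm:phase.middle.v}. Because the tuning parameters enter only through $v'$ in the first batch and through $t'$ in the second, and because the curves depend monotonically on them, the optimal choices are the smallest admissible values, which I identify as $v'_{\min}$ and $t'_{\min}$ of Definition~\ref{def:min.min}. These minima come from balancing (a) the Case~3 range constraint $v'\geq t'/\sqrt{1-\rho^2}$, (b) $v'\geq 1$ needed for $v'^2\geq 1$, (c) $t'\geq\sqrt{2}/2$ needed for $2t'^2\geq 1$, and (d) $v'\geq \sqrt{(1-\vt)/(1-\rho^2)}$, which is what $\vt+f_1\geq 1$ eventually reduces to in the regime where the $\sqrt{r}$ curve passes through the region $\sqrt{r}>2v'/(1+|\rho|)$.

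The main obstacle will be controlling the middle branches of $f_2(\sqrt{r},t',v')$, where the rejection region's boundary jumps from the red vertical segment $h_1=v'$ to the red tilted segment $h_1=\rho h_2+v'(1-\rho^2)$ through the corner point $D$. This introduces the auxiliary threshold $\sqrt{r_2(v',t')}$ and the elliptical distance $d^2_{|\rho|}(D,(\sqrt{r},|\rho|\sqrt{r}))$. I expect to have to verify that, at the candidate tuning $(v'_{\min},t'_{\min})$ and along each candidate curve, the value of $\sqrt{r}$ either stays in the regime where $f_2$ reduces to $(\sqrt{r}-v')_+^2$ or $\frac{1}{2}(1-|\rho|)r$, or else that the more complex $d^2_{|\rho|}(D,\cdot)$ branch produces a strictly larger exponent and can be discarded. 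A secondary but manageable nuisance is pruning redundant curves from the final maximum: curves involving $\sqrt{2}/2$ inside $\sqrt{(1-2\vt)/(1-\rho^2)}+(\sqrt{2}/2)/\sqrt{1-\rho^2}$ and $\sqrt{1-\rho^2}/\sqrt{1-\rho^2}$ can be shown to lie below those already kept, exactly as in the elimination step at the end of the proof of Theorem~\ref{suppthm:phase.middle.v}.

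Finally, I would confirm that the boundary $r=\vt$ separating Almost Full Recovery from No Recovery carries over verbatim from the arguments used for Elastic net (Section~\ref{suppsec:en}, Part~3): when $r<\vt$ any choice of $(t',v')$ yields $h(q;\vt,r)\leq\vt$, and when $r>\vt$ one can pick admissible $(t',v')$ in the Case~3 range making $h(q;\vt,r)>\vt$. This half of the statement requires no new work beyond checking that the Case~3 constraint $t'/\sqrt{1-\rho^2}\leq v'\leq t'(1+|\rho|/\sqrt{1-\rho^2})$ does not shrink the admissible set to the empty set for $r$ slightly above $\vt$, which it does not since the constraint can be satisfied with $t'$ and $v'$ simultaneously below any prescribed positive number.
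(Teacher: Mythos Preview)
Your proposal is sound in its identification of the ingredients --- the balancing argument, the optimal tuning $(v'_{\min},t'_{\min})$ from Definition~\ref{def:min.min}, and the main obstacle coming from the middle branches of $f_2$ involving the corner point $D$ and the threshold $\sqrt{r_2(v',t')}$. However, the paper explicitly abandons the four-case routine here: it states ``In this particular proof, we do not limit ourselves to discuss the four cases as usual. Instead, we just think about the conditions for $\min\{v'^2,2t'^2\},\vt + f_1,\vt + f_2,2\vt + f_3 \geq 1$, and take the smallest $\sqrt{r}$ possible.''

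Concretely, the paper first extracts all \emph{necessary} lower bounds on $\sqrt{r}$ and on $(v',t')$ directly from each constraint (in particular, $\vt+f_1\geq 1$ forces $v'\geq\sqrt{(1-\vt)/(1-\rho^2)}$ because $f_1$ is maximized by $(1-\rho^2)v'^2$), and then bifurcates on whether $\rho\leq 0.576$ or $\rho>0.576$. For small $|\rho|$ the curve $\sqrt{2(1-\vt)/(1-|\rho|)}$ is automatically dominated by the other two necessary bounds, so only sufficiency needs checking; for large $|\rho|$ that curve becomes the binding one, and the paper verifies sufficiency by a systematic elimination of the complicated conditional branches of $f_1$ and $f_2$ (those involving $d^2_{|\rho|}(B,\cdot)$, $d^2_{|\rho|}(D,\cdot)$, and $\sqrt{r_2}$), showing each either leads to a contradiction or to a curve lying above the claimed maximum.

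Your four-case routine would in principle reach the same destination, but it would force you to thread the branch structure of $f_2$ separately inside each of the four cases, rather than once in a consolidated sufficiency check. The paper's necessary-then-sufficient organization, with the $\rho$-split at $0.576$, is what makes the argument tractable here; without that split you would likely find the ``first'' case (your $\min(v'^2,2t'^2)=\vt+f_2=1$) does not close cleanly for large $\rho$, because the relevant branch of $f_2$ at the candidate $\sqrt{r}$ is not the simple $(\sqrt{r}-v')_+^2$ one.
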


\begin{proof}[Proof of Theorem~\ref{suppthm:phase.large.v}]

We start by considering the case of \( \rho \geq 0 \). The other case of \( \rho < 0 \)  can be prove in a very similar way by adding one more curve. In the discussion below, we do not work with \( |\rho| \), but \( \rho \geq 0 \) itself.

In this particular proof, we do not limit ourselves to discuss the four cases as usual. Instead, we just think about the conditions for \( \min\{v'^2,2t'^2\},\vt + f_1,\vt + f_2,2\vt + f_3 \geq 1 \), and take the smallest \( \sqrt{r} \) possible. Also, remember the important fact that at least two of these requirements should be tight. 

For fixed \( (v',t') \), the necessary and sufficient condition for  \( \min\{v'^2,2t'^2\} \geq 1 \) is \( v' \geq  1 \) and \( t' \geq \frac{\sqrt{2}}{2}  \). The necessary and sufficient condition for \( 2\vt + f_3 \geq 0 \) is \( \sqrt{r} \geq \sqrt{\frac{1 - 2\vt}{1 -\rho^2}} + v' \). (If \( \rho < 0 \), the condition should be \( \sqrt{r} \geq \max\left\{\sqrt{\frac{1 - 2\vt}{1 -\rho^2}} + v', \sqrt{\frac{1 -\vt}{1 -\rho^2}} \right\}  \).) From \( \vt + f_2 \geq 1 \), we know a necessary condition, \( \sqrt{r} \geq v' + \sqrt{1 -\vt} \). From the discussion above, we already know \begin{align*}
  & \sqrt{r} \geq  \max \left\{ v' + \sqrt{1 -\vt},\, \sqrt{\frac{1 - 2\vt}{1 -\rho^2}} + v' \right\} 
\end{align*}

In terms of admissible \( (v',t') \), we already have \( v' \geq 1 \) and \( t' \geq \frac{\sqrt{2}}{2}  \). Additionally, note that \( f_1(\sqrt{r},t',v') \) as a function of \( \sqrt{r} \) takes its maximum when \( f_1 =(1 -\rho^2)v'^2 \), so \( \vt +f_1 \geq 1\) implies \( v' \geq \sqrt{\frac{1 -\vt}{1 -\rho^2}} \). Under the conditions \( v' \geq \max \{1,\sqrt{\frac{1 -\vt}{1 -\rho^2}}\} \), \( t' \geq \frac{\sqrt{2}}{2} \), and  \(  \frac{t'}{\sqrt{1 -\rho^2}}\leq v' \leq t'( 1 +\frac{|\rho|}{\sqrt{1 -\rho^2}} ) \), the smallest admissible \( (v',t') \) are precisely defined by Definition~\ref{def:min.min} as \( (v'_{\min},t'_{\min}) \). 

We consider the two cases: \( \rho \leq 0.576 \) and \( \rho \geq 0.576 \). The point 0.576 is important, because 
when \( \rho \leq 0.576 \), since \( v'_{\min} \geq \max \left\{ 1,\sqrt{\frac{1 -\vt}{1 -\rho^2}} \right\} \), we can prove  
\begin{equation*}
  \sqrt{r} \geq \max \left\{ v'_{\min} + \sqrt{1 -\vt},\,  \sqrt{\frac{1 - 2\vt}{1 -\rho^2}} + v'_{\min} \right\} \geq \sqrt{\frac{2(1 -\vt)}{1 -\rho}}.
\end{equation*}

Since now \( \sqrt{r} \geq \sqrt{\frac{2(1 -\vt)}{1 -\rho}} \) is already implied by other necessary conditions, we only need to check the sufficiency of \( \sqrt{r} = \max \left\{  v'_{\min} + \sqrt{1 -\vt},\,  \sqrt{\frac{1 - 2\vt}{1 -\rho^2}} + v'_{\min}\right\} \) to prove Theorem~\ref{suppthm:phase.large.v} for \( 0 \leq \rho < 0.576 \).  

When \( 0 \leq \rho \leq 0.576 \), we already have \( \min\{v'^2,2t^2\} \geq 1 \), \( 2\vt + f_3 \geq 1 \) and \( \sqrt{r} \geq \sqrt{\frac{2(1 -\vt)}{1 -\rho}} \). we need to check \( \vt + f_1 \geq 1 \) and \( \vt + f_2 \geq 1 \).

For \( \vt + f_1 \geq 1 \), because \( \frac{1}{1 -\rho^2} d^2_{|\rho|}(A, (|\rho| \sqrt{r},\sqrt{r})) \geq \frac{1}{2}(1 -\rho)r \) and \(\frac{1}{1 -\rho^2} d^2_{|\rho|}(B, (|\rho| \sqrt{r},\sqrt{r})) \geq \frac{1}{2}(1 -\rho)r \), we only need to check \( \sqrt{r} \geq \sqrt{\frac{1 -\vt}{1 -\rho^2}} + \frac{t'}{\sqrt{1 -\rho^2}} \) when \( \sqrt{r} \geq \frac{t \sqrt{1 -\rho^2}}{\rho(1 -\rho)} \). Since \( t' \geq \frac{v'}{1 +\frac{\rho}{\sqrt{1 -\rho^2}}} \geq \frac{\sqrt{1 -\vt}}{\rho + \sqrt{1 - \rho^2}} \geq \rho \sqrt{1 -\vt}\) for all \( \rho \leq \frac{\sqrt{2}}{2}  \), we have \( t' \geq  \rho \sqrt{1 -\vt} \), and now actually \( \frac{t \sqrt{1 -\rho^2}}{\rho(1 -\rho)} \geq \sqrt{\frac{1 -\vt}{1 -\rho^2}} + \frac{t'}{\sqrt{1 -\rho^2}}\), so this case is not restrictive at all. 

For \( \vt + f_2 \geq 1 \), when \( \sqrt{r} \leq \sqrt{r_2(v',t')}  \), it is sufficient to have \( \sqrt{r} \geq \sqrt{\frac{2(1 -\vt)}{1 -\rho}} \). We only need to check the rest two cases in which \( \sqrt{r} \) is large:  
\begin{itemize}
  \item When \( \sqrt{r_2(v',t')} \leq \sqrt{r} \leq v' + \frac{t'}{\rho\sqrt{1-\rho^2}} \), we need  \( d^2(D,(\sqrt{r},\rho \sqrt{r})) \geq (1 -\rho^2)(1 -\vt)  \). This trivially holds, because \( \sqrt{r} \geq \sqrt{r_2(v',t')} \). From Equation~\eqref{eq:r_2}:
  \begin{equation*}
    r - 2 \left( v' +\frac{\rho t'}{\sqrt{1 -\rho^2}}\right)  \sqrt{r} + \left( v'^2 + \frac{t'^2}{1 -\rho^2} +\frac{2\rho v' t'}{\sqrt{1 -\rho^2}} \right) \geq \frac{1 -\rho}{2}r \geq 1 -\vt
  \end{equation*}
  whose last inequality is because \( \sqrt{r} \geq \sqrt{\frac{2(1 -\vt)}{1 -\rho}} \).
  \item When \( \sqrt{r} \geq v' + \frac{t'}{\rho\sqrt{1-\rho^2}} \), we need \( \sqrt{r} \geq \sqrt{\frac{1 -\vt}{1 -\rho^2}} + v' \). Since \( t' \geq \rho\sqrt{1 -\vt} \), it trivially holds.
\end{itemize}

We have proved Theorem~\ref{suppthm:phase.large.v} for \( \rho \leq 0.576 \). Now we move on to the next case of \( \rho > 0.576 \).

For \( \rho > 0. 576\), our task is to prove it is necessary and sufficient to have \( \sqrt{r} \geq \sqrt{\frac{2(1 -\vt)}{1 -\rho}} \). 
 Reviewing Theorem~\ref{suppthm:phase.large.v}, as long as \( \sqrt{r} \leq \sqrt{r_2(v',t')} \) and \( \sqrt{r} \leq \frac{2t'}{\sqrt{1-\rho}^2} \) for fixed admissible \( (v',t') \), then \( \sqrt{r} \geq \sqrt{\frac{2(1 -\vt)}{1 -\rho}} \) is also sufficient for \( \sqrt{r} \). In other words, we could simply set
\begin{equation*}
  \sqrt{r} =\max \left\{  v'_{\min} + \sqrt{1 -\vt},\,\sqrt{\frac{2(1 -\vt)}{1 -\rho}},\, \sqrt{\frac{1 - 2\vt}{1 -\rho^2}} + v'_{\min} \right\}.
\end{equation*}

We are left to eliminate other cases, i.e. \( \sqrt{r} \leq \sqrt{r_2(v',t')} \) or \( \sqrt{r} \leq \frac{2t'}{\sqrt{1-\rho}^2} \) for fixed admissible \( (v',t') \). They can either be impossible, or only produce a curve greater than the one in Equation~\ref{eq:boundary.large.v.positive}. The rest of the proof focuses on the elimination of other cases.

To prepare for such work, we take a closer look at the definition of \( f_2 \). We point out that when \( v' \geq \frac{\sqrt{2(1 +\rho)} - 1}{\rho} \cdot \frac{t}{\sqrt{1 -\rho^2}} \), the case of 
\begin{equation*}
  \begin{cases} 
  (\sqrt{r} - v')_ + ^2 \\
  \frac{1}{1 -\rho^2}d^2_{|\rho|} (D, (\sqrt{r},\rho \sqrt{r}))
  \end{cases} \text{ if } \min\{\sqrt{r_2},v' + \frac{t'}{|\rho|\sqrt{1-\rho^2}}\} \leq \sqrt{r} \leq v' + \frac{t'}{|\rho|\sqrt{1-\rho^2}}
\end{equation*}
does not exist, and the degenerated \( f_2 \) is just
\begin{equation*}
  f_2(\sqrt{r},t',v') =
  \begin{cases} 
    \min \{(\sqrt{r} - v')_ + ^2,\frac{1}{2}(1 -\rho) r,t'^2\}
 &\text{ if } \sqrt{r} \leq v' +\rho \frac{t'}{\sqrt{1-\rho^2}} \\
    \min\{(\sqrt{r} - v')_ + ^2,\frac{1}{2}(1 -\rho) r,\frac{1}{1 -\rho^2}d^2_{|\rho|}(D,(\sqrt{r},\rho \sqrt{r}))\}
 & \text{ if } v' +\rho \frac{t'}{\sqrt{1-\rho^2}} \leq \sqrt{r} \leq v' + \frac{t'}{\rho\sqrt{1-\rho^2}} \\
    (1 -\rho^2)\left[ \sqrt{r} - v' \right]^2 & \text{ if }  \sqrt{r} \geq  v' + \frac{t'}{\rho\sqrt{1-\rho^2}}.
  \end{cases}
\end{equation*}
When \( v' < \frac{\sqrt{2(1 +\rho)} - 1}{\rho} \cdot \frac{t}{\sqrt{1 -\rho^2}} \), the case of 
\begin{equation*}
  \begin{cases} 
  (1 -\rho^2)(\sqrt{r} - v')_ + ^2 \\
  d^2 (D, (\sqrt{r},\rho \sqrt{r}))
  \end{cases} \text{ if } \sqrt{r_2(v',t')} \leq \sqrt{r} \leq v' + \frac{t'}{\rho\sqrt{1-\rho^2}}
\end{equation*}
in \( f_2 \) does exist.

Now we are ready to eliminate the unwanted cases.

When the case of \( \frac{2t'}{\sqrt{1-\rho^2}} \leq \sqrt{r} \leq \frac{t \sqrt{1-\rho^2}}{\rho(1-\rho)} \) in \( \vt +f_1 \geq 1 \) is tight and active, we have \( \sqrt{r} =\frac{(1 + \rho)t'}{\sqrt{1 -\rho^2}} - \sqrt{1 -\vt - t'^2}. \) There is one important fact: \( v' +\frac{t'}{\rho \sqrt{1 -\rho^2}} \geq \frac{2}{1 +\rho} v' + \frac{2\rho t'}{(1 +\rho) \sqrt{1-\rho^2}} \geq \frac{2t'}{\sqrt{1-\rho^2}}\). The middle term is the symmetric axis of Equation~\eqref{eq:r_2}, and can be used exchangably with \( \sqrt{r_2(v',t')} \) as we have noted in the proof of Theorem~\ref{suppthm:hamm.large.v}. 
  \begin{itemize}
    \item If \( \sqrt{r} \leq v' +\frac{t'}{\rho \sqrt{1 -\rho^2}} \), we need \( d^2_{|\rho|}(D, (\sqrt{r},\rho \sqrt{r})) \geq (1 -\rho^2)(1 -\vt) \), i.e. \( \sqrt{r} \geq v' + \frac{\rho t'}{\sqrt{1 -\rho^2}} + \sqrt{1 -\vt - t'^2} \), which gives a contradiction. 
    \item If \( \sqrt{r} > v' +\frac{t'}{\rho \sqrt{1 -\rho^2}} \), because \( d^2_{|\rho|}(D, (\sqrt{r},\rho \sqrt{r})) \geq (1 -\rho^2)^2\left[ \sqrt{r} - v' \right]^2 \), it is the same  contradiction.
  \end{itemize} 

  When the case of \( \sqrt{r} > \frac{t \sqrt{1-\rho^2}}{\rho(1-\rho)} \) in \( \vt +f_1 \geq 1 \) is tight and active, we have \(\sqrt{r} = \sqrt{\frac{1-\vt}{1 -\rho^2}} + \frac{t'}{\sqrt{1 -\rho^2}}  \). Then we discuss the conditional expression of \( f_2 \) in \( \vt + f_2 \geq 1 \); given the intractability of \( \sqrt{r_2(\vt,\rho)} \), we work with the alternative \( \frac{2}{1 +\rho} v' + \frac{2\rho t'}{(1 +\rho) \sqrt{1-\rho^2}} \) instead. 

  \begin{itemize}
    \item When \( \sqrt{r} \geq v' +\frac{t'}{\rho \sqrt{1 -\rho^2}}  \) in \( f_2 \), we need \( \sqrt{r} \geq \sqrt{\frac{1-\vt}{1 -\rho^2}} + v' \)  which gives a contradiction.
    \item When  \( \frac{2}{1 +\rho} v' + \frac{2\rho t'}{(1 +\rho) \sqrt{1-\rho^2}} \leq \sqrt{r} < v' +\frac{t'}{\rho \sqrt{1 -\rho^2}}  \) in \( f_2 \), we actually cannot have \( \sqrt{\frac{1-\vt}{1 -\rho^2}} + \frac{t'}{\sqrt{1 -\rho^2}} \geq \frac{2}{1 +\rho} v' + \frac{2\rho t'}{(1 +\rho) \sqrt{1-\rho^2}} \), because it means \( \left( \sqrt{\frac{1-\vt}{1 -\rho^2}} - v' \right) +\frac{1 -\rho}{1 +\rho}\frac{t'}{\sqrt{1 -\rho^2}} \geq \frac{1 -\rho}{1 +\rho}v' \). 
  \end{itemize}

  When the case of \( \sqrt{r} \geq  v' + \frac{t'}{\rho\sqrt{1-\rho^2}} \) in \( FN_1 \) is tight  and active, we have \(  \sqrt{r} = \sqrt{\frac{1-\vt}{1 -\rho^2}} + v'_{\min} \).
  This case does not have any problem or contradiction itself. However, with \( v' \geq \max \{1,\sqrt{\frac{1 -\vt}{1 -\rho^2}} \}\), it is too large, much larger than \( \sqrt{r} = \max \left\{ v'_{\min} + \sqrt{1 -\vt},\, \sqrt{\frac{2(1 -\vt)}{1 -\rho}},\, \sqrt{\frac{1 - 2\vt}{1 -\rho^2}} + v'_{\min} \right\} \) whose sufficiency has been proven.

  When \( v \geq  \frac{\sqrt{2(1 +\rho)} - 1}{\rho} \cdot \frac{t}{\sqrt{1 -\rho^2}}  \), there is no more cases in \( f_2 \), and our discussion is finished. When \( v < \frac{\sqrt{2(1 +\rho)} - 1}{\rho} \cdot \frac{t}{\sqrt{1 -\rho^2}}  \), we need to look at the last case of \( \sqrt{r_2(v',t')} \leq \sqrt{r} \leq v' + \frac{t'}{\rho\sqrt{1-\rho^2}}\) in \( FN_1 \). If this case is tight and active, we have \( \sqrt{r} = v' + \frac{\rho t'}{\sqrt{1 -\rho^2}} + \sqrt{1 -\vt - t'^2} \).
  \begin{itemize}
    \item It can be verified that \( \sqrt{r_2(v',t')} \geq \frac{t \sqrt{1-\rho^2}}{\rho(1-\rho)} \), which is equivalent to 
    \begin{equation*}
      -\frac{1 +\rho}{2}v'^2 + \frac{2v't'}{\sqrt{1 -\rho^2}} \left( 1 +\frac{1 -\rho^3}{2\rho} \right) \geq \frac{t'^2}{1 -\rho^2} \left[ \frac{3}{2} + \frac{1}{\rho} -\frac{\rho}{2} -\rho^2 - \frac{(1 -\rho^2)^2}{4\rho^2} \right]
    \end{equation*}
    \( (RHS-LHS) \) is decreasing in \( \frac{t'}{v'} \), and the inequality holds as \( \frac{t'}{v'} =\frac{\rho \sqrt{1 -\rho^2}}{\sqrt{2(1 +\rho)} - 1} \).
    \item As a result, we need to verify \( v' + \frac{\rho t'}{\sqrt{1 -\rho^2}} + \sqrt{1 -\vt - t'^2} \geq \sqrt{\frac{1 -\vartheta}{1 -\rho^2}} + \frac{t'}{\sqrt{1 -\rho^2}} \) implied by \( \vt + f_1 \geq 1 \). This would give us a contradiction, because actually \( v' + \frac{\rho t'}{\sqrt{1 -\rho^2}} + 1.02 \cdot \sqrt{1 -\vt - t'^2} \leq  \sqrt{\frac{1 -\vartheta}{1 -\rho^2}} + \frac{t'}{\sqrt{1 -\rho^2}} \). In fact, We only need to prove \( \left[ \frac{\sqrt{2(1+\rho)} - 1}{\rho} +\rho - 1 \right]\frac{t'}{\sqrt{1 -\rho^2}} + 1.02 \cdot \sqrt{1 -\vt - t'^2} \leq \sqrt{\frac{1 -\vt}{1 -\rho^2}}\). The coefficient \( 1.02 \) as to make the LHS  decreasing in \( t' \) for \( \rho \geq 0.576 \). Taking \( t =\frac{\rho \sqrt{1 -\vt}}{\sqrt{2(1 +\rho)} - 1} \) proves the inequality.  
  \end{itemize}

So far our discussion is finally finished, and we have proven the phase curve to be 
\begin{equation*}
  \sqrt{r} =\max \left\{  v'_{\min} + \sqrt{1 -\vt},\,\sqrt{\frac{2(1 -\vt)}{1 -\rho}},\, \sqrt{\frac{1 - 2\vt}{1 -\rho^2}} + v'_{\min} \right\}.
\end{equation*}
where \( v'_{\min} \) is defined in Defition~\ref{def:min.min}. 

Reviewing the proof for \( \rho \geq 0 \), we notice that \( 2\vt + f_3(\sqrt{r},t'v') \geq 1 \) is only used at the very start of the proof, and does not change the bulk of the discussion. It can be proved with vitually the same proof, that when \( \rho < 0 \), the phase curve is 
\begin{align*}
  \sqrt{r} =  \max \Bigg\{&~ v'_{\min} + \sqrt{1 -\vt},\, \sqrt{\frac{2(1 -\vt)}{1 -|\rho|}},\, \sqrt{\frac{1 - 2\vt}{1 -\rho^2}} + v'_{\min},\sqrt{\frac{1 - 2\vt}{2(1 -|\rho|)}} + \frac{t'_{\min}}{1 -|\rho|} \Bigg\}.
\end{align*}

\end{proof}

\textit{Summarising the first three cases:} When \( \rho \geq 0 \), among the first three cases, we can take the minimum over Equation~\eqref{eq:boundary.small.v.positive},\eqref{suppeq:boundary.middle.v.positive},\eqref{eq:boundary.large.v.positive}. In fact, the minimum is just Equation~\eqref{suppeq:boundary.middle.v.positive}, which is 
\begin{equation}\label{suppeq:boundary.firstthree.positive}
  \sqrt{r} =\max \left\{ 1 + \sqrt{1 -\vt},\sqrt{\frac{2(1 -\vt)}{1 -\rho}},\sqrt{\frac{1 - 2\vt}{1 -\rho^2}} + \sqrt{\frac{1 -\vt}{1 -\rho^2}} \right\}.
\end{equation}
When \( \rho < 0 \), we also take the minimum over Equation~\eqref{eq:boundary.small.v.negative},\eqref{suppeq:boundary.middle.v.negative},\eqref{eq:boundary.large.v.negative}. In fact, in the region \( \vt\in(\frac{1}{2},1) \), Equation~\eqref{suppeq:boundary.middle.v.negative} is the minimum, but when \( \vt \leq \frac{1}{2} \), Equation~\eqref{eq:boundary.large.v.negative} is the minimum. 
As a result, we have an upper bound on the final phase curve, which can be expressed as:
\begin{align}\label{suppeq:boundary.firstthree.negative}
  \sqrt{r} =  \max \Bigg\{&~ v'_{\min} + \sqrt{1 -\vt},\, \sqrt{\frac{2(1 -\vt)}{1 -|\rho|}},\, \sqrt{\frac{1 - 2\vt}{1 -\rho^2}} + v'_{\min},\sqrt{\frac{1 - 2\vt}{2(1 -|\rho|)}} + \frac{t'_{\min}}{1 -|\rho|} \Bigg\}.
\end{align}
where we define \( v'_{\min} =\max \{1,\sqrt{\frac{1 -\vt}{1 -\rho^2}}\} \) and \( t'_{\min} = \max \left\{ \frac{\sqrt{2}}{2}, \frac{v'_{\min}}{1 +|\rho|/\sqrt{1 -\rho^2}}\right\} \).

\begin{remark}
  Equation~\eqref{suppeq:boundary.firstthree.positive} and \eqref{suppeq:boundary.firstthree.negative} is the result we presented as Theorem~\ref{thm:forward-backward} in the main text.

  When \( \rho \geq 0 \), since we used to define \( v'_{\min} =\max \left\{ 1,\sqrt{\frac{1 -\vt}{1 -\rho^2}},\frac{\sqrt{2}/2}{\sqrt{1 -\rho^2}} \right\} \) in the third case, Equation~\eqref{eq:boundary.large.v.negative} is strictly above Equation~\eqref{suppeq:boundary.middle.v.negative}.

  When \( \rho < 0 \), in Equation~\eqref{eq:boundary.large.v.negative}, we used to define \( v'_{\min} =\max \left\{ 1,\sqrt{\frac{1 -\vt}{1 -\rho^2}},\frac{\sqrt{2}/2}{\sqrt{1 -\rho^2}} \right\} \). When \( \vt \leq  \frac{1}{2}\), it is equivalent to \( v'_{\min} =\max \{1,\sqrt{\frac{1 -\vt}{1 -\rho^2}}\} \), which agrees with Equation~\eqref{suppeq:boundary.firstthree.negative} for \( \vt \leq \frac{1}{2} \) and \( \rho \leq 0 \). 

  When \( \rho < 0 \) and \( \vt > \frac{1}{2} \), Equation~\eqref{eq:boundary.large.v.negative} is not the minimum among the three, mainly because \( \frac{\sqrt{2}/2}{\sqrt{1 -\rho^2}} > 1 \) for \( |\rho| >\frac{\sqrt{2}}{2} \). The lowest phase curve for \( \vt > \frac{1}{2} \) and \( \rho\in( - 1,1) \) should be \( \max \{1 + \sqrt{1 -\vt},\sqrt{\frac{2(1 -\vt)}{1 -|\rho|}}\} \), but we can also write it equivalently as  \( \max \left\{ \max \{1,\sqrt{\frac{1 -\vt}{1 -\rho^2}}\} + \sqrt{1 -\vt},\sqrt{\frac{2(1 -\vt)}{1 -|\rho|}}\right\} \), because \( 1 + \sqrt{1 -\vt} \geq \sqrt{\frac{2(1 -\vt)}{1 -|\rho|}}\) and \( \vt > \frac{1}{2} \) together imply \( \sqrt{\frac{1 -\vt}{1 -\rho^2}} \leq 1 \). 
\end{remark}

\paragraph{The last three cases:}
We are still left to discuss the rest three cases: (i) \( t'( 1 +\frac{|\rho|}{\sqrt{1 -\rho^2}} )\leq v' \leq \frac{t' \sqrt{1 -\rho^2}}{1 -|\rho|} \), (ii) \( \frac{t' \sqrt{1 -\rho^2}}{1 -|\rho|}\leq v' \leq \frac{t'}{1 -|\rho|}  \), (iii) \(  v' \geq \frac{t'}{1 -|\rho|} \). 

When \( \rho \geq  0\), we can prove that Equation~\eqref{suppeq:boundary.firstthree.positive} is already the best, and there is no need to discuss the rest three cases for \( \rho \geq 0 \). This is because in Figure~\ref{subfig:large.plus.v}, \ref{subfig:large.plus.plus.v} and \ref{subfig:large.plus.plus.plus.v}, we need \( v' \geq \max \left\{ 1,\sqrt{\frac{1 -\vt}{1 -\rho^2}} \right\} \). We also need at least that \( \sqrt{r} \geq v' + \sqrt{1 -\vt} \), \( \sqrt{r} \geq \sqrt{\frac{2(1 -\vt)}{1 -\rho}} \) and \( \sqrt{r} \geq \sqrt{\frac{1 - 2\vt}{1 -\rho^2}} + v' \), so it cannot be any better than Equation~\eqref{suppeq:boundary.firstthree.positive}. 

When \( \rho < 0 \), the optimal phase curve for \( \vt \leq  \frac{1}{2} \) may still be one of the last three cases, but the discussion is too difficult. Even the expression of the phase curves is very complicated. We present the phase curves of the rest three cases without proof: 

\textit{Case 4. When \( t'\left( 1 +\frac{|\rho|}{\sqrt{1 -\rho^2}} \right)\leq v' \leq \frac{t' \sqrt{1 -\rho^2}}{1 -|\rho|} \)}, the phase curve is \begin{align}\label{eq:boundary.large.large.v.negative}
  \sqrt{r} =  \max \Bigg\{&~ v'_{\min} + \sqrt{1 -\vt},\, \sqrt{\frac{2(1 -\vt)}{1 -|\rho|}},\, \sqrt{\frac{1 - 2\vt}{1 -\rho^2}} + v'_{\min}(\vt), \sqrt{\frac{1 - 2\vt}{2(1 -|\rho|)}} + \frac{t'_{\min}(\vt)}{1 -|\rho|}\Bigg\}.
\end{align}
and the definition of \( (v'_{\min},t'_{\min}) \) is specific to this case. 
\begin{equation*}
  v_{\min}(\vt) =\max\left\{ 1,\sqrt{\frac{1 -\vt}{1 -\rho^2}},\frac{\sqrt{2}}{2} \left( 1 +\frac{|\rho|}{\sqrt{1 -\rho^2}} \right) \right\},\quad t_{\min}(\vt) =\max \left\{ \frac{1 -|\rho|}{\sqrt{1 -\rho^2}} v_{\min}(\vt),\, f(|\rho|),g(\vt) \right\} 
\end{equation*}
 where 
\begin{equation*}
  f(|\rho|) = \begin{cases} 
  \sqrt{\frac{1 -\rho^2}{2 -\rho^2}} & \text{ if } |\rho| \leq (\sqrt{5}-1)/2 \\
  \frac{1}{1 +|\rho|} & \text{ if } (\sqrt{5}-1)/2 \leq |\rho| \leq \frac{1}{3} \left[ - 2 + (19+3 \sqrt{33})^{1/3} + (19-3 \sqrt{33})^{1/3}\right]\\
  \frac{|\rho|}{\sqrt{1 +(1 +|\rho|)^2 - 2(1 +|\rho|)\sqrt{1 -\rho^2}}} & \text{ if } |\rho| \geq \frac{1}{3} \left[ - 2 + (19+3 \sqrt{33})^{1/3} + (19-3 \sqrt{33})^{1/3}\right]
  \end{cases} 
\end{equation*} 
and
\begin{equation*}
  g(\vt) = \begin{cases} 
  g_1(\vt) & \vartheta \geq \vt^* \\
  g_2(\vt) & \vt < \vt^*
  \end{cases} 
\end{equation*}
where \( \vt^* \), \( g_1(\vt) \) and \( g_2(\vt) \) are respectively the roots of \( \vt \) of the following three equations:
\begin{align*}
  \vt =\vt^* : &~ ~ 
  \frac{|\rho|}{1 -|\rho|} + \sqrt{\frac{1 - 2\vt}{2(1 -|\rho|)}} -\frac{1 +\rho^2}{\sqrt{1 -\rho^2}} - \sqrt{1 -\vt -\rho^2} = 0
  \\
  t =g_1(\vt): & ~~ t \left( \frac{1}{1 -|\rho|} - \frac{2|\rho|}{\sqrt{1 -\rho^2}} \right) + \sqrt{\frac{1 - 2\vt}{2(1 -|\rho|)}} - \sqrt{1 -t^2} - \sqrt{1 -\vt - t^2} = 0\\
  t =g_2(\vt): & ~~  t \left( \frac{1}{1 -|\rho|} - \frac{|\rho|}{\sqrt{1 -\rho^2}} \right) + \sqrt{\frac{1 - 2\vt}{2(1 -|\rho|)}} - \frac{1}{\sqrt{1 -\rho^2}} - \sqrt{1 -\vt - t^2} = 0
\end{align*}
All three equations can be solved easily with bi-section methods.

\textit{Case 5. When \( \frac{t' \sqrt{1 -\rho^2}}{1 -|\rho|}\leq v' \leq \frac{t'}{1 -|\rho|} \)}: We first define \(  v_{\min}(\vt) = \max\left\{ 1, \sqrt{\frac{1 -\vt}{1 -\rho^2}}, \frac{\sqrt{1 -\rho^2}}{1 -|\rho|} \cdot f(|\rho|),\right\} \) in which \( f(|\rho|) \) has the same definition from \textit{Case 4}.

When \( |\rho| \leq \frac{\sqrt{2}}{2}  \), the boundary is \begin{equation*}
  \sqrt{r} =\max \left\{ v_{\min}(\vt) + \sqrt{1 -\vt}, h_1(\vt) \right\}
\end{equation*}
where  \( h_1(\vt) =\min\left\{ \textit{Slope}(\vt) \cdot \sqrt{1 -\vt},\,h_2(\vt),\,  \max \left\{ \sqrt{\frac{1 - 2\vt}{2(1 -|\rho|)}} + \frac{g_1(\vt)}{1 -|\rho|},\,\frac{1+2|\rho|}{\sqrt{2-\rho^2}}+\sqrt{1-\vt-\frac{1-\rho^2}{2-\rho^2}} \right\} \right\} \), in which 
\begin{gather*}
  \textit{Slope}(\vt) = 1 + \frac{\sqrt{1-\rho^2}}{1-|\rho|} \cdot t^*, \text{ where \( t^*\in(0,1) \) solves } \frac{|\rho| t}{\sqrt{1-\rho^2}} + \sqrt{1 -t^2}= 1,
\end{gather*} 
and \begin{equation*}
  h_2(\vt) = \begin{cases} 
    \sqrt{\frac{1 - 2\vt}{1 -\rho^2}} + \frac{1}{\sqrt{1 -\rho^2}} & \text{ if } \vt \leq 1 -\frac{1}{\rho^2(1+|\rho|)^2} \\
    \frac{1 + 2|\rho|}{(1 +|\rho|)\sqrt{1 -\rho^2}} + \sqrt{1 -\vt -\frac{1}{(1 +|\rho|)^2}} & \text{ if } \vt > 1 -\frac{1}{\rho^2(1+|\rho|)^2}
  \end{cases} 
\end{equation*} 
and \( g_1(\vt) \) is the same one in \textit{Case 4}.

We define a numerical special numerical value for \( |\rho| \): \( |\rho| = 0.7544 \). It is the value which makes \( \sqrt{r} = \frac{1 + 2|\rho|}{(1 +|\rho|)\sqrt{1 -\rho^2}} + \sqrt{1 -\vt -\frac{1}{(1 +|\rho|)^2}} \) and \( \sqrt{r} =\left( 1 + \frac{\sqrt{1-\rho^2}}{1-|\rho|}  \right)\sqrt{1 -\vt} \) intersect  at \( \vt = \frac{1}{2} \).

When \( \frac{\sqrt{2}}{2} < |\rho| \leq 0.7544\), the boundary is 
\begin{equation*}
  \sqrt{r} =\max \left\{ v_{\min}(\vt) + \sqrt{1 -\vt}, h_3(\vt) \right\},
\end{equation*} where the definiton of \( v_{\min} \) is unchanged; \( h_3(\vt) =\min \left\{ \left( 1 + \frac{\sqrt{1-\rho^2}}{1-|\rho|}  \right)\sqrt{1 -\vt},\,h_2(\vt) \right\} \).

When \( |\rho| > 0.7544\), the boundary is:
\begin{equation*}
  \sqrt{r} = \begin{cases} 
  \max \left\{ v_{\min}(\vt) + \sqrt{1 -\vt}, \, \left( 1 + \frac{\sqrt{1-\rho^2}}{1-|\rho|}  \right)(1 -\vt) \right\} & \text{ if }\vt \leq \frac{1}{2} \\
  h_2(\vt) & \text{ if }\vt >\frac{1}{2}
  \end{cases} 
\end{equation*}

\textit{Case 6. When \(  v' \geq \frac{t'}{1 -|\rho|} \)}
\begin{equation*}
  \sqrt{r} = \max \left\{ \sqrt{1 -\vt} + \frac{1}{\sqrt{1 -\rho^2}},\,  \sqrt{\frac{1 - 2\vt}{1 -\rho^2}} + \frac{1}{\sqrt{1 -\rho^2}},\,h(\vt) \right\}
\end{equation*}
where the curve \( h(\vt) \) is defined as
\begin{equation*}
  h(\vt) = \begin{cases} 
    \sqrt{\frac{1 - \vt}{1 -\rho^2}} + \frac{1}{\sqrt{1 -\rho^2}} & \text{  if }\vt \leq 1 - \frac{1-|\rho|}{\rho^2(1+|\rho|)} \\
    \frac{1}{\sqrt{1 -\rho^2}} +\frac{|\rho|}{1 +|\rho|} + \sqrt{1 -\vt -\frac{1 -|\rho|}{1 +|\rho|}} & \text{  if }\vt > 1 - \frac{1-|\rho|}{\rho^2(1+|\rho|)}.
  \end{cases} 
\end{equation*}



To sum up all the six cases, the lowest phase curve over the six cases in given in Equation~\eqref{suppeq:boundary.firstthree.positive} when \( \rho \geq  0\). When \( \rho< 0 \), the optimal  curve is too complicated, but an upper bound is given by Equation~\eqref{suppeq:boundary.firstthree.negative}.

\section{Proof of Theorem~\ref{thm:equivalence}}

The key is to analyze the random-design setting and show that its minimax rate of Hamming error is only determined by $\mathbb{E}[X'X]=\Sigma$. Then, when we switch to the fixed-design case of $X'X=\Sigma$, the same minimax rate holds. For the random-design setting, we proceed by deriving a lower bound and an upper bound of the minimax Hamming error separately.

First, we derive a lower bound for the minimax Hamming error. Let $G=X'X$ denote the Gram matrix of the random-design model. Fixing any two subsets $V_0, V_1\subset\{1,2,\ldots,p\}$, we write $V=V_0\cup V_1$. Let $\eta\in \{0,1\}^p$ be an arbitrary binary vector. We consider two binary vectors $\mu^{(0)}, \mu^{(1)}\in\{0,1\}^p$ where $\mu^{(0)}_j=\mu^{(1)}_j=\eta_j$,  for $j\notin V$, and restricted on $V$, $\mathrm{Supp}(\mu^{(0)}_V)=V_0$ and $\mathrm{Supp}(\mu_V^{(1)})=V_1$. Let $\tau_p=\sqrt{2r\log(p)}$. Consider the testing problem
\beq \label{proof-equivalent-1}
H_0: \beta = \tau_p\mu^{(0)}, \qquad v.s. \qquad H_1: \beta=\tau_p \mu^{(1)}. 
\eeq
For a test $T$, let $R(T)$ be the sum of type I and type II errors. Any selector $\hat{\beta}$ can be converted to a test $T(\hat{\beta})$, where we reject the null hypothesis if $\mathrm{Supp}(\hat{\beta})\neq V_0$. It is seen that $R(T(\hat{\beta}))=\mathbb{P}\{\beta=\tau_p\mu^{(0)}, \mathrm{Supp}(\hat{\beta})\neq V_0\} +\mathbb{P}\{\beta=\tau_p\mu^{(1)}, \mathrm{Supp}(\hat{\beta})= V_0) \} \leq \sum_{j\in V}\{\mathbb{P}(\beta_j=0, \hat{\beta}_j\neq \tau_p)+ \mathbb{P}(\beta_j=\tau_p, \hat{\beta}_j= 0)\}$. It follows that
\beq \label{proof-equivalent-2}
\mathbb{E}[H(\hat{\beta}_V, \beta_V)|X]\geq R(T(\hat{\beta}))\geq \inf_{T}R(T)\equiv R^*(V_0, V_1; \eta, X). 
\eeq
We can compute the right hand side using the Neyman-Pearson lemma. Define
\beq\label{proof-equivalent-Def-a}
a=a(V_0, V_1, X) = (\mu^{(0)}-\mu^{(1)})'G(\mu^{(0)}-\mu^{(1)}). 
\eeq
The likelihood ratio test for \eqref{proof-equivalent-1} is equivalent to using the test statistic $Z=a^{-1/2}(\mu^{(1)}-\mu^{(0)})'X'(y-\tau_p X\mu^{(0)})$. 
Then, $Z\sim {\cal N}(0, 1)$ under $H_0$, and $Z\sim {\cal N}(a^{1/2}\tau_p, 1)={\cal N}(\sqrt{2ar\log(p)}, 1)$, under $H_1$. By Neyman-Pearson lemma, 
\begin{align}  \label{proof-equivalent-3}
R^*(V_0, V_1; \eta, X) &=\inf_t\Bigl\{\epsilon_p^{|V_0|}\cdot \mathbb{P}\bigl({\cal N}(0, 1)>t\bigr) +  \epsilon_p^{|V_1|}\cdot \mathbb{P}\bigl( {\cal N}(\sqrt{2ar\log(p)}, 1) <t\bigr)\Bigr\}\cr
&= \inf_{t=\sqrt{2q\log(p)}}\Bigl\{ L_p p^{-|V_0|\vartheta - q} + L_p p^{-|V_1|\vartheta - (\sqrt{ar}-\sqrt{q})_+^2}\Bigr\}\cr
&= L_p p^{-h(V_0, V_1, X)}, 
\end{align}
where
\[
h(V_0,V_1, X)=\max_{q>0}\Bigl(\min\Bigl\{ |V_0|\vartheta+q, \; |V_1|\vartheta +  (\sqrt{ar}-\sqrt{q})_+^2\Bigr\}\Bigr). 
\]
In the second line of \eqref{proof-equivalent-3}, we have used the Mills' ratio of $N(0,1)$ (e.g., see \cite{ke2014covariance} for a similar use of the Mills' ratio). Let $\Sigma$ be the covariance matrix, parameterized by $\rho$. We define the following quantities:
\begin{align} \label{proof-equivalent-h-star}
a^*(V_0, V_1,\rho)& = (\mu^{(0)}-\mu^{(1)})'\Sigma(\mu^{(0)}-\mu^{(1)}), \cr
h^*(V_0, V_1, \rho) &= \max_{q>0}\Bigl(\min\Bigl\{ |V_0|\vartheta+q, \; |V_1|\vartheta +  (\sqrt{a^*r}-\sqrt{q})_+^2\Bigr\}\Bigr). 
\end{align} 
Below, we show that $h(V_0, V_1, X)$ is sufficiently close to $h^*(V_0, V_1, \rho)$. The key is showing that $\Sigma$ and $G$ are sufficiently close on the diagonal block restricted to $V$. We use Theorem 5.39 and Remark 5.40 of \cite{Vershynin} with $t=O(\sqrt{|V|\log(p)})$. It follows that, when $|V|\ll n$, with probability $1-o(p^{-3-|V|})$, 
\[
\|G_{V,V}-\Sigma_{V,V}\|\leq C\|\Sigma_{V,V}\|\sqrt{n^{-1}|V|\log(p)};\quad \mbox{here, $C$ a constant independent of $|V|$}.  
\]
We note that $\|\Sigma_{V,V}\|\leq \|\Sigma\|\leq C$. For any finite integer $m\geq 1$, the total number of size-$m$ subset $V$ is ${p\choose m}=O(p^m)$. We then apply the probability union bound to get that, with probability $1-O(p^{-3})$, 
\beq  \label{proof-equivalent-4}
\max_{V: |V|\leq m} \|G_{V,V}-\Sigma_{V,V}\|\leq C\sqrt{n^{-1}\log(p)}. 
\eeq
Since $|a(V_0, V_1, X)-a^*(V_0, V_1, \rho)|\leq \|G_{V,V}-\Sigma_{V,V}\|\cdot\|\mu^{(1)}-\mu^{(0)}\|^2\leq \|G_{V,V}-\Sigma_{V,V}\|\cdot |V|$, we immediately know that 
\beq \label{proof-equivalent-a-bound}
|a(V_0, V_1, X)-a^*(V_0, V_1, \rho)|\leq C\sqrt{n^{-1}\log(p)} \quad \mbox{here, $C$ depends on $m$}. 
\eeq
Write $h=h(V_0,V_1,X)$ and $h^*=h^*(V_0,V_1,\rho)$ for short, and let $(h^*, a^*)$ be the shorthand notations defined similarly. Then, $h=\max_{q}g(q, a)$ and $h^* =\max_q f(q, a^*)$, for $f(q,a)=\min\{ |V_0|\vartheta+q,  |V_1|\vartheta +  (\sqrt{ar}-\sqrt{q})_+^2\}$. Let $\tilde{q}$ and $\tilde{q}^*$ be the two maximizers. It is seen that $h=f(\tilde{q}, a)\leq f(\tilde{q}, a^*)+\max_{q}|f(q,a)-f(q,a^*)|\leq h^*+\max_{q}|f(q,a)-f(q,a^*)|$. Similarly, we can also derive that $h\leq h^*+\max_q|f(q, a^*)-f(q,a )|$. Combining them gives $|h-h^*|\leq \max_q |f(q, a^*)-f(q,a)|$. We plug in the expression of $f(q,a)$ to get
\[
|h(V_0, V_1, X)-h^*(V_0, V_1, \rho)|\leq |\sqrt{ar}-\sqrt{a^*r}|\leq C\sqrt{n^{-1}\log(p)}. 
\]
We now combine all the results, and note that \eqref{proof-equivalent-4} has a maximum over all $V=V_0\cup V_1$. It follows that, with probability $1-O(p^{-3})$,
\beq  \label{proof-equivalent-5}
\max_{(V_0, V_1): |V_0\cup V_1|\leq m}|h(V_0, V_1, X)-h^*(V_0, V_1, \rho)|\leq C\sqrt{n^{-1}\log(p)}. 
\eeq
We plug it into \eqref{proof-equivalent-3}. Note that $L_p p^{-h}=L_p p^{-h^*}\cdot p^{h^*-h}$. In line of \eqref{proof-equivalent-5}, $p^{h^*-h}$ is a multi-$\log(p)$ term, i.e.,  $L_pp^{-h}=L_pp^{-h^*}$. We then combine it with \eqref{proof-equivalent-2}. It yields that, with probability $1-O(p^{-3})$, 
\beq \label{proof-equivalent-6}
\mathbb{E}[H(\hat{\beta}_V, \beta_V)|X]\geq L_p p^{-h^*(V_0, V_1,\rho)}, \mbox{simultaneously for all $(V_0,V_1)$ with $|V_0\cup V_1|\leq m$}. 
\eeq
Given $V$, we further take a maximum over $(V_0, V_1)$ on the right hand side. It follows that 
\beq \label{proof-equivalent-h-star-star}
\mathbb{E}[H(\hat{\beta}_V, \beta_V)|X]\geq L_p p^{-h^{**}(V,\rho)}, \qquad\mbox{where}\quad h^{**}(V, \rho) = \min_{\substack{(V_0,V_1): V_0\neq V_1,\\V_0\cup V_1=V}}h^*(V_0, V_1, \rho).  
\eeq 
Write $\{1,2,\ldots,p\}=\cup_{j=1}^{\lceil p/2\rceil} V_j$, where $V_j=\{2j-1, 2j\}$ for $j\leq p/2$ and $V_j=\{p\}$ for $j>p/2$ (this happens only if $p$ is odd). It follows that, with probability $1-O(p^{-3})$, 
\[
\mathbb{E}[H(\hat{\beta},\beta)|X]=\sum_{1\leq j\leq \lceil p/2\rceil}\mathbb{E}[H(\hat{\beta}_{V_j}, \beta_{V_j})|X]\geq 
\sum_{1\leq j\leq \lceil p/2\rceil}L_p p^{-h^{**}(V_j,\rho)}. 
\]  
When $p$ is even, $h^{**}(V_j,\rho)$ are all equal. When $p$ is odd, $h^{**}(V_j,\rho)$ are all equal, except for one $V_j$;  but this one has a negligible effect on the right hand side above. Let $h^{**}(\rho)$ be the common value of $h^{**}(V_j,\rho)$. Since $h^{**}(\rho)$ also depends on $(\vartheta,r)$, we write it as $h^{**}(\rho; \vartheta,r)$ to reflect this dependence. We immediately have that, with probability $1-O(p^{-3})$,
\[
\mathbb{E}[H(\hat{\beta},\beta)|X] \geq L_p p^{1-h^{**}(\rho;\vartheta,r)}. 
\] 
On the event that the above inequality does not hold, the Hamming error is at most $p$. The contribution of this event to the expected Hamming error is at most $p\cdot O(p^{-3})=O(p^{-2})$, which is negligible to $L_p p^{1-h^*(\rho;\vartheta,r)}$. It follows that
\beq \label{proof-equivalent-7}
\mathbb{E}[H(\hat{\beta},\beta)]\geq L_p p^{1-h^{**}(\rho;\vartheta,r)}, \qquad\mbox{for any method $\hat{\beta}$}. 
\eeq
This gives a lower bound for the minimax Hamming error. 

Next, we give an upper bound for the minimax Hamming error. We will consider a specific $\hat{\beta}$. Let the partition $\{1,2,\ldots,p\}=\cup_{j=1}^{\lceil p/2\rceil} V_j$ be the same as above. For any subset $U\subset\{1,2,\ldots,p\}$, let ${\bf1}_U$ be the binary vector such that its $j$th entry is $1$ if $j\in U$ and $0$ otherwise. Additionally, let $X_U$ be the submatrix of $X$ restricted to columns in $U$. For each $V_j$, define
\beq \label{proof-equivalent-8}
\hat{U}_j = \argmin_{U\subset V_j}\Bigl\{ \frac{1}{2}\|y-\tau_p X{\bf 1}_U\|^2 + \vartheta \log(p)|U| \Bigr\}.
\eeq
Define $\hat{\mu}\in\{0,1\}^p$ such that for any $i\in V_j$, $\hat{\mu}_i=1$ if $i\in \hat{U}_j$, and $\hat{\mu}_i=0$ otherwise. The estimator is $\hat{\beta}=\tau_p \hat{\mu}$. We now calculate the expected Hamming error of this estimator. Let $S$ be the support of $\beta$. Fix $V_j$ and write $V=V_j$ for short. Given any two subsets $U_0$ and $U_1$ of $V$ such that $U_0\neq U_1$, we consider the event 
\beq \label{proof-equivalent-event}
\mathrm{Supp}(\beta_V)=U_0, \qquad \mathrm{Supp}(\hat{\beta}_V)=U_1, \qquad |S|\leq 2p^{1-\vartheta}. 
\eeq
On this event, it is true that 
\beq \label{proof-equivalent-9}
\frac{1}{2}\|y-\tau_p X{\bf 1}_{U_0}\|^2 + \vartheta \log(p)|U_0|\geq  \frac{1}{2}\|y-\tau_p X {\bf 1}_{U_1}\|^2 + \vartheta \log(p)|U_1|. 
\eeq
Note that $y=X\beta+z=\tau_p X{\bf 1}_{U_0}+\tau_p X{\bf 1}_{S\cap V^c}+z$. We can re-write \eqref{proof-equivalent-9} as
\[
 \frac{1}{2}\|z+\tau_p X{\bf 1}_{S\cap V^c}\|^2 + \vartheta \log(p)|U_0|\geq  \frac{1}{2}\|(z+\tau_p X{\bf 1}_{S\cap V^c})-\tau_p X({\bf 1}_{U_1}-{\bf 1}_{U_0})\|^2 + \vartheta \log(p)|U_1|.
\]
Let $F=U_0\cap U_1$, $E_0=U_0\setminus F$ and $E_1=U_1\setminus F$. Then, $|U_0|=|F|+|E_0|$ and $|U_1|=|F|+|E_1|$. We plug it into the above inequality and re-arrange the terms. It gives
\[
z'X({\bf 1}_{U_1}-{\bf 1}_{U_0})\geq \frac{\tau_p}{2}({\bf 1}_{U_1}-{\bf 1}_{U_0})'G({\bf 1}_{U_1}-{\bf 1}_{U_0}) -  {\bf 1}_{S\cap V^c}'G({\bf 1}_{U_1}-{\bf 1}_{U_0})+\frac{\vartheta\log(p)}{\tau_p}(|E_1|-|E_0|). 
\]
Let $a=a(U_0, U_1, X)=({\bf 1}_{U_1}-{\bf 1}_{U_0})'G({\bf 1}_{U_1}-{\bf 1}_{U_0})$. We note that this definition is indeed the same as that in \eqref{proof-equivalent-Def-a}. Let $\tilde{z}=z'X({\bf 1}_{U_1}-{\bf 1}_{U_0})/\sqrt{a}$. The above can be written equivalently as
\beq \label{proof-equivalent-10}
\tilde{z}\geq \sqrt{2\log(p)} \biggl[ \frac{\sqrt{ra}}{2} - \frac{{\bf 1}_{S\cap V^c}'G({\bf 1}_{U_1}-{\bf 1}_{U_0})}{\sqrt{a}} + \frac{\vartheta(|E_1|-|E_0|)}{2\sqrt{ra}}\biggr], \quad\mbox{where}\;\; \tilde{z}|(X,\beta)\sim N(0,1). 
\eeq
We bound $|{\bf 1}_{S\cap V^c}'G({\bf 1}_{U_1}-{\bf 1}_{U_0})|$. Since $\Sigma_{V^cV}$ is a zero matrix, we immediately have ${\bf 1}_{S\cap V^c}'\Sigma({\bf 1}_{U_1}-{\bf 1}_{U_0})=0$. It follows by the triangle inequality that 
\begin{align}  \label{proof-equivalent-11}
|{\bf 1}_{S\cap V^c}'G({\bf 1}_{U_1}-{\bf 1}_{U_0})| & \leq |{\bf 1}_{S\cap V^c}'(G-\Sigma)({\bf 1}_{U_1}-{\bf 1}_{U_0})|\cr
&\leq |V|\cdot \max_{k\in V}|e_k'(G-\Sigma){\bf 1}_{S\cap V^c}|. 
\end{align}
For $k\in V$, $e_k'(G-\Sigma){\bf 1}_{S\cap V^c}=\sum_{i=1}^n \sum_{\ell \in S\cap V^c}\{X(i,k)X(i,\ell)-\mathbb{E}[X(i,k)X(i,\ell)]\}$. 
We recall that $\{1,2,\ldots,p\}=\cup_{m=1}^{\lceil p/2\rceil}V_m$ is a partition. It induces a partition on $S\cap V^c$, which we denote by $S\cap V^c=\cup_{m=1}^{N_p}S_m$. Each $S_m$ contains at most 2 indices and $ |S\cap V^c|/2\leq N_p\leq |S\cap V^c|$. Write
\[
e_k'(G-\Sigma){\bf 1}_{S\cap V^c} = \sum_{i=1}^n \sum_{m=1}^{N_p}\Bigl[ \sum_{\ell\in S_m}\{X(i,k)X(i,\ell)-\mathbb{E}[X(i,k)X(i,\ell)]\}\Bigr]. 
\]
The right hand side is a sum of $nN_p$ independent variables, where each variable has a zero mean and a sub-exponential norm bounded by $n^{-1}K$, for a constant $K>0$. We apply the Bernstein inequality \citep[Proposition 5.16]{Vershynin} to get that, for every $t>0$, 
\[
\mathbb{P}\bigl(|e_k'(G-\Sigma){\bf 1}_{S\cap V^c}|>t\bigr)\leq 2\exp\Bigl(-c\min\Bigl\{\frac{nt^2}{K^2N_p}, \frac{nt}{K}\Bigr\}\Bigr),
\]
where $c>0$ is a universal constant. By letting $t=C\sqrt{n^{-1}N_p\log(p)}$ for a properly large constant $C$, we have that, with probability $1-O(p^{-3})$, 
\[
|e_k'(G-\Sigma){\bf 1}_{S\cap V^c}|\leq C\sqrt{n^{-1}N_p\log(p)}\leq C\sqrt{n^{-1}|S|\log(p)}. 
\]
We plug it into \eqref{proof-equivalent-10} and apply the probability union bound. We also note that $|S|=O(p^{1-\vartheta})$ on the event \eqref{proof-equivalent-event}; also, $n=p^{\omega}$ with $\omega>1-\vartheta$.  It follows that, on this event, with probability $1-O(p^{-3})$, 
\beq \label{proof-equivalent-12}
|{\bf 1}_{S\cap V^c}'G({\bf 1}_{U_1}-{\bf 1}_{U_0})|\leq Cp^{-(\omega-1+\vartheta)/2}\sqrt{\log(p)}. 
\eeq
We plug \eqref{proof-equivalent-12} into \eqref{proof-equivalent-10} to get: 
\[
\tilde{z}\geq \sqrt{2b_p\log(p)},\quad \mbox{where } b_p= \Bigl[\frac{\sqrt{ra}}{2} + \frac{\vartheta(|E_1|-|E_0|)}{2\sqrt{ra}}-L_p p^{-\frac{\omega-1+\vartheta}{2}}\Bigr]^2. 
\]
Moreover, let $a^*=a^*(U_0, U_1, \rho)=({\bf 1}_{U_1}-{\bf 1}_{U_0})'\Sigma({\bf 1}_{U_1}-{\bf 1}_{U_0})$, which is the same as the definition in \eqref{proof-equivalent-h-star}. By \eqref{proof-equivalent-a-bound}, the replacement of $a$ by $a^*$ only yields a difference of $L_p p^{-\frac{\omega-1+\vartheta}{2}}$ in the expression of $b_p$. We further have: 
\beq \label{proof-equivalent-13}
\tilde{z}\geq \sqrt{2b_p\log(p)},\quad \mbox{where }\tilde{z}|(X,\beta)\sim N(0,1) \mbox{ and } b_p= \Bigl[\frac{\sqrt{ra^*}}{2} + \frac{\vartheta(|E_1|-|E_0|)}{2\sqrt{ra^*}}+L_p p^{-\frac{\omega-1+\vartheta}{2}}\Bigr]^2. 
\eeq
First, what \eqref{proof-equivalent-13} says is that, conditioning on $(X,\beta_{V}, \beta_{V^c})$, if $\|\beta\|_0\leq Cp^{1-\vartheta}$, then except for an event of probability $O(p^{-3})$, $\mathrm{Supp}(\hat{\beta}_V)=U_1$ implies $\tilde{z}>\sqrt{2b_p\log(p)}$. Second, under our model, $(X,\beta_{V^c})$ are independent of $\beta_V$, and $\mathbb{P}(\mathrm{Supp}(\beta_V)=U_0)=L_pp^{-\vartheta |U_0|}=L_pp^{-\vartheta(|F|+|E_0|)}$. Last, $\mathbb{P}(\|\beta_{V^c}\|_0\leq 2p^{1-\vartheta})=1-O(p^{-3})$ (this is seen by noticing that $\|\beta_{V^c}\|_0$ is the sum of independent Bernoulli variables and by applying the Bernstein's inequality). We combine the above to get
\begin{align*}
& \mathbb{P}\bigl(\mathrm{Supp}(\beta_V)=U_0,\, \mathrm{Supp}(\hat{\beta}_V)=U_1, \, |S|\leq 2p^{1-\vartheta}\bigr)\cr
 \leq \;\; &L_pp^{-\vartheta|U_0|}\cdot \mathbb{P}\biggl(\tilde{z}\geq \sqrt{2b_p\log(p)}\biggr) + O(p^{-3})\cr
 \leq \;\; & L_p p^{-\vartheta( |F| +|E_0|)- \bigl[\frac{\sqrt{ra^*}}{2} + \frac{\vartheta(|E_1|-|E_0|)}{2\sqrt{ra^*}}\bigr]_+^2}. 
\end{align*}
By elementary calculations, we have 
\begin{align*}
 & \vartheta(|F|+ |E_0|)+ \Bigl[\frac{\sqrt{ra^*}}{2} + \frac{\vartheta(|E_1|-|E_0|)}{2\sqrt{ra^*}}\Bigr]_+^2\cr
 \geq \quad & \vartheta|F|+\max\{|E_0|, |E_1|\}\vartheta +\frac{1}{4}\Bigl(\sqrt{ra^*}-\frac{|(|E_1|-|E_0|)|}{\sqrt{ra^*}}\Bigr)_+^2\cr
=\quad &  \max\{|U_0|, |U_1|\}\vartheta +\frac{1}{4}\Bigl(\sqrt{ra^*}-\frac{|(|U_1|-|U_0|)|}{\sqrt{ra^*}}\Bigr)_+^2\quad =\quad h^*(U_0, U_1, \rho), 
\end{align*}
where $h^*(U_0, U_1, \rho)$ is the same as that defined in \eqref{proof-equivalent-h-star} (the last equality above follows by solving $q$ in \eqref{proof-equivalent-h-star}). We combine the above to get
\beq \label{proof-equivalent-14}
\mathbb{P}\bigl(\mathrm{Supp}(\beta_V)=U_0,\, \mathrm{Supp}(\hat{\beta}_V)=U_1, \, |S|\leq 2p^{1-\vartheta}\bigr)\leq L_p p^{-h^*(U_0, U_1, \rho)}. 
\eeq 
On the above event, the Hamming error contributed by $\hat{\beta}_V$ is $|E_0|+|E_1|\leq |V|\leq 2$. 
Moreover, $h^*(U_0, U_1,\rho)\geq h^{**}(\rho;\vartheta,r)$, where the latter is defined in \eqref{proof-equivalent-h-star-star}. 
It follows that
\begin{align*}
\mathbb{E}[H(\hat{\beta}_V, \beta_V)] &= \sum_{(U_0, U_1)}2\cdot \mathbb{P}\bigl(\mathrm{Supp}(\beta_V)=U_0,\, \mathrm{Supp}(\hat{\beta}_V)=U_1, \, |S|\leq 2p^{1-\vartheta}\bigr) +O(p^{-3})\cr
&\leq L_p \sum_{(U_0, U_1)} p^{-h^*(U_0, U_1, \rho)} \leq  L_p p^{-h^{**}(\rho;\vartheta,r)}. 
\end{align*}
The above is true for every $V=V_j$ in the partition $\{1,2,\ldots,p\}=\cup_{j=1}^{\lceil p/2\rceil}$ (except for the last $V_j$ in the case that $p$ is odd; but this single $V_j$ has a negligible effect on the rate of the Hamming error). We immediately have
\beq \label{proof-equivalent-15}
\mathbb{E}[H(\hat{\beta}, \beta)]=\sum_{1\leq j\leq \lceil p/2\rceil}\mathbb{E}[H(\hat{\beta}_V, \beta_V)]\leq L_p p^{1-h^{**}(\rho;\vartheta,r)}, \quad\mbox{for the $\hat{\beta}$ in \eqref{proof-equivalent-8}}.  
\eeq
This gives an upper bound for the minimax Hamming error.

Last, we use \eqref{proof-equivalent-7} and \eqref{proof-equivalent-15} to show the claim. By combining these two inequalities, we know that, for the random design, 
\[
\inf_{\hat{\beta}} \mathbb{E}[H(\hat{\beta}, \beta)] = L_p p^{1-h^{**}(\rho;\vartheta,r)}. 
\]
A key observation is that the exponent $h^{**}(\rho; \vartheta,r)$ is only related to $\Sigma$, not the realization of $X'X$. Now, we can mimic all the above derivations to get the same conclusion when the Gram matrix is fixed at $\Sigma$ (the proof is very similar, except that we now have $G=\Sigma$). Therefore, the minimax rates of the Hamming errors under two settings are exactly the same. \qed

\end{document}